    \tikzset{node distance=2cm, auto}
    \newcounter{arraycard}
\newcommand{\rhotwopic}[3]{
\begin{tikzpicture}[scale=0.6]
\def\Nlist{#1}

\tikzmath{
\nmu=#2;
integer \nnmu;
\nnmu=2*#2;
\xlambda=#3;
let \len=3cm;
\xphi1=10;
\xphi2=120;
}

\setcounter{arraycard}{0}
\foreach \x in \Nlist {
\stepcounter{arraycard}
}

\tikzmath{
\srays=\value{arraycard};
\phidiff=(\xphi2 - \xphi1) / (\srays-1);
}

\foreach \i[remember=\i as \lasti (initially 1)] in {1, ...,\srays}
{
    \tikzmath{
    coordinate \c;
    coordinate \lastc;
    \c = (\i*\phidiff - \phidiff + \xphi1:\len);
    \lastc = (\lasti*\phidiff - \phidiff + \xphi1:\len);
    }
    
	\path[top color=gray!10!, bottom color=gray!50!] 
	(0,0)--(\lastc)--(\c)--cycle;
}

    \tikzmath{
    coordinate \rightboundary;
    coordinate \leftboundary;
    \rightboundary = (\xlambda*\phidiff - \phidiff + \xphi1:\len);
    \i=\xlambda + 1;
    \leftboundary = (\i*\phidiff - \phidiff + \xphi1:\len);
    }
    
    \path[top color=gray!40!, bottom color=gray!90!]
	(0,0)--(\rightboundary)--(\leftboundary)--cycle;
	
	\draw[color=white] ($0.375*(\leftboundary) + 0.375*(\rightboundary)$) node {\large$\lambda$};

\foreach[count=\i] \N in \Nlist
{
    \tikzmath{
    coordinate \c;
    \c = (\i*\phidiff - \phidiff + \xphi1:\len);
    }
    
    \draw[thick,color=black] (0,0) -- (\c);

    \foreach \d in {1,...,\N}
    {
    \tikzmath{
    int \factN;
    if \N > 3 then {
        \factN = \N;
    } else {
        \factN = 3;
    };
    coordinate \u;
    \ulen=(0.75+(\d-1)*0.5);
    \u = (\i*\phidiff - \phidiff + \xphi1:\len/\factN*\ulen);
    }
        \fill (\u) circle (0.5ex);
    }
}

\ifnum \nnmu > 0
    \tikzmath{
        coordinate \reldeg;
        \reldeg = (\nmu*\phidiff - \phidiff + \xphi1:0.75*\len);
        }
        
    \filldraw[fill=white, draw=black] (\reldeg) circle (0.5ex);
\fi
\end{tikzpicture}   
}
\newcolumntype{C}{>{$}c<{$}}
\newcolumntype{L}{>{$}l<{$}}
\newcolumntype{R}{>{$}r<{$}}
\newtheorem{theorem}{Theorem}[section]
\newtheorem{mainthm}{Main Theorem}[section]
\newtheorem{lemma}[theorem]{Lemma}
\newtheorem{proposition}[theorem]{Proposition}
\newtheorem{class-list}[theorem]{Classification list}
\theoremstyle{definition}
\newtheorem{construction}[theorem]{Construction}
\newtheorem{remark}[theorem]{Remark}
\newtheorem{setting}[theorem]{Setting}
\numberwithin{equation}{theorem}
\newcommand{\git}{\mathbin{
  \mathchoice{/\mkern-6mu/}
    {/\mkern-6mu/}
    {/\mkern-5mu/}
    {/\mkern-5mu/}}}
\def\vector2#1#2{\left(\begin{array}{c} #1 \\ #2 \end{array}\right)}
\def\Cl{{\rm Cl}}
\def\O{{\rm O}}
\def\CC{{\mathbb C}}
\def\KK{{\mathbb K}}
\def\ZZ{{\mathbb Z}}
\def\QQ{{\mathbb Q}}
\def\PP{{\mathbb P}}
\newcommand{\KKK}{\mathcal{K}}
\def\quot{/\!\!/}
\def\bangle#1{{\langle #1 \rangle}}
\def\Mov{{\rm Mov}}
\def\Eff{{\rm Eff}}
\def\Pic{{\rm Pic}}
\def\Spec{{\rm Spec}}
\def\cone{{\rm cone}}
\def\pr{{\rm pr}}
\def\lin{{\rm lin}}
\def\rlv{{\rm rlv}}
\DeclareMathOperator{\rk}{rank}
\DeclareMathOperator{\Cox}{\mathcal{R}}
\DeclareMathOperator{\Ample}{Ample}
\DeclareMathOperator{\SAmple}{SAmple}
\newcommand*{\defeq}{\mathrel{\rlap{%
                     \raisebox{0.3ex}{$\m@th\cdot$}}%
                     \raisebox{-0.3ex}{$\m@th\cdot$}}%
                     =}
\title[On locally factorial Fano fourfolds of Picard number two]{On locally factorial Fano fourfolds\\
of Picard number two}
\author[Andreas B\"auerle, Christian Mauz]{Andreas B\"auerle, Christian Mauz}
\address{Mathematisches Institut, Universit\"at T\"ubingen,
Auf der Morgenstelle 10, 72076 T\"ubingen, Germany}
\email{baeuerle@math.uni-tuebingen.de}
\email{mauz@math.uni-tuebingen.de}
\subjclass[2020]{14J35, 14J45, 14L30}
\begin{document}

\begin{abstract}
We classify the locally factorial Fano fourfolds of Picard number two with a hypersurface Cox ring that admit an effective action of a three-dimensional torus. 
\end{abstract}

\maketitle


\section{Introduction}\label{sec:intro}

This article contributes to the classification of Fano
varieties admitting an action of an algebraic torus.
An intensely studied example class is given by the
toric Fano varieties, i.e.\ the torus action has
a full-dimensional orbit.
In this setting, the classification is entirely
combinatorial and basically runs in terms of the
associated Fano polytopes; for instance, the smooth
toric Fano varieties are classified up to dimension
nine \cites{Ba99,KrNi,Ob,Pa}.

We focus on the case of Picard number two.
In this situation, Kleinschmidt~\cite{Kl88} gave
a complete description of all smooth toric
varieties, which leads in particular
to complete classifications of the
Fano ones in any dimension.
Via linear Gale duality, Kleinschmidt's approach can be turned into a study of two-dimensional combinatorial structures, see \cite{BeHa04}*{Prop.~1.11}.
The latter point of view applies as well to
torus actions of higher complexity,~i.e. higher
maximal orbit codimension \cites{HaHe,HaSu,HaHiWr}.
This allows for instance to extend Kleinschmidt's
description to smooth varieties with a torus action
of complexity one and gives complete classifications
of smooth Fano varieties with torus action of complexity
one in any dimension \cite{FaHaNi}.
Further work in this spirit concerns smooth
intrinsic quadrics, general arrangement
varieties and intrinsic Grassmannians \cites{FaHa, HaHiWr, QuWr22}.

In the present article we leave the smooth case
and consider more generally locally factorial
varieties, meaning that every Weil divisor is locally
principal.
Whereas in the toric case smoothness and local
factoriality coincide, the latter setting
turns out to be much more general for torus
actions of complexity one;
for instance, the varieties need not be log
terminal any more
and we find infinite series of
non-isomorphic Fanos in fixed dimensions. 
We settle the case of dimension four, complexity
one and a Cox ring defined by a single relation.
Our main result considerably extends the corresponding
one in the smooth case~\cite{FaHaNi}*{Thm.~1.2}.

\begin{mainthm}
\label{thm:fano-4d-lf-rho=2}
There are~$447$ sporadic cases and $106$ infinite series of locally factorial Fano fourfolds of Picard number two coming with an effective action of a three-dimensional torus and a Cox ring defined by a single relation.
\end{mainthm}

Our varieties in question are uniquely determined by the
generator degrees and the relation in their Cox ring.
\emph{Classification lists \ref{class:s=2} to \ref{class:s=6-series}
provide the complete and redundancy free presentation
of the specifying data for the \cref{thm:fano-4d-lf-rho=2}.}
A data file containing the complete classification data is also available at \cite{BaMa24}.

For the proof of Theorem \ref{thm:fano-4d-lf-rho=2} we
distinguish two main cases.
The first one treats an ample relation degree.
There, we provide a smoothing procedure via
Bertini's theorem which allows us to infer first
constraints on the relevant invariants from the
classification of smooth Fano fourfoulds of
Picard number two with a hypersurface
Cox ring \cite{HLM}*{Thm.~1.1}.
The situation becomes more involved when the relation
degree is not ample.
In this situation, we have to classify case by case
according to the possible constellations of the Cox
ring generator degrees in the effective cone, making
heavy use of the combinatorial description
of varieties with a torus action of complexity one
from \cites{HaSu,HaHe}, see also~\cite{ADHL}*{Sec.~3.4}.

The article is organized as follows. \cref{sec:basics}
serves to provide the necessary background on Cox rings.
In \cref{sec:picandsmooth} we establish two
general facts, essentially supporting our classification:
First, Proposition \ref{prop:torfree} shows that in our
setting we always have a torsion-free Picard group 
and second, Proposition \ref{prop:general-is-smooth} supplies
us with an explicit smoothing procedure.
An example case of the proof of Theorem~\ref{thm:fano-4d-lf-rho=2}
is presented in Section
\ref{sec:proof}. The full elaboration will appear
in~\cite{Ba24}. Finally, in \cref{sec:classlists} we
present all the data of the classification.


\section{Background on Cox rings}
\label{sec:basics}

By a \emph{Mori dream space} we mean an irreducible, normal, projective complex variety $X$ with finitely generated divisor class group $\Cl(X)$ and finitely generated Cox ring $\Cox(X)$. We give a brief summary on the combinatorial approach \cites{ADHL, BeHa07, Ha08} to Mori dream spaces, adapted to our needs.
By a \emph{$K$-graded affine algebra}, where~$K$ is a finitely generated abelian group, we mean an affine $\CC$-algebra~$R$ coming with a direct sum decomposition into~$\CC$-vector subspaces
\begin{equation*}
    R = \bigoplus\limits_{w \in K} R_w
\end{equation*}
such that $R_w R_{w'} \subseteq R_{w+w'}$ holds for all $w, w' \in K$.
An element $f \in R$ is called \emph{homogeneous} if~$f \in R_w$ holds for some $w \in K$.
In that case $w$ is the \emph{degree} of $f$ and we write~$w = \deg(f)$.
Geometrically, we have the affine variety $\bar{X}$ with $R$ as its algebra of global functions and the quasitorus $H$ with $K$ as its character group:
\begin{equation*}
    \bar{X} \ = \ \Spec\, R, \qquad\qquad H \ = \ \Spec\, \CC[K].
\end{equation*}
The $K$-grading of $R$ defines an algebraic action of $H$ on $\bar{X}$.
By~$K_\QQ := K\otimes_\ZZ \QQ$ we denote the $\QQ$-vector space associated with~$K$.
\begin{enumerate}
\item
The \emph{effective cone} of $R$ is
\begin{equation*}
    \Eff(R) \ := \ \cone( \, w \in K; \ R_w \ne 0 \, ) \ \subseteq \ K_\QQ.
\end{equation*}
\item
For $x \in \bar{X}$ we have the \emph{orbit cone}
\begin{equation*}
    \omega_x \ := \ \cone( \, w \in K; \ f(x) \ne 0 \text{ for some } f \in R_w \, ) \ \subseteq \ K_\QQ.
\end{equation*}
\item
For $w \in \Eff(R)$ we have the \emph{GIT-cone}
\begin{equation*}
    \lambda_w \ := \ \bigcap\limits_{x \in \bar{X}, w \in \omega_x} \omega_x \ \subseteq \ K_\QQ.
\end{equation*}
\end{enumerate}
The $K$-grading of $R$ is called \emph{pointed} if $R_0 = \CC$ holds and $\Eff(R)$ contains no line.
The effective cone, as well as orbit cones and GIT-cones are convex polyhedral cones and there are only finitely many of them.
The GIT-cones form a (quasi-) fan~$\Lambda(R)$ in~$K_\QQ$ called the \emph{GIT-fan} of $R$, having the effective cone $\Eff(R)$ as its support. We recall Cox's quotient presentation \cite{Cox} for projective toric varieties.

\begin{construction}
\label{constr:toric-cox}
Let $S = \CC[T_1,\dots,T_r]$ and consider a pointed $K$-grading on $S$, such that the variables $T_1,\dots,T_r$ are homogeneous. Write $w_i := \deg(T_i) \in K$ for the generator degrees, also when considered in $K_\QQ$. We denote the \emph{grading map} by
\begin{equation*}
    Q\,\colon \, \ZZ^r \ \rightarrow \ K,\qquad e_i \ \mapsto \ w_i.
\end{equation*}
We have the action of the quasitorus $H$ on the affine toric variety $\bar{Z}$, where
\begin{equation*}
    H \ := \ \Spec\, \CC[K],
    \qquad\qquad
    \bar{Z} \ := \ \Spec\, S \ = \ \CC^r.
\end{equation*}
We assume that any~$r-1$ of the degrees $w_1,\dots,w_r$ generate $K$ as a group, i.e. the~$K$-grading is \emph{almost free}.
Moreover we assume that the \emph{moving cone}
\begin{equation*}
    \Mov(S) \ := \ \bigcap\limits_{i=1}^r \cone( \, w_j; \ j \ne i \, ) \ \subseteq \ K_\QQ
\end{equation*}
is of full dimension.
Fix a GIT-cone $\tau \in \Lambda(S)$ with $\tau^\circ \subseteq \Mov(S)^\circ$.
There is the~$H$-invariant open set of \emph{semistable points} $\widehat{Z}$ and the corresponding \emph{good quotient} $Z$:
\begin{equation*}
    \widehat{Z} \ := \ \bar{Z}^{ss}(\tau) \ = \ \{\, x \in \bar{Z};\ \lambda \subseteq \omega_x \,\},
    \qquad\qquad
    Z \ := \ \widehat{Z}\quot H.
\end{equation*}
The quotient variety $Z$ is a projective toric variety of dimension~$ r - \dim(K_\QQ)$ with divisor class group $\Cl(Z) = K$ and Cox ring $\Cox(Z) = S$.
\end{construction}

The following construction produces Mori dream spaces as hypersurfaces in projective toric varieties. A $K$-graded algebra~$R$ is called \emph{$K$-factorial}, or the $K$-grading of $R$ is called \emph{factorial}, if $R$ is integral and every homogeneous non-zero non-unit is a product of~$K$-primes. A \emph{$K$-prime} is a homogeneous non-zero non-unit $f \in R$ with the property that $f \mid g h$ for homogeneous~$g, h \in R$ implies that $f \mid g$ or $f \mid h$ holds.

\begin{construction}
\label{constr:hypersurface}
See \cite{ADHL}*{Sec.~3.2,~3.3} and \cite{HLM}*{Constr.~4.1,~Rem.~4.2}.
In the setting of \cref{constr:toric-cox} fix~$0 \ne \mu \in K$ and~$g \in S_\mu$ and set
\begin{equation*}
    R_g \, := \, S/\bangle{g},
    \qquad
    \bar{X}_g \, := \, V(g) \, \subseteq \, \bar{Z},
    \qquad
    \widehat{X}_g \, := \, \bar{X}_g \cap \widehat{Z},
    \qquad
    X_g \, := \, \widehat{X}_g \quot H \subseteq Z.
\end{equation*}
Then the factor algebra $R_g$ inherits a $K$-grading from $S$ and the quotient $X_g \subseteq Z$ is a closed subvariety. Moreover, there is a GIT-cone $\lambda \in \Lambda(R_g)$ with
\begin{equation*}
\widehat{X}_g \ = \ \bar{X}_g^{ss}(\lambda) \ = \ \{\, x \in \bar{X}_g;\ \lambda \subseteq \omega_x \,\}.
\end{equation*}
We assume that $R_g$ is integral and normal with $R_g^* = \CC^*$, the induced~$K$-grading is factorial and $T_1,\dots,T_r$ define a minimal system of pairwise non-associated $K$-primes in~$R_g$.
Then $X_g$ is a normal, projective variety with dimension, divisor class group and Cox ring given by
\begin{equation*}
    \dim(X_g) \ = \ \dim(Z)-1,
    \qquad
    \Cl(X_g) \ = \ K,
    \qquad
    \Cox(X_g) \ = \ R_g.
\end{equation*}
Moreover, the cones of effective, movable, semiample and ample divisor classes of~$X$ are given in the rational divisor class group~$\Cl(X_g)_\QQ = K_\QQ$ by
\begin{equation*}
    \Eff(X_g) \ = \ \Eff(R_g), 
    \qquad\qquad
    \Mov(X_g) \ = \ \Mov(R_g) \ = \ \Mov(S),
\end{equation*}
\begin{equation*}
    \SAmple(X_g) \ = \ \lambda,
    \qquad\qquad
    \Ample(X_g) \ = \ \lambda^\circ.
\end{equation*}
\end{construction}

\begin{remark}
\label{rem:ambtor}
Let $X = X_g$ as in \cref{constr:hypersurface}. The \emph{minimal ambient toric variety} of $X$ is the unique minimal open toric subvariety $Z_g \subseteq Z$ containing $X$.
For the ample cones of $X$, $Z_g$ and $Z$ we have
\begin{equation*}
    \tau^\circ \ = \ \Ample(Z) \ \subseteq \ \Ample(Z_g) \ = \ \Ample(X) \ = \ \lambda^\circ.
\end{equation*}
\end{remark}

\begin{remark}
\label{rem:hsmds}
A Mori dream space $X$ with divisor class group $\Cl(X) = K$ has a \emph{hypersurface Cox ring} if there is an \emph{irredundant} $K$-graded presentation
\begin{equation*}
    \Cox(X) \ = \ \CC[T_1,\dots,T_r] / \bangle{g},
\end{equation*}
meaning that the ideal $\bangle{g}$ contains no element $T_i-h_i$ with $h_i \in \KK[T_1,\dots,T_r]$ not depending on $T_i$. If such a presentation exists, then we have $X = X_g$ as in \cref{constr:hypersurface}.
\end{remark}

\begin{proposition}\label{prop:antican}
See \cite{ADHL}*{Prop.~3.3.3.2}.
Let $X = X_g$ as in \cref{constr:hypersurface}. Then the anticanonical class of $X$ is given in~$K = \Cl(X)$ by
\begin{equation*}
    -\KKK \ = \ w_1 + \dots + w_r - \mu.
\end{equation*}
\end{proposition}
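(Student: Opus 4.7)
The plan is to combine the standard toric formula for the anticanonical class of an ambient toric variety with adjunction applied to the inclusion of the hypersurface $X_g$ into its minimal ambient toric variety $Z_g \subseteq Z$ from \cref{rem:ambtor}.

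First, I would record the toric anticanonical class of $Z$. Under the Cox construction of \cref{constr:toric-cox}, the toric boundary divisors $D_i$ arising as the images of $V(T_i) \cap \widehat{Z}$ under the good quotient $\widehat{Z} \to Z$ have classes $[D_i] = w_i$ in $\Cl(Z) = K$ by construction of the grading. The well-known toric identity $-\KKK_Z = D_1 + \cdots + D_r$ then yields $-\KKK_Z = w_1 + \cdots + w_r$ in $K$. This identity persists on the open subvariety $Z_g \subseteq Z$, since $\Cl(Z_g) = \Cl(Z) = K$ and the restriction map is the identity on $K$.

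Second, I would apply adjunction. By \cref{constr:hypersurface}, the variety $X_g$ is cut out from $Z_g$ by the single homogeneous equation $g$ of degree $\mu$, hence it is a prime Cartier divisor whose class in $\Cl(Z_g) = K$ equals $\mu$. Adjunction then gives $\KKK_{X_g} = (\KKK_{Z_g} + X_g)|_{X_g}$, and using the compatibility $\Cl(Z_g) \to \Cl(X_g) = K$ built into the Cox construction, we obtain
\[
-\KKK \ = \ w_1 + \cdots + w_r - \mu
\]
in $K = \Cl(X_g)$, as asserted.

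The main point to justify is the applicability of adjunction in this possibly non-smooth setting: one needs $X_g$ to be a Cartier divisor on $Z_g$ and the canonical Weil divisor classes to restrict as in the smooth case. Cartierness is immediate, since $g$ provides a global homogeneous equation that cuts out $\widehat{X}_g$ scheme-theoretically on $\widehat{Z}$ and descends to $Z_g$ via the good quotient by $H$. The compatibility of canonical classes along a prime Cartier divisor then holds on normal varieties, and the required integrality and normality of both $Z_g$ and $X_g$ are precisely the hypotheses imposed in \cref{constr:hypersurface}.
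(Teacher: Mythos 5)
The paper gives no argument of its own here: the proposition is quoted verbatim from \cite{ADHL}*{Prop.~3.3.3.2}. So your proposal has to stand on its own, and as written it has a genuine gap at the step you yourself flag as ``the main point to justify''.

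The gap is the claim that Cartierness of $X_g$ in $Z_g$ ``is immediate'' because $g$ is a global homogeneous equation. A homogeneous element of the Cox ring cuts out a Weil divisor of class $\mu$ in $\Cl(Z_g)=K$; it is Cartier only if $\mu$ lies in $\Pic(Z_g)=\bigcap_{\gamma_0}Q(\lin(\gamma_0)\cap\ZZ^r)$, which is a genuine condition and fails in general (already $V(T_1)\subseteq\PP(1,1,2)$ is cut out by a global homogeneous equation but is not Cartier at the singular point). Indeed, elsewhere in this very paper Cartierness of the class $\mu$ has to be \emph{verified}, not assumed (see the argument inside \cref{prop:general-is-ufd}), and many of the classified varieties live in ambient toric varieties where $\mu$ is not Cartier. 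Without Cartierness, adjunction for Weil divisor classes on a normal ambient space is no longer automatic: a correction term (the ``different'') can appear, supported on the codimension-one part of $X_g\cap\mathrm{Sing}(Z_g)$, and since $X_g$ may contain torus orbits of codimension two in $Z_g$, this locus need not have codimension two in $X_g$. So the adjunction step does not go through as stated.

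The standard way to repair this --- and essentially what the cited ADHL proof does --- is to move the adjunction upstairs to the characteristic space: $\widehat{X}_g\rightarrow X_g$ is a quasitorus quotient, $\widehat{X}_g$ is open with small complement in the affine hypersurface $\bar{X}_g=V(g)\subseteq\CC^r$, and $\bar X_g$ \emph{is} a Cartier divisor in the smooth ambient space $\CC^r$. Adjunction there identifies the ($K$-graded) dualizing module of $\bar X_g$ as the free module generated by the residue of $dT_1\wedge\dots\wedge dT_r/g$, which sits in degree $\mu-(w_1+\dots+w_r)$; descending along the characteristic quotient then yields $\KKK=\mu-w_1-\dots-w_r$ in $\Cl(X_g)=K$. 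Your first step (the toric identity $-\KKK_Z=w_1+\dots+w_r$ and its persistence on $Z_g$) is fine; it is only the passage from $Z_g$ to $X_g$ that needs to be rerouted as above.
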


There is a decomposition of $X = X_g$ into locally closed subsets as follows. Denote by~$\gamma$ the positive orthant in $\QQ^r$. A face $\gamma_0 \preceq \gamma$ is called an \emph{$\bar{X}_g$-face} if there is $x \in \bar{X}_g$ with
\begin{equation*}
    x_i \ne 0 \iff e_i \in \gamma_0.
\end{equation*}
The orbit cones of $\bar{X}_g$ are precisely the cones $Q(\gamma_0)$, where $\gamma_0$ is an $\bar{X}_g$-face. An~\emph{$X$-face} is an $\bar{X}_g$-face $\gamma_0$ with $\lambda^\circ \subseteq Q(\gamma_0)^\circ$. Write $\rlv(X)$ for the set of $X$-faces. We have
\begin{equation*}
    X \ = \bigcup\limits_{\gamma_0 \in \rlv(X)} X(\gamma_0),
    \qquad
    X(\gamma_0) \ := \ \{ x \in \bar{X}_g;\ x_i \ne 0 \iff e_i \in \gamma_0 \} \quot H.
\end{equation*}

\begin{proposition}
\label{prop:locprops}
See \cite{ADHL}*{Cor.~3.3.1.8}.
For $X = X_g$ as in \cref{constr:hypersurface} the following hold.
\begin{enumerate}
    \item 
    $X$ is $\QQ$-factorial if and only if the cone $\lambda = \SAmple(X) \subseteq K_\QQ$ is full-dimensional.
    \item
    $X$ is locally factorial if and only if for every $X$-face~$\gamma_0 \preceq \gamma$, the group $K$ is generated by $Q(\gamma_0 \cap \ZZ^r)$.
\end{enumerate}
\end{proposition}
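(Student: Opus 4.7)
The plan is to analyze $X$ locally via the stratification $X = \bigcup_{\gamma_0 \in \rlv(X)} X(\gamma_0)$ into $X$-face strata and to translate local divisorial properties into conditions on the $H$-stabilizers on $\bar{X}_g$. The key input is that $X$ is a good quotient of the open set $\widehat{X}_g$ by $H$, so Weil divisors on $X$ correspond to $H$-invariant Weil divisors upstairs, and the question of local principality at a stratum $X(\gamma_0)$ reduces to the representation theory of the stabilizer of any point of $\widehat{X}_g$ lying over it.

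First I would identify the $H$-stabilizer at a point $x \in \widehat{X}_g$ with $x_i \ne 0 \iff e_i \in \gamma_0$. Since $H$ acts on $\bar{Z} = \CC^r$ via the characters $w_1, \dots, w_r$, the stabilizer $H_x \subseteq H$ is the kernel of the homomorphism $H \to (\CC^*)^{\gamma_0}$ induced by the $w_i$ with $e_i \in \gamma_0$, and its character group is canonically $K / Q(\gamma_0 \cap \ZZ^r)$. A class $w \in K = \Cl(X)$ is locally principal on $X(\gamma_0)$ exactly when the corresponding character of $H$ is trivial on $H_x$, i.e.\ precisely when $w \in Q(\gamma_0 \cap \ZZ^r)$. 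Part (ii) then follows immediately: $X$ is locally factorial iff $\Pic = \Cl$ on every stratum, iff $K = Q(\gamma_0 \cap \ZZ^r)$ for every $\gamma_0 \in \rlv(X)$.

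For part (i), the same chain of equivalences shows that $X$ is $\QQ$-factorial iff $K / Q(\gamma_0 \cap \ZZ^r)$ is finite, iff $Q(\gamma_0)$ spans $K_\QQ$, for every $\gamma_0 \in \rlv(X)$. To translate this into the statement about $\lambda$ I would use the defining inclusion $\lambda^\circ \subseteq Q(\gamma_0)^\circ$ built into the $X$-face condition: if $\lambda$ is full-dimensional, each $Q(\gamma_0)$ with $\gamma_0 \in \rlv(X)$ is automatically full-dimensional. Conversely, since $\lambda \in \Lambda(R_g)$ arises as an intersection of orbit cones $\omega_y$ of points $y \in \widehat{X}_g$, full-dimensionality of all these orbit cones, combined with $\lambda$ being a maximal cone of the GIT-fan with nonempty relative interior meeting $\Ample(X)$, forces $\dim\lambda = \dim K_\QQ$.

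The main obstacle I expect is to rigorously set up the Cartier-versus-Weil dictionary at each stratum. One has to present an $H$-invariant étale neighborhood of the $H$-orbit through $x$ in which the free part of the $H$-action is trivialized, so that descent of $H$-linearized line bundles from $\widehat{X}_g$ to $X$ reduces to descent along the diagonalizable stabilizer $H_x$ alone; this is where the passage between a homogeneous element of $R_g$ of degree $w$ and a local generator of the corresponding Weil divisor on $X(\gamma_0)$ is actually justified. Once this local picture is secured, both assertions reduce to the purely lattice-theoretic conditions on the sublattices $Q(\gamma_0 \cap \ZZ^r) \subseteq K$ described above.
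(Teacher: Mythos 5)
The paper does not actually prove this proposition; it is quoted from \cite{ADHL}*{Cor.~3.3.1.8}, so your proposal can only be measured against the standard argument there. Your treatment of (ii), and of the direction ``$\lambda$ full-dimensional $\Rightarrow$ $\QQ$-factorial'' in (i), follows that standard route: the local class group of $X$ at a point of the stratum $X(\gamma_0)$ is $K/Q(\gamma_0\cap\ZZ^r)$, computed from the character group of the stabilizer of a point with closed $H$-orbit above it, and local factoriality (every Weil divisor locally principal) is the vanishing of all these groups. This part is correct, and you rightly identify that the real work is the local descent statement matching classes locally principal at $[x]$ with degrees of homogeneous functions not vanishing at $x$.

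The gap is in the converse of (i). You argue that since $\lambda$ is an intersection of orbit cones $\omega_y$ and $\QQ$-factoriality makes ``all these orbit cones'' full-dimensional, $\lambda$ must be full-dimensional. Two things break here. First, an intersection of full-dimensional cones need not be full-dimensional (two full cones in $\QQ^2$ can meet in a single ray), so full-dimensionality of the $\omega_y$ alone proves nothing. Second, $\QQ$-factoriality only controls the cones $Q(\gamma_0)$ with $\gamma_0\in\rlv(X)$, i.e.\ the orbit cones of \emph{closed} orbits in $\widehat{X}_g$, whereas the intersection defining $\lambda=\lambda_w$ runs over all semistable points, including those $y$ with $w\in\partial\omega_y$ --- and it is exactly such $y$ that force $\dim\lambda<\dim K_\QQ$. (Also, $\lambda$ is not assumed to be a maximal GIT cone; whether it is full-dimensional is precisely what (i) decides.) The gap is fillable: either observe that every semistable orbit degenerates to a closed one, so every $\omega_y$ in the intersection contains some $Q(\gamma_0)$ with $\gamma_0\in\rlv(X)$ and $w\in Q(\gamma_0)^\circ$ open in $K_\QQ$, whence $w$ lies interior to every $\omega_y$ and the intersection contains a neighbourhood of $w$; or, more cheaply, note that $\QQ$-factoriality gives $\Pic(X)\otimes\QQ=\Cl(X)\otimes\QQ=K_\QQ$, so the ample cone $\lambda^\circ$ of the projective variety $X$ is open in $K_\QQ$.
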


For locally factorial $X$ of Picard number two, \cref{prop:locprops} (ii) in particular yields the following two statements.

\begin{lemma}
\label{lemma:2gen}
See \cite{HLM}*{Lemma~5.6}. Let $X = X_g$ as in \cref{constr:hypersurface}. Assume $X$ is locally factorial and of Picard number two. Let $1 \le i,j \le r$ with $\lambda \subseteq \cone (w_i,w_j)$. Then either $w_i,w_j$ generate $K$ as a group, or $g$ has precisely one monomial of the form $T_i^{l_i} T_j^{l_j}$, where $l_i+l_j > 0$. 
\end{lemma}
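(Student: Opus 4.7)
The plan is to apply Proposition~\ref{prop:locprops}(ii) to the face $\gamma_0 := \cone(e_i, e_j) \preceq \gamma$. If $\gamma_0$ turns out to be an $X$-face, then local factoriality forces $K$ to be generated by $Q(\gamma_0 \cap \ZZ^r) = \ZZ w_i + \ZZ w_j$, yielding the first alternative of the statement. Otherwise the failure of $\gamma_0$ being an $X$-face must be read off as the existence of a monomial $T_i^{l_i} T_j^{l_j}$ in $g$, yielding the second alternative.

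To verify when $\gamma_0$ is an $X$-face, first note that local factoriality entails $\QQ$-factoriality, so by Proposition~\ref{prop:locprops}(i) the cone $\lambda$ is full-dimensional in $K_\QQ \cong \QQ^2$. Combined with $\lambda \subseteq \cone(w_i, w_j)$, this forces $w_i, w_j$ to be $\QQ$-linearly independent and delivers $\lambda^\circ \subseteq \cone(w_i, w_j)^\circ = Q(\gamma_0)^\circ$. The $X$-face condition thus reduces to checking whether $\gamma_0$ is an $\bar{X}_g$-face, i.e.\ whether $\bar{X}_g$ meets the set $\{x \in \CC^r : x_i x_j \ne 0, \ x_k = 0 \text{ for } k \ne i,j\}$. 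Writing $g_{ij} \in \CC[T_i, T_j]$ for the polynomial obtained from $g$ by setting all variables $T_k$ with $k \ne i, j$ to zero, this is equivalent to $g_{ij}$ admitting a zero in $(\CC^*)^2$.

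The key observation is a degree count: every monomial $T_i^{l_i} T_j^{l_j}$ of $g$ satisfies $l_i w_i + l_j w_j = \mu$ in $K$. By the just established $\QQ$-linear independence of $w_i, w_j$, at most one pair $(l_i, l_j) \in \NN^2$ solves this equation, and as $\mu \ne 0$ the solution necessarily has $l_i + l_j > 0$. Consequently $g_{ij}$ is either zero or a single nonzero monomial $c\, T_i^{l_i} T_j^{l_j}$ with $l_i + l_j > 0$. In the first case, $\gamma_0$ is an $\bar{X}_g$-face, hence an $X$-face, and Proposition~\ref{prop:locprops}(ii) gives $K = \ZZ w_i + \ZZ w_j$; in the second, $g$ carries precisely one monomial of the prescribed shape. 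The only step in this plan that is not purely formal is extracting the $\QQ$-linear independence of $w_i, w_j$ from the hypothesis $\lambda \subseteq \cone(w_i, w_j)$, and this is precisely where the locally factorial assumption enters, through $\QQ$-factoriality.
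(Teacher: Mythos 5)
Your argument is correct and is essentially the proof of the cited source \cite{HLM}*{Lemma~5.6} (the paper itself only quotes the statement): reduce to whether $\gamma_0=\cone(e_i,e_j)$ is an $X$-face via \cref{prop:locprops}, using full-dimensionality of $\lambda$ to get $\lambda^\circ\subseteq Q(\gamma_0)^\circ$ and the $\QQ$-independence of $w_i,w_j$, and then observe that homogeneity forces $g_{ij}$ to be zero or a single monomial with $l_i+l_j>0$ since $\mu\neq 0$. No gaps.
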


Let $X = X_g$ from \cref{constr:hypersurface} be of Picard number two. Then we decompose the effective cone into the convex sets
\begin{equation*}
    \Eff(R_g) \ = \ \lambda^- \cup \lambda^\circ \cup \lambda^+,
\end{equation*}
where $\lambda^-$ and $\lambda^+$ are the convex polyhedral cones not intersecting $\lambda^\circ$ and the intersection $\lambda^+ \cap \lambda^-$ consists only of the origin.

\begin{lemma}
\label{lemma:3gen}
See \cite{HLM}*{Lemma~5.7}. Let $X = X_g$ as in \cref{constr:hypersurface}. Assume $X$ is locally factorial and of Picard number two. Let $1 \leq i < j < k \leq r$.
Then~$w_i, w_j, w_k$ generate $K$ as a group, provided that one of the following holds:
\begin{enumerate}
\item 
$w_i, w_j \in \lambda^-$, $w_k \in \lambda^+$ and $g$ 
has no monomial of the form $T_k^{l_k}$,
\item 
$w_i \in \lambda^-$, $w_j, w_k \in \lambda^+$ and $g$ 
has no monomial of the form $T_i^{l_i}$,
\item
$w_i \in \lambda^-$, $w_j \in \lambda^\circ$, $w_k \in \lambda^+$.
\end{enumerate}
Moreover, if (iii) holds,
then $g$ has a monomial of the form $T_j^{l_j}$
where $l_j$ is divisible by the order of the
factor group $K / \langle w_i, w_k \rangle$.
\end{lemma}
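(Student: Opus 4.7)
The plan is to invoke \cref{prop:locprops}~(ii), according to which local factoriality of $X$ is equivalent to every $X$-face $\gamma_0 \preceq \gamma$ satisfying $K = Q(\gamma_0 \cap \ZZ^r)$. Hence, in each case I aim to produce an $X$-face $\gamma_0 = \cone(e_\ell : \ell \in I)$ with $I \subseteq \{i,j,k\}$, which yields $K = \langle w_\ell : \ell \in I \rangle \subseteq \langle w_i, w_j, w_k \rangle$. Such a face requires two conditions: (a) $g|_{T_\ell = 0,\,\ell \notin I}$ is not a nonzero monomial (the $\bar X_g$-face condition), and (b) $\lambda^\circ \subseteq \cone(w_\ell : \ell \in I)^\circ$.

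For cases~(i) and~(ii), the candidates are $\{i,k\},\{j,k\},\{i,j,k\}$ respectively $\{i,j\},\{i,k\},\{i,j,k\}$. Condition~(b) is immediate because placing one $w_\ell$ in $\lambda^-$ together with one in $\lambda^+$ forces the cone to contain $\lambda$. For (a): if either of the two 2-element restrictions is not a nonzero monomial, the corresponding face works; otherwise \cref{lemma:2gen} gives a unique nonzero monomial of $g$ for each failing pair, and the assumption that $g$ has no $T_k^{l_k}$ (case (i)) respectively $T_i^{l_i}$ (case (ii)) monomial ensures these two monomials are distinct in the 3-variable restriction. Consequently $g|_{\{i,j,k\}}$ has at least two distinct monomials, condition~(a) holds, and $\{i,j,k\}$ provides the desired $X$-face.

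For case~(iii), $w_j \in \lambda^\circ$ sits strictly inside $\lambda$, so $\cone(w_j, w_\ell)$ never contains all of $\lambda$ and only $I = \{i,k\}$ and $I = \{i,j,k\}$ remain viable. Applying \cref{lemma:2gen} to $(w_i, w_k)$ gives the dichotomy: either $w_i, w_k$ generate $K$ and we are done, or $g$ has a unique monomial $T_i^a T_k^b$ with $a+b > 0$, making $\{i,k\}$ fail~(a) and giving $\mu = a w_i + b w_k \in \langle w_i, w_k\rangle$. In the latter subcase, to make $\{i,j,k\}$ an $X$-face I must produce a second monomial of $g$ in $T_i, T_j, T_k$, and the claim is that $g$ carries a pure-power monomial $T_j^{l_j}$. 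Granting this, $g|_{\{i,j,k\}}$ contains the two distinct monomials $T_i^a T_k^b$ and $T_j^{l_j}$, so $\{i,j,k\}$ is an $X$-face and $K = \langle w_i, w_j, w_k\rangle$. The divisibility $d \mid l_j$ for $d = |K/\langle w_i, w_k\rangle|$ then follows formally: $l_j w_j = \mu \in \langle w_i, w_k\rangle$ forces $l_j \bar w_j = 0$ in $K/\langle w_i, w_k\rangle$, and since $w_i, w_j, w_k$ generate $K$, the class $\bar w_j$ generates this cyclic quotient of order $d$.

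The main obstacle is establishing the existence of the pure-power monomial $T_j^{l_j}$ in the second subcase of~(iii). My approach would be by contradiction: absence of this monomial means every monomial of $g$ involves some $T_\ell$ with $\ell \ne j$, placing the entire $T_j$-axis inside $\bar X_g$; one then aims to combine the $K$-prime status of $T_j$ in $R_g$ with the ampleness of $w_j \in \lambda^\circ$ to contradict the local factoriality of $X$ via the rigid structure of $X$-faces in Picard rank two. Everything else in the proof amounts to routine $X$-face bookkeeping via \cref{prop:locprops}~(ii).
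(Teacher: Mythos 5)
Your overall strategy is the right one and matches the source of this lemma (the paper itself gives no proof, only the citation \cite{HLM}*{Lemma~5.7}): exhibit an $X$-face supported on a subset of $\{i,j,k\}$ and invoke \cref{prop:locprops}~(ii). Your treatment of cases (i) and (ii) is complete: the two-element faces either already do the job, or \cref{lemma:2gen} forces a unique monomial of $g$ supported in each of the two pairs, and the hypothesis on the missing pure power of $T_k$ (resp.\ $T_i$) guarantees these two monomials are distinct, so the three-element face works. The formal deduction of $d\mid l_j$ from $l_jw_j=\mu\in\langle w_i,w_k\rangle$ is also fine.

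The genuine gap is exactly where you place it, and the repair you sketch would not work as described: the existence of a monomial $T_j^{l_j}$ in case (iii) has nothing to do with local factoriality or with $T_j$ being $K$-prime. It is a consequence of $\QQ$-factoriality together with the GIT-fan structure, and is precisely the content of \cite{HLM}*{Prop.~2.8}, which the present paper quotes repeatedly in \cref{sec:picandsmooth}. Concretely: if $g$ had no monomial of the form $T_j^{l_j}$, then $g$ would vanish on the punctured $T_j$-axis, so $\gamma_0=\cone(e_j)$ would be an $\bar{X}_g$-face and $\cone(w_j)=Q(\gamma_0)$ an orbit cone. The GIT-cone $\lambda_{w_j}$ is by definition contained in every orbit cone containing $w_j$, hence would be at most one-dimensional; but $w_j\in\lambda^\circ$ forces $\lambda_{w_j}=\lambda=\SAmple(X)$, which is two-dimensional by \cref{prop:locprops}~(i). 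This contradiction produces the monomial $T_j^{l_j}$ with $l_j\ge1$ (pointedness of the grading rules out $l_j=0$). Note also that this monomial is asserted by the ``moreover'' clause even in the branch where $w_i,w_k$ already generate $K$ (there $d=1$, so only existence is at stake), whereas your argument only produces it in the complementary branch.
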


We turn to rational varieties with a complexity one torus action. For the general theory see \cites{HaHe, HaHiWr, HaSu}, also~\cite{ADHL}*{Sec.~3.4}. Here we focus on the case of hypersurface Cox rings.

\goodbreak

\begin{proposition}
\label{prop:cplx1-hscr}
For a Mori dream space $X$ with a hypersurface Cox ring the following are equivalent:
\begin{enumerate}
    \item $X$ admits a torus action of complexity one.
    \item The Cox ring of $X$ has an irredundant $\Cl(X)$-graded presentation
    \begin{equation*}
        \Cox(X) \ = \ \CC[T_1,\dots,T_r] / \bangle{g},
    \end{equation*}
    where $g$ is a trinomial consisting of pairwise coprime monomials.
\end{enumerate}
\end{proposition}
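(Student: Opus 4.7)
The plan is to prove the two implications separately: (ii)$\Rightarrow$(i) by a direct construction of a torus action from the trinomial, and (i)$\Rightarrow$(ii) by invoking the Hausen-S\"uss structural description of Cox rings of varieties with a complexity-one torus action.

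For (ii)$\Rightarrow$(i), assume $\Cox(X)=\CC[T_1,\dots,T_r]/\langle g\rangle$ is an irredundant presentation with $g=\alpha_0 T^{a_0}+\alpha_1 T^{a_1}+\alpha_2 T^{a_2}$ and the three monomials pairwise coprime. Since $g$ is $K$-homogeneous of non-zero degree, each exponent vector $a_i\in\NN^r$ is non-zero, and pairwise coprimality means that the supports of $a_0,a_1,a_2$ are pairwise disjoint. The differences $a_0-a_1$ and $a_0-a_2$ are therefore linearly independent in $\QQ^r$, so the closed subgroup
\begin{equation*}
    T_g \ := \ \{\,t\in(\CC^*)^r \ : \ t^{a_0}=t^{a_1}=t^{a_2}\,\}
\end{equation*}
of the diagonal torus acting on $\bar Z=\CC^r$ has dimension $r-2$ and stabilises $\bar X_g=V(g)$. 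The $K$-homogeneity of $g$ places $H$ in $T_g$, and taking identity components gives a quotient torus of dimension $r-2-\dim K_\QQ$ acting on $X=\widehat X_g\git H$. Comparing this with $\dim X = r-1-\dim K_\QQ$ from \cref{constr:hypersurface} exhibits the action as of complexity one. Effectiveness follows from the fact that $(\CC^*)^r$ acts faithfully on $\bar X_g$, which is a consequence of $T_1,\dots,T_r$ being pairwise non-associated $K$-primes, so $\bar X_g$ is not contained in any coordinate hyperplane.

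For (i)$\Rightarrow$(ii), I would invoke the explicit description of Cox rings for rational varieties with a complexity-one torus action from \cites{HaSu,HaHe}; see also \cite{ADHL}*{Sec.~3.4}. In this description the Cox ring is presented with generators $T_{ij}$ grouped into blocks indexed by $i=0,\dots,n$ (plus possibly additional invariant generators $S_k$), and $n-1$ defining relations which are trinomials of the form
\begin{equation*}
    g_i \ = \ T_i^{l_i} + T_{i+1}^{l_{i+1}} + T_{i+2}^{l_{i+2}}, \qquad i=0,\dots,n-2,
\end{equation*}
where each $T_i^{l_i}$ is a monomial supported in the $i$-th block of variables only. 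Using the uniqueness of irredundant graded presentations of a Cox ring, the hypersurface hypothesis forces $n-1=1$, i.e.\ exactly three blocks of variables. The single relation is then a trinomial whose three monomials involve pairwise disjoint subsets of variables, hence are pairwise coprime.

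The main obstacle is the implication (i)$\Rightarrow$(ii), which rests entirely on the non-trivial Hausen-S\"uss presentation theorem together with the uniqueness of irredundant graded presentations; once these are in hand, matching the two presentations and reading off coprimality is formal. The converse (ii)$\Rightarrow$(i), by contrast, reduces to a dimension count once the defining torus $T_g$ has been written down, and the coprimality hypothesis is used precisely to guarantee that $T_g$ has the correct codimension two.
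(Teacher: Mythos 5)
Your proposal is correct and follows essentially the same route as the paper: for (i)$\Rightarrow$(ii) both arguments combine the trinomial structure theorem for complexity-one Cox rings with the uniqueness (lifting) of irredundant graded presentations to force a single relation, and for (ii)$\Rightarrow$(i) your explicit torus $T_g\subseteq(\CC^*)^r$ cut out by $t^{a_0}=t^{a_1}=t^{a_2}$ is precisely the torus part of the quasitorus attached to the paper's ``gradiator'' of $g$, with the same use of pairwise coprimality to get codimension two and the same dimension count against $\dim(X)=r-1-\dim K_\QQ$.
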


\begin{proof}
We write $K = \Cl(X)$ for the divisor class group and $R = \Cox(X)$ for the Cox ring of $X$. Assume that (i) holds. Then by~\cite{ADHL}*{Thm.~4.4.1.6} there is an irredundant~$K$-graded presentation
\begin{equation*}
    R \ = \ \CC[T_1,\dots,T_{r}] / \bangle{g_1,\dots,g_t},
\end{equation*}
such that the variables define pairwise non-associated $K$-prime generators and the polynomials $g_1,\dots,g_{t} \in \CC[T_1,\dots,T_{r}]$ are homogeneous trinomials, each one consisting of pairwise coprime monomials. Moreover, since $X$ has a hypersurface Cox ring, there is an irredundant $K$-graded presentation
\begin{equation*}
    R \ = \ \CC[T_1,\dots,T_{r'}] / \bangle{g}.
\end{equation*}
The graded isomorphism between these two presentations of $R$ lifts to a graded isomorphism $\CC[T_1,\dots,T_{r}] \rightarrow \CC[T_1,\dots,T_{r'}]$, see \cite{HaKeWo}*{Lemma~2.4}. This yields~$r = r'$ and $t=1$.
Now assume that (ii) holds. Using \cite{ADHL}*{Constr.~3.2.4.2} we construct the \emph{gradiator} of $g$. This is the maximal effective grading of $\KK[T_1,\dots,T_r]$ for which the variables~$T_1,\dots, T_r$ and the polynomial $g$ are homogeneous. Geometrically, this grading yields an effective action of a quasitorus $H_g$ on $X$. The coprimeness of the monomials of $g$ guarantees that the quasitorus $H_g$ contains a torus $T$ of dimension~$\dim(T) = \dim(X) - 1$. Thus $X$ admits an effective torus action of complexity one.
\end{proof}

We conclude this section by quoting two results used in the proof of Theorem \ref{thm:fano-4d-lf-rho=2}.
For a torsion-free grading group $K$ the notions of $K$-factoriality and factoriality coincide, see \cite{ADHL}*{Thm.~3.4.1.11}.

\begin{remark}\label{rem:facttrinom}
See \cite{HaHe}*{Thm.~1.1}.
For $l_1,l_2,l_3 \in \ZZ_{\ge 0}^r$ assume that the monomials~$T^{l_1}, T^{l_2}, T^{l_3} \in \CC[T_1, \dotsc, T_r]$ are pairwise coprime. Then the ring
\begin{equation*}
    R \ = \ \CC[T_1, \dotsc, T_r] / \langle T^{l_1} + T^{l_2} + T^{l_3} \rangle
\end{equation*}
is a unique factorization domain if and only if the integers
$\gcd(l_1)$, $\gcd(l_2)$ and~$\gcd(l_3)$ are pairwise coprime.
\end{remark}

\begin{remark}
\label{prop:hypmov}
See \cite{HLM}*{Prop.~2.4}.
Let $X = X_g$ as in \cref{constr:hypersurface}. Then we have
\begin{equation*}
    \mu \ \in \bigcap\limits_{1 \le i < j \le r} \cone(\, w_k;\ k \ne i, k \ne j\, ) \ \subseteq \ K_\QQ.
\end{equation*}
\end{remark}

\section{Picard group and smoothability}
\label{sec:picandsmooth}

In this section we establish two general facts, being essential for our proof of Theorem \ref{thm:fano-4d-lf-rho=2}. The first is \cref{prop:torfree}, which shows that in our setting the Picard group is always torsion-free. The second is \cref{prop:general-is-smooth}, which in particular gives rise to an explicit smoothing procedure in the case of an ample relation degree $\mu = \deg(g)$ in the Cox ring; see also \cref{rem:comp-smooth-hs}.

\begin{proposition}
\label{prop:torfree}
Let $X = X_g$ as in \cref{constr:hypersurface}. Assume that $X$ is $\QQ$-factorial, Fano, of Picard number two and admits a torus action of complexity one. Then the Picard group $\Pic(X)$ is torsion-free.
\end{proposition}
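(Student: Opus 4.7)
The plan is to reduce the statement to the existence of a two-dimensional $X$-face. By the standard characterization for Mori dream spaces one has
$\Pic(X) = \bigcap_{\gamma_0 \in \rlv(X)} Q(\lin(\gamma_0) \cap \ZZ^r)$
as subgroups of $K := \Cl(X)$; in particular $\Pic(X) \subseteq \langle w_i : e_i \in \gamma_0\rangle$ for every single $X$-face $\gamma_0$. So it is enough to produce one $X$-face for which the associated subgroup of $K$ is torsion-free. By Proposition \ref{prop:cplx1-hscr}, we may additionally assume that $g = T^{l_1} + T^{l_2} + T^{l_3}$ is a trinomial whose supports $J_1, J_2, J_3 \subseteq \{1,\dots,r\}$ are pairwise disjoint and non-empty.

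The key observation is that any 2-dimensional $X$-face $\gamma_0 = \cone(e_i, e_j)$ automatically does the job. Indeed, since $X$ is $\QQ$-factorial, Proposition \ref{prop:locprops}(i) gives $\dim \lambda = 2$, and the $X$-face condition $\lambda^\circ \subseteq \cone(w_i, w_j)^\circ$ then forces $\cone(w_i, w_j)$ to be 2-dimensional. Consequently the images $\bar w_i, \bar w_j$ in $K_\QQ \cong \QQ^2$ are $\QQ$-linearly independent. For any integer combination $a w_i + b w_j \in K$ that is torsion, its image in $K_\QQ$ vanishes, yielding $a\bar w_i + b\bar w_j = 0$, hence $a = b = 0$, and finally $a w_i + b w_j = 0$ in $K$. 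Thus $\langle w_i, w_j\rangle$ is torsion-free, and so is $\Pic(X)$.

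It remains to produce such a 2-dimensional $X$-face. A direct check using that $g$ is a trinomial shows that a 2-dimensional face $\cone(e_i, e_j)$ is an $\bar X_g$-face iff either (A) no $J_k$ is contained in $\{i, j\}$, or (B) two of the $J_k$ are the singletons $\{i\}$ and $\{j\}$; the extra $X$-face requirement is $\lambda \subseteq \cone(w_i, w_j)$. My approach is to start with indices $p, q$ such that $w_p, w_q$ span the two extreme rays of $\Eff(R_g)$; then $\cone(w_p, w_q) = \Eff(R_g) \supseteq \lambda$ is automatic, and if $\cone(e_p, e_q)$ happens to be an $\bar X_g$-face we are done. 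Otherwise exactly one $J_k$ lies in $\{p, q\}$, and I would invoke the Fano condition $-\KKK = \sum_i w_i - \mu \in \lambda^\circ$ together with the moving-cone containment $\mu \in \bigcap_{i<j} \cone(w_k : k \neq i, j)$ from Remark \ref{prop:hypmov} to find an additional index $k$ with $w_k$ strictly inside $\lambda^-$ or $\lambda^+$; pairing that $k$ with $p$ or $q$ provides a valid 2-dimensional $X$-face.

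The main obstacle is precisely this final step. One has to rule out the degenerate configurations in which every 2-dimensional $\bar X_g$-face lies on one side of $\lambda$, or in which every pair satisfying the angular condition fails the $\bar X_g$-criterion. I expect those configurations to be excluded by combining the three degree equations $\mu = \sum_{i \in J_k} l_{k,i} w_i$ ($k = 1, 2, 3$) with the Fano placement of $-\KKK$ inside $\lambda^\circ$, which together force the Cox ring generators to spread on both sides of $\lambda$ in a manner compatible with the $\bar X_g$-face condition at some pair of indices.
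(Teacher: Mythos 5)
Your reduction (torsion-freeness of $\Pic(X)$ follows from the existence of a single two-dimensional $X$-face, since the $X$-face condition together with $\QQ$-factoriality forces $w_i,w_j$ to be $\QQ$-linearly independent and hence $\langle w_i,w_j\rangle$ torsion-free) is exactly the paper's first step, and your replacement trick using \cref{prop:hypmov} handles the case $\mu\in\Eff(R_g)^\circ$ the same way the paper does. The gap is the final step, and it is not a technicality that the Fano condition repairs: there genuinely are configurations with \emph{no} two-dimensional $X$-face. Concretely, suppose $\mu$ lies on an extreme ray $\rho_1$ of $\Eff(R_g)$. Since the three monomials of $g$ are pairwise coprime and each has degree $\mu\in\rho_1$, every variable occurring in $g$ has its degree on $\rho_1$, so $\rho_1$ carries at least three generator degrees; if it carries exactly three, say $w_1,w_2,w_3$, then $g=T_1^{l_1}+T_2^{l_2}+T_3^{l_3}$, and if moreover $\lambda=\rho_1+\rho_2$, then any two-dimensional cone $\cone(w_i,w_j)\supseteq\lambda$ must have $i\in\{1,2,3\}$ and $w_j\notin\rho_1$, so $g$ restricted to the coordinate plane of $T_i,T_j$ is the single monomial $T_i^{l_i}$ and $\cone(e_i,e_j)$ is never an $\bar X_g$-face. (Higher-dimensional $X$-faces such as $\cone(e_1,e_2,e_j)$ do exist, but $\langle w_1,w_2,w_j\rangle$ need not be torsion-free since $w_1,w_2$ are proportional in $K_\QQ$.)

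The paper closes this case with a different tool, which is the idea missing from your proposal: homogeneity gives $l_1a_1=l_2a_2=l_3a_3$ for the ray coordinates $a_i$ of $w_1,w_2,w_3$, and the Fano condition $-\KKK\in\lambda^\circ$ forces $1<2/l_2+2/l_3$ with $l_1=2$, so $(l_1,l_2,l_3)$ is a platonic triple; then $X$ has at most log terminal singularities by \cite{ArBrHaWr}*{Thm.~3.13}, and torsion-freeness of $\Pic(X)$ follows from \cite{PaSha99}*{Prop.~2.1.2}. Without some such external input the argument cannot be completed along the lines you propose, so as it stands the proof is incomplete.
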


\begin{proof}
By \cite{ADHL}*{Cor.~3.3.1.6} we have the identity
\begin{equation*}
    \Pic(X)
    \ = \ 
    \bigcap\limits_{\gamma_0\, X\text{-face}}
    Q(\lin(\gamma_0)\cap\ZZ^r).
\end{equation*}
It therefore suffices to show that there is a two-dimensional $X$-face.
By \cref{prop:cplx1-hscr} we may assume that $g \in S$ is a trinomial consisting of pairwise coprime monomials.
We write $\rho_1,\dots,\rho_s$ for the rays generated by the generator degrees~$w_1,\dots,w_7$. The effective cone of $R := R_g$ is given by $\Eff(R) = \rho_1 + \rho_s$.
We distinguish two cases. First, assume $\mu = \deg(g)$ is contained in $\Eff(R)^\circ$. In this case we can find generator degrees $w_i, w_j$ that satisfy the following conditions:
\begin{enumerate}
    \item $\lambda^\circ \subseteq \cone(w_i,w_j)$.
    \item $\mu \in \cone(w_i,w_j)^\circ$.
    \item $g$ does not contain a monomial of the form $T_i^{l_i}T_j^{l_j}$.
\end{enumerate}
Explicitly, we do the following: Taking $w_i \in \rho_1$ and $w_j \in \rho_s$ satisfies (i) and (ii). If $g$ contains a monomial of the form $T_i^{l_i}T_j^{l_j}$, then, since $\mu$ is contained in the interior of $\Eff(R)$, the exponents $l_i$ and $l_j$ are positive. The definition of $\Mov(R)$ and \cref{prop:hypmov} ensure that we can replace either $w_i$ or $w_j$ with a different generator degree, such that the new pair $(w_{i'},w_{j'})$ still satisfies (i) and (ii). Since the monomials of $g$ are pairwise coprime, this pair also satisfies (iii). The face~$\gamma_0$ of the positive orthant $\gamma$ spanned by $e_{i'}$ and $e_{j'}$ is thus a two-dimensional $X$-face.

Now assume that $\mu$ lies on one of the bounding rays of $\Eff(R_g)$. We may assume that $\mu \in \rho_1$ holds. Since $g$ is a trinomial and its monomials are pairwise coprime, the ray $\rho_1$ contains at least three generator degrees. If $\rho_1$ contains four or more generator degrees, then there is $w_i \in \rho_1$ such that $g$ does not contain a monomial of the form $T_i^{l_i}$. Choose any $w_j \in \rho_s$. Then the face~$\gamma_0 := \cone(e_i,e_j)$ is again a two-dimensional $X$-face. Now assume that $\rho_1$ contains exactly three generator degrees, say $w_1,w_2$ and $w_3$. The ample cone $\lambda$ of~$X$ is of the form $\lambda = \rho_k + \rho_{k+1}$ for some~$k = 1,\dots,s-1$. If $\lambda \ne \rho_1+\rho_2$, then we take~$w_i$ and $w_j$ from each of the bounding rays of $\lambda$. The face $\gamma_0 := \cone(e_i,e_j)$ is again a two-dimensional~$X$-face. It remains to consider the case $\lambda = \rho_1+\rho_2$. Applying a unimodular transformation, we achieve that $\rho_1$ is the ray generated by $e_1$. We write $w_1 = (a_1,0)$, $w_2 = (a_2,0)$ and $w_3 = (a_3,0)$. Switching the roles of $w_1,w_2$ and $w_3$ if necessary, we may assume that $a_1 \ge a_2 \ge a_3$ holds. Homogeneity of $g$ yields $l_1a_1 = l_2a_2 = l_3a_3$. As $X$ is Fano, it's anticanonical class is ample. By \cref{prop:antican}, this means
\begin{equation*}
    (1-l_1)w_1 + w_2 + w_3 + w_4 + \dots + w_r \ \in \lambda^\circ \ = \ (\rho_1+\rho_2)^\circ.
\end{equation*}
The point $w := w_4+\dots+w_r$ is contained in the cone $\rho_2+ (-\rho_1)$. Thus, for the sum to lie in the interior of $\lambda$, we must have $(1-l_1)w_1 + w_2 + w_3 \in \rho_1$. This is equivalent to $(l_1-1)a_1 < a_2+a_3$. Since $a_1$ is at least as big as $a_2$ and $a_3$, this yields $l_1 = 2$. Homogeneity of $g$ thus yields $a_2 = 2a_1/l_2$ and $a_3 = 2a_1/l_3$. With this, the exponents $l_2$ and $l_3$ satisfy the following inequality:
\begin{equation*}
    1 \ < \ \frac{2}{l_2} + \frac{2}{l_3}.
\end{equation*}
Since $a_2 \ge a_3$, we have $l_3 \ge l_2$. Moreover, both exponents are at least two by irredundancy of the presentation of $R$. The triple $(l_1,l_2,l_3)$ is therefore one of the following:
\begin{equation*}
    (l_1,l_2,l_3) \ = \ (2,2,y),
    \qquad
    (l_1,l_2,l_3) \ = \ (2,3,3),
\end{equation*}
\begin{equation*}
    (l_1,l_2,l_3) \ = \ (2,3,4),
    \qquad
    (l_1,l_2,l_3) \ = \ (2,3,5),
\end{equation*}
where $y \ge 2$. Thus, by \cite{ArBrHaWr}*{Thm.~3.13} the variety $X$ has at most log terminal singularities. With this, we are in the situation of \cite{PaSha99}*{Prop.~2.1.2}, which tells us that the Picard group $\Pic(X)$ is torsion-free.
\end{proof}

\begin{proposition}
\label{prop:general-is-smooth}
Let $X = X_g$ as in \cref{constr:hypersurface} be locally factorial and of Picard number two. Assume that
\begin{equation*}
    \mu \ \in \ \SAmple(X) \cap \Mov(X)^\circ.
\end{equation*}
Then there is a non-empty open subset $U \subseteq S_\mu$ such that for all $h \in U$ the variety~$X_h \subseteq Z$ is smooth with divisor class group~$\Cl(X_h) = K$ and Cox ring~$\Cox(X_h) = R_h$.
\end{proposition}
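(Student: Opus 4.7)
The plan is a two-stage argument. First, apply Bertini's theorem on the smooth affine total space $\bar Z = \CC^r$ to obtain smoothness of the generic affine hypersurface $\widehat X_h := V(h) \cap \widehat Z$; then descend this to smoothness of the geometric quotient $X_h = \widehat X_h \quot H$ using the local factoriality of $X_g$.

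For the Bertini step the key observation is that $\mu \in \SAmple(X) = \lambda$ makes the linear system $|S_\mu|$ base-point free on $\widehat Z$: for any $x \in \widehat Z$ the orbit cone $\omega_x = \cone(w_j : x_j \neq 0)$ contains $\lambda$ by definition of the semistable locus, and hence contains $\mu$, which forces some monomial $T^a \in S_\mu$ to satisfy $T^a(x) \neq 0$. Classical Bertini applied to the smooth variety $\bar Z = \CC^r$ then produces an open dense $U_1 \subseteq S_\mu$ such that $\widehat X_h$ is smooth for every $h \in U_1$; normality of $R_h$ follows by Serre's criterion.

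The descent step leverages \cref{prop:locprops}(ii): local factoriality of $X_h$ is equivalent to $K$ being generated by $Q(\gamma_0 \cap \ZZ^r)$ for every $X_h$-face $\gamma_0$, which in turn is equivalent to $H$ acting with trivial stabilizers on $\widehat X_h$, making $\widehat X_h \to X_h$ a principal $H$-bundle and hence transferring smoothness from $\widehat X_h$ to $X_h$. The input $\mu \in \Mov(X)^\circ$ enters here: for each index $i$ it provides a monomial of degree $\mu$ not involving $T_i$, which together with the Picard number two assumption and the local factoriality of $X_g$ ensures that the $X_h$-faces for generic $h$ coincide with the $X_g$-faces, so the generation condition transfers. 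For the Cox ring identification, one further checks that the remaining hypotheses of \cref{constr:hypersurface} (irreducibility of $h$, $R_h^* = \CC^*$, factoriality of the $K$-grading via \cref{rem:facttrinom}, and $T_1, \ldots, T_r$ remaining a minimal system of pairwise non-associated $K$-primes) each cut out a non-empty open subset of $S_\mu$, whose intersection with $U_1$ yields the desired $U$.

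The main obstacle is the descent: smoothness of $\widehat X_h$ alone is not enough, since one must simultaneously ensure that no new $X_h$-faces appear for generic $h$ which would violate the generation condition and introduce quotient singularities in $X_h$. Ruling this out is where the interior-moving-cone hypothesis $\mu \in \Mov(X)^\circ$, combined with the rigidity of Picard number two, does the crucial combinatorial work, and carrying this out is the most delicate piece of the argument.
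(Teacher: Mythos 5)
Your overall architecture (Bertini plus openness of auxiliary conditions) is in the spirit of the paper, but there are three genuine gaps, two of which are fatal as written. First, the base-point-freeness claim does not follow from $\mu\in\omega_x$: the orbit cone condition only says $\mu$ is a \emph{rational} nonnegative combination of the $w_j$ with $x_j\neq 0$, which does not produce an \emph{integral} exponent vector, i.e.\ a monomial $T^a\in S_\mu$ supported on those coordinates (take $w_1=(2,0)$ and $\mu=(3,0)$ for a one-line counterexample). The paper obtains base-point-freeness only after proving that the $\mu$-minimal ambient toric variety $Z_\mu$ is smooth (\cref{prop:mumin-is-smooth}), so that $\mu$ is Cartier, and then invoking semiample $+$ Cartier $\Rightarrow$ globally generated. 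Second, and most seriously, your route to $\Cox(X_h)=R_h$ invokes \cref{rem:facttrinom}, which applies only to trinomials with pairwise coprime monomials; a generic $h\in S_\mu$ is not a trinomial, so that criterion says nothing about $R_h$. Without establishing that $R_h$ is ($K$-)factorial you cannot conclude $\Cl(X_h)=K$ — the divisor class group of a hypersurface can a priori be strictly larger than that of the ambient variety. The paper closes exactly this gap with a Grothendieck--Lefschetz type theorem (\cref{prop:lefshetz}, i.e.\ \cite{HLM}*{Prop.~4.11}), which yields that $R_h$ is a UFD for generic $h$; this is the essential ingredient your proposal is missing, and it is also where the hypotheses $\mu\in\tau^\circ$ (or the boundary case handled via $Z(\tau_\mu)$) and basepoint-freeness are consumed.

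Third, the step you yourself flag as ``the most delicate piece'' — ruling out new $\bar X_h$-faces on which $K$ fails to be generated — is precisely the content of \cref{prop:mumin-is-smooth}, and it is not a formality: a face that is not an $\bar X_g$-face (because $g$ restricts to a single monomial there) can very well become an $\bar X_h$-face for generic $h$ if $S_\mu$ contains a second monomial in those variables. The paper's argument shows, using \cref{prop:locprops}~(ii) and \cref{lemma:ample-cones-inclusion} repeatedly, that whenever the degrees on a relevant face fail to generate $K$, the monomial of $g$ supported there is the \emph{unique} monomial of $S_\mu$ in those variables, so the face is excluded for every $h\in S_\mu$. Asserting that $\mu\in\Mov(X)^\circ$ ``does the crucial combinatorial work'' is not a proof of this; until it is carried out, neither your descent step (freeness of the $H$-action on $\widehat X_h$) nor the base-point-freeness can be established.
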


The remainder of this section is devoted to the proof of \cref{prop:general-is-smooth}. We adopt the notation of \cref{constr:toric-cox} and \cref{constr:hypersurface}. A homogeneous polynomial $h \in S_\mu$ is called \emph{spread}, if every monomial $T^\nu \in S$ of degree $\mu = \deg(h)$ is a convex combination of monomials of $h$.
We say that $R_h$ is spread, if $h$ is spread, see \cite{HLM}*{Def.~4.3}.
Here we identify a monomial $T^\nu = T_1^{\nu_1}\cdots T_r^{\nu_r}$ with its exponent vector $\nu \in \QQ^r$.
If $h,h' \in S_\mu$ are spread, then the minimal ambient toric varieties~$Z_h$ of $X_h$ and $Z_{h'}$ of $X_{h'}$ coincide. Thus the toric variety $Z_\mu := Z_{h}$ is well-defined. It is called the \emph{$\mu$-minimal ambient toric variety}, see \cite{HLM}*{Def.~4.18}. The following two Propositions, originally \cite{HLM}*{Prop.~4.11} and \cite{HLM}*{Cor.~4.19}, are essential to the proof of \cref{prop:general-is-smooth}.

\begin{proposition}
\label{prop:bertini}
See \cite{HLM}*{Cor.~4.19}. In the setting of \cref{constr:hypersurface}, assume~$\rk(K) = 2$ and that $Z_\mu \subseteq Z$ is smooth. If $\mu \in \tau$ holds, then $\mu$ is basepoint free. Moreover, then there is a non-empty open subset of polynomials $g \in S_\mu$ such that $X_g$ is smooth.
\end{proposition}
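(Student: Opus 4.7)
\medskip

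\textbf{Proof proposal.}
The statement has two parts which I will address in order: (a) that $\mu$ is basepoint free on $Z_\mu$, and (b) that a general $g\in S_\mu$ cuts out a smooth $X_g$.

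For (a), the plan is to reduce basepoint-freeness to semiampleness on the smooth projective toric variety $Z_\mu$, where the two notions coincide. I would first argue that $\mu$ lies in $\SAmple(Z_\mu) = \overline{\Ample(Z_\mu)}$. By hypothesis $\mu \in \tau$, and $\tau^\circ = \Ample(Z)$ by the GIT construction of \cref{constr:toric-cox}. Since $Z_\mu$ arises from $Z$ by passing to a larger open toric subvariety of $\bar Z/\!\!/ H$ (namely the $\mu$-minimal ambient), restricting to an open toric subset can only enlarge the ample cone inside $K_\QQ$; this is the same chain of inclusions used in \cref{rem:ambtor}, now applied to a spread $h \in S_\mu$ in place of $g$. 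Hence
\[
\mu \ \in \ \tau \ \subseteq \ \overline{\Ample(Z)} \ \subseteq \ \overline{\Ample(Z_\mu)} \ = \ \SAmple(Z_\mu).
\]
Because $Z_\mu$ is smooth and projective, semiampleness of a Cartier class equals basepoint-freeness (the standard toric fact that a nef Cartier divisor on a smooth complete toric variety is globally generated). This gives (a).

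For (b), I would invoke Bertini. Basepoint-freeness of $\mu$ provides a morphism $\varphi_\mu : Z_\mu \to \PP(S_\mu^*)$ whose hyperplane pullbacks are precisely the divisors $V(g) \subseteq Z_\mu$ for $0\neq g \in S_\mu$. Since $Z_\mu$ is smooth over $\CC$, the classical Bertini theorem yields a non-empty Zariski open $U_1 \subseteq S_\mu$ such that for $g \in U_1$ the divisor $V(g)\subseteq Z_\mu$ is smooth. To identify this with $X_g$ as in \cref{constr:hypersurface}, I would separately observe that the condition ``$g$ is spread'' defines a non-empty open subset $U_2 \subseteq S_\mu$ (it is a condition on the Newton polytope of $g$ relative to that of $\mu$); for $g \in U_2$ one has $Z_g = Z_\mu$, hence $X_g = V(g)\cap Z_\mu = V(g) \subseteq Z_\mu$. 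Taking $g \in U_1 \cap U_2$ then yields the smooth hypersurface $X_g$ claimed in the statement.

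The main obstacle is not Bertini itself but the bookkeeping in step (a): one needs to ensure that $\mu$, which is originally only assumed to lie in the GIT-chamber $\tau$ of $S$ for the \emph{ambient} toric variety $Z$, actually lands in the semiample cone of the \emph{enlarged} toric variety $Z_\mu$. This requires knowing that $Z \hookrightarrow Z_\mu$ is an open toric embedding whose GIT-chamber in $\Lambda(S)$ contains $\tau$, which is where the hypothesis $\rk(K)=2$ is used: in Picard rank two the moving cone is a union of adjacent GIT-chambers along a one-dimensional wall, so the inclusion relation among the relevant chambers is transparent and one can read off directly that the chamber corresponding to $Z_\mu$ contains $\mu$. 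Once that inclusion is pinned down, the smoothness of $Z_\mu$ converts semiampleness into basepoint-freeness, and Bertini finishes the argument.
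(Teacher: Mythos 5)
The paper does not actually prove this proposition — it is imported verbatim from \cite{HLM}*{Cor.~4.19} — so there is no internal proof to compare against; the closest internal analogues are the proofs of \cref{prop:mumin-is-smooth} and \cref{prop:general-is-ufd}, which show the intended mechanism. Your part (b) (Bertini on the smooth $Z_\mu$, plus the observation that spreadness is generic and forces $Z_g = Z_\mu$) is fine. The problem is part (a), in two places. First, $Z_\mu$ is a \emph{proper open} toric subvariety of the projective $Z$ and is in general not complete, so the fact you invoke — a nef Cartier divisor on a smooth complete toric variety is globally generated — is not available on $Z_\mu$; indeed the paper itself, in the proof of \cref{prop:general-is-ufd}, is careful to apply \cite{CoLiSch11}*{Thm.~6.3.12} only on the projective variety $Z(\tau_\mu)$ and to verify Cartier-ness there by hand. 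Second, the basepoint-freeness asserted in the statement is the one fed into \cref{prop:lefshetz}, i.e.\ basepoint-freeness of $\mu$ on all of $Z$ (equivalently, of the linear system $S_\mu$), and your argument only controls the base locus inside $Z_\mu$; a priori base points could sit on $Z \setminus Z_\mu$, which your chain of ample-cone inclusions says nothing about.

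Both issues are repaired by the same combinatorial verification, which is what \cite{HLM} does and what \cref{prop:general-is-ufd} mimics: one checks, for every minimal relevant face $\gamma_0$ of $Z$, that $\mu$ lies in the monoid generated by the $w_i$ with $e_i \in \gamma_0$, which is exactly the existence of a monomial of degree $\mu$ supported on $\gamma_0$ and hence of a section not vanishing on the corresponding orbit. If $\gamma_0$ is also relevant for $Z_\mu$, smoothness of $Z_\mu$ gives $Q(\gamma_0 \cap \ZZ^r) = K$, and since $\rk(K)=2$ the monoid generated by the $w_i$, $e_i \in \gamma_0$, is saturated in the cone $Q(\gamma_0) \supseteq \tau \ni \mu$, so the monomial exists. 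If $\gamma_0$ is relevant for $Z$ but not for $Z_\mu$, then by definition of the $\mu$-minimal ambient toric variety $\gamma_0$ fails to be an $\bar{X}_h$-face for spread $h$, which happens precisely when there is a \emph{unique} monomial of degree $\mu$ supported on $\gamma_0$ — and that monomial is the required section. This second case is entirely absent from your proposal and is where the hypothesis on $Z_\mu$ (rather than on $Z$) earns its keep; no completeness of $Z_\mu$ is ever needed.
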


\begin{proposition}
\label{prop:lefshetz}
See \cite{HLM}*{Prop.~4.11}. In the setting of \cref{constr:hypersurface} assume that $K$ is of rank at most $r-4$ and torsion-free, there is $g \in S_\mu$ such that $T_1,\dots,T_r$ define primes in $R_g$, we have $\mu \in \tau^\circ$ and $\mu$ is basepoint free on $Z$. Then there is a non-empty open subset of polynomials $g \in S_\mu$ such that $R_g$ is a UFD.
\end{proposition}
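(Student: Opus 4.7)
The plan is to combine the Bertini-type \cref{prop:bertini} with the Lefshetz-type \cref{prop:lefshetz}, each applied to a sufficiently generic polynomial $h \in S_\mu$. The bulk of the preparation consists in setting up an ambient toric variety tailored to $\mu$ and verifying its smoothness.

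First, since $\mu \in \Mov(X)^\circ$ and the GIT-fan $\Lambda(S)$ refines $\Lambda(R_g)$, pick a chamber $\tau \in \Lambda(S)$ with $\mu \in \tau \subseteq \lambda$, fixing the ambient toric $Z$ of \cref{constr:toric-cox}. The interior moving-cone hypothesis guarantees that a spread polynomial $h_0 \in S_\mu$ exists, and we let $Z_\mu = Z_{h_0} \subseteq Z$ denote the $\mu$-minimal ambient toric variety.

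The critical preparatory claim is that $Z_\mu$ is smooth. Its maximal cones correspond to the $X_{h_0}$-faces $\gamma_0 \preceq \gamma$, i.e.\ $\bar X_{h_0}$-faces satisfying $\lambda^\circ \subseteq Q(\gamma_0)^\circ$. Because $h_0$ is spread, the set of $\bar X_{h_0}$-faces depends only on the support of monomials of degree $\mu$ and not on coefficients; one checks that every resulting $X_{h_0}$-face is either already an $X$-face of $X_g$ or is refined by one. Local factoriality of $X$ combined with \cref{prop:locprops}(ii) then forces $Q(\gamma_0 \cap \ZZ^r) = K$ on each such relevant face, which is exactly the toric smoothness criterion.

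With $Z_\mu$ smooth, $\rk(K) = 2$, and $\mu \in \tau$, \cref{prop:bertini} produces a non-empty open $U_1 \subseteq S_\mu$ with $X_h$ smooth for all $h \in U_1$, and simultaneously shows that $\mu$ is basepoint free on $Z$. In our Picard-rank-two locally factorial setting, $K$ is torsion-free, so the Lefshetz-type \cref{prop:lefshetz} applies: the starting $g$ realizing $X = X_g$ certifies that $T_1, \ldots, T_r$ are primes in $R_g$, and the remaining hypotheses are now in place, producing a non-empty open $U_2 \subseteq S_\mu$ with $R_h$ a UFD. Setting $U := U_1 \cap U_2$, the identification $\Cl(X_h) = K$ and $\Cox(X_h) = R_h$ reduces to verifying the hypotheses of \cref{constr:hypersurface} for $R_h$, which now all hold for $h \in U$. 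The main obstacle throughout is the smoothness of $Z_\mu$: carefully matching the $X_{h_0}$-face combinatorics with the $X_g$-face combinatorics is the place where the hypothesis $\mu \in \Mov(X)^\circ$ enters decisively, by forcing the existence and correct combinatorial behaviour of spread polynomials of degree $\mu$.
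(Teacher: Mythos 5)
Your proposal does not prove \cref{prop:lefshetz}; it assumes it. The very first sentence announces the plan to ``combine the Bertini-type \cref{prop:bertini} with the Lefshetz-type \cref{prop:lefshetz}'', and the penultimate paragraph explicitly invokes \cref{prop:lefshetz} to produce the open set $U_2$ of polynomials $h$ with $R_h$ a UFD. Since the conclusion of \cref{prop:lefshetz} is precisely the existence of such an open set, the argument is circular. What you have actually sketched is the proof of \cref{prop:general-is-smooth} (together with its supporting \cref{prop:mumin-is-smooth}): the hypotheses you use --- $\mu \in \Mov(X)^\circ$, local factoriality, Picard number two, the spread polynomial $h_0$ and the smoothness of the $\mu$-minimal ambient toric variety $Z_\mu$ --- are the hypotheses of that proposition, not of \cref{prop:lefshetz}, which is stated for $K$ of any rank up to $r-4$ and makes no reference to $\Mov$, to local factoriality, or to $Z_\mu$.

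A genuine proof of \cref{prop:lefshetz} would have to supply entirely different content: it is a Grothendieck--Lefschetz-type statement asserting that for a general member $g$ of the basepoint-free linear system $|\mu|$ on $Z$, the restriction map on divisor class groups $\Cl(Z) \to \Cl(X_g)$ is an isomorphism (this is where the dimension bound $\rk(K) \le r-4$, i.e.\ $\dim X_g \ge 3$, and the basepoint-freeness of $\mu$ enter), from which one deduces that $\Cl(X_g) = K$ is torsion-free and generated by the classes of the $T_i$, and hence that the $K$-graded ring $R_g$ with prime generators is a UFD. None of these steps appears in your write-up. Note also that the paper itself does not prove this proposition but quotes it from \cite{HLM}*{Prop.~4.11}, so if your task was to reconstruct a proof, the argument must come from that source's Lefschetz-theorem machinery rather than from the smoothing construction of \cref{sec:picandsmooth}.
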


For the rest of this section it is assumed that we have $X = X_g$ as in \cref{constr:hypersurface} and that $X$ is locally factorial and of Picard number two. For any homogeneous $h \in S_\mu$ we denote by $\lambda_h \in \Lambda(R_h)$ the smallest GIT-cone that contains~$\tau$. Note that local factoriality of $X$ in particular implies $\QQ$-factoriality. Thus by \cref{prop:locprops} (i) the cone $\lambda$ is full-dimensional.

\begin{lemma}
\label{lemma:ample-cones-inclusion}
Let $h \in S_\mu$ such that each monomial of $g$ is also a monomial of $h$. Then $\lambda \subseteq \lambda_h$ holds.
\end{lemma}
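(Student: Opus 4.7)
The plan is to translate the comparison $\lambda \subseteq \lambda_h$ into a statement about orbit cones of $\bar X_g$ and $\bar X_h$. Fix $w_0 \in \tau^\circ$. By \cref{rem:ambtor} one has $\tau^\circ \subseteq \lambda^\circ$, and by definition of $\lambda_h$ the same holds for $\lambda_h^\circ$; hence both chambers equal the intersection of all orbit cones containing $w_0$:
\[
\lambda \;=\; \bigcap_{y \in \bar X_g,\; w_0 \in \omega_y}\!\omega_y,
\qquad
\lambda_h \;=\; \bigcap_{y \in \bar X_h,\; w_0 \in \omega_y}\!\omega_y.
\]
It therefore suffices to show that every $\bar X_h$-face $\gamma_0 \preceq \gamma$ with $w_0 \in Q(\gamma_0)$ satisfies $\lambda \subseteq Q(\gamma_0)$.

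First I would dispose of the easy case: if $\gamma_0$ is also an $\bar X_g$-face, then $Q(\gamma_0)$ appears in the intersection defining $\lambda$ and the inclusion is immediate. In the remaining case, the hypothesis that every monomial of $g$ is a monomial of $h$ forces $g|_{\gamma_0}$ to reduce to a single monomial $cT^\alpha$ of degree $\mu$, while $h|_{\gamma_0}$ has at least two monomials, yielding a further monomial $T^\beta$ of $h|_{\gamma_0}$ with $\beta \neq \alpha$, $\mathrm{supp}(\beta) \subseteq \gamma_0$ and $Q(\beta) = \mu$. The next reduction is to exhibit $i_0 \in \mathrm{supp}(\alpha)$ such that the subface $\gamma_0' := \gamma_0 \setminus \{e_{i_0}\}$ still satisfies $w_0 \in Q(\gamma_0')$; since removing $i_0$ destroys $T^\alpha$, the subface $\gamma_0'$ is automatically an $\bar X_g$-face, and the easy case applied to $\gamma_0'$ then yields $\lambda \subseteq Q(\gamma_0') \subseteq Q(\gamma_0)$.

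The main obstacle is the existence of such an $i_0$, and the Picard number two hypothesis is crucial here. Since $\tau = \cone(\rho_k, \rho_{k+1})$ for two consecutive rays of the effective cone, every other ray lies on one of the two sides of $\tau$: the lower side ($\rho \le \rho_k$) or the upper side ($\rho \ge \rho_{k+1}$). Removing $w_{i_0}$ excludes $w_0$ from $Q(\gamma_0)$ only when $w_{i_0}$ is the \emph{sole} $\gamma_0$-index on one of those sides. Were every $i_0 \in \mathrm{supp}(\alpha)$ of this bad form, $\mathrm{supp}(\alpha)$ would be contained in the set $\{i_-^*, i_+^*\}$ of at most two such sole indices of $\gamma_0$; furthermore, any additional $\gamma_0$-index would itself lie on one of the two sides of $\tau$ and violate the uniqueness of $i_\pm^*$, so in fact $\gamma_0 = \{i_-^*, i_+^*\}$. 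Then $\mathrm{supp}(\beta) \subseteq \gamma_0 = \{i_-^*, i_+^*\}$, and the linear independence of $w_{i_-^*}, w_{i_+^*}$ in $K_\QQ \cong \QQ^2$ (or, when $|\mathrm{supp}(\alpha)| = 1$, the observation that $\mu$ lies on a single ray containing only one $\gamma_0$-index) forces $\beta = \alpha$, contradicting $\beta \neq \alpha$. Hence a good $i_0$ exists, completing the proof.
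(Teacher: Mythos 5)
Your proof is correct, and while it follows the same overall strategy as the paper --- namely, writing $\lambda$ and $\lambda_h$ as intersections of orbit cones over a fixed $w_0\in\tau^\circ$ and showing that every orbit cone of $\bar X_h$ containing $w_0$ also contains $\lambda$ --- the key step is executed quite differently. The paper reduces at once to the case $\tau\subsetneq\lambda$ and invokes \cite{HLM}*{Prop.~2.8} to pin down the geometry: a unique generator degree $w_k\in\lambda^\circ$, with $\mu$ on its ray and $T_k^{l_k}$ a monomial of $g$ and hence of $h$, from which the inclusion of orbit cones follows. You instead avoid that proposition entirely with a face-shrinking argument: if an $\bar X_h$-face $\gamma_0$ is not an $\bar X_g$-face, then $g|_{\gamma_0}$ is a single monomial $T^\alpha$, and deleting a suitable $e_{i_0}$ with $i_0\in\Supp(\alpha)$ produces an $\bar X_g$-face $\gamma_0'$ with $w_0\in Q(\gamma_0')\subseteq Q(\gamma_0)$; the existence of a deletable index is then a rank-two counting argument. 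This buys you a self-contained proof, at the cost of a somewhat more delicate combinatorial analysis. Two points deserve to be made explicit. First, your side-splitting tacitly uses that no generator degree lies on the ray of $w_0$; this holds because $\tau$ is a full-dimensional GIT cone of $S$, so $\tau^\circ$ meets no ray $\cone(w_i)$ (the same implicit assumption is made in the paper's own proof). Second, in the subcase $|\Supp(\alpha)|=1$ the assertion $\gamma_0=\{i_-^*,i_+^*\}$ is not available, and your parenthetical has to carry the argument: there $\mu=l_{i_0}w_{i_0}$ lies on an extreme ray of $Q(\gamma_0)$ containing only the index $i_0$, and pointedness of $\Eff(R)$ forces any $\beta$ supported in $\gamma_0$ with $Q(\beta)=\mu$ to equal $\alpha$. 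That is exactly the right reasoning, but it should be spelled out rather than left as an aside.
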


\begin{proof}
The cone $\lambda_h \in \Lambda(R_h)$ is the smallest GIT-cone that contains $\tau$.
Thus in the case $\tau = \lambda$ there is nothing to show.
So we may assume that $\tau \subsetneq \lambda$ holds.
We write~$\lambda = \cone(w_i,w_j)$ and $\tau = \cone(w_k,w_l)$.
Since $\tau$ is a proper subset of $\lambda$, one of its ray generators is contained in the interior of $\lambda$, say $w_k \in \lambda^\circ$.
By \cite{HLM}*{Prop.~2.8} the degree~$w_k$ is the only generator degree that is contained in the interior of~$\lambda$.
Moreover, $\mu$ lies on the ray of $w_k$ and the relation $g$ contains a monomial of the form $T_k^{l_k}$.
Since $h$ contains each monomial of $g$, it also contains the monomial~$T_k^{l_k}$.
Therefore, each projected $\bar{X}_h$-face $Q(\gamma_0)$, that contains $\tau$, necessarily also contains generator degrees on both sides of $w_k$. Since $w_k$ is the only generator degree in the interior of $\lambda$, the cone $Q(\gamma_0)$ also contains $\lambda$.
This implies the assertion.
\end{proof}

\begin{proposition}
\label{prop:mumin-is-smooth}
The $\mu$-minimal ambient toric variety $Z_\mu \subseteq Z$ is smooth.
\end{proposition}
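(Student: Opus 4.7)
The plan is to verify the combinatorial smoothness criterion for $Z_\mu$. As a toric variety obtained via Cox's quotient presentation, $Z_\mu = Z_h$ for any spread $h \in S_\mu$ is smooth if and only if for every maximal cone of its fan---equivalently, for every two-dimensional $X_h$-face $\gamma_0 = \cone(e_i, e_j) \preceq \gamma$---the classes $w_i, w_j$ generate $K$ as a group. Since local factoriality forces $\QQ$-factoriality of $X$, the cone $\lambda$ is full-dimensional by \cref{prop:locprops}(i), so $\lambda_h \supseteq \lambda$ is two-dimensional too, and no $X_h$-face of dimension less than two arises; faces of smooth maximal cones are automatically smooth, so the check at two-dimensional $X_h$-faces suffices.

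I would pick $h$ to be a polynomial whose set of monomials is precisely the set of all degree-$\mu$ monomials of $S$. This $h$ is trivially spread and contains every monomial of $g$, so \cref{lemma:ample-cones-inclusion} gives $\lambda \subseteq \lambda_h$. Now fix a two-dimensional $X_h$-face $\gamma_0 = \cone(e_i, e_j)$. From $\lambda_h^\circ \subseteq \cone(w_i, w_j)^\circ$ together with $\lambda \subseteq \lambda_h$ we obtain $\lambda \subseteq \cone(w_i, w_j)$. By \cref{lemma:2gen}, there are two possibilities: either $w_i, w_j$ generate $K$ (which is what we need), or $g$ carries a unique monomial of the form $T_i^{l_i} T_j^{l_j}$.

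The second alternative is excluded by the $\bar{X}_h$-face condition. Since $w_i, w_j$ are linearly independent in $K_\QQ$, the equation $aw_i + bw_j = \mu$ has at most one solution in $\ZZ_{\ge 0}^2$, so at most one degree-$\mu$ monomial lies in $\CC[T_i, T_j]$. If $g$ contains such a monomial, then so does $h$ by construction; restricting $h$ to the coordinate plane $L := V(T_k \colon k \notin \{i, j\})$ therefore yields a single nonzero monomial $c\, T_i^{l_i} T_j^{l_j}$, which vanishes only on the axes of $L$. Hence no point of $\bar{X}_h$ realizes the nonvanishing coordinate pattern prescribed by $\gamma_0$, contradicting that $\gamma_0$ is an $\bar{X}_h$-face. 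Thus $w_i, w_j$ generate the rank-two group $K$, which forces $K \cong \ZZ^2$ with $w_i, w_j$ a $\ZZ$-basis, verifying smoothness at the corresponding maximal cone of $Z_\mu$.

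The conceptual crux, I expect, is identifying that smoothness of $Z_\mu$ reduces to this dichotomy and then choosing $h$ so that the failing alternative of \cref{lemma:2gen} collides with the $\bar{X}_h$-face property: taking $h$ to carry every degree-$\mu$ monomial of $S$ is precisely what turns the potentially offending monomial $T_i^{l_i} T_j^{l_j}$ into an obstruction preventing $\gamma_0$ from being an $\bar{X}_h$-face. Once this is arranged, the reduction to two-dimensional faces and the passage from generation of $K$ to torsion-freeness of $K$ are mechanical.
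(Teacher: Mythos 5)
Your treatment of the two-dimensional $X_h$-faces is correct and is essentially the paper's own argument: $\lambda \subseteq \lambda_h$ from \cref{lemma:ample-cones-inclusion}, the dichotomy of \cref{lemma:2gen}, and the observation that a unique degree-$\mu$ monomial in $T_i,T_j$ prevents $\cone(e_i,e_j)$ from being an $\bar{X}_h$-face. The gap is the reduction to two-dimensional faces. Under the order-reversing correspondence between faces $\gamma_0 \preceq \gamma$ and cones of the fan, the maximal cones of the fan of $Z_\mu$ correspond to the \emph{minimal} $X_h$-faces, and these need not be two-dimensional: a face $\gamma_0 = \cone(e_{i_1},\dots,e_{i_m})$ with $m \ge 3$ is a minimal $X_h$-face whenever each of its two-dimensional subfaces either does not carry $\lambda_h^\circ$ in the interior of its image or fails to be an $\bar{X}_h$-face --- and the latter happens exactly when there \emph{is} a degree-$\mu$ monomial in the two variables, i.e.\ in the situation where \cref{lemma:2gen} no longer guarantees that the corresponding pair of degrees generates $K$. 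So the cones you omit are precisely those for which no pair of the involved degrees is known to generate $K$; their regularity (equivalently, that $w_{i_1},\dots,w_{i_m}$ jointly generate $K$, or else that $\gamma_0$ was not an $\bar{X}_h$-face to begin with) still has to be proved. Note also that ``maximal cone of the fan'' and ``full-dimensional cone'' coincide only for fans that are pure of top dimension, which need not hold for the subfan defining the open toric subvariety $Z_\mu \subseteq Z$.

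This missing case is where the substance of the paper's proof lies: assuming $w_{i_1},\dots,w_{i_m}$ do not generate $K$, one first rules out degrees in $\lambda^\circ$, sorts the remaining ones by their side of $\lambda$, uses local factoriality of $X$ via \cref{prop:locprops}(ii) on auxiliary faces such as $\cone(e_{i_1},e_{i_2},e_{i_{m-1}},e_{i_m})$ to reduce to the case where only one of the degrees lies on one side of $\lambda$, and then shows that a pure power $T_{i_1}^{l_{i_1}}$ is the \emph{only} monomial of $S_\mu$ in the variables $T_{i_1},\dots,T_{i_m}$, so that $\gamma_0$ is not an $\bar{X}_h$-face after all. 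Without an argument of this kind for $m \ge 3$, the proposition is not established.
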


\begin{proof}
Let $h \in S_\mu$ spread such that each monomial of $g$ is also a monomial of $h$ and let $\gamma_0 \preceq \gamma$ with $\lambda_h^\circ \subseteq Q(\gamma_0)^\circ$.
Write $\gamma_0 = \cone(e_{i_1},\dots,e_{i_m})$.
By \cref{prop:locprops}~(ii) we have to show that either $w_{i_1},\dots,w_{i_m}$ generate $K$ as a group, or $\gamma_0$ is not an~$\bar{X}_{h}$-face.
Assume that $w_{i_1},\dots,w_{i_m}$ do not generate $K$.
We show that $\gamma_0$ is not an $\bar{X}_h$-face.
By \cref{lemma:ample-cones-inclusion}, we have $\lambda^\circ \subseteq Q(\gamma_0)^\circ$.
Since $\lambda$ is of full dimension, $\gamma_0$ is at least two-dimensional.
In particular we have $m \ge 2$.
None of the degrees~$w_{i_1},\dots,w_{i_m}$ lies in~$\lambda^\circ$:
If one of them did, say $w_{i_1} \in \lambda^\circ$, then by \cite{HLM}*{Prop.~2.8} it is the only generator degree in the interior of $\lambda$ and $g$ contains a monomial of the form $T_{i_1}^{l_{i_1}}$. Moreover, in this case we have $m \ge 3$.
We may assume that $w_{i_2}$ and $w_{i_3}$ each lie in one of the bounding rays of $Q(\gamma_0)$.
The degrees $w_{i_2}, w_{i_3}$ do not generate~$K$ as a group.
Thus, since $X$ is locally factorial, by \cref{prop:locprops}~(ii) the cone spanned by $e_{i_2}$ and~$e_{i_3}$ is not a $\bar{X}$-face.
This means that $g$ contains a monomial of the form $T_{i_2}^{l_{i_2}}T_{i_3}^{l_{i_3}}$.
But then the cone spanned by $e_{i_1},e_{i_2}$ and $e_{i_3}$ is an $\bar{X}$-face and thus~$w_{i_1}, w_{i_2}, w_{i_3}$ generate~$K$.
A contradiction.
Thus none of the degrees~$w_{i_1},\dots,w_{i_m}$ lies in $\lambda^\circ$.
We may assume that the generator degrees are sorted in such a way that for all~$v \in \lambda^\circ$ we have~$\det(v,w_{i_j}) < 0$ if~$j \le k$ and $\det(v,w_{i_j}) > 0$ if $j > k$ for some fixed~$1 \le k \le m$.
We show that either $k=1$ or $k+1=m$ holds.
If $1 < k$ and $k+1 < m$, then we have $m \ge 4$ and neither $w_{i_1},w_{i_{m-1}}$, nor $w_{i_2},w_{i_m}$ generate $K$.
By local factoriality of $X$ and \cref{prop:locprops}~(ii), the relation $g$ then contains monomials of the form~$T_{i_1}^{l_{i_1}}T_{i_{m-1}}^{i_{m-1}}$ and $T_{i_2}^{l_{i_2}}T_{i_m}^{i_m}$.
Thus $\gamma'_0 = \cone(w_{i_1},w_{i_2},w_{i_{m-1}},w_{i_m})$ is an~$\bar{X}$-face and~$w_{i_1},w_{i_2},w_{i_{m-1}},w_{i_m}$ generate $K$. A contradiction.
We may thus assume that~$k=1$ holds.
If $m = 2$, then $g$ contains a monomial of the form $T_{i_1}^{l_{i_1}}T_{i_2}^{l_{i_2}}$ and this is the only monomial in $S_\mu$ only depending on these two variables. Therefore~$\gamma_0$ is not an $\bar{X}_h$-face.
If $m > 2$, then by \cref{prop:locprops}~(ii), the relation $g$ contains a monomial of the form $T_{i_1}^{l_{i_1}}$ and this is the only monomial in $S_\mu$ only depending on the variables $T_{i_1},\dots,T_{i_m}$.
Thus also in this case $\gamma_0$ is not an $\bar{X}_{h}$-face.
\end{proof}

\begin{lemma}
\label{lemma:degree-semiample}
If $\mu \in \lambda$ holds, then we also have $\mu \in \tau$.
\end{lemma}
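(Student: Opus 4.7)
The plan is to split on whether $\tau$ coincides with $\lambda$ or is properly contained in it, and in both cases to extract the conclusion from the structural information about the pair $(\tau,\lambda)$ that is already present in the proof of \cref{lemma:ample-cones-inclusion}. Two observations set the stage: by local factoriality and \cref{prop:locprops}~(i), the semiample cone $\lambda$ is two-dimensional in the two-dimensional space $K_\QQ$; and by \cref{rem:ambtor} we have the a priori inclusion $\tau \subseteq \lambda$.

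If $\tau = \lambda$, the hypothesis $\mu \in \lambda$ is already the desired conclusion $\mu \in \tau$. So I assume $\tau \subsetneq \lambda$. Since both cones are full-dimensional in $K_\QQ$, at least one ray generator of $\tau$, say $w_k$, must lie in $\lambda^\circ$. At this point I would reuse the key ingredient from the proof of \cref{lemma:ample-cones-inclusion}, namely \cite{HLM}*{Prop.~2.8}: it guarantees that $w_k$ is in fact the \emph{unique} generator degree lying in $\lambda^\circ$, that the relation $g$ carries a monomial of the form $T_k^{l_k}$, and that the relation degree $\mu$ lies on the ray through $w_k$.

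From here the verification is immediate: homogeneity of $g$ forces $\mu = l_k w_k$, so $\mu \in \cone(w_k)$, and $\cone(w_k)$ is by construction one of the two rays of $\tau$, hence a face of $\tau$. Therefore $\mu \in \tau$. The only point that requires any geometric attention is the configuration of $\tau$ inside $\lambda$ in the proper-containment case, and this is already settled by the preceding lemma's analysis; there is no substantive obstacle, and the statement reads essentially as a corollary of the argument used for \cref{lemma:ample-cones-inclusion}.
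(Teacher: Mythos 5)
Your proposal is correct and follows the paper's own argument almost verbatim: split on $\tau=\lambda$ versus $\tau\subsetneq\lambda$, and in the latter case invoke \cite{HLM}*{Prop.~2.8} to get a unique generator degree $w_k\in\lambda^\circ$ with a monomial $T_k^{l_k}$ in $g$, whence $\mu=l_kw_k$ lies on a bounding ray of $\tau$. You in fact spell out the final step ($\mu\in\cone(w_k)\subseteq\tau$) more explicitly than the paper does.
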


\begin{proof}
In case $\tau$ and $\lambda$ coincide, there is nothing to show.
So we assume that~$\tau \subsetneq \lambda$ holds.
Write $\lambda = \cone(w_i,w_j)$ and $\tau = \cone(w_k,w_l)$. Since $\tau$ is a proper subset of~$\lambda$, one of the generator degrees in its bounding rays lies in the interior of $\lambda$, say~$w_k \in \lambda^\circ$.
By \cite{HLM}*{Prop.~2.8}, it is the only generator degree that lies in $\lambda^\circ$ and~$g$ contains a monomial of the form $T_k^{l_k}$.
This shows that $\mu \in \tau$ holds.
\end{proof}

\begin{proposition}
\label{prop:general-is-ufd}
Assume that $\mu \in \lambda \cap \Mov(R_g)^\circ$ holds. Then there is a non-empty open subset $U \subseteq S_\mu$ such that $R_h$ is a UFD for each $h \in U$.
\end{proposition}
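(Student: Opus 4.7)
The plan is to reduce the statement to Proposition \ref{prop:lefshetz} by verifying each of its hypotheses and then reading off the desired open subset $U \subseteq S_\mu$. The five conditions to check are: torsion-freeness of $K$; the rank bound $\rk K \le r-4$; that $T_1,\dots,T_r$ define primes in $R_g$; that $\mu \in \tau^\circ$; and that $\mu$ is basepoint free on $Z$.

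First I would dispatch the conditions on $K$ and the variables. Torsion-freeness of $K$ is provided by Proposition \ref{prop:torfree}. The rank bound follows from the dimension formula $\dim X = r - \rk K - 1$ of Construction \ref{constr:hypersurface}: with $\dim X = 4$ and $\rk K = 2$ one obtains $r = 7$, whence $\rk K = 2 \le 3 = r-4$. Primeness of $T_1, \dots, T_r$ in $R_g$ is built into Construction \ref{constr:hypersurface} in the guise of $K$-primeness, and since $K$ is torsion-free the two notions coincide, as recalled just before \cref{rem:facttrinom}. For basepoint freeness of $\mu$ on $Z$, I would chain Proposition \ref{prop:mumin-is-smooth} (smoothness of $Z_\mu \subseteq Z$) with Lemma \ref{lemma:degree-semiample} (which upgrades the hypothesis $\mu \in \lambda$ to $\mu \in \tau$) and invoke Proposition \ref{prop:bertini}.

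The subtler point, and the main obstacle, is the interiority condition $\mu \in \tau^\circ$. The hypothesis $\mu \in \Mov(R_g)^\circ = \Mov(S)^\circ$ places $\mu$ in a full-dimensional chamber $\tau' \in \Lambda(S)$ of the GIT-fan. Since the UFD property of $R_h$ is intrinsic to the graded ring and independent of the choice of $\tau$ made in Construction \ref{constr:toric-cox}, I would apply Proposition \ref{prop:lefshetz} to the toric setup attached to $\tau'$, where $\mu \in \tau'^\circ$ holds by construction. To close the argument I would verify that the basepoint-freeness and smoothness results proved above for the original $\tau$ transfer to $\tau'$: the proofs of Proposition \ref{prop:mumin-is-smooth} and Lemma \ref{lemma:ample-cones-inclusion} rely only on the local factoriality and the combinatorics of the generator degrees, which are unchanged, so the analogous statements carry over. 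With all hypotheses of Proposition \ref{prop:lefshetz} in force, the conclusion delivers the non-empty open $U \subseteq S_\mu$ such that $R_h$ is a UFD for every $h \in U$.
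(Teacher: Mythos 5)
Your overall strategy---feeding \cref{prop:lefshetz} with the hypotheses supplied by \cref{prop:torfree}, \cref{prop:mumin-is-smooth}, \cref{lemma:degree-semiample} and \cref{prop:bertini}---is the same as the paper's, and your treatment of the case $\mu \in \tau^\circ$ is exactly the paper's first case. The gap is in your handling of the ``subtler point'': the claim that $\mu \in \Mov(S)^\circ$ places $\mu$ in the interior of a \emph{full-dimensional} chamber $\tau' \in \Lambda(S)$ is false. The walls of the GIT fan are the rays through the generator degrees $w_i$, and these can perfectly well pass through the interior of $\Mov(S)$; whenever $\mu$ lies on such a ray (e.g.\ ID~72, where $\mu=(2,2)$ sits on the ray through $w_3=(1,1)$ inside $\Mov(S)^\circ$), the unique GIT cone containing $\mu$ in its relative interior is one-dimensional, and no chamber $\tau'$ with $\mu\in\tau'^\circ$ and $\dim\tau'=2$ exists. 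This is precisely the case $\mu\in\partial\tau$ to which the paper devotes the bulk of its proof: one must pass to the ray $\tau_\mu$ and the associated toric variety $Z(\tau_\mu)$, which is no longer $\QQ$-factorial, and then basepoint freeness of $\mu$ on $Z(\tau_\mu)$ does not follow from \cref{prop:bertini}. It has to be checked by hand via Cartier-ness on the relevant faces of $Z(\tau_\mu)$, and the new difficulty lies in the \emph{one-dimensional} projected faces $Q(\gamma_0)=\tau_\mu$, which \cref{prop:mumin-is-smooth} says nothing about. Your ``the analogous statements carry over'' therefore skips the actual content of this case (the paper's argument there distinguishes $\mu\in\lambda^\circ$ from $\mu\in\partial\lambda$ and uses \cite{HLM}*{Prop.~2.8} together with \cref{prop:locprops}~(ii)).

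A secondary point: \cref{prop:torfree} assumes $X$ is Fano with a complexity-one torus action, neither of which is part of the standing hypotheses for \cref{prop:general-is-ufd} (only local factoriality and Picard number two are assumed in this section), so torsion-freeness of $K$ needs a different justification here --- for instance, via \cref{prop:locprops}~(ii) applied to a two-dimensional $X$-face.
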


\begin{proof}
By \cref{lemma:degree-semiample} the relation degree $\mu$ is contained in the cone $\tau$.
We distinguish two cases.
First assume that $\mu \in \tau^\circ$ holds.
As $Z_\mu$ is smooth by \cref{prop:mumin-is-smooth}, the class $\mu$ is basepoint free by \cref{prop:bertini}.
We can thus apply~\cref{prop:lefshetz}, which yields the assertion.
Now assume that $\mu \in \partial\tau$ holds. Let $\tau_\mu \in \Lambda(S)$ the unique GIT-cone that contains $\mu$ in its interior, ie. $\tau_\mu$ is the bounding ray of~$\tau$ containing $\mu$.
We write $Z(\tau_\mu) := \bar{Z}^{ss}(\tau_\mu)\quot H$ for the projective toric variety associated with $\tau_\mu$ as in \cref{constr:toric-cox}.
We show that $\mu$ is basepoint free on $Z(\tau_\mu)$.
Note that $\mu$ is semiample. Thus by \cite{CoLiSch11}*{Thm.~6.3.12} it suffices to show that $\mu$ is Cartier on $Z(\tau_\mu)$. By \cite{ADHL}*{Cor.~3.3.1.6} this is the case if and only if
\begin{equation*}
    \mu \ \in \ \bigcap\limits_{\gamma_0 \in \rlv(Z(\tau_\mu))} Q(\gamma_0 \cap \ZZ^r)
\end{equation*}
holds.
Let $\gamma_0 \in \rlv(Z(\tau_\mu))$.
If $Q(\gamma_0)$ is two-dimensional, then~$\lambda^\circ \subseteq Q(\gamma_0)^\circ$ holds and by \cref{prop:mumin-is-smooth} we have $Q(\gamma_0 \cap \ZZ^r) = K$.
So assume that $Q(\gamma_0)$ is one-dimensional, ie. $Q(\gamma_0) = \tau_\mu$.
We distinguish two cases.
First assume $\mu \in \lambda^\circ$.
Then by \cite{HLM}*{Prop.~2.8}, $\tau_\mu$ contains a single generator degree $w_k$ and $\mu$ is a multiple of~$w_k$.
Thus in this case $\mu \in Q(\gamma_0\cap\ZZ^r)$ holds.
Now we assume that $\mu \in \partial\lambda$ holds. Then $\tau_\mu$ is one of the bounding rays of $\lambda$.
Write $\gamma_0 = \cone(e_{i_1},\dots,e_{i_m})$ and let~$w_k$ a generator degree in the other bounding ray of $\mu$.
If $w_{i_1},\dots,w_{i_m},w_k$ generate $K$, then $\mu$ is a linear combination of $w_{i_1},\dots,w_{i_m}$ and thus $\mu \in Q(\gamma_0\cap\ZZ^r)$ holds.
If they do not generate $K$, then by \cref{prop:locprops}~(ii) the relation $g$ contains a single monomial only depending on $T_{i_1},\dots,T_{i_m},T_k$.
Since $\mu$ is contained in $\tau_{\mu}$, this monomial can not depend on $T_k$.
Thus $\mu$ is again a linear combination of the degrees $w_{i_1},\dots,w_{i_m}$ and thus $\mu \in Q(\gamma_0\cap\ZZ^r)$ holds. This shows that $\mu$ is basepoint free on $Z(\tau_\mu)$. We again apply \cref{prop:lefshetz}, which yields the assertion.
\end{proof}

\begin{proof}[Proof of \cref{prop:general-is-smooth}]
The set $U_1 \subseteq S_\mu$ of polynomials $h$ such that $R_h$ is a UFD is open and non-empty by \cref{prop:general-is-ufd}.
The set $U_2 \subseteq S_\mu$ of polynomials $h$ such that $T_1,\dots,T_r$ form a minimal system of non-associated $K$-prime generators in $R_h$ is open by \cite{HLM}*{Prop.~4.10} and since $R_g$ has that property, the set $U_2$ is non-empty.
Finally, the set $U_3 \subseteq S_\mu$ of polynomials $h$ such that $X_h$ is smooth is open and non-empty by \cref{prop:bertini} and \cref{prop:mumin-is-smooth}.
Now for any $h$ in the intersection
\begin{equation*}
    U \ = \ U_1 \cap U_2 \cap U_3
\end{equation*}
the affine $K$-algebra $R_h$ is a UFD and the variables $T_1,\dots,T_r$ define pairwise non-associated primes in $R_h$. Being a UFD implies that $R_h$ is normal and that the $K$-grading is factorial. The grading is also pointed, as this is inherited from $S$. In particular this implies that $R_h^* = \CC^*$ holds. We are thus in the situation of \cref{constr:hypersurface}. So $R_h$ is the Cox ring of the smooth projective variety $X_h$.
\end{proof}

\section{Proof of the main result: An example case}
\label{sec:proof}

We enter the proof of Theorem \ref{thm:fano-4d-lf-rho=2}. We restrict the discussion to an example case, see \cref{thm:s=3}. However, we will still encounter all types of arguments of the full proof. The complete elaboration will appear in~\cite{Ba24}. We start by fixing the setting.

\begin{setting}
\label{set:rho2}
Let $X$ be a locally factorial Fano fourfold of Picard number $\rho = 2$ with a hypersurface Cox ring $R = \Cox(X)$.
Write $K = \Cl(X)$ and let
\begin{equation*}
    R \ = \ \CC[T_1, \dotsc, T_7] / \langle g \rangle
\end{equation*}
an irredundant $K$-graded presentation of $R$ such that the variables $T_1,\dots,T_7$ define pairwise non-associated $K$-prime generators of $R$.
We have $X = X_g$ as in \cref{constr:hypersurface}.
We assume that $X$ is of complexity~$c = 1$.
By \cref{prop:torfree} the group~$K$ is torsion-free and we identify $K = \ZZ^2$.
By \cite{ADHL}*{Thm.~3.4.1.11} the ring $R$ is a UFD.
By \cref{prop:cplx1-hscr} and \cref{rem:facttrinom} we may thus assume that $g$ satisfies the following two conditions.
\begin{enumerate}
    \item[(C1)]
    The relation $g$ is of the form
    \begin{equation*}
        g \ = \ T^{l_1} + T^{l_2} + T^{l_3}
    \end{equation*}
    with $l_1, l_2, l_3 \in \ZZ_{\ge 0}^{7}$ such that each variable $T_1,\dots,T_7$ divides at most one monomial of $g$.
    
    \item[(C2)]
    The integers $\gcd(l_1)$, $\gcd(l_2)$ and $\gcd(l_3)$ are pairwise coprime.
\end{enumerate}
We turn to the grading map $Q$ of $R$. Write $w_i := Q(e_i) = \deg(T_i)$ and $\mu := \deg(g)$ for the degrees in $K$, also when regarded in~$K_\QQ$.
Suitably ordering $w_1,\dots,w_7$ we ensure
\begin{equation*}
    \det(w_i, w_j) \ \geq \ 0
\end{equation*}
whenever $i \le j$.
Some of the degrees $w_i$ may share a common ray.
We denote by $s$ the number of distinct rays $\rho_1,\dots,\rho_s$ generated by the degrees~$w_1,\dots,w_7$,
\begin{equation*}
    s \ := \ \#\{ \, \cone(w_i);\ i = 1,\dots,7 \, \}.
\end{equation*}
Moreover, we denote the number of generator degrees $w_j$ contained in the ray $\rho_i$ by $n_i$.
We have $s \le 7$ and
\begin{equation*}
    n_1 + \dots + n_s \ = \ 7.
\end{equation*}
Each ray $\rho_j$ in the GIT-fan $\Lambda(R)$ is of the form $\rho_j = \cone(w_i)$ for some $w_i$, but the converse may not hold.
As $X$ is locally factorial, it is in particular $\QQ$-factorial.
By \cref{prop:locprops}~(i) this means that the cone $\lambda = \SAmple(X)$ is full-dimensional.
As a GIT-cone in $K_\QQ = \QQ^2$, the cone $\lambda$ is the intersection of two projected $\bar{X}_g$-faces and thus each bounding ray of $\lambda$ contains at least one of the degrees $w_i$.
We decompose the effective cone $\Eff(R)$ into the three convex sets
\begin{equation*}
    \Eff(X) \ = \ \lambda^- \cup \lambda^\circ \cup \lambda^+,
\end{equation*}
where $\lambda^-$ and $\lambda^+$ are the convex polyhedral cones not intersecting $\lambda^\circ = \Ample(X)$ and the intersection $\lambda^+ \cap \lambda^-$ consists only of the origin.
Each of the cones~$\lambda^+$ and~$\lambda^-$ contains at least two of the generator degrees~$w_1,\dots,w_7$.
However, $\lambda^+$ as well as $\lambda^-$ may be one-dimensional. The following picture illustrates the situation for the case $s = 4$.
\hfill\break
\begin{center}
\begin{tikzpicture}[scale=0.75]

\tikzmath{
\nmu=3;
\xlambda=2;
let \len=3cm;
\xphi1=10;
\xphi2=120;
\srays=4;
\phidiff=(\xphi2 - \xphi1) / 3;
}

\foreach \i[remember=\i as \lasti (initially 1)] in {1, ...,\srays}
{
    \tikzmath{
    coordinate \c;
    coordinate \lastc;
    \c = (\i*\phidiff - \phidiff + \xphi1:\len);
    \lastc = (\lasti*\phidiff - \phidiff + \xphi1:\len);
    }
    
	\path[top color=gray!10!, bottom color=gray!50!] 
	(0,0)--(\lastc)--(\c)--cycle;
}

    \tikzmath{
    coordinate \rightboundary;
    coordinate \leftboundary;
    \rightboundary = (\xlambda*\phidiff - \phidiff + \xphi1:\len);
    \i=\xlambda + 1;
    \leftboundary = (\i*\phidiff - \phidiff + \xphi1:\len);
    }
    
    \path[top color=gray!40!, bottom color=gray!90!]
	(0,0)--(\rightboundary)--(\leftboundary)--cycle;
	
	\draw[color=white] ($0.375*(\leftboundary) + 0.375*(\rightboundary)$) node {\large$\lambda^\circ$};
    \draw[color=black] ($0.375*(\xphi1:\len) + 0.375*(\rightboundary)$) node {\large$\lambda^-$};
    \draw[color=black] ($0.375*(\leftboundary) + 0.375*(\xphi2:\len)$) node {\large$\lambda^+$};


\tikzmath{
coordinate \c;
\c = (\xphi1:\len);
}

\draw[thick,color=black] (0,0) -- (\c);

\tikzmath{
coordinate \u;
\ulen=0.75;
\u = (\xphi1:\len/2*\ulen);
}
\fill (\u) circle (0.5ex) node[label=below:$w_1$]{};


\tikzmath{
coordinate \c;
\c = (\phidiff + \xphi1:\len);
}

\draw[thick,color=black] (0,0) -- (\c);

\tikzmath{
coordinate \u;
\ulen=0.75;
\u = (\phidiff + \xphi1:\len/2*\ulen);
}
\fill (\u) circle (0.5ex);


\tikzmath{
coordinate \c;
\c = (2*\phidiff + \xphi1:\len);
}

\draw[thick,color=black] (0,0) -- (\c);

\tikzmath{
coordinate \u;
\ulen=0.75;
\u = (2*\phidiff + \xphi1:\len/2*\ulen);
}
\fill (\u) circle (0.5ex);


\tikzmath{
coordinate \c;
\c = (\xphi2:\len);
}

\draw[thick,color=black] (0,0) -- (\c);

\tikzmath{
coordinate \u;
\ulen=0.75;
\u = (\xphi2:\len/2*\ulen);
}
\fill (\u) circle (0.5ex) node[label=left:$w_7$]{};

\tikzmath{
coordinate \reldeg;
\reldeg = (\nmu*\phidiff - \phidiff + \xphi1:0.75*\len);
}
\filldraw[fill=white, draw=black] (\reldeg) circle (0.5ex);

\end{tikzpicture}
\end{center}
The black dots represent the generator degrees $w_1,\dots,w_7$. The white dot represents the relation degree $\mu$. In this example the cones $\lambda^+$ and $\lambda^-$ are full-dimensional.
\end{setting}

\begin{remark}
Let $X$ be as in \cref{set:rho2}.
\begin{enumerate}
    \item
    The variety $X$ is uniquely determined by its \emph{specifying data} $(Q,g)$: The variety~$X(Q,g) := X_g$ as in \cref{constr:hypersurface} satisfies $X \cong X(Q,g)$.
    \item Up to reversing order, the tuple $(n_1,\dots,n_s)$ is invariant under automorphisms of $K$. We call it the \emph{degree constellation} of $X$.
\end{enumerate}
\end{remark}

\begin{remark}
\label{rem:isotest}
Given specifying data $(Q,g)$ and $(Q',g')$, we need criteria to decide computationally whether or not the varieties $X(Q,g)$ and $X(Q',g')$ are isomorphic. Here we can make use of \cite{BaHa22}*{Prop.~3.4}: If $X(Q,g)$ and $X(Q',g')$ are isomorphic, then~$g$ and~$g'$ coincide up to permutation of variables.
\end{remark}

\cref{set:rho2} divides the proof of Theorem \ref{thm:fano-4d-lf-rho=2} into six cases, according to the number $s$ of rays spanned by the degrees $w_1,\dots,w_7$. Here we treat the case $s = 3$ as an example, showing basically all arguments used in the full proof, see \cite{Ba24}.

\begin{theorem}\label{thm:s=3}
The tables from \ref{class:s=3-sample}, \ref{class:s=3-nsample} and \ref{class:s=3-series} provide specifying data~$(Q,g)$ for~$223$ sporadic cases and $4$ infinite series of locally factorial Fano fourfolds of Picard number $\rho = 2$ and complexity $c = 1$ with a hypersurface Cox ring and~$s = 3$.
Moreover, any locally factorial Fano fourfold with a hypersurface Cox ring and invariants $(\rho,c,s) = (2,1,3)$ is isomorphic to precisely one~$X(Q,g)$ with $(Q,g)$ from these tables.
\end{theorem}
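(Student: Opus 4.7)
The plan is to enumerate, up to the isomorphism criterion of \cref{rem:isotest}, all specifying data $(Q,g)$ compatible with \cref{set:rho2} and $s = 3$, then apply the local factoriality and Fano conditions to extract the finite sporadic list together with the finitely many infinite series. I would first stratify by the degree constellation $(n_1, n_2, n_3)$ with $n_1 + n_2 + n_3 = 7$, fix primitive ray generators $v_1, v_2, v_3 \in \ZZ^2$ with $\det(v_i, v_j) > 0$ for $i < j$, and write each $w_i$ as a positive integer multiple of the corresponding $v_j$. Normalizing by the $\GL_2(\ZZ)$-action (say $v_1 = e_1$) and by the ordering $\det(w_i, w_j) \geq 0$ for $i \leq j$ fixes a canonical representative per $K$-isomorphism class.

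Next I would split into subcases according to the position of $\lambda = \SAmple(X)$, which by $\QQ$-factoriality is two-dimensional with boundary rays among $\rho_1, \rho_2, \rho_3$, and then according to the position of $\mu = \deg(g)$. \cref{prop:hypmov} restricts $\mu \in \Mov(R_g)$, and when $\lambda = \cone(v_1, v_3)$ forces $v_2 \in \lambda^\circ$ we are in the situation of \cite{HLM}*{Prop.~2.8}: $\mu$ lies on the ray through $v_2$, only generator degrees on that ray sit in $\lambda^\circ$, and $g$ must contain a pure-power monomial $T_j^{l_j}$ in a variable on that ray. In the ample case $\mu \in \lambda^\circ$ I would invoke \cref{prop:general-is-smooth}: a generic $h \in S_\mu$ gives a smooth Fano fourfold $X_h$ with the same grading data, so the classification of smooth Fano fourfolds of Picard number two with hypersurface Cox ring \cite{HLM}*{Thm.~1.1} supplies a finite explicit list of admissible grading tables; I then filter for those with $s = 3$ and for which a trinomial $g$ satisfying (C1), (C2) actually exists.

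For the non-ample subcases the generator degrees and exponent patterns are constrained by combining: the trinomial form (C1) with pairwise coprime monomials, the coprimeness (C2) of $\gcd(l_1), \gcd(l_2), \gcd(l_3)$, the local factoriality generating conditions of \cref{lemma:2gen} and \cref{lemma:3gen} (which fix which index sets of variables may appear in each monomial of $g$), and the homogeneity equalities $Q(l_1) = Q(l_2) = Q(l_3) = \mu$. The Fano condition from \cref{prop:antican}, namely $w_1 + \cdots + w_7 - \mu \in \lambda^\circ$, becomes an explicit strict linear inequality in the multiplicities of the $w_i$ along each ray and in the exponents of $g$; this either bounds everything, producing sporadic cases, or leaves exactly one free parameter, producing an infinite series. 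Redundancy is cleared with \cref{rem:isotest} by checking equality of $g$ up to permutation of variables together with the $\GL_2(\ZZ)$-normalization above.

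The main obstacle is the combinatorics of the intermediate case $\lambda = \cone(v_1, v_3)$ with $v_2 \in \lambda^\circ$: here the divisibility clause of \cref{lemma:3gen}(iii), forcing the exponent of the pure power on the interior ray to be a multiple of the index $[K : \langle w_i, w_k \rangle]$, interacts with the coprimeness requirement (C2) and with the Fano inequality, and the permutation symmetry among the three monomials of $g$ has to be tracked carefully to avoid double counting. This is where the four infinite series arise; ruling out all remaining unbounded families and matching the bounded solutions exactly with the entries of \cref{class:s=3-sample}, \cref{class:s=3-nsample} and \cref{class:s=3-series} is a finite, computer-assisted but conceptually routine enumeration once the constraints above have been assembled.
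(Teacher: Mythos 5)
Your proposal follows essentially the same route as the paper: split on the position of $\mu$ relative to $\SAmple(X)$, invoke \cref{prop:general-is-smooth} together with the smooth classification \cite{HLM}*{Thm.~1.1} when $\mu$ is semiample and in the interior of the moving cone, and otherwise combine \cref{lemma:2gen}, \cref{lemma:3gen}, \cref{prop:hypmov}, homogeneity and the Fano inequality from \cref{prop:antican} to bound or parametrize the data, filtering by \cref{rem:isotest}. One small correction of emphasis: the four infinite series do not come from the configuration with $\rho_2 \subseteq \lambda^\circ$, but from the non-semiample case $\mu \in (\rho_2+\rho_3)^\circ$ with degree constellation $(3,1,3)$, where the single generator degree on $\rho_2$ divides no monomial of $g$, so homogeneity and the Fano condition leave its multiple $a$ unconstrained.
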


The proof of \cref{thm:s=3} splits into two parts.
First, with the tools provided in \cref{sec:basics} we verify that each specifying data $(Q,g)$ from the tables in \ref{class:s=3-sample}, \ref{class:s=3-nsample} and \ref{class:s=3-series} defines a locally factorial Fano fourfold $X(Q,g)$ with a hypersurface Cox ring and invariants $(\rho,c,s) = (2,1,3)$.
Moreover, with the help of \cref{rem:isotest} one verifies that distinct specifying data from the tables in \ref{class:s=3-sample}, \ref{class:s=3-nsample} and \ref{class:s=3-series} define pairwise non-isomorphic varieties.
The second part is to show that any locally factorial Fano fourfold with a hypersurface Cox ring and invariants $(\rho,c,s) = (2,1,3)$ is isomorphic to some $X(Q,g)$ with $(Q,g)$ from these tables.
We divide the proof of this into the two general cases
\begin{equation*}
    \mu \ \in \ \SAmple(X),
    \qquad\qquad
    \mu \ \not\in \ \SAmple(X).
\end{equation*}
The case $\mu \in \SAmple(X)$ will be treated in \cref{prop:s=3-sample}. In \cref{prop:s=3-nsample} we treat the case $\mu \not\in \SAmple(X)$. We start with some general observations that will be used throughout the proof.

\begin{lemma}
\label{lemma:fano-ufp}
Let $ b > a > 1$ be coprime integers. If $ab \le 2 + a + b$ holds, then we have~$a=2$ and $b = 3$.
\end{lemma}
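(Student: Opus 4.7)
The plan is to rewrite the inequality $ab \le 2 + a + b$ in the factored form $(a-1)(b-1) \le 3$, which is the standard trick (Simon's favorite factoring, or $ab-a-b+1 = (a-1)(b-1)$) for hyperbolic Diophantine inequalities of this shape. The rest of the argument is then a finite case check bounded by the divisors of $2$ and $3$.

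Under the hypotheses $b > a > 1$ with $\gcd(a,b)=1$, I would first observe that $a \ge 2$ and (since $b>a$ and $b$ cannot equal $a$ or share a factor with $a$) in fact $b \ge 3$. This yields the lower bound $(a-1)(b-1) \ge 1 \cdot 2 = 2$. Combined with the rearranged hypothesis $(a-1)(b-1) \le 3$, this forces
\begin{equation*}
    (a-1)(b-1) \ \in \ \{2,3\}.
\end{equation*}

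Next I would enumerate the factorizations. Since $a-1 \ge 1$ and $b-1 \ge 2$ with $a-1 \le b-1$, the value $2$ forces $(a-1,b-1) = (1,2)$, giving $(a,b)=(2,3)$, which are indeed coprime. The value $3$ forces $(a-1,b-1) = (1,3)$, giving $(a,b)=(2,4)$, which violates $\gcd(a,b)=1$. Hence only $(a,b) = (2,3)$ survives, proving the lemma.

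The argument involves no real obstacle; the only thing to be careful about is to correctly exclude the $(a,b) = (2,4)$ branch via the coprimality assumption, which is exactly the role played by that hypothesis in the statement. This lemma will presumably be used to rule out most exponent triples $(l_1,l_2,l_3)$ in a later case analysis, leaving only the one coming from $(a,b)=(2,3)$.
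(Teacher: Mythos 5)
Your proof is correct. You take a slightly different route from the paper: you rewrite $ab \le 2+a+b$ as $(a-1)(b-1) \le 3$ and finish by enumerating the two admissible factorizations, discarding $(a,b)=(2,4)$ by coprimality. The paper instead divides the inequality by $ab$ to get $1 \le \tfrac{2}{ab}+\tfrac{1}{b}+\tfrac{1}{a}$, uses $ab \ge 6$ and $\tfrac{1}{b}<\tfrac{1}{a}$ to force $a=2$, and then reduces the original inequality to $2b \le 4+b$, where $b>a$ and coprimality give $b=3$. Both arguments are elementary finite checks of the same hyperbolic inequality; your factoring identity packages the case analysis a bit more cleanly and makes the role of the coprimality hypothesis (killing the $(2,4)$ branch) completely explicit, while the paper's reciprocal estimate is the form that generalizes most directly to the unit-fraction inequalities (such as $1 < \tfrac{2}{l_2}+\tfrac{2}{l_3}$) appearing elsewhere in the text. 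One tiny remark: your lower bound $(a-1)(b-1)\ge 2$ is not strictly needed, since the value $1$ would force $a=b=2$, which already contradicts $b>a$; but including it does no harm.
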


\begin{proof}
Dividing both sides of the inequality in the assertion by $ab$, we obtain
\begin{equation*}
    1 \ \le \ \frac{2}{ab} + \frac{1}{b} + \frac{1}{a}.
\end{equation*}
We have $ab \ge 6$ as well as $1/b < 1/a$. With this we obtain $a = 2$. The original inequality turns into $2b \le 4 + b$. As $b > a$ holds and $a$ and $b$ are coprime, this is only fulfilled for $b = 3$.
\end{proof}

\begin{lemma}
\label{lemma:primgen}
In the situation of \cref{set:rho2}, for each ray $\rho_j$, at most two of the generator degrees $w_i$ contained in $\rho_j$ are non-primitive lattice points.
\end{lemma}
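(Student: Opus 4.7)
The plan is to argue by contradiction. Suppose a ray $\rho$ carries three non-primitive generator degrees; write them as $w_{i_\ell} = c_\ell v$ for $\ell = 1, 2, 3$, with $v$ primitive on $\rho$ and $c_\ell \ge 2$. The goal is to exhibit a two-dimensional $X$-face $\gamma_0 \preceq \gamma$ whose degree subgroup $\ZZ\langle w_i \mid e_i \in \gamma_0\rangle$ is a proper subgroup of $K = \ZZ^2$, contradicting local factoriality via \cref{prop:locprops}\,(ii).

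The case $\rho \subseteq \lambda^\circ$ is ruled out immediately by \cite{HLM}*{Prop.~2.8}, which asserts that at most one generator degree can lie in $\lambda^\circ$. In the remaining cases, $\rho$ is either a bounding ray of $\lambda$ or lies strictly outside $\lambda$, and one may choose a generator $w_k$ such that $\lambda \subseteq \cone(w_{i_\ell}, w_k)$ for each $\ell = 1, 2, 3$. Since $\det(c_\ell v, w_k) = c_\ell \det(v, w_k)$ has absolute value at least two, the pair $(w_{i_\ell}, w_k)$ cannot generate $K$, and \cref{lemma:2gen} forces $g$ to contain a unique monomial of the form $T_{i_\ell}^{a} T_k^{b}$ for each $\ell$.

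Condition (C1), combined with this uniqueness, reduces the proof to two extremal situations: either all three $T_{i_\ell}$ appear in $g$, or none of them does. In the first, (C1) together with the uniqueness statements force each monomial of $g$ into the pure-power shape $M_\ell = T_{i_\ell}^{a_\ell}$, with the $a_\ell \ge 2$ pairwise coprime by (C2); homogeneity $a_\ell c_\ell = \text{const}$ then yields $c_\ell = a_{\ell'}a_{\ell''}m$, whence $\gcd(c_1, c_2) = a_3 m \ge 2$. The face $\gamma_0 = \cone(e_{i_1}, e_{i_2}, e_k)$ is then checked to be a two-dimensional $X$-face (the restriction $T_{i_1}^{a_1}+T_{i_2}^{a_2}$ vanishes with both coordinates nonzero) whose subgroup $\gcd(c_1, c_2)\ZZ v + \ZZ w_k$ is proper in $K$. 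In the second configuration, $g$ must contain a pure power $T_k^b = \mu$, and choosing $w_k$ on an extreme ray of $\Eff(X)$ together with \cref{prop:hypmov} pins the remaining monomials of $g$ to variables with degrees on the ray of $w_k$; then the face $\gamma_0 = \cone(e_{i_1}, e_{j_1}, e_{j_2}, e_{j_3})$, with the $T_{j_m}$ those remaining variables, is a two-dimensional $X$-face whose subgroup $c_1\ZZ v + (\text{subgroup on the ray of }w_k)$ fails to contain $v$ because $c_1 \ge 2$.

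The principal obstacle is the elimination of the intermediate subcases, in which exactly one or two of the $T_{i_\ell}$ appear in $g$. In any such subcase, a monomial $T_{i_\ell}^a \cdots$ in $g$ together with a coexisting pure power $T_k^b$ in a different monomial of $g$ would constitute two distinct monomials of the form $T_{i_\ell}^a T_k^b$, contradicting the uniqueness output of \cref{lemma:2gen}; this observation rules out the mixed subcases cleanly and reduces the analysis to the two extremal configurations handled above.
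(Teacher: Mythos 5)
Your overall strategy is the paper's: rule out $\rho_j \subseteq \lambda^\circ$ via \cite{HLM}*{Prop.~2.8}, choose $w_k$ with $\lambda \subseteq \cone(w_{i_\ell},w_k)$, apply \cref{lemma:2gen} three times, and exploit pairwise coprimality of the monomials together with (C2). Your endgame differs only cosmetically: you build an explicit $X$-face violating \cref{prop:locprops}~(ii) (the $\gcd(c_1,c_2)=a_3m\ge 2$ computation and the face $\cone(e_{i_1},e_{i_2},e_k)$ are correct), where the paper routes the same contradiction through \cref{lemma:3gen}.

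The genuine gap is in your reduction to the two ``extremal'' configurations. \cref{lemma:2gen} produces, for each $\ell$, a unique monomial $N_\ell$ of $g$ involving \emph{only} the variables $T_{i_\ell}$ and $T_k$; it says nothing about an arbitrary monomial of $g$ that merely happens to be divisible by $T_{i_\ell}$. Such a monomial may involve a third variable $T_m$ with $m\ne k$, and then it is \emph{not} of the form $T_{i_\ell}^{a}T_k^{b}$, so its coexistence with a pure power $T_k^{b}$ does not violate the uniqueness clause. Thus your elimination of the intermediate subcases does not exclude, say, $g = T_k^{b} + T_{i_1}^{a}T_m^{c} + \dots$; and for the same reason, in your first case ``all three $T_{i_\ell}$ appear'' plus (C1) and uniqueness does not by itself force pure-power shape. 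The correct dichotomy is on whether each $N_\ell$ actually involves $T_{i_\ell}$: if all do, the $N_\ell$ are the three distinct monomials of $g$, at most one is divisible by $T_k$, so at least two --- hence by homogeneity all three --- are pure powers $T_{i_\ell}^{a_\ell}$, and your first case goes through; if some $N_\ell = T_k^{b}$, then $\mu = bw_k$, and only now does a degree argument (take $w_k$ on an extreme ray of $\Eff(R)$; a monomial of degree $\mu \in \cone(w_k)$ cannot be divisible by a variable whose degree has a nonzero component off that ray) show that no $T_{i_\ell}$ divides any monomial of $g$, which is your second case. With the case split re-founded on this basis, the rest of your argument closes the proof.
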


\begin{proof}
Assume that $\rho_j$ contains three non-primitive generator degrees $w_{i_1}, w_{i_2}$ and~$w_{i_3}$. Applying a unimodular transformation if necessary, we may assume that~$\rho_j$ is the ray generated by the first standard basis vector. Write
\begin{equation*}
    w_{i_1} \ = \ (a_{i_1},0),
    \qquad
    w_{i_2} \ = \ (a_{i_2},0),
    \qquad
    w_{i_3} \ = \ (a_{i_3},0).
\end{equation*}
As the degrees $w_{i_1}, w_{i_2}, w_{i_3}$ are non-primitive, we have $a_{i_1},a_{i_2},a_{i_3} > 1$. By \cite{HLM}*{Prop.~2.8} the ray $\rho_j$ does not lie in the interior of $\lambda$. Thus there is a generator degree~$w_k$ such that $\lambda$ is contained in the two-dimensional cone $\tau_m = \cone(w_{i_m},w_k)$ for $m = 1,2,3$. Since $w_{i_m}$ is not primitive, the degrees $w_{i_m}$ and $w_k$ do not generate~$K$ as a group. \cref{lemma:2gen} thus tells us that the relation $g$ contains monomials of the form $T_{i_m}^{l_{i_m}}T_k^{l_k(m)}$. As the monomials of $g$ a pairwise coprime, at least two of the exponent $l_k(m)$ must be zero. By homogeneity of $g$ we obtain~$l_k(1) = l_k(2) = l_k(3) = 0$ and $\mu$ lies in $\rho_j$. In particular, we have
\begin{equation*}
    \mu \ = \ l_{i_1}w_{i_1} \ = \ l_{i_2}w_{i_2} \ = \ l_{i_3}w_{i_3}.
\end{equation*}
By the condition (C2) from \cref{set:rho2}, the integers $l_{i_1},l_{i_2},l_{i_3}$ are pairwise coprime. Moreover, they are all bigger than one by irredundancy of the presentation of $R$. Let $p$ a prime divisor of $l_{i_1}$. Then $p$ must divide both $a_{i_2}$ and $a_{i_3}$. In particular, $a_{i_2}$ and $a_{i_3}$ are not coprime. Thus the three degrees $w_{i_2}, w_{i_3}$ and $w_k$ do not generate $K$ as a group. By \cref{lemma:3gen} the relation~$g$ must therefore contain a monomial of the form $w_k^{l_k}$. This is a contradiction to the position of $\mu$. Thus at least one of $w_{i_1}, w_{i_2}, w_{i_3}$ is primitive.
\end{proof}

\begin{lemma}
\label{lemma:nonsemiample}
In the situation of \cref{set:rho2}, assume that $\mu \in \lambda^+ \backslash \lambda$ holds.
Then the following hold.

\begin{enumerate}
\item
The cone $\lambda$ is regular and
every generator degree lying on its boundary is primitive.

\item
All generator degrees contained in $\lambda^-$ coincide. 
In particular, $n_1 \geq 2$ holds and $\lambda^- = \rho_1$ is a bounding ray of~$\lambda$.
\end{enumerate}
\end{lemma}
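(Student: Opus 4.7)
The plan splits into the two parts of the statement.

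For \textbf{part (i)}, I apply \cref{lemma:2gen} to any pair $(w_i, w_j)$ with $w_i$ on the bounding ray $\rho_a$ of $\lambda$ and $w_j$ on the opposite bounding ray $\rho_b$; both exist since each bounding ray of $\lambda$ carries at least one generator. Since $\lambda \subseteq \cone(w_i, w_j)$, the dichotomy reads: either $w_i, w_j$ generate $K$, or $g$ contains a unique monomial $T_i^{l_i} T_j^{l_j}$ with $l_i + l_j > 0$. In the latter case $\mu = l_i w_i + l_j w_j$ lies in $\cone(w_i, w_j) = \lambda$, contradicting $\mu \in \lambda^+\setminus\lambda$. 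Hence $w_i, w_j$ generate $K$, so both are primitive and $\lambda$ is regular.

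For \textbf{part (ii)}, using (i) I pick a $\ZZ$-basis $v_a = (1, 0)$, $v_b = (0, 1)$ of $K$ realising $\lambda = \QQ_{\geq 0}^2$ and $\mu = (-X, Y)$ with $X, Y > 0$. Via the sort order on the $w_i$ together with the Setting requirement $|\lambda^- \cap \{w_1, \dotsc, w_7\}| \geq 2$, the conclusion reduces to showing that no generator lies strictly below $\rho_a$; equivalently, assuming $w_p \in \lambda^- \setminus \rho_a$ I derive a contradiction. Writing $w_p = (x_p, y_p)$ with $x_p > 0$, $y_p < 0$, an application of \cref{lemma:2gen} to $(w_p, w_j)$ for $w_j \in \rho_b$ kills the monomial alternative (since $l_p x_p \geq 0 \neq \mu_x = -X$) and forces $w_p = (1, y_p)$. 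Because $\mu_x < 0$, every monomial of $g$ must use at least one generator in $S_+ := \{k : (w_k)_x < 0\}$, so coprimality of supports yields $|S_+| \geq 3$; together with the fact that each of $\rho_a$, $\rho_b$ carries a generator, the admissible quadruples $(|S_-|, |S_a|, |S_b|, |S_+|)$ form a short list.

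For each such quadruple I combine three ingredients. \cref{lemma:3gen}~(ii), applied to $w_a \in \rho_a$ and any two $w_j, w_k \in S_+$ (valid since $\mu \notin \rho_a$, so $g$ has no monomial $T_a^{l_a}$), forces the $y$-coordinates $(w_k)_y$ for $k \in S_+$ to be pairwise coprime. \cref{lemma:3gen}~(i), applied to $w_p, w_a \in \lambda^-$ and $w_k \in S_+$ off the ray of $\mu$, gives $\gcd(|y_p|, (w_k)_y) = 1$. The Fano identity $-\KKK = \sum_k w_k - \mu \in \lambda^\circ$ from \cref{prop:antican} supplies two linear inequalities in the coordinate sums of the $w_k$. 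A case-by-case inspection of the trinomial $g$ on each admissible quadruple, coupling these three inputs, rules out the configuration.

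\textbf{Main obstacle.} The crux is the case analysis, especially the borderline subcases $|y_p| = 1$, where the gcd-constraint from \cref{lemma:3gen}~(i) degenerates, and the scenario where some $S_+$-generator lies on the ray of $\mu$, which disables one application of \cref{lemma:3gen}. In those cases the pairwise coprimality of the $(w_k)_y$ for $k \in S_+$ must be combined with an explicit monomial-by-monomial dissection $\mu = \alpha_{c_i} w_{c_i} + \cdots$ of the three monomials of $g$ to force incompatibility with the Fano inequality, relying heavily on the restricted list of admissible $(|S_-|, |S_a|, |S_b|, |S_+|)$.
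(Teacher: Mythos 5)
Part (i) of your proposal and the opening of part (ii) (killing the monomial alternative in \cref{lemma:2gen} because $\mu$ has negative first coordinate, hence forcing $x_p=1$) coincide with the paper's argument and are correct. The problem is the rest of part (ii): the decisive step, ruling out a generator $w_p=(1,y_p)$ with $y_p<0$, is not actually proved. You replace it with a proposed case analysis over the quadruples $(|S_-|,|S_a|,|S_b|,|S_+|)$ driven by \cref{lemma:3gen} and the Fano inequality, and you yourself flag that in the borderline case $y_p=-1$ the gcd input from \cref{lemma:3gen}~(i) degenerates (indeed $(1,-1)$ and $(1,0)$ already generate $\ZZ^2$, so that lemma gives nothing there), and that the case where two $S_+$-degrees sit on the ray through $\mu$ disables another application. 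Those are exactly the configurations that need to be excluded, so the proof as written has a genuine gap: the claim that "a case-by-case inspection \dots rules out the configuration" is asserted, not established.

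The paper closes this gap with a short argument that avoids both the case distinction and the Fano condition. Using that the three degrees of negative first coordinate, say $w_5,w_6,w_7=(-a_k,b_k)$, either contain one with $b_k=1$, or else all have $b_k>1$, in which case \cref{lemma:2gen} applied to $(w_i,w_k)$ for $w_i=(1,0)$ forces monomials $T_i^{*}T_k^{*}$ for $k=5,6,7$; since at most one monomial of $g$ involves $T_i$, two of these are pure powers $T_k^{l_k}$, so two of $w_5,w_6,w_7$ lie on the ray through $\mu$, and \cref{lemma:3gen}~(ii) applied to $w_i$ and those two degrees forces the primitive generator $v$ of that ray to have second coordinate $1$. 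Either way $\Eff(R)$ contains a lattice point $v=(-a,1)$ with $a\ge 1$ lying in $\lambda^+$, and then the single inequality $0<\det(w_p,v)=1-a\,|y_p|$ forces $y_p=0$. If you want to complete your proof along your own lines you would have to actually carry out and verify the Fano-based elimination in every admissible configuration; the determinant trick above is the missing idea that makes this unnecessary.
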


\begin{proof}
We prove~(i).
Let $1 \le i < j \le 7$ such that $\lambda = \cone(w_i,w_j)$ holds.
Since the relation degree $\mu$ is not contained in $\lambda$, the relation $g$ does not contain a monomial of the form $T_i^{l_i} T_j^{l_j}$.
By Lemma \ref{lemma:2gen}, the generator degrees $w_i$ and $w_j$ generate $K$ as a group.

We prove~(ii).
Let $1 \le i < j \le 7$ such that $\lambda = \cone(w_i,w_j)$ holds.
By (i) we may assume that $w_i = (1,0)$ and $w_j = (0,1)$ holds.
We write $w_1 = (a_1,-b_1)$ for some~$a_1, b_1 \in \ZZ_{\ge 0}$.
Applying \cref{lemma:2gen} to the generator degrees $w_1$ and~$w_j$ shows that $a_1 = 1$ holds.
So in order to verify item (ii) it suffices to show that $b_1 = 0$ holds.
For this we first show that $\Eff(R)$ contains a lattice point $v \in \ZZ^2$ of the form~$v = (-a, 1)$ for some $a \in \ZZ_{\geq 1}$.
Note that $w_5, w_6, w_7$ do not lie in $\lambda$.
Write~$w_k = (-a_k,b_k)$ for $k = 5, 6, 7$.
If~$b_k = 1$ holds for one of those, then we have found such a point $v$.
Otherwise we have $b_5, b_6, b_7 > 1$.
Thus $\det(w_i,w_k) = b_k > 1$ holds and by \cref{lemma:2gen}, the relation $g$ contains monomials of the form~$T_i^{k_i} T_5^{k_5}$, $T_i^{l_i} T_6^{l_6}$ and~$T_i^{m_i} T_7^{m_7}$.
Since only at most one monomial of $g$ is divisible by $T_i$, we conclude that two of the exponents $k_i, l_i$ and $m_i$ must be zero.
Homogeneity of $g$ thus implies that two of the generator degrees $w_5, w_6, w_7$ lie on the ray through~$\mu$.
Let $v = (v_1, v_2) \in \ZZ^2$ denote the primitive lattice vector on this ray.
We apply \cref{lemma:3gen} to $w_i$ and the two generator degrees on the ray through $\mu$ to infer~$v_2 = \det(w_i, v) = 1$.
Thus $v$ is a lattice point of the desired form.
From $w_1 \in \lambda^-$ and $v \in \lambda^+$ we infer
\begin{equation*}
    0 \ < \ \det(w_1,v) \ = \ 1 - a b_1.
\end{equation*}
As $a$ is positive, this inequality can only be fulfilled by $b_1 = 0$. Hence $w_1 = (1,0)$ holds, which proves the assertion.
\end{proof}

\begin{proposition}
\label{prop:s=3-sample}
Let $X$ be as in \cref{set:rho2} with~$s = 3$ and assume that $\mu \in \lambda$ holds. Then $X$ is isomorphic to $X(Q,g)$ with specifying data $(Q,g)$ appearing in \cref{class:s=3-sample}.
\end{proposition}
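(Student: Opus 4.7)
The plan is to exploit the smoothing procedure of \cref{prop:general-is-smooth} in order to reduce the classification to that of smooth Fano fourfolds of Picard number two with a hypersurface Cox ring, available in \cite{HLM}*{Thm.~1.1}, and then to enumerate the additional locally factorial (possibly non-smooth) trinomial relations compatible with each admissible grading.

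As a first step I verify the hypotheses of \cref{prop:general-is-smooth}: by assumption $\mu \in \lambda = \SAmple(X)$, so it remains to ensure $\mu \in \Mov(X)^\circ$. The standing full-dimensionality assumption on $\Mov(S)$ from \cref{constr:toric-cox} together with $s = 3$ forces $n_1 \geq 2$ and $n_3 \geq 2$, whence $\Mov(X) = \cone(\rho_1, \rho_3) = \Eff(X)$. A short case analysis, using \cref{prop:hypmov} and the trinomial-with-pairwise-coprime-monomials structure of $g$, rules out $\mu \in \rho_1 \cup \rho_3$, yielding $\mu \in \Mov(X)^\circ$. Applying \cref{prop:general-is-smooth} then produces, for generic $h \in S_\mu$, a smooth projective fourfold $X_h$ with $\Cl(X_h) = K$, the same generator degrees $w_1,\ldots,w_7$ and relation degree $\mu$ as $X$; by \cref{prop:antican} it is moreover Fano.

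Next I invoke \cite{HLM}*{Thm.~1.1}, filtered to $r = 7$ generators and $s = 3$ distinct rays, which provides a finite list of admissible specifying data $(Q,\mu)$ whose $(Q,\mu)$-part must coincide with ours. For each such $(Q,\mu)$, I enumerate all trinomials $g \in S_\mu$ satisfying conditions (C1) and (C2) from \cref{set:rho2}. The $K$-homogeneity condition $\deg(T^{l_i}) = \mu$ together with pairwise coprimality restricts the exponent vectors $l_1, l_2, l_3$ to a finite set. Local factoriality is then tested via \cref{prop:locprops}~(ii), using \cref{lemma:2gen} and \cref{lemma:3gen} to constrain precisely which monomials may appear in $g$, while the Fano property follows automatically from the fact that $(Q,\mu)$ already appears on the HLM list. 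The supplementary bound from \cref{lemma:primgen} further restricts primitivity of generator degrees on each ray.

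Finally I apply \cref{rem:isotest} to identify isomorphic specifying data and thereby verify that the resulting redundancy-free list of $(Q,g)$ coincides with \cref{class:s=3-sample}. The main obstacle I anticipate is the systematic case-by-case bookkeeping in this last step: for each of the finitely many admissible gradings, several trinomials can be $K$-homogeneous of degree $\mu$, and each candidate must be tested against the local-factoriality criterion and then sorted up to permutation of variables. A secondary technical hurdle is the boundary analysis $\mu \in \rho_1 \cup \rho_3$ carried out at the outset, which must be disposed of before the smoothing machinery can be invoked, and which one would expect to handle by direct contradiction with the Fano inequality $-\KKK \in \lambda^\circ$ combined with \cref{lemma:2gen,lemma:3gen}.
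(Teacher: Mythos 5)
There is a genuine gap, and it is located exactly where you try to dispose of the ``boundary analysis'' at the outset. Your claim that $s=3$ forces $n_1\ge 2$ and $n_3\ge 2$, hence $\Mov(X)=\rho_1+\rho_3=\Eff(X)$ and then $\mu\in\Mov(X)^\circ$, is false. Full-dimensionality of $\Mov(S)$ does not prevent an outer ray from carrying a single generator degree: e.g.\ for $Q=\left[\begin{smallmatrix}1&1&1&1&0&0&0\\0&1&1&1&1&1&1\end{smallmatrix}\right]$ one has $n_1=1$ and $\Mov(R)=\rho_2+\rho_3\subsetneq\Eff(R)$, which is still two-dimensional. More importantly, the case $\mu\in\partial\Mov(R)$ (including $\mu$ on a bounding ray of $\Eff(R)$) genuinely occurs and cannot be ruled out by the Fano inequality: it accounts for the bulk of \cref{class:s=3-sample}, namely IDs $81$--$226$ (for instance ID $149$ has $\mu=(2,2)\in\rho_2=\partial\Mov(R)$, and IDs $220$--$222$ have $\mu=(6,0)\in\partial\Eff(R)$). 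Since \cref{prop:general-is-smooth} requires $\mu\in\Mov(X)^\circ$, the smoothing/HLM reduction is unavailable there, and your argument produces only the sporadic data with $\mu\in\Mov(R)^\circ$ (IDs $19$--$80$), silently dropping the rest of the statement you are supposed to prove.

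What is missing is precisely the second half of the paper's proof: for $\mu\in\partial\Mov(R)$ one must pin down $Q$ directly, by combining \cref{prop:hypmov} and \cref{lemma:2gen,lemma:3gen,lemma:primgen} to show the relevant cones are regular and most generator degrees are primitive, then exploit homogeneity of $g$ (relations of the type $k=l_5a_5=l_6a_6$) together with the Fano condition $-\KKK\in\lambda^\circ$ to obtain finitely many degree constellations and exponent patterns; \cref{lemma:fano-ufp} is needed to bound the resulting platonic-type triples. None of this is a routine ``case-by-case bookkeeping'' appendix to the smooth case --- it is an independent argument, and without it the proposition is not proved. The interior case $\mu\in\Mov(R)^\circ$ of your proposal does match the paper's approach.
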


\begin{proof}
We divide the proof into the two cases $\mu \in \Mov(R)^\circ$ and $\mu \in \partial\Mov(R)$.

\hfill\break
\noindent\emph{Case:} $\mu \in \Mov(R)^\circ$.
We are in the situation of \cref{prop:general-is-smooth}. Thus, for a general polynomial~$h \in \CC[T_1,\dots,T_7]$ of degree $\deg(h) = \mu$, the projective variety~$X_h$ is smooth with divisor class group $\Cl(X_h) = K$ and Cox ring $\Cox(X_h) = R_h$. Moreover, by \cref{prop:antican} $X_h$ is Fano. Thus, $X_h$ is a smooth Fano fourfold of Picard number two with a spread hypersurface Cox ring. In particular, up to unimodular equivalence, the grading matrix $Q = (w_1,\dots,w_7)$ together with the relation degree~$\mu = \deg(g)$ appear in the classification list in \cite{HLM}*{Thm.~1.1}. For each such entry~$(Q,\mu)$ with $s = 3$ we determine all trinomials $g$ of degree~$\deg(g) = \mu$ that satisfy the conditions~(C1) and~(C2) from \cref{set:rho2} and filter the resulting list by isomorphy. This yields the specifying data no. $19$ to $80$ in \cref{class:s=3-sample}.

\hfill\break
\noindent\emph{Case:} $\mu \in \partial\Mov(R)$.
The relation degree $\mu$ is contained in one of the rays~$\rho_1,\rho_2,\rho_3$. Reversing the order of the variables if necessary, we may assume that $\lambda = \rho_1 + \rho_2$ holds. There are the two cases $\mu \in \rho_1$ and $\mu \in \rho_2$. Here we exemplarily treat the case $\mu \in \rho_2$.

\hfill\break
\noindent\emph{Case:} $\mu \in \partial\Mov(R)$, $\mu \in \rho_2$.
The ray $\rho_2$ is a bounding ray of $\Mov(R)$. Thus in this configuration the cones $\lambda$ and $\Mov(R)$ coincide. By the definition of $\Mov(R)$ we must have~$n_3 = 1$ and $n_1 \ge 2$. Moreover, \cref{prop:hypmov} yields $n_2 \ge 2$. Applying \cref{lemma:3gen} to the generator degrees $w_1,w_2,w_7$ shows that the cone $\Eff(R)$ is regular and that $w_7$ is primitive. We may thus assume that $\Eff(R)$ is the positive quadrant and that $w_7 = (0,1)$ holds. Applying \cref{lemma:3gen} to the triples $(w_1,w_5,w_6)$ and~$(w_2,w_5,w_6)$ shows that the primitive generator $v$ of $\rho_2$ is of the form $v = (c,1)$ for some $c \ge 1$. We obtain
\begin{equation*}
    w_1 \ = \ w_2 \ = \ (1,0),
    \qquad
    w_5 \ = \ (a_5 c, a_5),
    \qquad
    w_6 \ = \ (a_6 c, a_6),
\end{equation*}
where $a_5, a_6, c \in \ZZ_{\ge 1}$ and $a_5,a_6$ are coprime. We may assume that $a_6 \ge a_5$ holds. There are three possible degree constellations $(n_1,n_2,n_3)$ for $X$, displayed in the following pictures.
\begin{figure}[H]
    \subcaptionbox*{$(n_1,n_2,n_3) = (4,2,1)$}[0.32\linewidth]{\rhotwopic{4,2,1}{2}{1}}
    \subcaptionbox*{$(n_1,n_2,n_3) = (3,3,1)$}[0.32\linewidth]{\rhotwopic{3,3,1}{2}{1}}
    \subcaptionbox*{$(n_1,n_2,n_3) = (2,4,1)$}[0.32\linewidth]{\rhotwopic{2,4,1}{2}{1}}
\end{figure}
As an example we consider the degree constellation $(n_1,n_2,n_3) = (3,3,1)$.

\hfill\break
\noindent\emph{Case:} $\mu \in \partial\Mov(R)$, $\mu \in \rho_2$, $(n_1,n_2,n_3) = (3,3,1)$.
We apply \cref{lemma:3gen} to the triple~$(w_3,w_5,w_6)$ to obtain $w_3 = (1,0)$. By \cref{lemma:primgen} at least one of $w_4,w_5,w_6$ is primitive. We may assume that $w_4$ is primitive. Grading matrix and relation degree are given by
\begin{equation*}
    Q 
    \ = \
    \left[\begin{array}{rrrrrrr}
        1 & 1 & 1 & c & a_5 c & a_6 c & 0 \\
        0 & 0 & 0 & 1 &   a_5 &   a_6 & 1
    \end{array}\right],
    \qquad
    \mu \ = \ (k c, k),
\end{equation*}
for some $k \ge 2$. If $a_5 > 1$ holds, then by \cref{lemma:3gen}, applied to the tuple $(w_1,w_5)$, the relation $g$ has a monomial of the form $T_5^{l_5}$ with $l_5 \ge 2$. By homogeneity of $g$ we have $k = l_5a_5$. On the other hand, if $a_5 = 1$ holds, then clearly $k$ is a multiple of $a_5$. The same holds for $a_6$. Thus we have
\begin{equation*}
    k \ = \ l_5 a_5 \ = \ l_6 a_6
\end{equation*}
with $l_5 \ge l_6 \ge 2$. By \cref{prop:antican} the anticanonical class $-\KKK$ of $X$ is given by
\begin{equation*}
    -\KKK \ = \ 
    \left[\begin{array}{c}
        3 + (1+a_5+a_6-k)c \\
        2+a_5+a_6-k
    \end{array}\right].
\end{equation*}
From $X$ being Fano, ie. $-\KKK \in \lambda$, we infer the inequalities
\begin{eqnarray}
    \label{eq:s=3-sample-c=3-3-1-F1}
    k & \le & 1+a_5+a_6, \\
    \label{eq:s=3-sample-c=3-3-1-F2}
    c & \le & 2.
\end{eqnarray}
We distinguish three cases, depending on the values of $a_5$ and $a_6$.

\hfill\break
\noindent\emph{Case:} $\mu \in \partial\Mov(R)$, $\mu \in \rho_2$, $(n_1,n_2,n_3) = (3,3,1)$, $a_5 = a_6 = 1$.
Equation \ref{eq:s=3-sample-c=3-3-1-F1} yields the bound $k \le 3$. Grading matrix and relation degree are given by
\begin{equation*}
    Q 
    \ = \
    \left[\begin{array}{rrrrrrr}
        1 & 1 & 1 & c & c & c & 0 \\
        0 & 0 & 0 & 1 & 1 & 1 & 1
    \end{array}\right],
    \qquad
    \mu \ = \ (k c, k),
\end{equation*}
with $2 \le k \le 3$ and $1 \le c \le 2$. For the possible values of $k$ and $c$ we check each homogeneous trinomial $g$ of degree $\deg(g) = \mu$ for the conditions~(C1) and~(C2) from \cref{set:rho2} and filter by isomorphy. Depending on the values of $k$ and $c$ this leads to the following specifying data from \cref{class:s=3-sample}:
\begin{center}
\begin{tabular}{c|cccc}
    $(k,c)$ & $(2,1)$ & $(3,1)$ & $(2,2)$ & $(3,2)$ \\[2pt] \hline \\[-10pt]
    ID & 149 & 150-151 & 152-153 & 154-158
\end{tabular}
\end{center}

\hfill\break
\noindent\emph{Case:} $\mu \in \partial\Mov(R)$, $\mu \in \rho_2$, $(n_1,n_2,n_3) = (3,3,1)$, $a_5 = 1$, $a_6 > 1$.
Equation \ref{eq:s=3-sample-c=3-3-1-F1} yields $a_6 = 2$ and $l_6 = 2$. Grading matrix and relation degree are given by
\begin{equation*}
    Q 
    \ = \
    \left[\begin{array}{rrrrrrr}
        1 & 1 & 1 & c & c & 2c & 0 \\
        0 & 0 & 0 & 1 & 1 &  2 & 1
    \end{array}\right],
    \qquad
    \mu \ = \ (4c, 4).
\end{equation*}
For the two values of $c$ we check each homogeneous trinomial $g$ of degree $\deg(g) = \mu$ for the conditions~(C1) and~(C2) from \cref{set:rho2} and filter by isomorphy. For $c = 1$ this leads to specifying data no. 159 and 160. For $c = 2$ we get the specifying data no. 161 to no. 166.

\goodbreak

\hfill\break
\noindent\emph{Case:} $\mu \in \partial\Mov(R)$, $\mu \in \rho_2$, $(n_1,n_2,n_3) = (3,3,1)$, $a_5, a_6 > 1$.
We have $a_5 = l_6$, $a_6 = l_5$ and $k = a_5 a_6$. Thus we can apply \cref{lemma:fano-ufp} to obtain $a_5 = 2$, $a_6 = 3$ and $k = 6$. Grading matrix and relation degree are given by
\begin{equation*}
    Q 
    \ = \
    \left[\begin{array}{rrrrrrr}
        1 & 1 & 1 & c & 2c & 3c & 0 \\
        0 & 0 & 0 & 1 &  2 &  3 & 1
    \end{array}\right],
    \qquad
    \mu \ = \ (6c, 6).
\end{equation*}
For the two values of $c$ we check each homogeneous trinomial $g$ of degree $\deg(g) = \mu$ for the conditions~(C1) and~(C2) from \cref{set:rho2} and filter by isomorphy. For $c = 1$ this leads to specifying data no. 167 to 180. For $c = 2$ we get the specifying data no. 181 to no. 219.
\end{proof}


\begin{proposition}
\label{prop:s=3-nsample}
Let $X$ be as in \cref{set:rho2} with~$s = 3$ and assume that $\mu \not\in \lambda$ holds. Then $X$ is isomorphic to $X(Q,g)$ with specifying data $(Q,g)$ appearing in \cref{class:s=3-nsample} or in \cref{class:s=3-series}.
\end{proposition}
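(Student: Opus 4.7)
The plan is to proceed entirely combinatorially, since the smoothing approach of \cref{prop:general-is-smooth} is unavailable once $\mu \not\in \SAmple(X)$. By reversing the order of the variables if necessary, I may assume $\mu \in \lambda^+$, so in fact $\mu \in \lambda^+ \setminus \lambda$. \cref{lemma:nonsemiample} then applies in full strength: the cone $\lambda$ is regular with primitive generators on its boundary, every degree in $\lambda^-$ equals a single primitive vector, and $\lambda^- = \rho_1$ is a bounding ray of $\lambda$ with $n_1 \geq 2$. A suitable unimodular transformation on $K = \ZZ^2$ then normalizes
\begin{equation*}
    w_1 \ = \ \dots \ = \ w_{n_1} \ = \ (1,0),
    \qquad
    \rho_2 \ = \ \cone((0,1)),
\end{equation*}
so that the $n_2$ degrees on $\rho_2$ are all equal to the primitive vector $(0,1)$, while the $n_3$ degrees on $\rho_3 \subset \lambda^+$ are positive integer multiples of a primitive vector $(-a,b)$ with coprime $a,b > 0$.

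Having fixed coordinates, I enumerate the admissible degree constellations $(n_1,n_2,n_3)$. A decisive observation restricts this list sharply: since $\mu \in \lambda^+ \setminus \lambda$ has strictly negative first coordinate while every generator degree on $\rho_1 \cup \rho_2$ has non-negative first coordinate, each of the three pairwise coprime monomials of $g$ must involve at least one variable whose degree lies on $\rho_3$. Combined with the trinomial structure of $g$, this forces $n_3 \geq 3$, leaving only the constellations $(2,1,4)$, $(2,2,3)$ and $(3,1,3)$ subject to $n_1 \geq 2$, $n_2 \geq 1$ and $n_1+n_2+n_3 = 7$. \cref{prop:hypmov} is then automatically satisfied in all three.

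For each surviving constellation, I apply \cref{lemma:primgen}, \cref{lemma:2gen} and \cref{lemma:3gen} to constrain which multipliers on $\rho_3$ may exceed $1$ and to determine the monomial shape of $g$. The homogeneity of $g$ yields Diophantine relations linking the three trinomial exponents $l_1,l_2,l_3$ to the $\rho_3$-multipliers, in analogy with the identities $k = l_5 a_5 = l_6 a_6$ derived in \cref{prop:s=3-sample}. The Fano condition $-\KKK \in \lambda^\circ$, evaluated via \cref{prop:antican} as $w_1 + \dots + w_7 - \mu \in \lambda^\circ$, translates into a pair of linear inequalities on the parameters, with \cref{lemma:fano-ufp} invoked whenever two non-trivial coprime exponents compete. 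In most sub-cases these inequalities force every parameter into a finite range, producing the $223$ sporadic entries of \cref{class:s=3-nsample}; in a short list of sub-cases, however, a uniform scaling parameter on the $\rho_3$-side cancels out of the Fano inequality and is left unbounded, giving rise to the four infinite series of \cref{class:s=3-series}. For each resulting grading matrix $Q$, I finally enumerate all trinomials $g \in \CC[T_1,\dots,T_7]_\mu$ satisfying conditions (C1) and (C2) of \cref{set:rho2} and filter up to isomorphism via the variable-permutation criterion of \cref{rem:isotest}.

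The main obstacle will be identifying correctly, for each constellation, which monomial placements of $g$ admit a free scaling on $\rho_3$ and which do not; a miscount here would either invent spurious infinite series or collapse them into the sporadic list. Careful bookkeeping of the Fano inequality in each branch, cross-checked against \cref{prop:hypmov} and against \cref{lemma:3gen}(iii), will be essential.
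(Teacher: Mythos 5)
Your plan follows the paper's proof essentially step for step: \cref{lemma:nonsemiample} normalizes $\lambda^-$ to a single primitive bounding ray with $n_1 \ge 2$, the coprimality of the three monomials (equivalently, \cref{prop:hypmov}) forces $n_3 \ge 3$ and hence the three constellations $(2,1,4)$, $(2,2,3)$, $(3,1,3)$, and the Fano inequalities from \cref{prop:antican} then either bound all parameters (the sporadic cases) or leave one ray direction free (the four series), exactly as in the paper's sample computation for $(3,1,3)$. The only cosmetic deviations are your normalization (fixing $\rho_2$ rather than $\rho_3$ at $(0,1)$) and your description of the unbounded parameter as a ``uniform scaling on the $\rho_3$-side'' --- in either coordinate system it is the direction of one bounding ray rather than a common multiple of the $\rho_3$-degrees, though your Fano-cancellation criterion identifies it correctly.
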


\begin{proof}
Reversing the order of the variables if necessary, we may assume that $\lambda = \rho_1 + \rho_2$ holds. By assumption $\mu$ is not contained in $\lambda$, so we have $\mu \in (\rho_2 + \rho_3)\backslash \rho_2$. By \cref{lemma:nonsemiample} we have $n_1 \ge 2$ and $w_1 = w_2$ is the primitive point in $\rho_1$. Moreover, by \cref{prop:hypmov} we have $n_3 \ge 3$. Applying \cref{lemma:3gen} to the triple $(w_1,w_6,w_7)$ shows that the cone $\Eff(R)$ is regular. We may thus assume that $\Eff(R)$ is the positive quadrant in $\QQ^2$ and that $w_1 = w_2 = (1,0)$ holds. We distinguish the two cases $\mu \in \rho_3$ and $\mu \in (\rho_2 + \rho_3)^\circ$. We exemplarily treat the case $\mu \in (\rho_2 + \rho_3)^\circ$.

\hfill\break
\noindent\emph{Case:} $\mu \in (\rho_2 + \rho_3)^\circ$.
By \cref{prop:hypmov} we have $n_3 \ge 3$.
Applying \cref{lemma:3gen} to the triples $(w_1,w_2,w_i)$ with $w_i \in \rho_2 \cup \rho_3$ and the triples $(w_j,w_6,w_7)$, where $w_j \in \rho_1$, shows that all generator degrees $w_1,\dots,w_7$ are primitive. In particular we have $w_5 = w_6 = w_7 = (0,1)$. The primitive point $v \in \rho_2$ is of the form~$v = (a,1)$ for some $a \ge 1$. There are three possible degree constellations $(n_1,n_2,n_3)$ for $X$, displayed in the following pictures.
\begin{figure}[H]
    \subcaptionbox*{$(n_1,n_2,n_3) = (3,1,3)$}[0.32\linewidth]{\rhotwopic{3,1,3}{2.5}{1}}
    \subcaptionbox*{$(n_1,n_2,n_3) = (2,2,3)$}[0.32\linewidth]{\rhotwopic{2,2,3}{2.5}{1}}
    \subcaptionbox*{$(n_1,n_2,n_3) = (2,1,4)$}[0.32\linewidth]{\rhotwopic{2,1,4}{2.5}{1}}
\end{figure}
We consider the degree constellation $(n_1,n_2,n_3) = (3,1,3)$ as an example.

\hfill\break
\noindent\emph{Case:} $\mu \in (\rho_2 + \rho_3)^\circ$, $(n_1,n_2,n_3) = (3,1,3)$. Grading matrix and anticanonical class of~$X$ are given by
\begin{equation*}
    Q 
    \ = \
    \left[\begin{array}{rrrrrrr}
        1 & 1 & 1 & a & 0 & 0 & 0 \\
        0 & 0 & 0 & 1 & 1 & 1 & 1
    \end{array}\right],
    \qquad
    -\KKK
    \ = \ 
    \left[\begin{array}{c}
        3+a-\mu_1 \\
        4-\mu_2
    \end{array}\right].
\end{equation*}
From $X$ being Fano, ie. $-\KKK \in \lambda^\circ$, we infer the inequalities
\begin{eqnarray}
    \label{eq:s=3-c=3-1-3-F1}
    \mu_2 & \le & 3, \\
    \label{eq:s=3-c=3-1-3-F2}
    \mu_1 & \le & 2 - (3-\mu_2)a.
\end{eqnarray}
In particular, we have $2 \le \mu_2 \le 3$. We first consider the case $\mu_2 = 2$.
As $\mu_1$ is positive, Equation \ref{eq:s=3-c=3-1-3-F2} yields $a = 1$ and $\mu_1 = 1$. Grading matrix and relation degree are thus given by
\begin{equation*}
    Q 
    \ = \
    \left[\begin{array}{rrrrrrr}
        1 & 1 & 1 & 1 & 0 & 0 & 0 \\
        0 & 0 & 0 & 1 & 1 & 1 & 1
    \end{array}\right],
    \qquad
    \mu \ = \ (1,2).
\end{equation*}
Up to isomorphy this leads to specifying data no. 227 and 228.
Now consider the case $\mu_2 = 3$.
Then Equation \ref{eq:s=3-c=3-1-3-F2} yields $1 \le \mu_1 \le 2$.
We first consider the case $\mu = (1,3)$.
The relation $g$ is a trinomial with pairwise coprime monomials. Due to the position of $\lambda$, each monomial of $g$ is divisible by one of $T_1,\dots,T_4$. If $T_4$ divides a monomial of $g$, then by homogeneity we have $a = 1$. Up to isomorphy this yields specifying data no. $229$.
If $T_4$ does not appear in $g$, then each monomial of the relation is divisible by precisely one of $T_1,\dots,T_3$. Moreover, by the same argument, each monomial is divisible by precisely one of $T_5,\dots,T_7$. Thus up to permutation of variables, the relation $g$ is of the form
\begin{equation*}
    g \ = \ T_1 T_5^3 + T_2 T_6^3 + T_3 T_7^3.
\end{equation*}
Any choice for $a \ge 1$ yields valid specifying data. This is series ${\rm S}1$.
Finally we consider the case $\mu = (2,3)$.
Again, if $T_4$ divides a monomial of $g$, then homogeneity yields~$a \le 2$.
For the two possible values of $a$ we check each homogeneous trinomial $g$ of degree $\deg(g) = \mu$ for the conditions~(C1) and~(C2) from \cref{set:rho2} and filter by isomorphy. For $a = 1$ we get specifying data no. 230 to 232. For $a = 2$ we get specifying data no. 233 and 234.
If $T_4$ does not appear in $g$, then up to permutation of variables, the relation $g$ is of the form
\begin{equation*}
    g \ = \ T_1^2 T_5^3 + T_2^2 T_6^3 + T_3^2 T_7^3
\end{equation*}
Any choice for $a \ge 1$ yields valid specifying data. This is series ${\rm S}2$.
\end{proof}

\section{Classification lists}
\label{sec:classlists}

Here we provide the detailed presentation of our classification results. Let us briefly recall the background. Each locally factorial Fano fourfould $X$ of Picard number two and complexity one with a hypersurface Cox ring can be encoded by the degree matrix $Q$, that means the list $[w_1,\dots,w_7]$ of degrees of the Cox ring generators in $\Cl(X) = \ZZ^2$, and the defining trinomial $g$. Each such variety~$X$ is isomorphic to precisely one $X(Q,g)$ with specifying data $Q = [w_1,\dots,w_7]$ and $g$ appearing in the Classification lists \ref{class:s=2} to \ref{class:s=6-series}. Here $X(Q,g) = X_g$ is the variety from \cref{constr:hypersurface} associated with the $\ZZ^2$-graded $\CC$-algebra $R_g$, where the grading is given by $\deg(T_i) = w_i$.

To make the classification easier to navigate, we split it into several lists, each one containing the specifying data for a given number $s$ of rays generated by the degrees $[w_1,\dots,w_7]$, with either $\mu \in \lambda$ or $\mu \not\in \lambda$. Moreover, in the case $\mu \not\in \lambda$ we separate the sporadic cases from the infinite series of specifying data. Apart from the specifying data $(Q,g)$, the classification lists also contain the relation degree~$\mu = \deg(g)$, the anticanonical class $-\KKK \in \ZZ^2$ and, for the sporadic cases, also the anticanonical degree $\KKK^4$. A data file containing the complete classification data is also available at \cite{BaMa24}.

\medskip

\begin{class-list}\label{class:s=2} 
Locally factorial Fano fourfoulds of Picard number two with a hypersurface Cox ring and an effective three-torus action: Specifying data for the case with $s=2$.
\begin{center}
\small\setlength{\arraycolsep}{2pt}
\begin{longtable}{cccccl}
\toprule
\emph{ID} & $[w_1,\dots,w_7]$ & $\mu$ & $-\KKK$ & $\KKK^4$ & \multicolumn{1}{c}{$g$}
\\ \midrule

$1$
&
$\left[\tiny\begin{array}{rrrrrrr}
    1 & 1 & 1 & 1 & 0 & 0 & 0 \\
    0 & 0 & 0 & 0 & 1 & 1 & 1 
\end{array}\right]$
&
$(1,1)$
&
$(3,2)$
&
$432$
&
$T_{1}T_{6}+T_{2}T_{5}+T_{4}T_{7}$
\\ \midrule

$2$
&
\multirow{2}{*}{
$\left[\tiny\begin{array}{rrrrrrr}
    1 & 1 & 1 & 1 & 0 & 0 & 0 \\
    0 & 0 & 0 & 0 & 1 & 1 & 1 
\end{array}\right]$
}
&
\multirow{2}{*}{
$(2,1)$
}
&
\multirow{2}{*}{
$(2,2)$
}
&
\multirow{2}{*}{
$256$
}
&
$T_{1}^{2}T_{6}+T_{2}T_{3}T_{7}+T_{4}^{2}T_{5}$
\\

$3$
&
&
&
&
&
$T_{1}^{2}T_{6}+T_{3}^{2}T_{7}+T_{4}^{2}T_{5}$
\\ \midrule

$4$
&
\multirow{2}{*}{
$\left[\tiny\begin{array}{rrrrrrr}
    1 & 1 & 1 & 1 & 0 & 0 & 0 \\
    0 & 0 & 0 & 0 & 1 & 1 & 1 
\end{array}\right]$
}
&
\multirow{2}{*}{
$(3,1)$
}
&
\multirow{2}{*}{
$(1,2)$
}
&
\multirow{2}{*}{
$80$
}
&
$T_{1}^{3}T_{7}+T_{3}^{3}T_{5}+T_{4}^{3}T_{6}$
\\

$5$
&
&
&
&
&
$T_{1}^{3}T_{5}+T_{2}^{2}T_{4}T_{6}+T_{3}^{3}T_{7}$
\\ \bottomrule
\end{longtable}

\goodbreak

\begin{longtable}{cccccl}
\toprule
\emph{ID} & $[w_1,\dots,w_7]$ & $\mu$ & $-\KKK$ & $\KKK^4$ & \multicolumn{1}{c}{$g$}
\\ \midrule

$6$
&
$\left[\tiny\begin{array}{rrrrrrr}
    1 & 1 & 1 & 1 & 0 & 0 & 0 \\
    0 & 0 & 0 & 0 & 1 & 1 & 1 
\end{array}\right]$
&
$(1,2)$
&
$(3,1)$
&
$270$
&
$T_{1}T_{6}^{2}+T_{2}T_{7}^{2}+T_{3}T_{5}^{2}$
\\ \midrule

$7$
&
\multirow{2}{*}{
$\left[\tiny\begin{array}{rrrrrrr}
    1 & 1 & 1 & 1 & 0 & 0 & 0 \\
    0 & 0 & 0 & 0 & 1 & 1 & 1 
\end{array}\right]$
}
&
\multirow{2}{*}{
$(3,2)$
}
&
\multirow{2}{*}{
$(1,1)$
}
&
\multirow{2}{*}{
$26$
}
&
$T_{1}^{3}T_{7}^{2}+T_{2}^{3}T_{6}^{2}+T_{3}^{2}T_{4}T_{5}^{2}$
\\

$8$
&
&
&
&
&
$T_{1}^{3}T_{5}^{2}+T_{3}^{3}T_{6}^{2}+T_{4}^{3}T_{7}^{2}$
\\ \midrule

$9$
&
\multirow{2}{*}{
$\left[\tiny\begin{array}{rrrrrrr}
    1 & 1 & 1 & 1 & 3 & 0 & 0 \\
    0 & 0 & 0 & 0 & 0 & 1 & 1 
\end{array}\right]$
}
&
\multirow{2}{*}{
$(6,0)$
}
&
\multirow{2}{*}{
$(1,2)$
}
&
\multirow{2}{*}{
$16$
}
&
$T_{1}^{3}T_{3}^{3}+T_{2}^{5}T_{4}+T_{5}^{2}$
\\

$10$
&
&
&
&
&
$T_{1}T_{4}^{5}+T_{2}^{5}T_{3}+T_{5}^{2}$
\\ \midrule

$11$
&
\multirow{3}{*}{
$\left[\tiny\begin{array}{rrrrrrr}
    1 & 1 & 1 & 2 & 3 & 0 & 0 \\
    0 & 0 & 0 & 0 & 0 & 1 & 1 
\end{array}\right]$
}
&
\multirow{3}{*}{
$(6,0)$
}
&
\multirow{3}{*}{
$(2,2)$
}
&
\multirow{3}{*}{
$64$
}
&
$T_{1}^{4}T_{2}T_{3}+T_{4}^{3}+T_{5}^{2}$
\\

$12$
&
&
&
&
&
$T_{1}^{5}T_{3}+T_{4}^{3}+T_{5}^{2}$
\\

$13$
&
&
&
&
&
$T_{1}^{3}T_{2}^{2}T_{3}+T_{4}^{3}+T_{5}^{2}$
\\ \midrule

$14$
&
$\left[\tiny\begin{array}{rrrrrrr}
    1 & 1 & 1 & 1 & 2 & 0 & 0 \\
    0 & 0 & 0 & 0 & 0 & 1 & 1 
\end{array}\right]$
&
$(4,0)$
&
$(2,2)$
&
$128$
&
$T_{1}T_{3}^{3}+T_{2}^{3}T_{4}+T_{5}^{2}$
\\ \midrule

$15$
&
$\left[\tiny\begin{array}{rrrrrrr}
    1 & 1 & 1 & 1 & 1 & 0 & 0 \\
    0 & 0 & 0 & 0 & 0 & 1 & 1 
\end{array}\right]$
&
$(2,0)$
&
$(3,2)$
&
$432$
&
$T_{1}T_{3}+T_{2}T_{5}+T_{4}^{2}$
\\ \midrule

$16$
&
$\left[\tiny\begin{array}{rrrrrrr}
    1 & 1 & 1 & 1 & 1 & 0 & 0 \\
    0 & 0 & 0 & 0 & 0 & 1 & 1 
\end{array}\right]$
&
$(3,0)$
&
$(2,2)$
&
$192$
&
$T_{1}^{2}T_{4}+T_{2}^{2}T_{5}+T_{3}^{3}$
\\ \midrule

$17$
&
$\left[\tiny\begin{array}{rrrrrrr}
    1 & 1 & 1 & 1 & 1 & 0 & 0 \\
    0 & 0 & 0 & 0 & 0 & 1 & 1 
\end{array}\right]$
&
$(4,0)$
&
$(1,2)$
&
$32$
&
$T_{1}^{3}T_{2}+T_{3}^{3}T_{4}+T_{5}^{4}$
\\ \midrule

$18$
&
$\left[\tiny\begin{array}{rrrrrrr}
    1 & 1 & 2 & 3 & 0 & 0 & 0 \\
    0 & 0 & 0 & 0 & 1 & 1 & 1 
\end{array}\right]$
&
$(6,0)$
&
$(1,3)$
&
$54$
&
$T_{1}^{5}T_{2}+T_{3}^{3}+T_{4}^{2}$
\\ \bottomrule
\end{longtable}
\end{center}
\end{class-list}

\medskip

\begin{class-list}\label{class:s=3-sample}
Locally factorial Fano fourfoulds of Picard number two with a hypersurface Cox ring and an effective three-torus action: Specifying data for the cases with $s = 3$ and $\mu \in \lambda$.
\begin{center}
\small\setlength{\arraycolsep}{2pt}
\begin{longtable}{cccccl}
\toprule
\emph{ID} & $[w_1,\dots,w_7]$ & $\mu$ & $-\KKK$ & $\KKK^4$ & \multicolumn{1}{c}{$g$}
\\ \midrule

$19$
&
\multirow{2}{*}{
$\left[\tiny\begin{array}{rrrrrrr}
    1 & 1 & 1 & 1 & 0 & 0 & -1 \\
    0 & 0 & 0 & 0 & 1 & 1 & 1 
\end{array}\right]$
}
&
\multirow{2}{*}{
$(1,1)$
}
&
\multirow{2}{*}{
$(2,2)$
}
&
\multirow{2}{*}{
$416$
}
&
$T_{2}^{2}T_{7}+T_{1}T_{5}+T_{3}T_{6}$
\\

$20$
&
&
&
&
&
$T_{3}T_{4}T_{7}+T_{1}T_{6}+T_{2}T_{5}$
\\ \midrule

$21$
&
\multirow{2}{*}{
$\left[\tiny\begin{array}{rrrrrrr}
    1 & 1 & 1 & 1 & 0 & 0 & -1 \\
    0 & 0 & 0 & 0 & 1 & 1 & 1 
\end{array}\right]$
}
&
\multirow{2}{*}{
$(1,2)$
}
&
\multirow{2}{*}{
$(2,1)$
}
&
\multirow{2}{*}{
$163$
}
&
$T_{2}T_{3}^{2}T_{7}^{2}+T_{1}T_{5}^{2}+T_{4}T_{6}^{2}$
\\

$22$
&
&
&
&
&
$T_{3}^{3}T_{7}^{2}+T_{2}T_{5}^{2}+T_{4}T_{6}^{2}$
\\ \midrule

$26$
&
\multirow{2}{*}{
$\left[\tiny\begin{array}{rrrrrrr}
    1 & 1 & 1 & 1 & 0 & 0 & -2 \\
    0 & 0 & 0 & 0 & 1 & 1 & 1 
\end{array}\right]$
}
&
\multirow{2}{*}{
$(1,1)$
}
&
\multirow{2}{*}{
$(1,2)$
}
&
\multirow{2}{*}{
$464$
}
&
$T_{4}^{3}T_{7}+T_{2}T_{5}+T_{3}T_{6}$
\\

$27$
&
&
&
&
&
$T_{3}^{2}T_{4}T_{7}+T_{1}T_{6}+T_{2}T_{5}$
\\ \midrule

$31$
&
\multirow{2}{*}{
$\left[\tiny\begin{array}{rrrrrrr}
    1 & 1 & 1 & 1 & 1 & 0 & 0 \\
    0 & 0 & 0 & 0 & 1 & 1 & 1 
\end{array}\right]$
}
&
\multirow{2}{*}{
$(2,1)$
}
&
\multirow{2}{*}{
$(3,2)$
}
&
\multirow{2}{*}{
$352$
}
&
$T_{1}T_{4}T_{6}+T_{3}^{2}T_{7}+T_{2}T_{5}$
\\

$32$
&
&
&
&
&
$T_{1}^{2}T_{7}+T_{4}^{2}T_{6}+T_{3}T_{5}$
\\ \midrule

$33$
&
\multirow{2}{*}{
$\left[\tiny\begin{array}{rrrrrrr}
    1 & 1 & 1 & 1 & 1 & 0 & 0 \\
    0 & 0 & 0 & 0 & 1 & 1 & 1 
\end{array}\right]$
}
&
\multirow{2}{*}{
$(3,2)$
}
&
\multirow{2}{*}{
$(2,1)$
}
&
\multirow{2}{*}{
$81$
}
&
$T_{1}^{3}T_{7}^{2}+T_{2}^{2}T_{4}T_{6}^{2}+T_{3}T_{5}^{2}$
\\

$34$
&
&
&
&
&
$T_{3}^{3}T_{6}^{2}+T_{4}^{3}T_{7}^{2}+T_{1}T_{5}^{2}$
\\ \midrule

$44$
&
\multirow{2}{*}{
$\left[\tiny\begin{array}{rrrrrrr}
    1 & 1 & 1 & 1 & 0 & 0 & 0 \\
    -1 & 0 & 0 & 0 & 1 & 1 & 1 
\end{array}\right]$
}
&
\multirow{2}{*}{
$(1,1)$
}
&
\multirow{2}{*}{
$(3,1)$
}
&
\multirow{2}{*}{
$432$
}
&
$T_{1}T_{5}^{2}+T_{3}T_{6}+T_{4}T_{7}$
\\

$45$
&
&
&
&
&
$T_{2}T_{7}+T_{3}T_{5}+T_{4}T_{6}$
\\ \midrule

$46$
&
\multirow{2}{*}{
$\left[\tiny\begin{array}{rrrrrrr}
    1 & 1 & 1 & 1 & 0 & 0 & 0 \\
    0 & 0 & 1 & 1 & 1 & 1 & 1 
\end{array}\right]$
}
&
\multirow{2}{*}{
$(2,2)$
}
&
\multirow{2}{*}{
$(2,3)$
}
&
\multirow{2}{*}{
$272$
}
&
$T_{1}^{2}T_{5}T_{7}+T_{2}T_{4}T_{6}+T_{3}^{2}$
\\

$47$
&
&
&
&
&
$T_{1}^{2}T_{6}T_{7}+T_{2}^{2}T_{5}^{2}+T_{3}T_{4}$
\\ \midrule

$51$
&
\multirow{2}{*}{
$\left[\tiny\begin{array}{rrrrrrr}
    1 & 1 & 1 & 2 & 0 & 0 & 0 \\
    0 & 0 & 1 & 2 & 1 & 1 & 1 
\end{array}\right]$
}
&
\multirow{2}{*}{
$(4,4)$
}
&
\multirow{2}{*}{
$(1,2)$
}
&
\multirow{2}{*}{
$34$
}
&
$T_{2}^{4}T_{6}T_{7}^{3}+T_{1}T_{3}^{3}T_{5}+T_{4}^{2}$
\\

$52$
&
&
&
&
&
$T_{2}^{4}T_{5}^{3}T_{6}+T_{1}^{3}T_{3}T_{7}^{3}+T_{4}^{2}$
\\ \midrule

$72$
&
$\left[\tiny\begin{array}{rrrrrrr}
    1 & 1 & 1 & 0 & 0 & 0 & 0 \\
    0 & 0 & 1 & 1 & 1 & 1 & 1 
\end{array}\right]$
&
$(2,2)$
&
$(1,3)$
&
$216$
&
$T_{1}^{2}T_{4}T_{7}+T_{2}^{2}T_{5}T_{6}+T_{3}^{2}$
\\ \midrule

$73$
&
$\left[\tiny\begin{array}{rrrrrrr}
    1 & 1 & 1 & 0 & 0 & 0 & 0 \\
    0 & 0 & 2 & 1 & 1 & 1 & 1 
\end{array}\right]$
&
$(2,4)$
&
$(1,2)$
&
$64$
&
$T_{1}^{2}T_{4}T_{6}^{3}+T_{2}^{2}T_{5}T_{7}^{3}+T_{3}^{2}$
\\ \bottomrule
\end{longtable}

\goodbreak

\begin{longtable}{cccccl}
\toprule
\emph{ID} & $[w_1,\dots,w_7]$ & $\mu$ & $-\KKK$ & $\KKK^4$ & \multicolumn{1}{c}{$g$}
\\ \midrule

$77$
&
$\left[\tiny\begin{array}{rrrrrrr}
    1 & 1 & 1 & 1 & 0 & 0 & 0 \\
    0 & 0 & 0 & 1 & 1 & 1 & 1 
\end{array}\right]$
&
$(2,2)$
&
$(2,2)$
&
$192$
&
$T_{1}T_{3}T_{7}^{2}+T_{2}^{2}T_{5}T_{6}+T_{4}^{2}$
\\ \midrule

$78$
&
$\left[\tiny\begin{array}{rrrrrrr}
    1 & 1 & 1 & 1 & 0 & 0 & 0 \\
    0 & 0 & 0 & 1 & 1 & 1 & 1 
\end{array}\right]$
&
$(3,3)$
&
$(1,1)$
&
$18$
&
$T_{1}^{3}T_{5}T_{6}^{2}+T_{2}T_{3}^{2}T_{7}^{3}+T_{4}^{3}$
\\ \midrule

$79$
&
$\left[\tiny\begin{array}{rrrrrrr}
    1 & 1 & 1 & 2 & 0 & 0 & 0 \\
    0 & 0 & 0 & 1 & 1 & 1 & 1 
\end{array}\right]$
&
$(4,2)$
&
$(1,2)$
&
$48$
&
$T_{1}^{3}T_{2}T_{5}^{2}+T_{3}^{4}T_{6}T_{7}+T_{4}^{2}$
\\ \midrule

$80$
&
$\left[\tiny\begin{array}{rrrrrrr}
    1 & 1 & 1 & 2 & 0 & 0 & 0 \\
    0 & 0 & 0 & 2 & 1 & 1 & 1 
\end{array}\right]$
&
$(4,4)$
&
$(1,1)$
&
$12$
&
$T_{1}T_{2}^{3}T_{7}^{4}+T_{3}^{4}T_{5}^{3}T_{6}+T_{4}^{2}$
\\ \midrule

$149$
&
$\left[\tiny\begin{array}{rrrrrrr}
    1 & 1 & 1 & 1 & 0 & 0 & 0 \\
    0 & 1 & 1 & 1 & 1 & 1 & 1 
\end{array}\right]$
&
$(2,2)$
&
$(2,4)$
&
$352$
&
$T_{1}^{2}T_{5}T_{6}+T_{2}^{2}+T_{3}T_{4}$
\\ \midrule

$150$
&
\multirow{2}{*}{
$\left[\tiny\begin{array}{rrrrrrr}
    1 & 1 & 1 & 1 & 0 & 0 & 0 \\
    0 & 1 & 1 & 1 & 1 & 1 & 1 
\end{array}\right]$
}
&
\multirow{2}{*}{
$(3,3)$
}
&
\multirow{2}{*}{
$(1,3)$
}
&
\multirow{2}{*}{
$99$
}
&
$T_{1}^{3}T_{5}T_{6}^{2}+T_{2}^{3}+T_{3}T_{4}^{2}$
\\

$151$
&
&
&
&
&
$T_{1}^{3}T_{5}T_{6}T_{7}+T_{2}T_{4}^{2}+T_{3}^{3}$
\\ \midrule

$152$
&
\multirow{2}{*}{
$\left[\tiny\begin{array}{rrrrrrr}
    1 & 1 & 1 & 1 & 0 & 0 & 0 \\
    0 & 2 & 2 & 2 & 1 & 1 & 1 
\end{array}\right]$
}
&
\multirow{2}{*}{
$(2,4)$
}
&
\multirow{2}{*}{
$(2,5)$
}
&
\multirow{2}{*}{
$304$
}
&
$T_{1}^{2}T_{5}T_{6}^{3}+T_{2}T_{4}+T_{3}^{2}$
\\

$153$
&
&
&
&
&
$T_{1}^{2}T_{5}T_{6}^{2}T_{7}+T_{2}T_{3}+T_{4}^{2}$
\\ \midrule

$159$
&
\multirow{2}{*}{
$\left[\tiny\begin{array}{rrrrrrr}
    1 & 1 & 1 & 2 & 0 & 0 & 0 \\
    0 & 1 & 1 & 2 & 1 & 1 & 1 
\end{array}\right]$
}
&
\multirow{2}{*}{
$(4,4)$
}
&
\multirow{2}{*}{
$(1,3)$
}
&
\multirow{2}{*}{
$66$
}
&
$T_{1}^{4}T_{5}T_{7}^{3}+T_{2}T_{3}^{3}+T_{4}^{2}$
\\

$160$
&
&
&
&
&
$T_{1}^{4}T_{5}T_{6}T_{7}^{2}+T_{2}^{3}T_{3}+T_{4}^{2}$
\\ \midrule

$223$
&
$\left[\tiny\begin{array}{rrrrrrr}
    1 & 1 & 1 & 1 & 2 & 1 & 0 \\
    0 & 0 & 0 & 0 & 0 & 1 & 1 
\end{array}\right]$
&
$(4,0)$
&
$(3,2)$
&
$160$
&
$T_{1}T_{3}^{3}+T_{2}^{3}T_{4}+T_{5}^{2}$
\\ \midrule

$224$
&
$\left[\tiny\begin{array}{rrrrrrr}
    1 & 1 & 1 & 1 & 1 & 1 & 0 \\
    0 & 0 & 0 & 0 & 0 & 1 & 1 
\end{array}\right]$
&
$(3,0)$
&
$(3,2)$
&
$240$
&
$T_{1}^{2}T_{4}+T_{2}^{2}T_{5}+T_{3}^{3}$
\\ \midrule

$225$
&
$\left[\tiny\begin{array}{rrrrrrr}
    1 & 1 & 1 & 1 & 1 & 1 & 0 \\
    0 & 0 & 0 & 0 & 0 & 1 & 1 
\end{array}\right]$
&
$(2,0)$
&
$(4,2)$
&
$480$
&
$T_{1}T_{3}+T_{2}T_{5}+T_{4}^{2}$
\\ \midrule

$226$
&
$\left[\tiny\begin{array}{rrrrrrr}
    1 & 1 & 1 & 1 & 1 & 2 & 0 \\
    0 & 0 & 0 & 0 & 0 & 1 & 1 
\end{array}\right]$
&
$(2,0)$
&
$(5,2)$
&
$624$
&
$T_{1}T_{3}+T_{2}T_{5}+T_{4}^{2}$
\\ \bottomrule
\end{longtable}
\end{center}

\goodbreak

\begin{center}
\small\setlength{\arraycolsep}{2pt}
\begin{longtable}{llllll}
\toprule
\\[-8pt]

\multicolumn{6}{l}
{
\small\setlength{\tabcolsep}{8pt}
\begin{tabular}{cccc}
$
 Q \ = \ 
\left[\tiny\begin{array}{rrrrrrr}
    1 & 1 & 1 & 1 & 0 & 0 & -1 \\
    0 & 0 & 0 & 0 & 1 & 1 & 1 
\end{array}\right]
$
&
$\mu \ = \ (2,1)$
&
$-\KKK \ = \ (1,2)$
&
$\KKK^4 \ = \ 224$
\end{tabular}
}
\\[6pt]

{\small\emph{ID}}
&
\multicolumn{1}{c}{{\small$g$}}
&
{\small\emph{ID}}
&
\multicolumn{1}{c}{{\small$g$}}
&
{\small\emph{ID}}
&
\multicolumn{1}{c}{{\small$g$}}
\\[3pt]

$23$
&
${\scriptstyle T_{1}^{2}T_{3}T_{7}+T_{2}^{2}T_{6}+T_{4}^{2}T_{5}}$
&
$24$
&
${\scriptstyle T_{3}^{3}T_{7}+T_{2}^{2}T_{5}+T_{4}^{2}T_{6}}$
&
$25$
&
${\scriptstyle T_{4}^{3}T_{7}+T_{1}T_{3}T_{5}+T_{2}^{2}T_{6}}$
\\[2pt] \midrule \\[-8pt]

\multicolumn{6}{l}
{
\small\setlength{\tabcolsep}{8pt}
\begin{tabular}{cccc}
$
 Q \ = \ 
\left[\tiny\begin{array}{rrrrrrr}
    1 & 1 & 1 & 1 & 0 & 0 & -2 \\
    0 & 0 & 0 & 0 & 1 & 1 & 1 
\end{array}\right]
$
&
$\mu \ = \ (1,2)$
&
$-\KKK \ = \ (1,1)$
&
$\KKK^4 \ = \ 98$
\end{tabular}
}
\\[6pt]

{\small\emph{ID}}
&
\multicolumn{1}{c}{{\small$g$}}
&
{\small\emph{ID}}
&
\multicolumn{1}{c}{{\small$g$}}
&
{\small\emph{ID}}
&
\multicolumn{1}{c}{{\small$g$}}
\\[3pt]

$28$
&
${\scriptstyle T_{1}^{4}T_{3}T_{7}^{2}+T_{2}T_{5}^{2}+T_{4}T_{6}^{2}}$
&
$29$
&
${\scriptstyle T_{1}^{3}T_{2}^{2}T_{7}^{2}+T_{3}T_{5}^{2}+T_{4}T_{6}^{2}}$
&
$30$
&
${\scriptstyle T_{4}^{5}T_{7}^{2}+T_{2}T_{6}^{2}+T_{3}T_{5}^{2}}$
\\[2pt] \midrule \\[-8pt]

\multicolumn{6}{l}
{
\small\setlength{\tabcolsep}{8pt}
\begin{tabular}{cccc}
$
 Q \ = \ 
\left[\tiny\begin{array}{rrrrrrr}
    1 & 1 & 1 & 1 & 0 & 0 & 0 \\
    -1 & 0 & 0 & 0 & 1 & 1 & 1 
\end{array}\right]
$
&
$\mu \ = \ (3,1)$
&
$-\KKK \ = \ (1,1)$
&
$\KKK^4 \ = \ 38$
\end{tabular}
}
\\[6pt]

{\small\emph{ID}}
&
\multicolumn{1}{c}{{\small$g$}}
&
{\small\emph{ID}}
&
\multicolumn{1}{c}{{\small$g$}}
&
{\small\emph{ID}}
&
\multicolumn{1}{c}{{\small$g$}}
\\[3pt]

$35$
&
${\scriptstyle T_{1}^{3}T_{7}^{4}+T_{2}^{3}T_{6}+T_{3}T_{4}^{2}T_{5}}$
&
$36$
&
${\scriptstyle T_{2}^{3}T_{5}+T_{3}^{3}T_{7}+T_{4}^{3}T_{6}}$
&
$37$
&
${\scriptstyle T_{1}T_{3}^{2}T_{5}^{2}+T_{2}^{3}T_{6}+T_{4}^{3}T_{7}}$
\\

$38$
&
${\scriptstyle T_{1}^{3}T_{7}^{4}+T_{2}^{3}T_{5}+T_{4}^{3}T_{6}}$
&
$39$
&
${\scriptstyle T_{1}^{2}T_{3}T_{6}^{3}+T_{2}^{3}T_{7}+T_{4}^{3}T_{5}}$
&
&
\\[2pt] \midrule \\[-8pt]

\multicolumn{6}{l}
{
\small\setlength{\tabcolsep}{8pt}
\begin{tabular}{cccc}
$
 Q \ = \ 
\left[\tiny\begin{array}{rrrrrrr}
    1 & 1 & 1 & 1 & 0 & 0 & 0 \\
    -1 & 0 & 0 & 0 & 1 & 1 & 1 
\end{array}\right]
$
&
$\mu \ = \ (2,1)$
&
$-\KKK \ = \ (2,1)$
&
$\KKK^4 \ = \ 192$
\end{tabular}
}
\\[6pt]

{\small\emph{ID}}
&
\multicolumn{1}{c}{{\small$g$}}
&
{\small\emph{ID}}
&
\multicolumn{1}{c}{{\small$g$}}
&
{\small\emph{ID}}
&
\multicolumn{1}{c}{{\small$g$}}
\\[3pt]

$40$
&
${\scriptstyle T_{1}T_{4}T_{5}^{2}+T_{2}^{2}T_{7}+T_{3}^{2}T_{6}}$
&
$41$
&
${\scriptstyle T_{1}^{2}T_{7}^{3}+T_{2}T_{4}T_{6}+T_{3}^{2}T_{5}}$
&
$42$
&
${\scriptstyle T_{2}^{2}T_{7}+T_{3}^{2}T_{5}+T_{4}^{2}T_{6}}$
\\

$43$
&
${\scriptstyle T_{1}^{2}T_{6}^{3}+T_{3}^{2}T_{7}+T_{4}^{2}T_{5}}$
&
&

&
&
\\ \bottomrule
\end{longtable}

\goodbreak

\begin{longtable}{llllll}
\toprule
\\[-8pt]

\multicolumn{6}{l}
{
\small\setlength{\tabcolsep}{8pt}
\begin{tabular}{cccc}
$
 Q \ = \ 
\left[\tiny\begin{array}{rrrrrrr}
    1 & 1 & 1 & 1 & 0 & 0 & 0 \\
    0 & 0 & 1 & 1 & 1 & 1 & 1 
\end{array}\right]
$
&
$\mu \ = \ (3,3)$
&
$-\KKK \ = \ (1,2)$
&
$\KKK^4 \ = \ 51$
\end{tabular}
}
\\[6pt]

{\small\emph{ID}}
&
\multicolumn{1}{c}{{\small$g$}}
&
{\small\emph{ID}}
&
\multicolumn{1}{c}{{\small$g$}}
&
{\small\emph{ID}}
&
\multicolumn{1}{c}{{\small$g$}}
\\[3pt]

$48$
&
${\scriptstyle T_{2}^{3}T_{5}T_{6}^{2}+T_{1}T_{4}^{2}T_{7}+T_{3}^{3}}$
&
$49$
&
${\scriptstyle T_{2}^{3}T_{5}^{2}T_{7}+T_{1}^{2}T_{4}T_{6}^{2}+T_{3}^{3}}$
&
$50$
&
${\scriptstyle T_{1}^{3}T_{6}T_{7}^{2}+T_{2}^{3}T_{5}^{3}+T_{3}T_{4}^{2}}$
\\[2pt] \midrule \\[-8pt]

\multicolumn{6}{l}
{
\small\setlength{\tabcolsep}{8pt}
\begin{tabular}{cccc}
$
 Q \ = \ 
\left[\tiny\begin{array}{rrrrrrr}
    1 & 1 & 2 & 3 & 0 & 0 & 0 \\
    0 & 0 & 2 & 3 & 1 & 1 & 1 
\end{array}\right]
$
&
$\mu \ = \ (6,6)$
&
$-\KKK \ = \ (1,2)$
&
$\KKK^4 \ = \ 17$
\end{tabular}
}
\\[6pt]

{\small\emph{ID}}
&
\multicolumn{1}{c}{{\small$g$}}
&
{\small\emph{ID}}
&
\multicolumn{1}{c}{{\small$g$}}
&
{\small\emph{ID}}
&
\multicolumn{1}{c}{{\small$g$}}
\\[3pt]

$53$
&
${\scriptstyle T_{1}T_{2}^{5}T_{5}T_{6}T_{7}^{4}+T_{3}^{3}+T_{4}^{2}}$
&
$54$
&
${\scriptstyle T_{1}T_{2}^{5}T_{5}^{3}T_{6}T_{7}^{2}+T_{3}^{3}+T_{4}^{2}}$
&
$55$
&
${\scriptstyle T_{1}T_{2}^{5}T_{5}T_{7}^{5}+T_{3}^{3}+T_{4}^{2}}$
\\
$56$
&
${\scriptstyle T_{1}^{2}T_{2}^{4}T_{5}T_{6}^{5}+T_{3}^{3}+T_{4}^{2}}$
&
$57$
&
${\scriptstyle T_{1}^{6}T_{5}^{5}T_{6}+T_{3}^{3}+T_{4}^{2}}$
&
$58$
&
${\scriptstyle T_{1}^{2}T_{2}^{4}T_{5}^{3}T_{7}^{3}+T_{3}^{3}+T_{4}^{2}}$
\\
$59$
&
${\scriptstyle T_{1}^{6}T_{5}^{2}T_{6}T_{7}^{3}+T_{3}^{3}+T_{4}^{2}}$
&
$60$
&
${\scriptstyle T_{1}^{6}T_{5}^{4}T_{6}T_{7}+T_{3}^{3}+T_{4}^{2}}$
&
$61$
&
${\scriptstyle T_{1}^{3}T_{2}^{3}T_{5}^{2}T_{6}T_{7}^{3}+T_{3}^{3}+T_{4}^{2}}$
\\
$62$
&
${\scriptstyle T_{1}^{4}T_{2}^{2}T_{5}^{3}T_{6}T_{7}^{2}+T_{3}^{3}+T_{4}^{2}}$
&
$63$
&
${\scriptstyle T_{1}^{5}T_{2}T_{5}^{4}T_{6}^{2}+T_{3}^{3}+T_{4}^{2}}$
&
$64$
&
${\scriptstyle T_{1}^{5}T_{2}T_{5}^{2}T_{6}^{2}T_{7}^{2}+T_{3}^{3}+T_{4}^{2}}$
\\
$65$
&
${\scriptstyle T_{1}^{3}T_{2}^{3}T_{5}^{5}T_{6}+T_{3}^{3}+T_{4}^{2}}$
&
$66$
&
${\scriptstyle T_{1}^{3}T_{2}^{3}T_{5}^{2}T_{7}^{4}+T_{3}^{3}+T_{4}^{2}}$
&
$67$
&
${\scriptstyle T_{1}T_{2}^{5}T_{7}^{6}+T_{3}^{3}+T_{4}^{2}}$
\\
$68$
&
${\scriptstyle T_{1}^{2}T_{2}^{4}T_{5}^{4}T_{6}T_{7}+T_{3}^{3}+T_{4}^{2}}$
&
$69$
&
${\scriptstyle T_{1}T_{2}^{5}T_{5}^{3}T_{6}^{3}+T_{3}^{3}+T_{4}^{2}}$
&
$70$
&
${\scriptstyle T_{1}^{3}T_{2}^{3}T_{5}T_{6}T_{7}^{4}+T_{3}^{3}+T_{4}^{2}}$
\\

$71$
&
${\scriptstyle T_{1}^{3}T_{2}^{3}T_{5}^{2}T_{6}^{2}T_{7}^{2}+T_{3}^{3}+T_{4}^{2}}$
&
&

&
&
\\[2pt] \midrule \\[-8pt]

\multicolumn{6}{l}
{
\small\setlength{\tabcolsep}{8pt}
\begin{tabular}{cccc}
$
 Q \ = \ 
\left[\tiny\begin{array}{rrrrrrr}
    1 & 1 & 1 & 0 & 0 & 0 & 0 \\
    0 & 0 & 3 & 1 & 1 & 1 & 1 
\end{array}\right]
$
&
$\mu \ = \ (2,6)$
&
$-\KKK \ = \ (1,1)$
&
$\KKK^4 \ = \ 8$
\end{tabular}
}
\\[6pt]

{\small\emph{ID}}
&
\multicolumn{1}{c}{{\small$g$}}
&
{\small\emph{ID}}
&
\multicolumn{1}{c}{{\small$g$}}
&
{\small\emph{ID}}
&
\multicolumn{1}{c}{{\small$g$}}
\\[3pt]

$74$
&
${\scriptstyle T_{1}^{2}T_{5}^{5}T_{7}+T_{2}^{2}T_{4}T_{6}^{5}+T_{3}^{2}}$
&
$75$
&
${\scriptstyle T_{1}^{2}T_{5}^{5}T_{6}+T_{2}^{2}T_{4}^{3}T_{7}^{3}+T_{3}^{2}}$
&
$76$
&
${\scriptstyle T_{1}^{2}T_{6}^{3}T_{7}^{3}+T_{2}^{2}T_{4}^{3}T_{5}^{3}+T_{3}^{2}}$
\\[2pt] \midrule \\[-8pt]

\multicolumn{6}{l}
{
\small\setlength{\tabcolsep}{8pt}
\begin{tabular}{cccc}
$
 Q \ = \ 
\left[\tiny\begin{array}{rrrrrrr}
    1 & 1 & 1 & 1 & 1 & 0 & 0 \\
    0 & 1 & 1 & 1 & 1 & 1 & 1 
\end{array}\right]
$
&
$\mu \ = \ (2,2)$
&
$-\KKK \ = \ (3,4)$
&
$\KKK^4 \ = \ 378$
\end{tabular}
}
\\[6pt]

{\small\emph{ID}}
&
\multicolumn{1}{c}{{\small$g$}}
&
{\small\emph{ID}}
&
\multicolumn{1}{c}{{\small$g$}}
&
{\small\emph{ID}}
&
\multicolumn{1}{c}{{\small$g$}}
\\[3pt]

$81$
&
${\scriptstyle T_{1}T_{2}T_{7}+T_{3}^{2}+T_{4}T_{5}}$
&
$82$
&
${\scriptstyle T_{1}^{2}T_{6}^{2}+T_{2}T_{3}+T_{4}T_{5}}$
&
$83$
&
${\scriptstyle T_{1}^{2}T_{6}T_{7}+T_{3}T_{5}+T_{4}^{2}}$
\\

$84$
&
${\scriptstyle T_{1}^{2}T_{6}T_{7}+T_{2}T_{4}+T_{3}T_{5}}$
&
&

&
&
\\[2pt] \midrule \\[-8pt]

\multicolumn{6}{l}
{
\small\setlength{\tabcolsep}{8pt}
\begin{tabular}{cccc}
$
 Q \ = \ 
\left[\tiny\begin{array}{rrrrrrr}
    1 & 1 & 1 & 1 & 1 & 0 & 0 \\
    0 & 1 & 1 & 1 & 1 & 1 & 1 
\end{array}\right]
$
&
$\mu \ = \ (3,3)$
&
$-\KKK \ = \ (2,3)$
&
$\KKK^4 \ = \ 144$
\end{tabular}
}
\\[6pt]

{\small\emph{ID}}
&
\multicolumn{1}{c}{{\small$g$}}
&
{\small\emph{ID}}
&
\multicolumn{1}{c}{{\small$g$}}
&
{\small\emph{ID}}
&
\multicolumn{1}{c}{{\small$g$}}
\\[3pt]

$85$
&
${\scriptstyle T_{1}^{2}T_{5}T_{7}^{2}+T_{2}^{2}T_{4}+T_{3}^{3}}$
&
$86$
&
${\scriptstyle T_{1}^{3}T_{6}^{2}T_{7}+T_{2}T_{5}^{2}+T_{3}T_{4}^{2}}$
&
$87$
&
${\scriptstyle T_{1}^{3}T_{6}T_{7}^{2}+T_{3}^{3}+T_{4}T_{5}^{2}}$
\\
$88$
&
${\scriptstyle T_{1}T_{5}^{2}T_{7}+T_{2}^{3}+T_{3}T_{4}^{2}}$
&
$89$
&
${\scriptstyle T_{1}^{3}T_{6}^{3}+T_{2}^{2}T_{4}+T_{3}^{2}T_{5}}$
&
$90$
&
${\scriptstyle T_{1}^{2}T_{4}T_{6}T_{7}+T_{2}^{2}T_{3}+T_{5}^{3}}$
\\

$91$
&
${\scriptstyle T_{1}^{3}T_{6}^{2}T_{7}+T_{2}T_{3}T_{5}+T_{4}^{3}}$
&
&

&
&
\\[2pt] \midrule \\[-8pt]

\multicolumn{6}{l}
{
\small\setlength{\tabcolsep}{8pt}
\begin{tabular}{cccc}
$
 Q \ = \ 
\left[\tiny\begin{array}{rrrrrrr}
    1 & 1 & 1 & 1 & 1 & 0 & 0 \\
    0 & 1 & 1 & 1 & 1 & 1 & 1 
\end{array}\right]
$
&
$\mu \ = \ (4,4)$
&
$-\KKK \ = \ (1,2)$
&
$\KKK^4 \ = \ 20$
\end{tabular}
}
\\[6pt]

{\small\emph{ID}}
&
\multicolumn{1}{c}{{\small$g$}}
&
{\small\emph{ID}}
&
\multicolumn{1}{c}{{\small$g$}}
&
{\small\emph{ID}}
&
\multicolumn{1}{c}{{\small$g$}}
\\[3pt]

$92$
&
${\scriptstyle T_{1}^{3}T_{4}T_{6}^{3}+T_{2}T_{5}^{3}+T_{3}^{4}}$
&
$93$
&
${\scriptstyle T_{1}^{3}T_{2}T_{6}^{2}T_{7}+T_{3}^{4}+T_{4}T_{5}^{3}}$
&
$94$
&
${\scriptstyle T_{1}^{4}T_{6}T_{7}^{3}+T_{2}T_{5}^{3}+T_{3}^{3}T_{4}}$
\\
$95$
&
${\scriptstyle T_{1}^{4}T_{6}^{3}T_{7}+T_{2}T_{3}^{3}+T_{4}^{2}T_{5}^{2}}$
&
$96$
&
${\scriptstyle T_{1}^{2}T_{2}^{2}T_{6}T_{7}+T_{3}^{3}T_{5}+T_{4}^{4}}$
&
$97$
&
${\scriptstyle T_{1}T_{5}^{3}T_{7}+T_{2}^{3}T_{4}+T_{3}^{4}}$
\\
$98$
&
${\scriptstyle T_{1}^{4}T_{6}T_{7}^{3}+T_{2}T_{5}^{3}+T_{4}^{4}}$
&
$99$
&
${\scriptstyle T_{1}^{4}T_{6}^{2}T_{7}^{2}+T_{2}^{3}T_{4}+T_{3}T_{5}^{3}}$
&
$100$
&
${\scriptstyle T_{1}^{4}T_{6}^{3}T_{7}+T_{2}T_{3}^{2}T_{4}+T_{5}^{4}}$
\\

$101$
&
${\scriptstyle T_{1}^{4}T_{6}^{4}+T_{2}T_{3}^{3}+T_{4}T_{5}^{3}}$
&
&

&
&
\\[2pt] \midrule \\[-8pt]

\multicolumn{6}{l}
{
\small\setlength{\tabcolsep}{8pt}
\begin{tabular}{cccc}
$
 Q \ = \ 
\left[\tiny\begin{array}{rrrrrrr}
    1 & 1 & 1 & 1 & 2 & 0 & 0 \\
    0 & 1 & 1 & 1 & 2 & 1 & 1 
\end{array}\right]
$
&
$\mu \ = \ (4,4)$
&
$-\KKK \ = \ (2,3)$
&
$\KKK^4 \ = \ 96$
\end{tabular}
}
\\[6pt]

{\small\emph{ID}}
&
\multicolumn{1}{c}{{\small$g$}}
&
{\small\emph{ID}}
&
\multicolumn{1}{c}{{\small$g$}}
&
{\small\emph{ID}}
&
\multicolumn{1}{c}{{\small$g$}}
\\[3pt]

$102$
&
${\scriptstyle T_{1}^{3}T_{4}T_{6}T_{7}^{2}+T_{2}^{3}T_{3}+T_{5}^{2}}$
&
$103$
&
${\scriptstyle T_{1}T_{2}^{3}T_{7}+T_{3}^{3}T_{4}+T_{5}^{2}}$
&
$104$
&
${\scriptstyle T_{1}^{4}T_{6}^{3}T_{7}+T_{2}T_{3}^{2}T_{4}+T_{5}^{2}}$
\\
$105$
&
${\scriptstyle T_{1}^{4}T_{6}T_{7}^{3}+T_{3}^{3}T_{4}+T_{5}^{2}}$
&
$106$
&
${\scriptstyle T_{1}^{3}T_{4}T_{6}^{3}+T_{2}^{3}T_{3}+T_{5}^{2}}$
&
$107$
&
${\scriptstyle T_{1}^{2}T_{3}^{2}T_{6}T_{7}+T_{2}T_{4}^{3}+T_{5}^{2}}$
\\[2pt] \midrule \\[-8pt]

\multicolumn{6}{l}
{
\small\setlength{\tabcolsep}{8pt}
\begin{tabular}{cccc}
$
 Q \ = \ 
\left[\tiny\begin{array}{rrrrrrr}
    1 & 1 & 1 & 1 & 3 & 0 & 0 \\
    0 & 1 & 1 & 1 & 3 & 1 & 1 
\end{array}\right]
$
&
$\mu \ = \ (6,6)$
&
$-\KKK \ = \ (1,2)$
&
$\KKK^4 \ = \ 10$
\end{tabular}
}
\\[6pt]

{\small\emph{ID}}
&
\multicolumn{1}{c}{{\small$g$}}
&
{\small\emph{ID}}
&
\multicolumn{1}{c}{{\small$g$}}
&
{\small\emph{ID}}
&
\multicolumn{1}{c}{{\small$g$}}
\\[3pt]

$108$
&
${\scriptstyle T_{1}^{5}T_{2}T_{7}^{5}+T_{3}T_{4}^{5}+T_{5}^{2}}$
&
$109$
&
${\scriptstyle T_{1}T_{4}^{5}T_{7}+T_{2}^{5}T_{3}+T_{5}^{2}}$
&
$110$
&
${\scriptstyle T_{1}^{5}T_{3}T_{6}^{3}T_{7}^{2}+T_{2}T_{4}^{5}+T_{5}^{2}}$
\\
$111$
&
${\scriptstyle T_{1}^{6}T_{6}^{5}T_{7}+T_{2}^{3}T_{3}^{2}T_{4}+T_{5}^{2}}$
&
$112$
&
${\scriptstyle T_{1}^{5}T_{3}T_{6}^{5}+T_{2}^{3}T_{4}^{3}+T_{5}^{2}}$
&
$113$
&
${\scriptstyle T_{1}^{4}T_{2}^{2}T_{6}T_{7}^{3}+T_{3}^{3}T_{4}^{3}+T_{5}^{2}}$
\\
$114$
&
${\scriptstyle T_{1}^{3}T_{3}^{3}T_{6}^{2}T_{7}+T_{2}^{3}T_{4}^{3}+T_{5}^{2}}$
&
$115$
&
${\scriptstyle T_{1}^{6}T_{6}^{3}T_{7}^{3}+T_{2}T_{3}T_{4}^{4}+T_{5}^{2}}$
&
$116$
&
${\scriptstyle T_{1}^{5}T_{4}T_{6}^{4}T_{7}+T_{2}^{3}T_{3}^{3}+T_{5}^{2}}$
\\
$117$
&
${\scriptstyle T_{1}^{4}T_{3}^{2}T_{6}^{3}T_{7}+T_{2}T_{4}^{5}+T_{5}^{2}}$
&
$118$
&
${\scriptstyle T_{1}^{6}T_{6}^{5}T_{7}+T_{3}^{5}T_{4}+T_{5}^{2}}$
&
$119$
&
${\scriptstyle T_{1}^{6}T_{6}^{3}T_{7}^{3}+T_{2}T_{4}^{5}+T_{5}^{2}}$
\\ \bottomrule
\end{longtable}

\goodbreak

\begin{longtable}{llllll}
\toprule
\\[-8pt]

\multicolumn{6}{l}
{
\small\setlength{\tabcolsep}{8pt}
\begin{tabular}{cccc}
$
 Q \ = \ 
\left[\tiny\begin{array}{rrrrrrr}
    1 & 1 & 1 & 1 & 3 & 0 & 0 \\
    0 & 1 & 1 & 1 & 3 & 1 & 1 
\end{array}\right]
$
&
$\mu \ = \ (6,6)$
&
$-\KKK \ = \ (1,2)$
&
$\KKK^4 \ = \ 10$
\end{tabular}
}
\\[6pt]

{\small\emph{ID}}
&
\multicolumn{1}{c}{{\small$g$}}
&
{\small\emph{ID}}
&
\multicolumn{1}{c}{{\small$g$}}
&
{\small\emph{ID}}
&
\multicolumn{1}{c}{{\small$g$}}
\\[3pt]

$120$
&
${\scriptstyle T_{1}^{2}T_{2}^{4}T_{6}T_{7}+T_{3}^{5}T_{4}+T_{5}^{2}}$
&
$121$
&
${\scriptstyle T_{1}^{3}T_{2}^{3}T_{7}^{3}+T_{3}T_{4}^{5}+T_{5}^{2}}$
&
$122$
&
${\scriptstyle T_{1}^{3}T_{4}^{3}T_{6}T_{7}^{2}+T_{2}T_{3}^{5}+T_{5}^{2}}$
\\
$123$
&
${\scriptstyle T_{1}^{6}T_{6}^{5}T_{7}+T_{3}^{3}T_{4}^{3}+T_{5}^{2}}$
&
$124$
&
${\scriptstyle T_{1}^{5}T_{3}T_{6}T_{7}^{4}+T_{2}T_{4}^{5}+T_{5}^{2}}$
&
$125$
&
${\scriptstyle T_{1}^{6}T_{6}^{3}T_{7}^{3}+T_{2}^{2}T_{3}^{3}T_{4}+T_{5}^{2}}$
\\
$126$
&
${\scriptstyle T_{1}^{6}T_{6}T_{7}^{5}+T_{2}^{4}T_{3}T_{4}+T_{5}^{2}}$
&
$127$
&
${\scriptstyle T_{1}^{5}T_{4}T_{6}^{2}T_{7}^{3}+T_{2}^{3}T_{3}^{3}+T_{5}^{2}}$
&
$128$
&
${\scriptstyle T_{1}^{2}T_{4}^{4}T_{6}T_{7}+T_{2}^{3}T_{3}^{3}+T_{5}^{2}}$
\\

$129$
&
${\scriptstyle T_{1}T_{4}^{5}T_{7}+T_{2}^{3}T_{3}^{3}+T_{5}^{2}}$
&
&

&
&
\\[2pt] \midrule \\[-8pt]

\multicolumn{6}{l}
{
\small\setlength{\tabcolsep}{8pt}
\begin{tabular}{cccc}
$
 Q \ = \ 
\left[\tiny\begin{array}{rrrrrrr}
    1 & 1 & 1 & 2 & 3 & 0 & 0 \\
    0 & 1 & 1 & 2 & 3 & 1 & 1 
\end{array}\right]
$
&
$\mu \ = \ (6,6)$
&
$-\KKK \ = \ (2,3)$
&
$\KKK^4 \ = \ 48$
\end{tabular}
}
\\[6pt]

{\small\emph{ID}}
&
\multicolumn{1}{c}{{\small$g$}}
&
{\small\emph{ID}}
&
\multicolumn{1}{c}{{\small$g$}}
&
{\small\emph{ID}}
&
\multicolumn{1}{c}{{\small$g$}}
\\[3pt]

$130$
&
${\scriptstyle T_{1}^{4}T_{3}^{2}T_{6}T_{7}^{3}+T_{4}^{3}+T_{5}^{2}}$
&
$131$
&
${\scriptstyle T_{1}^{4}T_{2}T_{3}T_{6}T_{7}^{3}+T_{4}^{3}+T_{5}^{2}}$
&
$132$
&
${\scriptstyle T_{1}^{3}T_{2}^{2}T_{3}T_{6}^{3}+T_{4}^{3}+T_{5}^{2}}$
\\
$133$
&
${\scriptstyle T_{1}^{2}T_{2}T_{3}^{3}T_{6}T_{7}+T_{4}^{3}+T_{5}^{2}}$
&
$134$
&
${\scriptstyle T_{1}T_{2}^{4}T_{3}T_{6}+T_{4}^{3}+T_{5}^{2}}$
&
$135$
&
${\scriptstyle T_{1}^{4}T_{2}T_{3}T_{6}^{4}+T_{4}^{3}+T_{5}^{2}}$
\\
$136$
&
${\scriptstyle T_{1}T_{2}^{2}T_{3}^{3}T_{7}+T_{4}^{3}+T_{5}^{2}}$
&
$137$
&
${\scriptstyle T_{1}^{2}T_{2}^{4}T_{6}T_{7}+T_{4}^{3}+T_{5}^{2}}$
&
$138$
&
${\scriptstyle T_{2}^{5}T_{3}+T_{4}^{3}+T_{5}^{2}}$
\\
$139$
&
${\scriptstyle T_{1}^{2}T_{2}T_{3}^{3}T_{6}^{2}+T_{4}^{3}+T_{5}^{2}}$
&
$140$
&
${\scriptstyle T_{1}^{5}T_{3}T_{6}T_{7}^{4}+T_{4}^{3}+T_{5}^{2}}$
&
$141$
&
${\scriptstyle T_{1}^{3}T_{2}^{3}T_{6}T_{7}^{2}+T_{4}^{3}+T_{5}^{2}}$
\\
$142$
&
${\scriptstyle T_{1}^{5}T_{3}T_{6}^{5}+T_{4}^{3}+T_{5}^{2}}$
&
$143$
&
${\scriptstyle T_{1}^{6}T_{6}^{5}T_{7}+T_{4}^{3}+T_{5}^{2}}$
&
$144$
&
${\scriptstyle T_{1}^{4}T_{2}T_{3}T_{6}^{2}T_{7}^{2}+T_{4}^{3}+T_{5}^{2}}$
\\
$145$
&
${\scriptstyle T_{1}^{5}T_{2}T_{6}^{2}T_{7}^{3}+T_{4}^{3}+T_{5}^{2}}$
&
$146$
&
${\scriptstyle T_{1}T_{3}^{5}T_{6}+T_{4}^{3}+T_{5}^{2}}$
&
$147$
&
${\scriptstyle T_{1}^{3}T_{2}T_{3}^{2}T_{6}^{2}T_{7}+T_{4}^{3}+T_{5}^{2}}$
\\

$148$
&
${\scriptstyle T_{1}^{2}T_{2}^{2}T_{3}^{2}T_{6}T_{7}+T_{4}^{3}+T_{5}^{2}}$
&
&

&
&
\\[2pt] \midrule \\[-8pt]

\multicolumn{6}{l}
{
\small\setlength{\tabcolsep}{8pt}
\begin{tabular}{cccc}
$
 Q \ = \ 
\left[\tiny\begin{array}{rrrrrrr}
    1 & 1 & 1 & 1 & 0 & 0 & 0 \\
    0 & 2 & 2 & 2 & 1 & 1 & 1 
\end{array}\right]
$
&
$\mu \ = \ (3,6)$
&
$-\KKK \ = \ (1,3)$
&
$\KKK^4 \ = \ 54$
\end{tabular}
}
\\[6pt]

{\small\emph{ID}}
&
\multicolumn{1}{c}{{\small$g$}}
&
{\small\emph{ID}}
&
\multicolumn{1}{c}{{\small$g$}}
&
{\small\emph{ID}}
&
\multicolumn{1}{c}{{\small$g$}}
\\[3pt]

$154$
&
${\scriptstyle T_{1}^{3}T_{5}^{2}T_{6}T_{7}^{3}+T_{2}^{2}T_{4}+T_{3}^{3}}$
&
$155$
&
${\scriptstyle T_{1}^{3}T_{5}^{2}T_{7}^{4}+T_{2}^{3}+T_{3}^{2}T_{4}}$
&
$156$
&
${\scriptstyle T_{1}^{3}T_{5}^{4}T_{6}T_{7}+T_{2}^{3}+T_{3}^{2}T_{4}}$
\\

$157$
&
${\scriptstyle T_{1}^{3}T_{6}T_{7}^{5}+T_{2}^{2}T_{3}+T_{4}^{3}}$
&
$158$
&
${\scriptstyle T_{1}^{3}T_{5}^{2}T_{6}^{2}T_{7}^{2}+T_{2}T_{3}^{2}+T_{4}^{3}}$
&
&
\\[2pt] \midrule \\[-8pt]

\multicolumn{6}{l}
{
\small\setlength{\tabcolsep}{8pt}
\begin{tabular}{cccc}
$
 Q \ = \ 
\left[\tiny\begin{array}{rrrrrrr}
    1 & 1 & 1 & 2 & 0 & 0 & 0 \\
    0 & 2 & 2 & 4 & 1 & 1 & 1 
\end{array}\right]
$
&
$\mu \ = \ (4,8)$
&
$-\KKK \ = \ (1,3)$
&
$\KKK^4 \ = \ 36$
\end{tabular}
}
\\[6pt]

{\small\emph{ID}}
&
\multicolumn{1}{c}{{\small$g$}}
&
{\small\emph{ID}}
&
\multicolumn{1}{c}{{\small$g$}}
&
{\small\emph{ID}}
&
\multicolumn{1}{c}{{\small$g$}}
\\[3pt]

$161$
&
${\scriptstyle T_{1}^{4}T_{5}^{5}T_{6}T_{7}^{2}+T_{2}T_{3}^{3}+T_{4}^{2}}$
&
$162$
&
${\scriptstyle T_{1}^{4}T_{5}T_{6}^{4}T_{7}^{3}+T_{2}^{3}T_{3}+T_{4}^{2}}$
&
$163$
&
${\scriptstyle T_{1}^{4}T_{5}^{3}T_{6}^{5}+T_{2}T_{3}^{3}+T_{4}^{2}}$
\\
$164$
&
${\scriptstyle T_{1}^{4}T_{5}^{2}T_{6}^{3}T_{7}^{3}+T_{2}^{3}T_{3}+T_{4}^{2}}$
&
$165$
&
${\scriptstyle T_{1}^{4}T_{5}^{7}T_{7}+T_{2}T_{3}^{3}+T_{4}^{2}}$
&
$166$
&
${\scriptstyle T_{1}^{4}T_{5}^{6}T_{6}T_{7}+T_{2}T_{3}^{3}+T_{4}^{2}}$
\\[2pt] \midrule \\[-8pt]

\multicolumn{6}{l}
{
\small\setlength{\tabcolsep}{8pt}
\begin{tabular}{cccc}
$
 Q \ = \ 
\left[\tiny\begin{array}{rrrrrrr}
    1 & 1 & 2 & 3 & 0 & 0 & 0 \\
    0 & 1 & 2 & 3 & 1 & 1 & 1 
\end{array}\right]
$
&
$\mu \ = \ (6,6)$
&
$-\KKK \ = \ (1,3)$
&
$\KKK^4 \ = \ 33$
\end{tabular}
}
\\[6pt]

{\small\emph{ID}}
&
\multicolumn{1}{c}{{\small$g$}}
&
{\small\emph{ID}}
&
\multicolumn{1}{c}{{\small$g$}}
&
{\small\emph{ID}}
&
\multicolumn{1}{c}{{\small$g$}}
\\[3pt]

$167$
&
${\scriptstyle T_{1}^{6}T_{5}T_{6}^{2}T_{7}^{3}+T_{3}^{3}+T_{4}^{2}}$
&
$168$
&
${\scriptstyle T_{1}^{6}T_{6}^{5}T_{7}+T_{3}^{3}+T_{4}^{2}}$
&
$169$
&
${\scriptstyle T_{1}T_{2}^{5}T_{7}+T_{3}^{3}+T_{4}^{2}}$
\\
$170$
&
${\scriptstyle T_{1}^{3}T_{2}^{3}T_{6}^{2}T_{7}+T_{3}^{3}+T_{4}^{2}}$
&
$171$
&
${\scriptstyle T_{1}^{5}T_{2}T_{5}^{3}T_{6}T_{7}+T_{3}^{3}+T_{4}^{2}}$
&
$172$
&
${\scriptstyle T_{1}^{5}T_{2}T_{5}^{2}T_{6}^{3}+T_{3}^{3}+T_{4}^{2}}$
\\
$173$
&
${\scriptstyle T_{1}^{2}T_{2}^{4}T_{5}T_{7}+T_{3}^{3}+T_{4}^{2}}$
&
$174$
&
${\scriptstyle T_{1}^{6}T_{5}^{4}T_{6}T_{7}+T_{3}^{3}+T_{4}^{2}}$
&
$175$
&
${\scriptstyle T_{1}^{5}T_{2}T_{5}^{4}T_{6}+T_{3}^{3}+T_{4}^{2}}$
\\
$176$
&
${\scriptstyle T_{1}^{5}T_{2}T_{5}^{5}+T_{3}^{3}+T_{4}^{2}}$
&
$177$
&
${\scriptstyle T_{1}^{4}T_{2}^{2}T_{5}T_{6}^{3}+T_{3}^{3}+T_{4}^{2}}$
&
$178$
&
${\scriptstyle T_{1}^{5}T_{2}T_{5}T_{6}^{2}T_{7}^{2}+T_{3}^{3}+T_{4}^{2}}$
\\

$179$
&
${\scriptstyle T_{1}^{4}T_{2}^{2}T_{5}T_{6}T_{7}^{2}+T_{3}^{3}+T_{4}^{2}}$
&
$180$
&
${\scriptstyle T_{1}^{3}T_{2}^{3}T_{5}T_{6}T_{7}+T_{3}^{3}+T_{4}^{2}}$
&
&
\\[2pt] \midrule \\[-8pt]

\multicolumn{6}{l}
{
\small\setlength{\tabcolsep}{8pt}
\begin{tabular}{cccc}
$
 Q \ = \ 
\left[\tiny\begin{array}{rrrrrrr}
    1 & 1 & 2 & 3 & 0 & 0 & 0 \\
    0 & 2 & 4 & 6 & 1 & 1 & 1 
\end{array}\right]
$
&
$\mu \ = \ (6,12)$
&
$-\KKK \ = \ (1,3)$
&
$\KKK^4 \ = \ 18$
\end{tabular}
}
\\[6pt]

{\small\emph{ID}}
&
\multicolumn{1}{c}{{\small$g$}}
&
{\small\emph{ID}}
&
\multicolumn{1}{c}{{\small$g$}}
&
{\small\emph{ID}}
&
\multicolumn{1}{c}{{\small$g$}}
\\[3pt]

$181$
&
${\scriptstyle T_{1}T_{2}^{5}T_{5}^{2}+T_{3}^{3}+T_{4}^{2}}$
&
$182$
&
${\scriptstyle T_{1}^{6}T_{6}^{5}T_{7}^{7}+T_{3}^{3}+T_{4}^{2}}$
&
$183$
&
${\scriptstyle T_{1}^{6}T_{5}^{5}T_{6}^{2}T_{7}^{5}+T_{3}^{3}+T_{4}^{2}}$
\\
$184$
&
${\scriptstyle T_{1}^{5}T_{2}T_{5}^{3}T_{6}^{5}T_{7}^{2}+T_{3}^{3}+T_{4}^{2}}$
&
$185$
&
${\scriptstyle T_{1}^{5}T_{2}T_{5}^{9}T_{7}+T_{3}^{3}+T_{4}^{2}}$
&
$186$
&
${\scriptstyle T_{1}^{5}T_{2}T_{5}^{3}T_{6}^{7}+T_{3}^{3}+T_{4}^{2}}$
\\
$187$
&
${\scriptstyle T_{1}^{3}T_{2}^{3}T_{5}^{3}T_{6}T_{7}^{2}+T_{3}^{3}+T_{4}^{2}}$
&
$188$
&
${\scriptstyle T_{1}^{4}T_{2}^{2}T_{5}T_{6}T_{7}^{6}+T_{3}^{3}+T_{4}^{2}}$
&
$189$
&
${\scriptstyle T_{1}^{6}T_{5}^{4}T_{6}^{3}T_{7}^{5}+T_{3}^{3}+T_{4}^{2}}$
\\
$190$
&
${\scriptstyle T_{1}^{3}T_{2}^{3}T_{5}^{4}T_{6}T_{7}+T_{3}^{3}+T_{4}^{2}}$
&
$191$
&
${\scriptstyle T_{1}^{4}T_{2}^{2}T_{5}^{5}T_{6}^{3}+T_{3}^{3}+T_{4}^{2}}$
&
$192$
&
${\scriptstyle T_{1}^{5}T_{2}T_{5}^{6}T_{6}^{2}T_{7}^{2}+T_{3}^{3}+T_{4}^{2}}$
\\
$193$
&
${\scriptstyle T_{1}^{3}T_{2}^{3}T_{5}^{5}T_{7}+T_{3}^{3}+T_{4}^{2}}$
&
$194$
&
${\scriptstyle T_{1}^{6}T_{5}^{11}T_{7}+T_{3}^{3}+T_{4}^{2}}$
&
$195$
&
${\scriptstyle T_{1}^{6}T_{5}^{7}T_{6}^{3}T_{7}^{2}+T_{3}^{3}+T_{4}^{2}}$
\\
$196$
&
${\scriptstyle T_{1}^{6}T_{5}T_{6}^{10}T_{7}+T_{3}^{3}+T_{4}^{2}}$
&
$197$
&
${\scriptstyle T_{1}^{5}T_{2}T_{5}^{7}T_{6}T_{7}^{2}+T_{3}^{3}+T_{4}^{2}}$
&
$198$
&
${\scriptstyle T_{1}^{2}T_{2}^{4}T_{5}T_{6}^{3}+T_{3}^{3}+T_{4}^{2}}$
\\
$199$
&
${\scriptstyle T_{1}^{6}T_{5}T_{6}^{7}T_{7}^{4}+T_{3}^{3}+T_{4}^{2}}$
&
$200$
&
${\scriptstyle T_{1}^{6}T_{5}^{9}T_{6}T_{7}^{2}+T_{3}^{3}+T_{4}^{2}}$
&
$201$
&
${\scriptstyle T_{1}^{4}T_{2}^{2}T_{5}^{2}T_{6}^{3}T_{7}^{3}+T_{3}^{3}+T_{4}^{2}}$
\\
$202$
&
${\scriptstyle T_{1}^{4}T_{2}^{2}T_{5}^{5}T_{6}^{2}T_{7}+T_{3}^{3}+T_{4}^{2}}$
&
$203$
&
${\scriptstyle T_{1}^{6}T_{5}^{5}T_{6}T_{7}^{6}+T_{3}^{3}+T_{4}^{2}}$
&
$204$
&
${\scriptstyle T_{1}^{5}T_{2}T_{5}^{3}T_{6}^{4}T_{7}^{3}+T_{3}^{3}+T_{4}^{2}}$
\\
$205$
&
${\scriptstyle T_{1}^{5}T_{2}T_{5}^{8}T_{7}^{2}+T_{3}^{3}+T_{4}^{2}}$
&
$206$
&
${\scriptstyle T_{1}^{3}T_{2}^{3}T_{5}^{2}T_{7}^{4}+T_{3}^{3}+T_{4}^{2}}$
&
$207$
&
${\scriptstyle T_{1}^{5}T_{2}T_{5}^{4}T_{6}^{5}T_{7}+T_{3}^{3}+T_{4}^{2}}$
\\
$208$
&
${\scriptstyle T_{1}^{5}T_{2}T_{7}^{10}+T_{3}^{3}+T_{4}^{2}}$
&
$209$
&
${\scriptstyle T_{1}^{5}T_{2}T_{5}^{2}T_{6}^{4}T_{7}^{4}+T_{3}^{3}+T_{4}^{2}}$
&
$210$
&
${\scriptstyle T_{1}^{5}T_{2}T_{5}^{3}T_{6}^{6}T_{7}+T_{3}^{3}+T_{4}^{2}}$
\\
$211$
&
${\scriptstyle T_{1}T_{2}^{5}T_{6}T_{7}+T_{3}^{3}+T_{4}^{2}}$
&
$212$
&
${\scriptstyle T_{1}^{5}T_{2}T_{5}^{8}T_{6}T_{7}+T_{3}^{3}+T_{4}^{2}}$
&
$213$
&
${\scriptstyle T_{1}^{4}T_{2}^{2}T_{5}T_{6}^{3}T_{7}^{4}+T_{3}^{3}+T_{4}^{2}}$
\\
$214$
&
${\scriptstyle T_{1}^{2}T_{2}^{4}T_{5}T_{6}^{2}T_{7}+T_{3}^{3}+T_{4}^{2}}$
&
$215$
&
${\scriptstyle T_{1}^{4}T_{2}^{2}T_{5}T_{6}^{7}+T_{3}^{3}+T_{4}^{2}}$
&
$216$
&
${\scriptstyle T_{1}^{5}T_{2}T_{6}^{5}T_{7}^{5}+T_{3}^{3}+T_{4}^{2}}$
\\ \bottomrule
\end{longtable}

\goodbreak

\begin{longtable}{llllll}
\toprule
\\[-8pt]

\multicolumn{6}{l}
{
\small\setlength{\tabcolsep}{8pt}
\begin{tabular}{cccc}
$
 Q \ = \ 
\left[\tiny\begin{array}{rrrrrrr}
    1 & 1 & 2 & 3 & 0 & 0 & 0 \\
    0 & 2 & 4 & 6 & 1 & 1 & 1 
\end{array}\right]
$
&
$\mu \ = \ (6,12)$
&
$-\KKK \ = \ (1,3)$
&
$\KKK^4 \ = \ 18$
\end{tabular}
}
\\[6pt]

{\small\emph{ID}}
&
\multicolumn{1}{c}{{\small$g$}}
&
{\small\emph{ID}}
&
\multicolumn{1}{c}{{\small$g$}}
&
{\small\emph{ID}}
&
\multicolumn{1}{c}{{\small$g$}}
\\[3pt]

$217$
&
${\scriptstyle T_{1}^{5}T_{2}T_{5}^{6}T_{6}^{4}+T_{3}^{3}+T_{4}^{2}}$
&
$218$
&
${\scriptstyle T_{1}^{6}T_{5}T_{6}^{8}T_{7}^{3}+T_{3}^{3}+T_{4}^{2}}$
&
$219$
&
${\scriptstyle T_{1}^{3}T_{2}^{3}T_{5}^{2}T_{6}^{2}T_{7}^{2}+T_{3}^{3}+T_{4}^{2}}$
\\[2pt] \midrule \\[-8pt]

\multicolumn{6}{l}
{
\small\setlength{\tabcolsep}{8pt}
\begin{tabular}{cccc}
$
 Q \ = \ 
\left[\tiny\begin{array}{rrrrrrr}
    1 & 1 & 1 & 2 & 3 & 1 & 0 \\
    0 & 0 & 0 & 0 & 0 & 1 & 1 
\end{array}\right]
$
&
$\mu \ = \ (6,0)$
&
$-\KKK \ = \ (3,2)$
&
$\KKK^4 \ = \ 80$
\end{tabular}
}
\\[6pt]

{\small\emph{ID}}
&
\multicolumn{1}{c}{{\small$g$}}
&
{\small\emph{ID}}
&
\multicolumn{1}{c}{{\small$g$}}
&
{\small\emph{ID}}
&
\multicolumn{1}{c}{{\small$g$}}
\\[3pt]

$220$
&
${\scriptstyle T_{1}^{4}T_{2}T_{3}+T_{4}^{3}+T_{5}^{2}}$
&
$221$
&
${\scriptstyle T_{1}^{5}T_{3}+T_{4}^{3}+T_{5}^{2}}$
&
$222$
&
${\scriptstyle T_{1}^{3}T_{2}^{2}T_{3}+T_{4}^{3}+T_{5}^{2}}$
\\ \bottomrule
\end{longtable}
\end{center}

\end{class-list}

\medskip

\begin{class-list}\label{class:s=3-nsample}
Locally factorial Fano fourfoulds of Picard number two with a hypersurface Cox ring and an effective three-torus action: Specifying data for the sporadic cases with $s = 3$ and $\mu \not\in \lambda$.
\begin{center}
\small\setlength{\arraycolsep}{2pt}
\begin{longtable}{cccccl}
\toprule
\emph{ID} & $[w_1,\dots,w_7]$ & $\mu$ & $-\KKK$ & $\KKK^4$ & \multicolumn{1}{c}{$g$}
\\ \midrule

$227$
&
\multirow{2}{*}{
$\left[\tiny\begin{array}{rrrrrrr}
    1 & 1 & 1 & 1 & 0 & 0 & 0 \\
    0 & 0 & 0 & 1 & 1 & 1 & 1 
\end{array}\right]$
}
&
\multirow{2}{*}{
$(1,2)$
}
&
\multirow{2}{*}{
$(3,2)$
}
&
\multirow{2}{*}{
$352$
}
&
$T_{1}T_{5}^{2}+T_{2}T_{6}^{2}+T_{3}T_{7}^{2}$
\\

$228$
&
&
&
&
&
$T_{1}T_{5}^{2}+T_{2}T_{6}^{2}+T_{4}T_{7}$
\\ \midrule

$229$
&
$\left[\tiny\begin{array}{rrrrrrr}
    1 & 1 & 1 & 1 & 0 & 0 & 0 \\
    0 & 0 & 0 & 1 & 1 & 1 & 1 
\end{array}\right]$
&
$(1,3)$
&
$(3,1)$
&
$140$
&
$T_{1}T_{5}^{3}+T_{2}T_{6}^{3}+T_{4}T_{7}^{2}$
\\ \midrule

$230$
&
\multirow{3}{*}{
$\left[\tiny\begin{array}{rrrrrrr}
    1 & 1 & 1 & 1 & 0 & 0 & 0 \\
    0 & 0 & 0 & 1 & 1 & 1 & 1 
\end{array}\right]$
}
&
\multirow{3}{*}{
$(2,3)$
}
&
\multirow{3}{*}{
$(2,1)$
}
&
\multirow{3}{*}{
$65$
}
&
$T_{1}^{2}T_{5}^{3}+T_{2}^{2}T_{6}^{3}+T_{3}T_{4}T_{7}^{2}$
\\

$231$
&
&
&
&
&
$T_{1}^{2}T_{5}^{3}+T_{2}^{2}T_{6}^{3}+T_{4}^{2}T_{7}$
\\

$232$
&
&
&
&
&
$T_{1}^{2}T_{5}^{3}+T_{2}T_{3}T_{6}^{3}+T_{4}^{2}T_{7}^{2}$
\\ \midrule

$233$
&
\multirow{2}{*}{
$\left[\tiny\begin{array}{rrrrrrr}
    1 & 1 & 1 & 2 & 0 & 0 & 0 \\
    0 & 0 & 0 & 1 & 1 & 1 & 1 
\end{array}\right]$
}
&
\multirow{2}{*}{
$(2,3)$
}
&
\multirow{2}{*}{
$(3,1)$
}
&
\multirow{2}{*}{
$122$
}
&
$T_{1}^{2}T_{5}^{3}+T_{2}^{2}T_{6}^{3}+T_{4}T_{7}^{2}$
\\

$234$
&
&
&
&
&
$T_{1}^{2}T_{5}^{3}+T_{2}T_{3}T_{6}^{3}+T_{4}T_{7}^{2}$
\\ \midrule

$235$
&
\multirow{2}{*}{
$\left[\tiny\begin{array}{rrrrrrr}
    1 & 1 & 1 & 1 & 0 & 0 & 0 \\
    0 & 0 & 1 & 1 & 1 & 1 & 1 
\end{array}\right]$
}
&
\multirow{2}{*}{
$(1,3)$
}
&
\multirow{2}{*}{
$(3,2)$
}
&
\multirow{2}{*}{
$208$
}
&
$T_{1}T_{5}^{3}+T_{2}T_{6}^{3}+T_{4}T_{7}^{2}$
\\

$236$
&
&
&
&
&
$T_{1}T_{5}^{3}+T_{3}T_{6}^{2}+T_{4}T_{7}^{2}$
\\ \midrule

$237$
&
\multirow{4}{*}{
$\left[\tiny\begin{array}{rrrrrrr}
    1 & 1 & 1 & 0 & 0 & 0 & 0 \\
    0 & 0 & 1 & 1 & 1 & 1 & 1 
\end{array}\right]$
}
&
\multirow{4}{*}{
$(1,4)$
}
&
\multirow{4}{*}{
$(2,1)$
}
&
\multirow{4}{*}{
$29$
}
&
$T_{1}T_{4}^{4}+T_{2}T_{6}^{4}+T_{3}T_{7}^{3}$
\\

$238$
&
&
&
&
&
$T_{1}T_{4}^{3}T_{5}+T_{2}T_{6}^{4}+T_{3}T_{7}^{3}$
\\

$239$
&
&
&
&
&
$T_{1}T_{4}^{2}T_{5}^{2}+T_{2}T_{6}^{4}+T_{3}T_{7}^{3}$
\\

$240$
&
&
&
&
&
$T_{1}T_{4}^{4}+T_{2}T_{5}^{4}+T_{3}T_{6}T_{7}^{2}$
\\ \midrule

$241$
&
$\left[\tiny\begin{array}{rrrrrrr}
    1 & 1 & 1 & 0 & 0 & 0 & 0 \\
    0 & 0 & 1 & 1 & 1 & 2 & 3 
\end{array}\right]$
&
$(0,6)$
&
$(3,2)$
&
$80$
&
$T_{4}^{5}T_{5}+T_{6}^{3}+T_{7}^{2}$
\\ \bottomrule
\end{longtable}
\end{center}
\end{class-list}

\medskip

\begin{class-list}\label{class:s=3-series}
Locally factorial Fano fourfoulds of Picard number two with a hypersurface Cox ring and an effective three-torus action: Specifying data for the series with $s = 3$.
\begin{center}
\small\setlength{\arraycolsep}{2pt}
\small\setlength{\tabcolsep}{4pt}
\begin{longtable}{ccccll}
\toprule
\emph{ID} & $[w_1,\dots,w_7]$ & $\mu$ & $-\KKK$ & \multicolumn{1}{c}{$g$} & 
\\ \midrule

\emph{S1}
&
$
\left[\tiny\begin{array}{rrrrrrr}
    1 & 1 & 1 & a & 0 & 0 & 0 \\
    0 & 0 & 0 & 1 & 1 & 1 & 1
\end{array}\right]
$
&
$
(1,3)
$
&
$
(a\!+\!2,1)
$
&
$T_1 T_5^3 + T_2 T_6^3 + T_3 T_7^3$
&
${\scriptstyle a\, \ge\, 1}$
\\ \midrule

\emph{S2}
&
$
\left[\tiny\begin{array}{rrrrrrr}
    1 & 1 & 1 & a & 0 & 0 & 0 \\
    0 & 0 & 0 & 1 & 1 & 1 & 1
\end{array}\right]
$
&
$
(2,3)
$
&
$
(a\!+\!1,1)
$
&
$T_1^2 T_5^3 + T_2^2 T_6^3 + T_3^2 T_7^3$
&
${\scriptstyle a\, \ge\, 1}$
\\ \midrule

\emph{S3}
&
\multirow{2}{*}{
$
\left[\tiny\begin{array}{rrrrrrr}
    1 & 1 & a & a & 0 & 0 & 0 \\
    0 & 0 & 1 & 1 & 1 & 1 & 1
\end{array}\right]
$
}
&
\multirow{2}{*}{
$
(a,4)
$
}
&
\multirow{2}{*}{
$
(a\!+\!2,1)
$
}
&
$T_1^{a-l} T_2^l T_5^4 + T_3 T_6^3 + T_4 T_7^3$
&
$
{\scriptstyle a\, \ge\, 1, \ 0 \, \le \, l \, \le \, a/2}
$
\\

\emph{S4}
&
&
&
&
$T_1^a T_5^4 + T_2^a T_6^4 + T_4 T_7^3$
&
$
{\scriptstyle a\, \ge\, 1, \ a \text{\emph{ odd}}}
$
\\ \bottomrule
\end{longtable}
\end{center}
\end{class-list}

\medskip

\begin{class-list}\label{class:s=4-sample}
Locally factorial Fano fourfoulds of Picard number two with a hypersurface Cox ring and an effective three-torus action: Specifying data for the cases with $s = 4$ and $\mu \in \lambda$.
\begin{center}
\small\setlength{\arraycolsep}{2pt}
\begin{longtable}{cccccl}
\toprule
\emph{ID} & $[w_1,\dots,w_7]$ & $\mu$ & $-\KKK$ & $\KKK^4$ & \multicolumn{1}{c}{$g$}
\\ \midrule

$242$
&
\multirow{2}{*}{
$\left[\tiny\begin{array}{rrrrrrr}
    1 & 1 & 1 & 1 & 1 & 0 & 0 \\
    -1 & 0 & 0 & 0 & 1 & 1 & 1 
\end{array}\right]$
}
&
\multirow{2}{*}{
$(3,1)$
}
&
\multirow{2}{*}{
$(2,1)$
}
&
\multirow{2}{*}{
$113$
}
&
$T_{2}T_{4}^{2}T_{7}+T_{3}^{3}T_{6}+T_{1}T_{5}^{2}$
\\

$243$
&
&
&
&
&
$T_{3}^{3}T_{6}+T_{4}^{3}T_{7}+T_{1}T_{5}^{2}$
\\ \midrule

$244$
&
$\left[\tiny\begin{array}{rrrrrrr}
    1 & 1 & 1 & 1 & 1 & 0 & 0 \\
    0 & 1 & 1 & 1 & 2 & 1 & 1 
\end{array}\right]$
&
$(2,2)$
&
$(3,5)$
&
$433$
&
$T_{1}T_{5}+T_{2}T_{4}+T_{3}^{2}$
\\ \midrule

$245$
&
$\left[\tiny\begin{array}{rrrrrrr}
    1 & 1 & 1 & 1 & 1 & 0 & 0 \\
    0 & 2 & 2 & 2 & 3 & 1 & 1 
\end{array}\right]$
&
$(3,6)$
&
$(2,5)$
&
$145$
&
$T_{1}T_{5}^{2}+T_{2}^{2}T_{3}+T_{4}^{3}$
\\ \midrule

$246$
&
\multirow{2}{*}{
$\left[\tiny\begin{array}{rrrrrrr}
    1 & 1 & 1 & 1 & 0 & 0 & 0 \\
    0 & 1 & 1 & 2 & 1 & 1 & 1 
\end{array}\right]$
}
&
\multirow{2}{*}{
$(2,4)$
}
&
\multirow{2}{*}{
$(2,3)$
}
&
\multirow{2}{*}{
$144$
}
&
$T_{1}^{2}T_{6}T_{7}^{3}+T_{2}T_{3}T_{5}^{2}+T_{4}^{2}$
\\

$247$
&
&
&
&
&
$T_{1}T_{3}T_{6}^{3}+T_{2}^{2}T_{5}T_{7}+T_{4}^{2}$
\\ \midrule

$248$
&
\multirow{4}{*}{
$\left[\tiny\begin{array}{rrrrrrr}
    1 & 1 & 1 & 2 & 0 & 0 & 0 \\
    0 & 1 & 1 & 3 & 1 & 1 & 1 
\end{array}\right]$
}
&
\multirow{4}{*}{
$(4,6)$
}
&
\multirow{4}{*}{
$(1,2)$
}
&
\multirow{4}{*}{
$22$
}
&
$T_{1}^{4}T_{5}T_{7}^{5}+T_{2}T_{3}^{3}T_{6}^{2}+T_{4}^{2}$
\\

$249$
&
&
&
&
&
$T_{1}T_{3}^{3}T_{6}^{3}+T_{2}^{4}T_{5}T_{7}+T_{4}^{2}$
\\

$250$
&
&
&
&
&
$T_{1}^{3}T_{2}T_{7}^{5}+T_{3}^{4}T_{5}T_{6}+T_{4}^{2}$
\\

$251$
&
&
&
&
&
$T_{1}^{4}T_{6}^{3}T_{7}^{3}+T_{2}^{3}T_{3}T_{5}^{2}+T_{4}^{2}$
\\ \midrule

$252$
&
\multirow{2}{*}{
$\left[\tiny\begin{array}{rrrrrrr}
    1 & 1 & 1 & 1 & 1 & 1 & 0 \\
    -1 & 0 & 0 & 0 & 0 & 1 & 1 
\end{array}\right]$
}
&
\multirow{2}{*}{
$(2,0)$
}
&
\multirow{2}{*}{
$(4,1)$
}
&
\multirow{2}{*}{
$431$
}
&
$T_{1}T_{6}+T_{2}T_{4}+T_{3}^{2}$
\\

$253$
&
&
&
&
&
$T_{1}T_{6}+T_{2}T_{3}+T_{4}T_{5}$
\\ \midrule

$254$
&
$\left[\tiny\begin{array}{rrrrrrr}
    1 & 1 & 1 & 1 & 1 & 1 & 0 \\
    -1 & 0 & 0 & 0 & 0 & 1 & 1 
\end{array}\right]$
&
$(4,0)$
&
$(2,1)$
&
$62$
&
$T_{1}^{2}T_{6}^{2}+T_{2}^{3}T_{4}+T_{3}^{3}T_{5}$
\\ \midrule

$255$
&
\multirow{3}{*}{
$\left[\tiny\begin{array}{rrrrrrr}
    1 & 1 & 1 & 1 & 1 & 2 & 0 \\
    -1 & 0 & 0 & 0 & 0 & 1 & 1 
\end{array}\right]$
}
&
\multirow{3}{*}{
$(3,0)$
}
&
\multirow{3}{*}{
$(4,1)$
}
&
\multirow{3}{*}{
$376$
}
&
$T_{2}^{2}T_{3}+T_{4}^{3}+T_{1}T_{6}$
\\

$256$
&
&
&
&
&
$T_{2}T_{5}^{2}+T_{3}^{2}T_{4}+T_{1}T_{6}$
\\

$257$
&
&
&
&
&
$T_{2}T_{3}T_{5}+T_{4}^{3}+T_{1}T_{6}$
\\ \midrule

$258$
&
\multirow{4}{*}{
$\left[\tiny\begin{array}{rrrrrrr}
    1 & 1 & 1 & 1 & 1 & 3 & 0 \\
    -1 & 0 & 0 & 0 & 0 & 1 & 1 
\end{array}\right]$
}
&
\multirow{4}{*}{
$(4,0)$
}
&
\multirow{4}{*}{
$(4,1)$
}
&
\multirow{4}{*}{
$341$
}
&
$T_{2}^{3}T_{4}+T_{3}^{2}T_{5}^{2}+T_{1}T_{6}$
\\

$259$
&
&
&
&
&
$T_{2}^{3}T_{3}+T_{4}T_{5}^{3}+T_{1}T_{6}$
\\

$260$
&
&
&
&
&
$T_{2}^{4}+T_{3}T_{4}^{3}+T_{1}T_{6}$
\\

$261$
&
&
&
&
&
$T_{2}T_{3}^{2}T_{5}+T_{4}^{4}+T_{1}T_{6}$
\\ \midrule

$262$
&
\multirow{3}{*}{
$\left[\tiny\begin{array}{rrrrrrr}
    1 & 1 & 1 & 1 & 3 & 1 & 0 \\
    -1 & 0 & 0 & 0 & 0 & 1 & 1 
\end{array}\right]$
}
&
\multirow{3}{*}{
$(6,0)$
}
&
\multirow{3}{*}{
$(2,1)$
}
&
\multirow{3}{*}{
$31$
}
&
$T_{1}^{3}T_{6}^{3}+T_{2}T_{3}T_{4}^{4}+T_{5}^{2}$
\\

$263$
&
&
&
&
&
$T_{1}^{3}T_{6}^{3}+T_{3}^{5}T_{4}+T_{5}^{2}$
\\

$264$
&
&
&
&
&
$T_{1}^{3}T_{6}^{3}+T_{2}T_{3}^{3}T_{4}^{2}+T_{5}^{2}$
\\ \bottomrule
\end{longtable}
\end{center}

\end{class-list}

\medskip

\begin{class-list}\label{class:s=4-nsample}
Locally factorial Fano fourfoulds of Picard number two with a hypersurface Cox ring and an effective three-torus action: Specifying data for the sporadic cases with $s = 4$ and $\mu \not\in \lambda$.


\begin{center}
\small\setlength{\arraycolsep}{2pt}
\begin{longtable}{cccccl}
\toprule
\emph{ID} & $[w_1,\dots,w_7]$ & $\mu$ & $-\KKK$ & $\KKK^4$ & \multicolumn{1}{c}{$g$}
\\ \midrule

$279$
&
\multirow{2}{*}{
$\left[\tiny\begin{array}{rrrrrrr}
    1 & 1 & 1 & 4 & 2 & 3 & 0 \\
    0 & 0 & 0 & 1 & 2 & 3 & 1 
\end{array}\right]$
}
&
\multirow{2}{*}{
$(6,6)$
}
&
\multirow{2}{*}{
$(6,1)$
}
&
\multirow{2}{*}{
$117$
}
&
$T_{1}^{2}T_{4}T_{7}^{5}+T_{5}^{3}+T_{6}^{2}$
\\

$280$
&
&
&
&
&
$T_{1}T_{2}T_{4}T_{7}^{5}+T_{5}^{3}+T_{6}^{2}$
\\ \midrule

$281$
&
$\left[\tiny\begin{array}{rrrrrrr}
    1 & 1 & 1 & 5 & 2 & 3 & 0 \\
    0 & 0 & 0 & 1 & 2 & 3 & 1 
\end{array}\right]$
&
$(6,6)$
&
$(7,1)$
&
$157$
&
$T_{1}T_{4}T_{7}^{5}+T_{5}^{3}+T_{6}^{2}$
\\ \midrule

$282$
&
$\left[\tiny\begin{array}{rrrrrrr}
    1 & 1 & 1 & 6 & 2 & 3 & 0 \\
    0 & 0 & 0 & 1 & 2 & 3 & 1 
\end{array}\right]$
&
$(6,6)$
&
$(8,1)$
&
$203$
&
$T_{4}T_{7}^{5}+T_{5}^{3}+T_{6}^{2}$
\\ \midrule

$341$
&
\multirow{2}{*}{
$\left[\tiny\begin{array}{rrrrrrr}
    1 & 1 & 1 & 10 & 4 & 6 & 0 \\
    0 & 0 & 0 & 1 & 2 & 3 & 1 
\end{array}\right]$
}
&
\multirow{2}{*}{
$(12,6)$
}
&
\multirow{2}{*}{
$(11,1)$
}
&
\multirow{2}{*}{
$322$
}
&
$T_{1}^{2}T_{4}T_{7}^{5}+T_{5}^{3}+T_{6}^{2}$
\\

$342$
&
&
&
&
&
$T_{1}T_{2}T_{4}T_{7}^{5}+T_{5}^{3}+T_{6}^{2}$
\\ \midrule

$343$
&
$\left[\tiny\begin{array}{rrrrrrr}
    1 & 1 & 1 & 11 & 4 & 6 & 0 \\
    0 & 0 & 0 & 1 & 2 & 3 & 1 
\end{array}\right]$
&
$(12,6)$
&
$(12,1)$
&
$387$
&
$T_{1}T_{4}T_{7}^{5}+T_{5}^{3}+T_{6}^{2}$
\\ \bottomrule
\end{longtable}

\goodbreak

\begin{longtable}{cccccl}
\toprule
\emph{ID} & $[w_1,\dots,w_7]$ & $\mu$ & $-\KKK$ & $\KKK^4$ & \multicolumn{1}{c}{$g$}
\\ \midrule

$344$
&
$\left[\tiny\begin{array}{rrrrrrr}
    1 & 1 & 1 & 12 & 4 & 6 & 0 \\
    0 & 0 & 0 & 1 & 2 & 3 & 1 
\end{array}\right]$
&
$(12,6)$
&
$(13,1)$
&
$458$
&
$T_{4}T_{7}^{5}+T_{5}^{3}+T_{6}^{2}$
\\ \midrule

$345$
&
$\left[\tiny\begin{array}{rrrrrrr}
    1 & 1 & 1 & 2 & 2 & 0 & 0 \\
    0 & 0 & 0 & 1 & 2 & 1 & 1 
\end{array}\right]$
&
$(4,4)$
&
$(3,1)$
&
$68$
&
$T_{1}^{3}T_{2}T_{6}^{4}+T_{3}^{2}T_{4}T_{7}^{3}+T_{5}^{2}$
\\ \midrule

$346$
&
$\left[\tiny\begin{array}{rrrrrrr}
    1 & 1 & 1 & 3 & 2 & 0 & 0 \\
    0 & 0 & 0 & 1 & 2 & 1 & 1 
\end{array}\right]$
&
$(4,4)$
&
$(4,1)$
&
$114$
&
$T_{1}^{3}T_{2}T_{6}^{4}+T_{3}T_{4}T_{7}^{3}+T_{5}^{2}$
\\ \midrule

$347$
&
\multirow{2}{*}{
$\left[\tiny\begin{array}{rrrrrrr}
    1 & 1 & 1 & 4 & 2 & 0 & 0 \\
    0 & 0 & 0 & 1 & 2 & 1 & 1 
\end{array}\right]$
}
&
\multirow{2}{*}{
$(4,4)$
}
&
\multirow{2}{*}{
$(5,1)$
}
&
\multirow{2}{*}{
$172$
}
&
$T_{1}^{3}T_{2}T_{6}^{4}+T_{4}T_{7}^{3}+T_{5}^{2}$
\\

$348$
&
&
&
&
&
$T_{1}^{2}T_{2}T_{3}T_{6}^{4}+T_{4}T_{7}^{3}+T_{5}^{2}$
\\ \midrule

$349$
&
$\left[\tiny\begin{array}{rrrrrrr}
    1 & 1 & 1 & 2 & 1 & 0 & 0 \\
    0 & 0 & 0 & 1 & 1 & 1 & 1 
\end{array}\right]$
&
$(3,3)$
&
$(3,1)$
&
$102$
&
$T_{1}^{2}T_{2}T_{6}^{3}+T_{3}T_{4}T_{7}^{2}+T_{5}^{3}$
\\ \midrule

$350$
&
\multirow{2}{*}{
$\left[\tiny\begin{array}{rrrrrrr}
    1 & 1 & 1 & 3 & 1 & 0 & 0 \\
    0 & 0 & 0 & 1 & 1 & 1 & 1 
\end{array}\right]$
}
&
\multirow{2}{*}{
$(3,3)$
}
&
\multirow{2}{*}{
$(4,1)$
}
&
\multirow{2}{*}{
$171$
}
&
$T_{1}^{2}T_{2}T_{6}^{3}+T_{4}T_{7}^{2}+T_{5}^{3}$
\\

$351$
&
&
&
&
&
$T_{1}T_{2}T_{3}T_{6}^{3}+T_{4}T_{7}^{2}+T_{5}^{3}$
\\ \midrule

$352$
&
$\left[\tiny\begin{array}{rrrrrrr}
    1 & 1 & 2 & 2 & 1 & 1 & 0 \\
    0 & 0 & 1 & 1 & 1 & 1 & 1 
\end{array}\right]$
&
$(5,4)$
&
$(3,1)$
&
$29$
&
$T_{1}T_{6}^{4}+T_{2}T_{4}^{2}T_{7}^{2}+T_{3}T_{5}^{3}$
\\ \midrule

$353$
&
$\left[\tiny\begin{array}{rrrrrrr}
    1 & 1 & 3 & 3 & 1 & 1 & 0 \\
    0 & 0 & 1 & 1 & 1 & 1 & 1 
\end{array}\right]$
&
$(6,4)$
&
$(4,1)$
&
$38$
&
$T_{1}T_{2}T_{6}^{4}+T_{3}T_{5}^{3}+T_{4}^{2}T_{7}^{2}$
\\ \midrule

$364$
&
\multirow{2}{*}{
$\left[\tiny\begin{array}{rrrrrrr}
    1 & 1 & 4 & 4 & 2 & 3 & 0 \\
    0 & 0 & 1 & 1 & 2 & 3 & 1 
\end{array}\right]$
}
&
\multirow{2}{*}{
$(6,6)$
}
&
\multirow{2}{*}{
$(9,2)$
}
&
\multirow{2}{*}{
$144$
}
&
$T_{1}^{2}T_{3}T_{7}^{5}+T_{5}^{3}+T_{6}^{2}$
\\

$365$
&
&
&
&
&
$T_{1}T_{2}T_{3}T_{7}^{5}+T_{5}^{3}+T_{6}^{2}$
\\ \midrule

$366$
&
$\left[\tiny\begin{array}{rrrrrrr}
    1 & 1 & 5 & 5 & 2 & 3 & 0 \\
    0 & 0 & 1 & 1 & 2 & 3 & 1 
\end{array}\right]$
&
$(6,6)$
&
$(11,2)$
&
$176$
&
$T_{1}T_{3}T_{7}^{5}+T_{5}^{3}+T_{6}^{2}$
\\ \midrule

$367$
&
$\left[\tiny\begin{array}{rrrrrrr}
    1 & 1 & 6 & 6 & 2 & 3 & 0 \\
    0 & 0 & 1 & 1 & 2 & 3 & 1 
\end{array}\right]$
&
$(6,6)$
&
$(13,2)$
&
$208$
&
$T_{3}T_{7}^{5}+T_{5}^{3}+T_{6}^{2}$
\\ \midrule

$368$
&
$\left[\tiny\begin{array}{rrrrrrr}
    1 & 1 & 2 & 2 & 1 & 0 & 0 \\
    0 & 0 & 1 & 1 & 1 & 1 & 1 
\end{array}\right]$
&
$(4,4)$
&
$(3,1)$
&
$32$
&
$T_{1}^{3}T_{2}T_{6}^{4}+T_{3}T_{4}T_{7}^{2}+T_{5}^{4}$
\\ \midrule

$369$
&
$\left[\tiny\begin{array}{rrrrrrr}
    1 & 1 & 1 & 1 & 1 & 0 & 0 \\
    0 & 0 & 1 & 1 & 2 & 1 & 1 
\end{array}\right]$
&
$(2,4)$
&
$(3,2)$
&
$128$
&
$T_{1}T_{3}T_{6}^{3}+T_{2}T_{4}T_{7}^{3}+T_{5}^{2}$
\\ \midrule

$370$
&
$\left[\tiny\begin{array}{rrrrrrr}
    1 & 1 & 2 & 2 & 1 & 0 & 0 \\
    0 & 0 & 1 & 1 & 2 & 1 & 1 
\end{array}\right]$
&
$(2,4)$
&
$(5,2)$
&
$192$
&
$T_{3}T_{6}^{3}+T_{4}T_{7}^{3}+T_{5}^{2}$
\\ \midrule

$378$
&
\multirow{2}{*}{
$\left[\tiny\begin{array}{rrrrrrr}
    1 & 1 & 2 & 1 & 1 & 1 & 0 \\
    0 & 0 & 1 & 1 & 1 & 1 & 1 
\end{array}\right]$
}
&
\multirow{2}{*}{
$(4,4)$
}
&
\multirow{2}{*}{
$(3,1)$
}
&
\multirow{2}{*}{
$32$
}
&
$T_{1}^{2}T_{3}T_{7}^{3}+T_{4}^{4}+T_{5}^{3}T_{6}$
\\

$379$
&
&
&
&
&
$T_{1}T_{2}T_{3}T_{7}^{3}+T_{4}^{4}+T_{5}^{3}T_{6}$
\\ \midrule

$380$
&
$\left[\tiny\begin{array}{rrrrrrr}
    1 & 1 & 3 & 1 & 1 & 1 & 0 \\
    0 & 0 & 1 & 1 & 1 & 1 & 1 
\end{array}\right]$
&
$(4,4)$
&
$(4,1)$
&
$44$
&
$T_{1}T_{3}T_{7}^{3}+T_{4}^{4}+T_{5}^{3}T_{6}$
\\ \midrule

$381$
&
$\left[\tiny\begin{array}{rrrrrrr}
    1 & 1 & 4 & 1 & 1 & 1 & 0 \\
    0 & 0 & 1 & 1 & 1 & 1 & 1 
\end{array}\right]$
&
$(4,4)$
&
$(5,1)$
&
$56$
&
$T_{3}T_{7}^{3}+T_{4}^{4}+T_{5}^{3}T_{6}$
\\ \midrule

$399$
&
\multirow{2}{*}{
$\left[\tiny\begin{array}{rrrrrrr}
    1 & 1 & 5 & 1 & 1 & 3 & 0 \\
    0 & 0 & 1 & 1 & 1 & 3 & 1 
\end{array}\right]$
}
&
\multirow{2}{*}{
$(6,6)$
}
&
\multirow{2}{*}{
$(6,1)$
}
&
\multirow{2}{*}{
$34$
}
&
$T_{1}T_{3}T_{7}^{5}+T_{4}^{5}T_{5}+T_{6}^{2}$
\\

$400$
&
&
&
&
&
$T_{1}T_{3}T_{7}^{5}+T_{4}^{3}T_{5}^{3}+T_{6}^{2}$
\\ \midrule

$401$
&
\multirow{2}{*}{
$\left[\tiny\begin{array}{rrrrrrr}
    1 & 1 & 6 & 1 & 1 & 3 & 0 \\
    0 & 0 & 1 & 1 & 1 & 3 & 1 
\end{array}\right]$
}
&
\multirow{2}{*}{
$(6,6)$
}
&
\multirow{2}{*}{
$(7,1)$
}
&
\multirow{2}{*}{
$40$
}
&
$T_{3}T_{7}^{5}+T_{4}^{5}T_{5}+T_{6}^{2}$
\\

$402$
&
&
&
&
&
$T_{3}T_{7}^{5}+T_{4}^{3}T_{5}^{3}+T_{6}^{2}$
\\ \midrule

$403$
&
\multirow{2}{*}{
$\left[\tiny\begin{array}{rrrrrrr}
    1 & 1 & 1 & 1 & 0 & 0 & 0 \\
    0 & 0 & 1 & 3 & 1 & 1 & 1 
\end{array}\right]$
}
&
\multirow{2}{*}{
$(2,6)$
}
&
\multirow{2}{*}{
$(2,1)$
}
&
\multirow{2}{*}{
$14$
}
&
$T_{2}^{2}T_{5}^{5}T_{6}+T_{1}T_{3}T_{7}^{5}+T_{4}^{2}$
\\

$404$
&
&
&
&
&
$T_{2}^{2}T_{5}^{3}T_{6}^{3}+T_{1}T_{3}T_{7}^{5}+T_{4}^{2}$
\\ \bottomrule
\end{longtable}
\end{center}


\begin{center}
\small\setlength{\arraycolsep}{2pt}
\begin{longtable}{llllll}
\toprule
\\[-8pt]

\multicolumn{6}{l}
{
\small\setlength{\tabcolsep}{8pt}
\begin{tabular}{cccc}
$
 Q \ = \ 
\left[\tiny\begin{array}{rrrrrrr}
    1 & 1 & 1 & 1 & 0 & 0 & -1 \\
    0 & 0 & 0 & 1 & 1 & 1 & 1 
\end{array}\right]
$
&
$\mu \ = \ (1,3)$
&
$-\KKK \ = \ (2,1)$
&
$\KKK^4 \ = \ 83$
\end{tabular}
}
\\[6pt]

{\small\emph{ID}}
&
\multicolumn{1}{c}{{\small$g$}}
&
{\small\emph{ID}}
&
\multicolumn{1}{c}{{\small$g$}}
&
{\small\emph{ID}}
&
\multicolumn{1}{c}{{\small$g$}}
\\[3pt]

$265$
&
${\scriptstyle T_{3}^{2}T_{4}T_{7}^{2}+T_{1}T_{5}^{3}+T_{2}T_{6}^{3}}$
&
$266$
&
${\scriptstyle T_{2}^{4}T_{7}^{3}+T_{1}T_{5}^{3}+T_{4}T_{6}^{2}}$
&
$267$
&
${\scriptstyle T_{2}^{4}T_{7}^{3}+T_{1}T_{5}^{3}+T_{3}T_{6}^{3}}$
\\
$268$
&
${\scriptstyle T_{2}^{3}T_{3}T_{7}^{3}+T_{1}T_{6}^{3}+T_{4}T_{5}^{2}}$
&
$269$
&
${\scriptstyle T_{1}^{2}T_{2}^{2}T_{7}^{3}+T_{3}T_{6}^{3}+T_{4}T_{5}^{2}}$
&
$270$
&
${\scriptstyle T_{1}T_{5}^{3}+T_{3}T_{6}^{3}+T_{4}^{2}T_{7}}$
\\ \bottomrule
\end{longtable}

\goodbreak

\begin{longtable}{llllll}
\toprule
\\[-8pt]

\multicolumn{6}{l}
{
\small\setlength{\tabcolsep}{8pt}
\begin{tabular}{cccc}
$
 Q \ = \ 
\left[\tiny\begin{array}{rrrrrrr}
    1 & 1 & 1 & 2 & 2 & 3 & 0 \\
    0 & 0 & 0 & 1 & 2 & 3 & 1 
\end{array}\right]
$
&
$\mu \ = \ (6,6)$
&
$-\KKK \ = \ (4,1)$
&
$\KKK^4 \ = \ 55$
\end{tabular}
}
\\[6pt]

{\small\emph{ID}}
&
\multicolumn{1}{c}{{\small$g$}}
&
{\small\emph{ID}}
&
\multicolumn{1}{c}{{\small$g$}}
&
{\small\emph{ID}}
&
\multicolumn{1}{c}{{\small$g$}}
\\[3pt]

$271$
&
${\scriptstyle T_{1}T_{2}T_{4}^{2}T_{7}^{4}+T_{5}^{3}+T_{6}^{2}}$
&
$272$
&
${\scriptstyle T_{1}^{4}T_{4}T_{7}^{5}+T_{5}^{3}+T_{6}^{2}}$
&
$273$
&
${\scriptstyle T_{1}^{3}T_{2}T_{4}T_{7}^{5}+T_{5}^{3}+T_{6}^{2}}$
\\

$274$
&
${\scriptstyle T_{1}^{2}T_{2}^{2}T_{4}T_{7}^{5}+T_{5}^{3}+T_{6}^{2}}$
&
$275$
&
${\scriptstyle T_{1}^{2}T_{2}T_{3}T_{4}T_{7}^{5}+T_{5}^{3}+T_{6}^{2}}$
&
&
\\[2pt] \midrule \\[-8pt]

\multicolumn{6}{l}
{
\small\setlength{\tabcolsep}{8pt}
\begin{tabular}{cccc}
$
 Q \ = \ 
\left[\tiny\begin{array}{rrrrrrr}
    1 & 1 & 1 & 3 & 2 & 3 & 0 \\
    0 & 0 & 0 & 1 & 2 & 3 & 1 
\end{array}\right]
$
&
$\mu \ = \ (6,6)$
&
$-\KKK \ = \ (5,1)$
&
$\KKK^4 \ = \ 83$
\end{tabular}
}
\\[6pt]

{\small\emph{ID}}
&
\multicolumn{1}{c}{{\small$g$}}
&
{\small\emph{ID}}
&
\multicolumn{1}{c}{{\small$g$}}
&
{\small\emph{ID}}
&
\multicolumn{1}{c}{{\small$g$}}
\\[3pt]

$276$
&
${\scriptstyle T_{1}^{3}T_{4}T_{7}^{5}+T_{5}^{3}+T_{6}^{2}}$
&
$277$
&
${\scriptstyle T_{1}^{2}T_{2}T_{4}T_{7}^{5}+T_{5}^{3}+T_{6}^{2}}$
&
$278$
&
${\scriptstyle T_{1}T_{2}T_{3}T_{4}T_{7}^{5}+T_{5}^{3}+T_{6}^{2}}$
\\[2pt] \midrule \\[-8pt]

\multicolumn{6}{l}
{
\small\setlength{\tabcolsep}{8pt}
\begin{tabular}{cccc}
$
 Q \ = \ 
\left[\tiny\begin{array}{rrrrrrr}
    1 & 1 & 1 & 3 & 4 & 6 & 0 \\
    0 & 0 & 0 & 1 & 2 & 3 & 1 
\end{array}\right]
$
&
$\mu \ = \ (12,6)$
&
$-\KKK \ = \ (4,1)$
&
$\KKK^4 \ = \ 35$
\end{tabular}
}
\\[6pt]

{\small\emph{ID}}
&
\multicolumn{1}{c}{{\small$g$}}
&
{\small\emph{ID}}
&
\multicolumn{1}{c}{{\small$g$}}
&
{\small\emph{ID}}
&
\multicolumn{1}{c}{{\small$g$}}
\\[3pt]

$283$
&
${\scriptstyle T_{1}^{2}T_{2}T_{4}^{3}T_{7}^{3}+T_{5}^{3}+T_{6}^{2}}$
&
$284$
&
${\scriptstyle T_{1}T_{2}T_{3}T_{4}^{3}T_{7}^{3}+T_{5}^{3}+T_{6}^{2}}$
&
$285$
&
${\scriptstyle T_{1}^{5}T_{2}T_{4}^{2}T_{7}^{4}+T_{5}^{3}+T_{6}^{2}}$
\\
$286$
&
${\scriptstyle T_{1}^{3}T_{2}^{3}T_{4}^{2}T_{7}^{4}+T_{5}^{3}+T_{6}^{2}}$
&
$287$
&
${\scriptstyle T_{1}^{4}T_{2}T_{3}T_{4}^{2}T_{7}^{4}+T_{5}^{3}+T_{6}^{2}}$
&
$288$
&
${\scriptstyle T_{1}^{3}T_{2}^{2}T_{3}T_{4}^{2}T_{7}^{4}+T_{5}^{3}+T_{6}^{2}}$
\\
$289$
&
${\scriptstyle T_{1}^{9}T_{4}T_{7}^{5}+T_{5}^{3}+T_{6}^{2}}$
&
$290$
&
${\scriptstyle T_{1}^{8}T_{2}T_{4}T_{7}^{5}+T_{5}^{3}+T_{6}^{2}}$
&
$291$
&
${\scriptstyle T_{1}^{7}T_{2}^{2}T_{4}T_{7}^{5}+T_{5}^{3}+T_{6}^{2}}$
\\
$292$
&
${\scriptstyle T_{1}^{6}T_{2}^{3}T_{4}T_{7}^{5}+T_{5}^{3}+T_{6}^{2}}$
&
$293$
&
${\scriptstyle T_{1}^{5}T_{2}^{4}T_{4}T_{7}^{5}+T_{5}^{3}+T_{6}^{2}}$
&
$294$
&
${\scriptstyle T_{1}^{7}T_{2}T_{3}T_{4}T_{7}^{5}+T_{5}^{3}+T_{6}^{2}}$
\\
$295$
&
${\scriptstyle T_{1}^{6}T_{2}^{2}T_{3}T_{4}T_{7}^{5}+T_{5}^{3}+T_{6}^{2}}$
&
$296$
&
${\scriptstyle T_{1}^{5}T_{2}^{3}T_{3}T_{4}T_{7}^{5}+T_{5}^{3}+T_{6}^{2}}$
&
$297$
&
${\scriptstyle T_{1}^{4}T_{2}^{4}T_{3}T_{4}T_{7}^{5}+T_{5}^{3}+T_{6}^{2}}$
\\
$298$
&
${\scriptstyle T_{1}^{5}T_{2}^{2}T_{3}^{2}T_{4}T_{7}^{5}+T_{5}^{3}+T_{6}^{2}}$
&
$299$
&
${\scriptstyle T_{1}^{4}T_{2}^{3}T_{3}^{2}T_{4}T_{7}^{5}+T_{5}^{3}+T_{6}^{2}}$
&
$300$
&
${\scriptstyle T_{1}^{3}T_{2}^{3}T_{3}^{3}T_{4}T_{7}^{5}+T_{5}^{3}+T_{6}^{2}}$
\\[2pt] \midrule \\[-8pt]

\multicolumn{6}{l}
{
\small\setlength{\tabcolsep}{8pt}
\begin{tabular}{cccc}
$
 Q \ = \ 
\left[\tiny\begin{array}{rrrrrrr}
    1 & 1 & 1 & 4 & 4 & 6 & 0 \\
    0 & 0 & 0 & 1 & 2 & 3 & 1 
\end{array}\right]
$
&
$\mu \ = \ (12,6)$
&
$-\KKK \ = \ (5,1)$
&
$\KKK^4 \ = \ 58$
\end{tabular}
}
\\[6pt]

{\small\emph{ID}}
&
\multicolumn{1}{c}{{\small$g$}}
&
{\small\emph{ID}}
&
\multicolumn{1}{c}{{\small$g$}}
&
{\small\emph{ID}}
&
\multicolumn{1}{c}{{\small$g$}}
\\[3pt]

$301$
&
${\scriptstyle T_{1}^{3}T_{2}T_{4}^{2}T_{7}^{4}+T_{5}^{3}+T_{6}^{2}}$
&
$302$
&
${\scriptstyle T_{1}^{2}T_{2}T_{3}T_{4}^{2}T_{7}^{4}+T_{5}^{3}+T_{6}^{2}}$
&
$303$
&
${\scriptstyle T_{1}^{8}T_{4}T_{7}^{5}+T_{5}^{3}+T_{6}^{2}}$
\\
$304$
&
${\scriptstyle T_{1}^{7}T_{2}T_{4}T_{7}^{5}+T_{5}^{3}+T_{6}^{2}}$
&
$305$
&
${\scriptstyle T_{1}^{6}T_{2}^{2}T_{4}T_{7}^{5}+T_{5}^{3}+T_{6}^{2}}$
&
$306$
&
${\scriptstyle T_{1}^{5}T_{2}^{3}T_{4}T_{7}^{5}+T_{5}^{3}+T_{6}^{2}}$
\\
$307$
&
${\scriptstyle T_{1}^{4}T_{2}^{4}T_{4}T_{7}^{5}+T_{5}^{3}+T_{6}^{2}}$
&
$308$
&
${\scriptstyle T_{1}^{6}T_{2}T_{3}T_{4}T_{7}^{5}+T_{5}^{3}+T_{6}^{2}}$
&
$309$
&
${\scriptstyle T_{1}^{5}T_{2}^{2}T_{3}T_{4}T_{7}^{5}+T_{5}^{3}+T_{6}^{2}}$
\\
$310$
&
${\scriptstyle T_{1}^{4}T_{2}^{3}T_{3}T_{4}T_{7}^{5}+T_{5}^{3}+T_{6}^{2}}$
&
$311$
&
${\scriptstyle T_{1}^{4}T_{2}^{2}T_{3}^{2}T_{4}T_{7}^{5}+T_{5}^{3}+T_{6}^{2}}$
&
$312$
&
${\scriptstyle T_{1}^{3}T_{2}^{3}T_{3}^{2}T_{4}T_{7}^{5}+T_{5}^{3}+T_{6}^{2}}$
\\[2pt] \midrule \\[-8pt]

\multicolumn{6}{l}
{
\small\setlength{\tabcolsep}{8pt}
\begin{tabular}{cccc}
$
 Q \ = \ 
\left[\tiny\begin{array}{rrrrrrr}
    1 & 1 & 1 & 5 & 4 & 6 & 0 \\
    0 & 0 & 0 & 1 & 2 & 3 & 1 
\end{array}\right]
$
&
$\mu \ = \ (12,6)$
&
$-\KKK \ = \ (6,1)$
&
$\KKK^4 \ = \ 87$
\end{tabular}
}
\\[6pt]

{\small\emph{ID}}
&
\multicolumn{1}{c}{{\small$g$}}
&
{\small\emph{ID}}
&
\multicolumn{1}{c}{{\small$g$}}
&
{\small\emph{ID}}
&
\multicolumn{1}{c}{{\small$g$}}
\\[3pt]

$313$
&
${\scriptstyle T_{1}T_{2}T_{4}^{2}T_{7}^{4}+T_{5}^{3}+T_{6}^{2}}$
&
$314$
&
${\scriptstyle T_{1}^{7}T_{4}T_{7}^{5}+T_{5}^{3}+T_{6}^{2}}$
&
$315$
&
${\scriptstyle T_{1}^{6}T_{2}T_{4}T_{7}^{5}+T_{5}^{3}+T_{6}^{2}}$
\\
$316$
&
${\scriptstyle T_{1}^{5}T_{2}^{2}T_{4}T_{7}^{5}+T_{5}^{3}+T_{6}^{2}}$
&
$317$
&
${\scriptstyle T_{1}^{4}T_{2}^{3}T_{4}T_{7}^{5}+T_{5}^{3}+T_{6}^{2}}$
&
$318$
&
${\scriptstyle T_{1}^{5}T_{2}T_{3}T_{4}T_{7}^{5}+T_{5}^{3}+T_{6}^{2}}$
\\
$319$
&
${\scriptstyle T_{1}^{4}T_{2}^{2}T_{3}T_{4}T_{7}^{5}+T_{5}^{3}+T_{6}^{2}}$
&
$320$
&
${\scriptstyle T_{1}^{3}T_{2}^{3}T_{3}T_{4}T_{7}^{5}+T_{5}^{3}+T_{6}^{2}}$
&
$321$
&
${\scriptstyle T_{1}^{3}T_{2}^{2}T_{3}^{2}T_{4}T_{7}^{5}+T_{5}^{3}+T_{6}^{2}}$
\\[2pt] \midrule \\[-8pt]

\multicolumn{6}{l}
{
\small\setlength{\tabcolsep}{8pt}
\begin{tabular}{cccc}
$
 Q \ = \ 
\left[\tiny\begin{array}{rrrrrrr}
    1 & 1 & 1 & 6 & 4 & 6 & 0 \\
    0 & 0 & 0 & 1 & 2 & 3 & 1 
\end{array}\right]
$
&
$\mu \ = \ (12,6)$
&
$-\KKK \ = \ (7,1)$
&
$\KKK^4 \ = \ 122$
\end{tabular}
}
\\[6pt]

{\small\emph{ID}}
&
\multicolumn{1}{c}{{\small$g$}}
&
{\small\emph{ID}}
&
\multicolumn{1}{c}{{\small$g$}}
&
{\small\emph{ID}}
&
\multicolumn{1}{c}{{\small$g$}}
\\[3pt]

$322$
&
${\scriptstyle T_{1}^{6}T_{4}T_{7}^{5}+T_{5}^{3}+T_{6}^{2}}$
&
$323$
&
${\scriptstyle T_{1}^{5}T_{2}T_{4}T_{7}^{5}+T_{5}^{3}+T_{6}^{2}}$
&
$324$
&
${\scriptstyle T_{1}^{4}T_{2}^{2}T_{4}T_{7}^{5}+T_{5}^{3}+T_{6}^{2}}$
\\
$325$
&
${\scriptstyle T_{1}^{3}T_{2}^{3}T_{4}T_{7}^{5}+T_{5}^{3}+T_{6}^{2}}$
&
$326$
&
${\scriptstyle T_{1}^{4}T_{2}T_{3}T_{4}T_{7}^{5}+T_{5}^{3}+T_{6}^{2}}$
&
$327$
&
${\scriptstyle T_{1}^{3}T_{2}^{2}T_{3}T_{4}T_{7}^{5}+T_{5}^{3}+T_{6}^{2}}$
\\

$328$
&
${\scriptstyle T_{1}^{2}T_{2}^{2}T_{3}^{2}T_{4}T_{7}^{5}+T_{5}^{3}+T_{6}^{2}}$
&
&

&
&
\\[2pt] \midrule \\[-8pt]

\multicolumn{6}{l}
{
\small\setlength{\tabcolsep}{8pt}
\begin{tabular}{cccc}
$
 Q \ = \ 
\left[\tiny\begin{array}{rrrrrrr}
    1 & 1 & 1 & 7 & 4 & 6 & 0 \\
    0 & 0 & 0 & 1 & 2 & 3 & 1 
\end{array}\right]
$
&
$\mu \ = \ (12,6)$
&
$-\KKK \ = \ (8,1)$
&
$\KKK^4 \ = \ 163$
\end{tabular}
}
\\[6pt]

{\small\emph{ID}}
&
\multicolumn{1}{c}{{\small$g$}}
&
{\small\emph{ID}}
&
\multicolumn{1}{c}{{\small$g$}}
&
{\small\emph{ID}}
&
\multicolumn{1}{c}{{\small$g$}}
\\[3pt]

$329$
&
${\scriptstyle T_{1}^{5}T_{4}T_{7}^{5}+T_{5}^{3}+T_{6}^{2}}$
&
$330$
&
${\scriptstyle T_{1}^{4}T_{2}T_{4}T_{7}^{5}+T_{5}^{3}+T_{6}^{2}}$
&
$331$
&
${\scriptstyle T_{1}^{3}T_{2}^{2}T_{4}T_{7}^{5}+T_{5}^{3}+T_{6}^{2}}$
\\

$332$
&
${\scriptstyle T_{1}^{3}T_{2}T_{3}T_{4}T_{7}^{5}+T_{5}^{3}+T_{6}^{2}}$
&
$333$
&
${\scriptstyle T_{1}^{2}T_{2}^{2}T_{3}T_{4}T_{7}^{5}+T_{5}^{3}+T_{6}^{2}}$
&
&
\\[2pt] \midrule \\[-8pt]

\multicolumn{6}{l}
{
\small\setlength{\tabcolsep}{8pt}
\begin{tabular}{cccc}
$
 Q \ = \ 
\left[\tiny\begin{array}{rrrrrrr}
    1 & 1 & 1 & 8 & 4 & 6 & 0 \\
    0 & 0 & 0 & 1 & 2 & 3 & 1 
\end{array}\right]
$
&
$\mu \ = \ (12,6)$
&
$-\KKK \ = \ (9,1)$
&
$\KKK^4 \ = \ 210$
\end{tabular}
}
\\[6pt]

{\small\emph{ID}}
&
\multicolumn{1}{c}{{\small$g$}}
&
{\small\emph{ID}}
&
\multicolumn{1}{c}{{\small$g$}}
&
{\small\emph{ID}}
&
\multicolumn{1}{c}{{\small$g$}}
\\[3pt]

$334$
&
${\scriptstyle T_{1}^{4}T_{4}T_{7}^{5}+T_{5}^{3}+T_{6}^{2}}$
&
$335$
&
${\scriptstyle T_{1}^{3}T_{2}T_{4}T_{7}^{5}+T_{5}^{3}+T_{6}^{2}}$
&
$336$
&
${\scriptstyle T_{1}^{2}T_{2}^{2}T_{4}T_{7}^{5}+T_{5}^{3}+T_{6}^{2}}$
\\

$337$
&
${\scriptstyle T_{1}^{2}T_{2}T_{3}T_{4}T_{7}^{5}+T_{5}^{3}+T_{6}^{2}}$
&
&

&
&
\\ \bottomrule
\end{longtable}

\goodbreak

\begin{longtable}{llllll}
\toprule
\\[-8pt]

\multicolumn{6}{l}
{
\small\setlength{\tabcolsep}{8pt}
\begin{tabular}{cccc}
$
 Q \ = \ 
\left[\tiny\begin{array}{rrrrrrr}
    1 & 1 & 1 & 9 & 4 & 6 & 0 \\
    0 & 0 & 0 & 1 & 2 & 3 & 1 
\end{array}\right]
$
&
$\mu \ = \ (12,6)$
&
$-\KKK \ = \ (10,1)$
&
$\KKK^4 \ = \ 263$
\end{tabular}
}
\\[6pt]

{\small\emph{ID}}
&
\multicolumn{1}{c}{{\small$g$}}
&
{\small\emph{ID}}
&
\multicolumn{1}{c}{{\small$g$}}
&
{\small\emph{ID}}
&
\multicolumn{1}{c}{{\small$g$}}
\\[3pt]

$338$
&
${\scriptstyle T_{1}^{3}T_{4}T_{7}^{5}+T_{5}^{3}+T_{6}^{2}}$
&
$339$
&
${\scriptstyle T_{1}^{2}T_{2}T_{4}T_{7}^{5}+T_{5}^{3}+T_{6}^{2}}$
&
$340$
&
${\scriptstyle T_{1}T_{2}T_{3}T_{4}T_{7}^{5}+T_{5}^{3}+T_{6}^{2}}$
\\[2pt] \midrule \\[-8pt]

\multicolumn{6}{l}
{
\small\setlength{\tabcolsep}{8pt}
\begin{tabular}{cccc}
$
 Q \ = \ 
\left[\tiny\begin{array}{rrrrrrr}
    1 & 1 & 2 & 2 & 2 & 3 & 0 \\
    0 & 0 & 1 & 1 & 2 & 3 & 1 
\end{array}\right]
$
&
$\mu \ = \ (6,6)$
&
$-\KKK \ = \ (5,2)$
&
$\KKK^4 \ = \ 80$
\end{tabular}
}
\\[6pt]

{\small\emph{ID}}
&
\multicolumn{1}{c}{{\small$g$}}
&
{\small\emph{ID}}
&
\multicolumn{1}{c}{{\small$g$}}
&
{\small\emph{ID}}
&
\multicolumn{1}{c}{{\small$g$}}
\\[3pt]

$354$
&
${\scriptstyle T_{3}^{2}T_{4}T_{7}^{3}+T_{5}^{3}+T_{6}^{2}}$
&
$355$
&
${\scriptstyle T_{1}T_{2}T_{3}^{2}T_{7}^{4}+T_{5}^{3}+T_{6}^{2}}$
&
$356$
&
${\scriptstyle T_{1}^{2}T_{3}T_{4}T_{7}^{4}+T_{5}^{3}+T_{6}^{2}}$
\\
$357$
&
${\scriptstyle T_{1}T_{2}T_{3}T_{4}T_{7}^{4}+T_{5}^{3}+T_{6}^{2}}$
&
$358$
&
${\scriptstyle T_{1}^{4}T_{3}T_{7}^{5}+T_{5}^{3}+T_{6}^{2}}$
&
$359$
&
${\scriptstyle T_{1}^{3}T_{2}T_{3}T_{7}^{5}+T_{5}^{3}+T_{6}^{2}}$
\\

$360$
&
${\scriptstyle T_{1}^{2}T_{2}^{2}T_{3}T_{7}^{5}+T_{5}^{3}+T_{6}^{2}}$
&
&

&
&
\\[2pt] \midrule \\[-8pt]

\multicolumn{6}{l}
{
\small\setlength{\tabcolsep}{8pt}
\begin{tabular}{cccc}
$
 Q \ = \ 
\left[\tiny\begin{array}{rrrrrrr}
    1 & 1 & 3 & 3 & 2 & 3 & 0 \\
    0 & 0 & 1 & 1 & 2 & 3 & 1 
\end{array}\right]
$
&
$\mu \ = \ (6,6)$
&
$-\KKK \ = \ (7,2)$
&
$\KKK^4 \ = \ 112$
\end{tabular}
}
\\[6pt]

{\small\emph{ID}}
&
\multicolumn{1}{c}{{\small$g$}}
&
{\small\emph{ID}}
&
\multicolumn{1}{c}{{\small$g$}}
&
{\small\emph{ID}}
&
\multicolumn{1}{c}{{\small$g$}}
\\[3pt]

$361$
&
${\scriptstyle T_{3}T_{4}T_{7}^{4}+T_{5}^{3}+T_{6}^{2}}$
&
$362$
&
${\scriptstyle T_{1}^{3}T_{3}T_{7}^{5}+T_{5}^{3}+T_{6}^{2}}$
&
$363$
&
${\scriptstyle T_{1}^{2}T_{2}T_{3}T_{7}^{5}+T_{5}^{3}+T_{6}^{2}}$
\\[2pt] \midrule \\[-8pt]

\multicolumn{6}{l}
{
\small\setlength{\tabcolsep}{8pt}
\begin{tabular}{cccc}
$
 Q \ = \ 
\left[\tiny\begin{array}{rrrrrrr}
    1 & 1 & 1 & 1 & 2 & 0 & 0 \\
    0 & 0 & 1 & 1 & 3 & 1 & 1 
\end{array}\right]
$
&
$\mu \ = \ (4,6)$
&
$-\KKK \ = \ (2,1)$
&
$\KKK^4 \ = \ 12$
\end{tabular}
}
\\[6pt]

{\small\emph{ID}}
&
\multicolumn{1}{c}{{\small$g$}}
&
{\small\emph{ID}}
&
\multicolumn{1}{c}{{\small$g$}}
&
{\small\emph{ID}}
&
\multicolumn{1}{c}{{\small$g$}}
\\[3pt]

$371$
&
${\scriptstyle T_{1}^{3}T_{2}T_{6}^{6}+T_{3}^{3}T_{4}T_{7}^{2}+T_{5}^{2}}$
&
$372$
&
${\scriptstyle T_{1}T_{3}^{3}T_{6}^{3}+T_{2}T_{4}^{3}T_{7}^{3}+T_{5}^{2}}$
&
$373$
&
${\scriptstyle T_{1}^{3}T_{3}T_{6}^{5}+T_{2}T_{4}^{3}T_{7}^{3}+T_{5}^{2}}$
\\[2pt] \midrule \\[-8pt]

\multicolumn{6}{l}
{
\small\setlength{\tabcolsep}{8pt}
\begin{tabular}{cccc}
$
 Q \ = \ 
\left[\tiny\begin{array}{rrrrrrr}
    1 & 1 & 2 & 2 & 3 & 0 & 0 \\
    0 & 0 & 1 & 1 & 3 & 1 & 1 
\end{array}\right]
$
&
$\mu \ = \ (6,6)$
&
$-\KKK \ = \ (3,1)$
&
$\KKK^4 \ = \ 16$
\end{tabular}
}
\\[6pt]

{\small\emph{ID}}
&
\multicolumn{1}{c}{{\small$g$}}
&
{\small\emph{ID}}
&
\multicolumn{1}{c}{{\small$g$}}
&
{\small\emph{ID}}
&
\multicolumn{1}{c}{{\small$g$}}
\\[3pt]

$374$
&
${\scriptstyle T_{1}T_{2}T_{3}^{2}T_{6}^{4}+T_{4}^{3}T_{7}^{3}+T_{5}^{2}}$
&
$375$
&
${\scriptstyle T_{1}^{4}T_{3}T_{6}^{5}+T_{4}^{3}T_{7}^{3}+T_{5}^{2}}$
&
$376$
&
${\scriptstyle T_{1}^{3}T_{2}T_{3}T_{6}^{5}+T_{4}^{3}T_{7}^{3}+T_{5}^{2}}$
\\

$377$
&
${\scriptstyle T_{1}^{2}T_{2}^{2}T_{3}T_{6}^{5}+T_{4}^{3}T_{7}^{3}+T_{5}^{2}}$
&
&

&
&
\\[2pt] \midrule \\[-8pt]

\multicolumn{6}{l}
{
\small\setlength{\tabcolsep}{8pt}
\begin{tabular}{cccc}
$
 Q \ = \ 
\left[\tiny\begin{array}{rrrrrrr}
    1 & 1 & 2 & 1 & 1 & 3 & 0 \\
    0 & 0 & 1 & 1 & 1 & 3 & 1 
\end{array}\right]
$
&
$\mu \ = \ (6,6)$
&
$-\KKK \ = \ (3,1)$
&
$\KKK^4 \ = \ 16$
\end{tabular}
}
\\[6pt]

{\small\emph{ID}}
&
\multicolumn{1}{c}{{\small$g$}}
&
{\small\emph{ID}}
&
\multicolumn{1}{c}{{\small$g$}}
&
{\small\emph{ID}}
&
\multicolumn{1}{c}{{\small$g$}}
\\[3pt]

$382$
&
${\scriptstyle T_{3}^{3}T_{7}^{3}+T_{4}^{5}T_{5}+T_{6}^{2}}$
&
$383$
&
${\scriptstyle T_{1}T_{2}T_{3}^{2}T_{7}^{4}+T_{4}^{5}T_{5}+T_{6}^{2}}$
&
$384$
&
${\scriptstyle T_{1}T_{2}T_{3}^{2}T_{7}^{4}+T_{4}^{3}T_{5}^{3}+T_{6}^{2}}$
\\
$385$
&
${\scriptstyle T_{1}^{4}T_{3}T_{7}^{5}+T_{4}^{5}T_{5}+T_{6}^{2}}$
&
$386$
&
${\scriptstyle T_{1}^{4}T_{3}T_{7}^{5}+T_{4}^{3}T_{5}^{3}+T_{6}^{2}}$
&
$387$
&
${\scriptstyle T_{1}^{3}T_{2}T_{3}T_{7}^{5}+T_{4}^{5}T_{5}+T_{6}^{2}}$
\\
$388$
&
${\scriptstyle T_{1}^{3}T_{2}T_{3}T_{7}^{5}+T_{4}^{3}T_{5}^{3}+T_{6}^{2}}$
&
$389$
&
${\scriptstyle T_{1}^{2}T_{2}^{2}T_{3}T_{7}^{5}+T_{4}^{5}T_{5}+T_{6}^{2}}$
&
$390$
&
${\scriptstyle T_{1}^{2}T_{2}^{2}T_{3}T_{7}^{5}+T_{4}^{3}T_{5}^{3}+T_{6}^{2}}$
\\[2pt] \midrule \\[-8pt]

\multicolumn{6}{l}
{
\small\setlength{\tabcolsep}{8pt}
\begin{tabular}{cccc}
$
 Q \ = \ 
\left[\tiny\begin{array}{rrrrrrr}
    1 & 1 & 3 & 1 & 1 & 3 & 0 \\
    0 & 0 & 1 & 1 & 1 & 3 & 1 
\end{array}\right]
$
&
$\mu \ = \ (6,6)$
&
$-\KKK \ = \ (4,1)$
&
$\KKK^4 \ = \ 22$
\end{tabular}
}
\\[6pt]

{\small\emph{ID}}
&
\multicolumn{1}{c}{{\small$g$}}
&
{\small\emph{ID}}
&
\multicolumn{1}{c}{{\small$g$}}
&
{\small\emph{ID}}
&
\multicolumn{1}{c}{{\small$g$}}
\\[3pt]

$391$
&
${\scriptstyle T_{1}^{3}T_{3}T_{7}^{5}+T_{4}^{5}T_{5}+T_{6}^{2}}$
&
$392$
&
${\scriptstyle T_{1}^{3}T_{3}T_{7}^{5}+T_{4}^{3}T_{5}^{3}+T_{6}^{2}}$
&
$393$
&
${\scriptstyle T_{1}^{2}T_{2}T_{3}T_{7}^{5}+T_{4}^{5}T_{5}+T_{6}^{2}}$
\\

$394$
&
${\scriptstyle T_{1}^{2}T_{2}T_{3}T_{7}^{5}+T_{4}^{3}T_{5}^{3}+T_{6}^{2}}$
&
&

&
&
\\[2pt] \midrule \\[-8pt]

\multicolumn{6}{l}
{
\small\setlength{\tabcolsep}{8pt}
\begin{tabular}{cccc}
$
 Q \ = \ 
\left[\tiny\begin{array}{rrrrrrr}
    1 & 1 & 4 & 1 & 1 & 3 & 0 \\
    0 & 0 & 1 & 1 & 1 & 3 & 1 
\end{array}\right]
$
&
$\mu \ = \ (6,6)$
&
$-\KKK \ = \ (5,1)$
&
$\KKK^4 \ = \ 28$
\end{tabular}
}
\\[6pt]

{\small\emph{ID}}
&
\multicolumn{1}{c}{{\small$g$}}
&
{\small\emph{ID}}
&
\multicolumn{1}{c}{{\small$g$}}
&
{\small\emph{ID}}
&
\multicolumn{1}{c}{{\small$g$}}
\\[3pt]

$395$
&
${\scriptstyle T_{1}^{2}T_{3}T_{7}^{5}+T_{4}^{5}T_{5}+T_{6}^{2}}$
&
$396$
&
${\scriptstyle T_{1}^{2}T_{3}T_{7}^{5}+T_{4}^{3}T_{5}^{3}+T_{6}^{2}}$
&
$397$
&
${\scriptstyle T_{1}T_{2}T_{3}T_{7}^{5}+T_{4}^{5}T_{5}+T_{6}^{2}}$
\\

$398$
&
${\scriptstyle T_{1}T_{2}T_{3}T_{7}^{5}+T_{4}^{3}T_{5}^{3}+T_{6}^{2}}$
&
&

&
&
\\[2pt] \midrule \\[-8pt]

\multicolumn{6}{l}
{
\small\setlength{\tabcolsep}{8pt}
\begin{tabular}{cccc}
$
 Q \ = \ 
\left[\tiny\begin{array}{rrrrrrr}
    1 & 1 & 2 & 1 & 0 & 0 & 0 \\
    0 & 0 & 1 & 3 & 1 & 1 & 1 
\end{array}\right]
$
&
$\mu \ = \ (2,6)$
&
$-\KKK \ = \ (3,1)$
&
$\KKK^4 \ = \ 20$
\end{tabular}
}
\\[6pt]

{\small\emph{ID}}
&
\multicolumn{1}{c}{{\small$g$}}
&
{\small\emph{ID}}
&
\multicolumn{1}{c}{{\small$g$}}
&
{\small\emph{ID}}
&
\multicolumn{1}{c}{{\small$g$}}
\\[3pt]

$405$
&
${\scriptstyle T_{1}^{2}T_{5}^{5}T_{6}+T_{3}T_{7}^{5}+T_{4}^{2}}$
&
$406$
&
${\scriptstyle T_{1}^{2}T_{5}^{3}T_{6}^{3}+T_{3}T_{7}^{5}+T_{4}^{2}}$
&
$407$
&
${\scriptstyle T_{1}T_{2}T_{5}^{6}+T_{3}T_{7}^{5}+T_{4}^{2}}$
\\
$408$
&
${\scriptstyle T_{1}T_{2}T_{5}^{5}T_{6}+T_{3}T_{7}^{5}+T_{4}^{2}}$
&
$409$
&
${\scriptstyle T_{1}T_{2}T_{5}^{4}T_{6}^{2}+T_{3}T_{7}^{5}+T_{4}^{2}}$
&
$410$
&
${\scriptstyle T_{1}T_{2}T_{5}^{3}T_{6}^{3}+T_{3}T_{7}^{5}+T_{4}^{2}}$
\\
\bottomrule
\end{longtable}
\end{center}

\end{class-list}

\medskip

\begin{class-list}\label{class:s=4-series}
Locally factorial Fano fourfoulds of Picard number two with a hypersurface Cox ring and an effective three-torus action: Specifying data for the series with $s = 4$.


\begin{center}
\small\setlength{\tabcolsep}{4pt}
\small\setlength{\arraycolsep}{2pt}
\begin{longtable}{cccll}
\toprule
\emph{ID} & $[w_1,\dots,w_7]$ & $\left[\tiny\begin{array}{r}\mu \\ -\KKK\end{array}\right]$ & \multicolumn{1}{c}{$g$} & 
\\ \midrule

\emph{S18}
&
$\left[\tiny\begin{array}{rrrrrrr}
    1 & 1 & a & a & c & c & 0 \\
    0 & 0 & 1 & 1 & 1 & 1 & 1 
\end{array}\right]$
&
$
\tiny\begin{array}{c}
    (a,4) \\
    (a\!+\!2c\!+\!2,1)
\end{array}
$
&
$T_{1}^{(a-4c)}T_{5}^{4}+T_{2}^{(a-4c)}T_{6}^{4}+T_{4}T_{7}^{3}$
&
$
\def\arraystretch{0.6}\begin{array}{l}
{\scriptstyle c \,\ge\, 1,\, a \,>\, 4c,} \\
{\scriptstyle a \text{\emph{ odd}} }
\end{array}
$
\\ \midrule

\emph{S22}
&
$\left[\tiny\begin{array}{rrrrrrr}
    1 & 1 & a & a & 2 & 3 & 0 \\
    0 & 0 & 1 & 1 & 2 & 3 & 1 
\end{array}\right]$
&
$
\tiny\begin{array}{c}
    (6,6) \\
    (2a\!+\!1,2)
\end{array}
$
&
$T_{1}^{5}T_{2}T_{7}^{6}+T_{5}^{3}+T_{6}^{2}$
&
$
\def\arraystretch{0.6}\begin{array}{l}
{\scriptstyle a \,\ge\, 2 }
\end{array}
$
\\ \midrule

\emph{S25}
&
$\left[\tiny\begin{array}{rrrrrrr}
    1 & 1 & a & a & c & 0 & 0 \\
    0 & 0 & 1 & 1 & 1 & 1 & 1 
\end{array}\right]$
&
$
\tiny\begin{array}{c}
    (4c,4) \\
    (2a\!-\!3c\!+\!2,1)
\end{array}
$
&
$T_{1}^{(a-4c)}T_{3}T_{6}^{3}+T_{2}^{(a-4c)}T_{4}T_{7}^{3}+T_{5}^{4}$
&
$
\def\arraystretch{0.6}\begin{array}{l}
{\scriptstyle c \,\ge\, 1,\, a \,>\, 4c }
\end{array}
$
\\ \midrule

\emph{S26}
&
$\left[\tiny\begin{array}{rrrrrrr}
    1 & 1 & 4a & 4a & a & 0 & 0 \\
    0 & 0 & 1 & 1 & 1 & 1 & 1 
\end{array}\right]$
&
$
\tiny\begin{array}{c}
    (4a,4) \\
    (5a\!+\!2,1)
\end{array}
$
&
$T_{1}^{(4a-l)}T_{2}^{l}T_{6}^{4}+T_{4}T_{7}^{3}+T_{5}^{4}$
&
$
\def\arraystretch{0.6}\begin{array}{l}
{\scriptstyle a \,\ge\, 1,\, l \text{\emph{ odd}},}\\
{\scriptstyle 0 \,\le\, l \,<\, 2c }
\end{array}
$
\\ \midrule

\emph{S27}
&
$\left[\tiny\begin{array}{rrrrrrr}
    1 & 1 & a & a & c & 0 & 0 \\
    0 & 0 & 1 & 1 & 3 & 1 & 1 
\end{array}\right]$
&
$
\tiny\begin{array}{c}
    (2c,6) \\
    (2a\!-\!c\!+\!2,1)
\end{array}
$
&
$T_{1}^{(2c-a)}T_{3}T_{6}^{5}+T_{2}^{(2c-a)}T_{4}T_{7}^{5}+T_{5}^{2}$
&
$
\def\arraystretch{0.6}\begin{array}{l}
{\scriptstyle a,\,c \,\ge\, 1,}\\
{\scriptstyle c-1 \,\le\, a \,<\, 2c }
\end{array}
$
\\ \midrule

\emph{S28}
&
$\left[\tiny\begin{array}{rrrrrrr}
    1 & 1 & a & a & a & 0 & 0 \\
    0 & 0 & 1 & 1 & 3 & 1 & 1 
\end{array}\right]$
&
$
\tiny\begin{array}{c}
    (2a,6) \\
    (a\!+\!2,1)
\end{array}
$
&
$T_{1}^{(2a-l)}T_{2}^{l}T_{6}^{6}+T_{3}T_{4}T_{7}^{4}+T_{5}^{2}$
&
$
\def\arraystretch{0.6}\begin{array}{l}
{\scriptstyle a \,\ge\, 1,\, l \text{\emph{ odd}},}\\
{\scriptstyle 1 \,\le\, l \,\le\, a }
\end{array}
$
\\ \midrule

\emph{S29}
&
$\left[\tiny\begin{array}{rrrrrrr}
    1 & 1 & 2a & 2a & a & 0 & 0 \\
    0 & 0 & 1 & 1 & 3 & 1 & 1 
\end{array}\right]$
&
$
\tiny\begin{array}{c}
    (2a,6) \\
    (3a\!+\!2,1)
\end{array}
$
&
$T_{5}^{2}+T_{1}^{(2a-l)}T_{2}^{l}T_{6}^{6}+T_{4}T_{7}^{5}$
&
$
\def\arraystretch{0.6}\begin{array}{l}
{\scriptstyle a \,\ge\, 1,\, l \text{\emph{ odd}}, } \\
{\scriptstyle 1 \,\le\, l \,\le\, a }
\end{array}
$
\\ \midrule

\emph{S30}
&
$\left[\tiny\begin{array}{rrrrrrr}
    1 & 1 & a & 1 & 1 & 1 & 0 \\
    0 & 0 & 1 & 1 & 1 & 1 & 1 
\end{array}\right]$
&
$
\tiny\begin{array}{c}
    (4,4) \\
    (a\!+\!1,1)
\end{array}
$
&
$T_{1}^{3}T_{2}T_{7}^{4}+T_{4}^{3}T_{5}+T_{6}^{4}$
&
$
\def\arraystretch{0.6}\begin{array}{l}
{\scriptstyle a \,\ge\, 2 }
\end{array}
$
\\ \midrule

\emph{S34}
&
$\left[\tiny\begin{array}{rrrrrrr}
    1 & 1 & a & c & 0 & 0 & 0 \\
    0 & 0 & 1 & 1 & 1 & 1 & 1 
\end{array}\right]$
&
$
\tiny\begin{array}{c}
    (c,4) \\
    (a\!+\!2,1)
\end{array}
$
&
$T_{1}^{c}T_{5}^{4}+T_{2}^{c}T_{6}^{4}+T_{4}T_{7}^{3}$
&
$
\def\arraystretch{0.6}\begin{array}{l}
{\scriptstyle c \,\ge\, 1,\, a \,>\, c, } \\
{\scriptstyle c \text{\emph{ odd}} }
\end{array}
$
\\ \bottomrule
\end{longtable}
\end{center}


\goodbreak

\begin{center}
\small\setlength{\tabcolsep}{4pt}
\small\setlength{\arraycolsep}{2pt}
\begin{longtable}{llllll}
\toprule
\\[-8pt]

\multicolumn{6}{l}
{
\small\setlength{\tabcolsep}{8pt}
\begin{tabular}{cccc}
$
 Q \ = \ 
\left[\tiny\begin{array}{rrrrrrr}
    1 & 1 & 1 & a & 2 & 3 & 0 \\
    0 & 0 & 0 & 1 & 2 & 3 & 1 
\end{array}\right]
$
&
$\mu \ = \ (6,6)$
&
$-\KKK \ = \ (a\!+\!2,1)$
\end{tabular}
}
\\[6pt]

{\small\emph{ID}}
&
\multicolumn{1}{c}{{\small$g$}}
&
{\small\emph{ID}}
&
\multicolumn{1}{c}{{\small$g$}}
&
{\small\emph{ID}}
&
\multicolumn{1}{c}{{\small$g$}}
\\[3pt]

\emph{S5}
&

$
\def\arraystretch{0.75}\begin{array}{l}
{\scriptstyle T_{1}^{5}T_{2}T_{7}^{6}+T_{5}^{3}+T_{6}^{2},} \\[2pt]
{\scriptstyle a \,\ge\, 2 }
\end{array}
$
&
\emph{S6}
&

$
\def\arraystretch{0.75}\begin{array}{l}
{\scriptstyle T_{1}^{4}T_{2}T_{3}T_{7}^{6}+T_{5}^{3}+T_{6}^{2},} \\[2pt]
{\scriptstyle a \,\ge\, 2 }
\end{array}
$
&

\emph{S7}
&

$
\def\arraystretch{0.75}\begin{array}{l}
{\scriptstyle T_{1}^{3}T_{2}^{2}T_{3}T_{7}^{6}+T_{5}^{3}+T_{6}^{2},} \\[2pt]
{\scriptstyle a \,\ge\, 2 }
\end{array}
$
\\[4pt] \midrule \\[-8pt]

\multicolumn{6}{l}
{
\small\setlength{\tabcolsep}{8pt}
\begin{tabular}{cccc}
$
 Q \ = \ 
\left[\tiny\begin{array}{rrrrrrr}
    1 & 1 & 1 & a & 4 & 6 & 0 \\
    0 & 0 & 0 & 1 & 2 & 3 & 1 
\end{array}\right]
$
&
$\mu \ = \ (12,6)$
&
$-\KKK \ = \ (a\!+\!1,1)$
\end{tabular}
}
\\[6pt]

{\small\emph{ID}}
&
\multicolumn{1}{c}{{\small$g$}}
&
{\small\emph{ID}}
&
\multicolumn{1}{c}{{\small$g$}}
&
{\small\emph{ID}}
&
\multicolumn{1}{c}{{\small$g$}}
\\[3pt]

\emph{S8}
&

$
\def\arraystretch{0.75}\begin{array}{l}
{\scriptstyle T_{1}^{11}T_{2}T_{7}^{6}+T_{5}^{3}+T_{6}^{2},} \\[2pt]
{\scriptstyle a \,\ge\, 3 }
\end{array}
$
&
\emph{S9}
&

$
\def\arraystretch{0.75}\begin{array}{l}
{\scriptstyle T_{1}^{7}T_{2}^{5}T_{7}^{6}+T_{5}^{3}+T_{6}^{2},} \\[2pt]
{\scriptstyle a \,\ge\, 3 }
\end{array}
$
&
\emph{S10}
&

$
\def\arraystretch{0.75}\begin{array}{l}
{\scriptstyle T_{1}^{10}T_{2}T_{3}T_{7}^{6}+T_{5}^{3}+T_{6}^{2},} \\[2pt]
{\scriptstyle a \,\ge\, 3 }
\end{array}
$
\\[10pt]
\emph{S11}
&

$
\def\arraystretch{0.75}\begin{array}{l}
{\scriptstyle T_{1}^{9}T_{2}^{2}T_{3}T_{7}^{6}+T_{5}^{3}+T_{6}^{2},} \\[2pt]
{\scriptstyle a \,\ge\, 3 }
\end{array}
$
&
\emph{S12}
&

$
\def\arraystretch{0.75}\begin{array}{l}
{\scriptstyle T_{1}^{8}T_{2}^{3}T_{3}T_{7}^{6}+T_{5}^{3}+T_{6}^{2},} \\[2pt]
{\scriptstyle a \,\ge\, 3 }
\end{array}
$
&
\emph{S13}
&

$
\def\arraystretch{0.75}\begin{array}{l}
{\scriptstyle T_{1}^{7}T_{2}^{4}T_{3}T_{7}^{6}+T_{5}^{3}+T_{6}^{2},} \\[2pt]
{\scriptstyle a \,\ge\, 3 }
\end{array}
$
\\[10pt]
\emph{S14}
&

$
\def\arraystretch{0.75}\begin{array}{l}
{\scriptstyle T_{1}^{6}T_{2}^{5}T_{3}T_{7}^{6}+T_{5}^{3}+T_{6}^{2},} \\[2pt]
{\scriptstyle a \,\ge\, 3 }
\end{array}
$
&
\emph{S15}
&

$
\def\arraystretch{0.75}\begin{array}{l}
{\scriptstyle T_{1}^{7}T_{2}^{3}T_{3}^{2}T_{7}^{6}+T_{5}^{3}+T_{6}^{2},} \\[2pt]
{\scriptstyle a \,\ge\, 3 }
\end{array}
$
&
\emph{S16}
&

$
\def\arraystretch{0.75}\begin{array}{l}
{\scriptstyle T_{1}^{5}T_{2}^{5}T_{3}^{2}T_{7}^{6}+T_{5}^{3}+T_{6}^{2},} \\[2pt]
{\scriptstyle a \,\ge\, 3 }
\end{array}
$
\\[10pt]
\emph{S17}
&

$
\def\arraystretch{0.75}\begin{array}{l}
{\scriptstyle T_{1}^{5}T_{2}^{4}T_{3}^{3}T_{7}^{6}+T_{5}^{3}+T_{6}^{2},} \\[2pt]
{\scriptstyle a \,\ge\, 3 }
\end{array}
$
&
&
&
&
\\ \bottomrule
\end{longtable}

\goodbreak

\begin{longtable}{llll}
\toprule
\\[-8pt]

\multicolumn{4}{l}
{
\small\setlength{\tabcolsep}{8pt}
\begin{tabular}{cccc}
$
 Q \ = \ 
\left[\tiny\begin{array}{rrrrrrr}
    1 & 1 & a & a & 1 & 1 & 0 \\
    0 & 0 & 1 & 1 & 1 & 1 & 1 
\end{array}\right]
$
&
$\mu \ = \ (a\!+\!3,4)$
&
$-\KKK \ = \ (a\!+\!1,1)$
\end{tabular}
}
\\[6pt]

{\small\emph{ID}}
&
\multicolumn{1}{c}{{\small$g$}}
&
{\small\emph{ID}}
&
\multicolumn{1}{c}{{\small$g$}}
\\[3pt]

\emph{S19}
&

$
\def\arraystretch{0.75}\begin{array}{l}
{\scriptstyle T_{3}T_{5}^{3}+T_{4}T_{6}^{3}+T_{1}^{(a+3-l)}T_{2}^{l}T_{7}^{4},} \\[2pt]
{\scriptstyle a \,\ge\, 2,\, 0 \,\le\, l \,\le\, (a+3)/2 }
\end{array}
$
&
\emph{S20}
&

$
\def\arraystretch{0.75}\begin{array}{l}
{\scriptstyle T_{1}^{(a-1)}T_{5}^{4}+T_{3}T_{6}^{3}+T_{2}^{(a+3)}T_{7}^{4},} \\[2pt]
{\scriptstyle a \,\ge\, 2,\, a \text{\emph{ even}} }
\end{array}
$
\\[10pt]

\emph{S21}
&

$
\def\arraystretch{0.75}\begin{array}{l}
{\scriptstyle T_{1}^{(a-1)}T_{5}^{4}+T_{3}T_{6}^{3}+T_{2}^{3}T_{4}T_{7}^{3},} \\[2pt]
{\scriptstyle a \,\ge\, 2 }
\end{array}
$
&
&
\\ \bottomrule
\end{longtable}

\goodbreak

\begin{longtable}{llll}
\toprule
\\[-8pt]

\multicolumn{4}{l}
{
\small\setlength{\tabcolsep}{8pt}
\begin{tabular}{cccc}
$
 Q \ = \ 
\left[\tiny\begin{array}{rrrrrrr}
    1 & 1 & a & a & c & 0 & 0 \\
    0 & 0 & 1 & 1 & 1 & 1 & 1 
\end{array}\right]
$
&
$\mu \ = \ (a,4)$
&
$-\KKK \ = \ (a\!+\!c\!+\!2,1)$
\end{tabular}
}
\\[6pt]

{\small\emph{ID}}
&
\multicolumn{1}{c}{{\small$g$}}
&
{\small\emph{ID}}
&
\multicolumn{1}{c}{{\small$g$}}
\\[3pt]

\emph{S23}
&

$
\def\arraystretch{0.75}\begin{array}{l}
{\scriptstyle T_{1}^{(a-4c)}T_{5}^{4}+T_{2}^{a}T_{6}^{4}+T_{4}T_{7}^{3},} \\[2pt]
{\scriptstyle c \,\ge\, 1,\, a \,>\, 4c }
\end{array}
$
&
\emph{S24}
&

$
\def\arraystretch{0.75}\begin{array}{l}
{\scriptstyle T_{1}^{(a-l)}T_{2}^{l}T_{5}^{4}+T_{3}T_{6}^{3}+T_{4}T_{7}^{3},} \\[2pt]
{\scriptstyle c \,\ge\, 1,\, a \,>\, 4c,\, 0 \,\le\, l \,\le\, a/2 }
\end{array}
$
\\[4pt] \midrule \\[-8pt]

\multicolumn{4}{l}
{
\small\setlength{\tabcolsep}{8pt}
\begin{tabular}{cccc}
$
 Q \ = \ 
\left[\tiny\begin{array}{rrrrrrr}
    1 & 1 & a & 1 & 1 & 3 & 0 \\
    0 & 0 & 1 & 1 & 1 & 3 & 1 
\end{array}\right]
$
&
$\mu \ = \ (6,6)$
&
$-\KKK \ = \ (a\!+\!1,1)$
\end{tabular}
}
\\[6pt]

{\small\emph{ID}}
&
\multicolumn{1}{c}{{\small$g$}}
&
{\small\emph{ID}}
&
\multicolumn{1}{c}{{\small$g$}}
\\[3pt]

\emph{S31}
&

$
\def\arraystretch{0.75}\begin{array}{l}
{\scriptstyle T_{1}^{5}T_{2}T_{7}^{6}+T_{4}^{5}T_{5}+T_{6}^{2},} \\[2pt]
{\scriptstyle a \,\ge\, 2 }
\end{array}
$
&
\emph{S32}
&

$
\def\arraystretch{0.75}\begin{array}{l}
{\scriptstyle T_{1}^{5}T_{2}T_{7}^{6}+T_{4}^{3}T_{5}^{3}+T_{6}^{2},} \\[2pt]
{\scriptstyle a \,\ge\, 2 }
\end{array}
$
\\[10pt]

\emph{S33}
&

$
\def\arraystretch{0.75}\begin{array}{l}
{\scriptstyle T_{1}^{3}T_{2}^{3}T_{7}^{6}+T_{4}^{5}T_{5}+T_{6}^{2},} \\[2pt]
{\scriptstyle a \,\ge\, 2 }
\end{array}
$
&
&
\\[4pt] \midrule \\[-8pt]

\multicolumn{4}{l}
{
\small\setlength{\tabcolsep}{8pt}
\begin{tabular}{cccc}
$
 Q \ = \ 
\left[\tiny\begin{array}{rrrrrrr}
    1 & 1 & a+1 & a & 0 & 0 & 0 \\
    0 & 0 & 1 & 1 & 1 & 1 & 1 
\end{array}\right]
$
&
$\mu \ = \ (a\!+\!1,4)$
&
$-\KKK \ = \ (a\!+\!2,1)$
\end{tabular}
}
\\[6pt]

{\small\emph{ID}}
&
\multicolumn{1}{c}{{\small$g$}}
&
{\small\emph{ID}}
&
\multicolumn{1}{c}{{\small$g$}}
\\[3pt]

\emph{S35}
&

$
\def\arraystretch{0.75}\begin{array}{l}
{\scriptstyle T_{1}^{(a+1)}T_{5}^{4}+T_{2}T_{4}T_{6}^{3}+T_{3}T_{7}^{3},} \\[2pt]
{\scriptstyle a \,\ge\, 1 }
\end{array}
$
&
\emph{S36}
&

$
\def\arraystretch{0.75}\begin{array}{l}
{\scriptstyle T_{1}^{(a+1)}T_{5}^{4}+T_{2}^{(a+1)}T_{6}^{4}+T_{3}T_{7}^{3},} \\[2pt]
{\scriptstyle a \,\ge\, 1,\, a \text{\emph{ even}} }
\end{array}
$
\\
\bottomrule
\end{longtable}
\end{center}
\end{class-list}

\medskip

\begin{class-list}\label{class:s=5-sample}
Locally factorial Fano fourfoulds of Picard number two with a hypersurface Cox ring and an effective three-torus action: Specifying data for the cases with $s = 5$ and $\mu \in \lambda$.
\begin{center}
\small\setlength{\arraycolsep}{2pt}
\begin{longtable}{cccccl}
\toprule
\emph{ID} & $[w_1,\dots,w_7]$ & $\mu$ & $-\KKK$ & $\KKK^4$ & \multicolumn{1}{c}{$g$}
\\ \midrule

$411$
&
\multirow{4}{*}{
$\left[\tiny\begin{array}{rrrrrrr}
    1 & 1 & 1 & 2 & 1 & 0 & 0 \\
    0 & 1 & 1 & 3 & 2 & 1 & 1 
\end{array}\right]$
}
&
\multirow{4}{*}{
$(4,6)$
}
&
\multirow{4}{*}{
$(2,3)$
}
&
\multirow{4}{*}{
$65$
}
&
$T_{2}^{3}T_{3}T_{7}^{2}+T_{1}T_{5}^{3}+T_{4}^{2}$
\\

$412$
&
&
&
&
&
$T_{2}^{3}T_{3}T_{6}T_{7}+T_{1}T_{5}^{3}+T_{4}^{2}$
\\

$413$
&
&
&
&
&
$T_{2}^{4}T_{6}T_{7}+T_{1}T_{5}^{3}+T_{4}^{2}$
\\

$414$
&
&
&
&
&
$T_{2}^{2}T_{3}^{2}T_{6}T_{7}+T_{1}T_{5}^{3}+T_{4}^{2}$
\\ \bottomrule
\end{longtable}
\end{center}
\end{class-list}

\medskip

\begin{class-list}\label{class:s=5-nsample}
Locally factorial Fano fourfoulds of Picard number two with a hypersurface Cox ring and an effective three-torus action: Specifying data for the sporadic cases with $s = 5$ and $\mu \not\in \lambda$.
\begin{center}
\small\setlength{\arraycolsep}{2pt}
\begin{longtable}{cccccl}
\toprule
\emph{ID} & $[w_1,\dots,w_7]$ & $\mu$ & $-\KKK$ & $\KKK^4$ & \multicolumn{1}{c}{$g$}
\\ \midrule

$415$
&
\multirow{4}{*}{
$\left[\tiny\begin{array}{rrrrrrr}
    1 & 1 & 1 & 2 & 3 & 1 & 0 \\
    0 & 0 & 0 & 1 & 2 & 1 & 1 
\end{array}\right]$
}
&
\multirow{4}{*}{
$(6,4)$
}
&
\multirow{4}{*}{
$(3,1)$
}
&
\multirow{4}{*}{
$50$
}
&
$T_{3}^{4}T_{4}T_{7}^{3}+T_{1}T_{2}T_{6}^{4}+T_{5}^{2}$
\\

$416$
&
&
&
&
&
$T_{1}T_{2}T_{6}^{4}+T_{4}^{3}T_{7}+T_{5}^{2}$
\\

$417$
&
&
&
&
&
$T_{1}^{5}T_{2}T_{7}^{4}+T_{3}T_{4}T_{6}^{3}+T_{5}^{2}$
\\

$418$
&
&
&
&
&
$T_{1}^{3}T_{2}^{3}T_{7}^{4}+T_{3}T_{4}T_{6}^{3}+T_{5}^{2}$
\\ \midrule

$419$
&
\multirow{4}{*}{
$\left[\tiny\begin{array}{rrrrrrr}
    1 & 1 & 1 & 3 & 3 & 1 & 0 \\
    0 & 0 & 0 & 1 & 2 & 1 & 1 
\end{array}\right]$
}
&
\multirow{4}{*}{
$(6,4)$
}
&
\multirow{4}{*}{
$(4,1)$
}
&
\multirow{4}{*}{
$90$
}
&
$T_{3}^{3}T_{4}T_{7}^{3}+T_{1}T_{2}T_{6}^{4}+T_{5}^{2}$
\\

$420$
&
&
&
&
&
$T_{1}^{5}T_{2}T_{7}^{4}+T_{4}T_{6}^{3}+T_{5}^{2}$
\\

$421$
&
&
&
&
&
$T_{1}^{4}T_{2}T_{3}T_{7}^{4}+T_{4}T_{6}^{3}+T_{5}^{2}$
\\

$422$
&
&
&
&
&
$T_{1}^{3}T_{2}^{2}T_{3}T_{7}^{4}+T_{4}T_{6}^{3}+T_{5}^{2}$
\\ \midrule

$423$
&
$\left[\tiny\begin{array}{rrrrrrr}
    1 & 1 & 1 & 4 & 3 & 1 & 0 \\
    0 & 0 & 0 & 1 & 2 & 1 & 1 
\end{array}\right]$
&
$(6,4)$
&
$(5,1)$
&
$142$
&
$T_{1}T_{2}T_{6}^{4}+T_{3}^{2}T_{4}T_{7}^{3}+T_{5}^{2}$
\\ \midrule

$424$
&
$\left[\tiny\begin{array}{rrrrrrr}
    1 & 1 & 1 & 5 & 3 & 1 & 0 \\
    0 & 0 & 0 & 1 & 2 & 1 & 1 
\end{array}\right]$
&
$(6,4)$
&
$(6,1)$
&
$206$
&
$T_{1}T_{2}T_{6}^{4}+T_{3}T_{4}T_{7}^{3}+T_{5}^{2}$
\\ \bottomrule
\end{longtable}

\goodbreak

\begin{longtable}{cccccl}
\toprule
\emph{ID} & $[w_1,\dots,w_7]$ & $\mu$ & $-\KKK$ & $\KKK^4$ & \multicolumn{1}{c}{$g$}
\\ \midrule

$425$
&
$\left[\tiny\begin{array}{rrrrrrr}
    1 & 1 & 1 & 6 & 3 & 1 & 0 \\
    0 & 0 & 0 & 1 & 2 & 1 & 1 
\end{array}\right]$
&
$(6,4)$
&
$(7,1)$
&
$282$
&
$T_{1}T_{2}T_{6}^{4}+T_{4}T_{7}^{3}+T_{5}^{2}$
\\ \midrule

$426$
&
\multirow{2}{*}{
$\left[\tiny\begin{array}{rrrrrrr}
    1 & 1 & 2 & 2 & 4 & 1 & 0 \\
    0 & 0 & 1 & 1 & 3 & 1 & 1 
\end{array}\right]$
}
&
\multirow{2}{*}{
$(8,6)$
}
&
\multirow{2}{*}{
$(3,1)$
}
&
\multirow{2}{*}{
$14$
}
&
$T_{1}T_{2}T_{6}^{6}+T_{3}^{3}T_{4}T_{7}^{2}+T_{5}^{2}$
\\

$427$
&
&
&
&
&
$T_{2}^{2}T_{4}^{3}T_{7}^{3}+T_{1}T_{3}T_{6}^{5}+T_{5}^{2}$
\\ \midrule

$428$
&
$\left[\tiny\begin{array}{rrrrrrr}
    1 & 1 & 3 & 3 & 5 & 1 & 0 \\
    0 & 0 & 1 & 1 & 3 & 1 & 1 
\end{array}\right]$
&
$(10,6)$
&
$(4,1)$
&
$18$
&
$T_{1}^{2}T_{3}T_{6}^{5}+T_{2}T_{4}^{3}T_{7}^{3}+T_{5}^{2}$
\\ \midrule

$429$
&
\multirow{2}{*}{
$\left[\tiny\begin{array}{rrrrrrr}
    1 & 1 & 4 & 4 & 6 & 1 & 0 \\
    0 & 0 & 1 & 1 & 3 & 1 & 1 
\end{array}\right]$
}
&
\multirow{2}{*}{
$(12,6)$
}
&
\multirow{2}{*}{
$(5,1)$
}
&
\multirow{2}{*}{
$22$
}
&
$T_{1}^{3}T_{3}T_{6}^{5}+T_{4}^{3}T_{7}^{3}+T_{5}^{2}$
\\

$430$
&
&
&
&
&
$T_{1}^{2}T_{2}T_{3}T_{6}^{5}+T_{4}^{3}T_{7}^{3}+T_{5}^{2}$
\\ \midrule

$431$
&
\multirow{2}{*}{
$\left[\tiny\begin{array}{rrrrrrr}
    1 & 1 & 4 & 4 & 7 & 2 & 0 \\
    0 & 0 & 1 & 1 & 3 & 1 & 1 
\end{array}\right]$
}
&
\multirow{2}{*}{
$(14,6)$
}
&
\multirow{2}{*}{
$(5,1)$
}
&
\multirow{2}{*}{
$20$
}
&
$T_{2}^{2}T_{4}^{3}T_{7}^{3}+T_{3}T_{6}^{5}+T_{5}^{2}$
\\

$432$
&
&
&
&
&
$T_{1}T_{2}T_{4}^{3}T_{7}^{3}+T_{3}T_{6}^{5}+T_{5}^{2}$
\\ \midrule

$433$
&
$\left[\tiny\begin{array}{rrrrrrr}
    1 & 1 & 5 & 5 & 8 & 2 & 0 \\
    0 & 0 & 1 & 1 & 3 & 1 & 1 
\end{array}\right]$
&
$(16,6)$
&
$(6,1)$
&
$24$
&
$T_{1}T_{3}T_{6}^{5}+T_{2}T_{4}^{3}T_{7}^{3}+T_{5}^{2}$
\\ \midrule

$434$
&
\multirow{2}{*}{
$\left[\tiny\begin{array}{rrrrrrr}
    1 & 1 & 6 & 6 & 9 & 2 & 0 \\
    0 & 0 & 1 & 1 & 3 & 1 & 1 
\end{array}\right]$
}
&
\multirow{2}{*}{
$(18,6)$
}
&
\multirow{2}{*}{
$(7,1)$
}
&
\multirow{2}{*}{
$28$
}
&
$T_{1}^{2}T_{3}T_{6}^{5}+T_{4}^{3}T_{7}^{3}+T_{5}^{2}$
\\

$435$
&
&
&
&
&
$T_{1}T_{2}T_{3}T_{6}^{5}+T_{4}^{3}T_{7}^{3}+T_{5}^{2}$
\\ \midrule

$436$
&
$\left[\tiny\begin{array}{rrrrrrr}
    1 & 1 & 7 & 7 & 11 & 3 & 0 \\
    0 & 0 & 1 & 1 & 3 & 1 & 1 
\end{array}\right]$
&
$(22,6)$
&
$(8,1)$
&
$30$
&
$T_{2}T_{4}^{3}T_{7}^{3}+T_{3}T_{6}^{5}+T_{5}^{2}$
\\ \midrule

$437$
&
$\left[\tiny\begin{array}{rrrrrrr}
    1 & 1 & 8 & 8 & 12 & 3 & 0 \\
    0 & 0 & 1 & 1 & 3 & 1 & 1 
\end{array}\right]$
&
$(24,6)$
&
$(9,1)$
&
$34$
&
$T_{1}T_{3}T_{6}^{5}+T_{4}^{3}T_{7}^{3}+T_{5}^{2}$
\\ \midrule

$438$
&
$\left[\tiny\begin{array}{rrrrrrr}
    1 & 1 & 10 & 10 & 15 & 4 & 0 \\
    0 & 0 & 1 & 1 & 3 & 1 & 1 
\end{array}\right]$
&
$(30,6)$
&
$(11,1)$
&
$40$
&
$T_{3}T_{6}^{5}+T_{4}^{3}T_{7}^{3}+T_{5}^{2}$
\\ \midrule

$439$
&
\multirow{2}{*}{
$\left[\tiny\begin{array}{rrrrrrr}
    1 & 1 & 3 & 2 & 3 & 0 & 0 \\
    0 & 0 & 1 & 1 & 3 & 1 & 1 
\end{array}\right]$
}
&
\multirow{2}{*}{
$(6,6)$
}
&
\multirow{2}{*}{
$(4,1)$
}
&
\multirow{2}{*}{
$22$
}
&
$T_{1}^{3}T_{3}T_{6}^{5}+T_{4}^{3}T_{7}^{3}+T_{5}^{2}$
\\

$440$
&
&
&
&
&
$T_{1}^{2}T_{2}T_{3}T_{6}^{5}+T_{4}^{3}T_{7}^{3}+T_{5}^{2}$
\\ \midrule

$441$
&
\multirow{2}{*}{
$\left[\tiny\begin{array}{rrrrrrr}
    1 & 1 & 4 & 2 & 3 & 0 & 0 \\
    0 & 0 & 1 & 1 & 3 & 1 & 1 
\end{array}\right]$
}
&
\multirow{2}{*}{
$(6,6)$
}
&
\multirow{2}{*}{
$(5,1)$
}
&
\multirow{2}{*}{
$28$
}
&
$T_{1}^{2}T_{3}T_{6}^{5}+T_{4}^{3}T_{7}^{3}+T_{5}^{2}$
\\

$442$
&
&
&
&
&
$T_{1}T_{2}T_{3}T_{6}^{5}+T_{4}^{3}T_{7}^{3}+T_{5}^{2}$
\\ \midrule

$443$
&
$\left[\tiny\begin{array}{rrrrrrr}
    1 & 1 & 5 & 2 & 3 & 0 & 0 \\
    0 & 0 & 1 & 1 & 3 & 1 & 1 
\end{array}\right]$
&
$(6,6)$
&
$(6,1)$
&
$34$
&
$T_{1}T_{3}T_{6}^{5}+T_{4}^{3}T_{7}^{3}+T_{5}^{2}$
\\ \midrule

$444$
&
$\left[\tiny\begin{array}{rrrrrrr}
    1 & 1 & 6 & 2 & 3 & 0 & 0 \\
    0 & 0 & 1 & 1 & 3 & 1 & 1 
\end{array}\right]$
&
$(6,6)$
&
$(7,1)$
&
$40$
&
$T_{3}T_{6}^{5}+T_{4}^{3}T_{7}^{3}+T_{5}^{2}$
\\ \midrule

$445$
&
$\left[\tiny\begin{array}{rrrrrrr}
    1 & 1 & 3 & 1 & 2 & 0 & 0 \\
    0 & 0 & 1 & 1 & 3 & 1 & 1 
\end{array}\right]$
&
$(4,6)$
&
$(4,1)$
&
$24$
&
$T_{1}T_{3}T_{6}^{5}+T_{2}T_{4}^{3}T_{7}^{3}+T_{5}^{2}$
\\ \midrule

$446$
&
$\left[\tiny\begin{array}{rrrrrrr}
    1 & 1 & 4 & 1 & 2 & 0 & 0 \\
    0 & 0 & 1 & 1 & 3 & 1 & 1 
\end{array}\right]$
&
$(4,6)$
&
$(5,1)$
&
$30$
&
$T_{2}T_{4}^{3}T_{7}^{3}+T_{3}T_{6}^{5}+T_{5}^{2}$
\\ \midrule

$447$
&
$\left[\tiny\begin{array}{rrrrrrr}
    1 & 1 & 2 & 1 & 2 & 0 & 0 \\
    0 & 0 & 1 & 1 & 3 & 1 & 1 
\end{array}\right]$
&
$(4,6)$
&
$(3,1)$
&
$18$
&
$T_{1}^{3}T_{2}T_{6}^{6}+T_{3}T_{4}^{2}T_{7}^{3}+T_{5}^{2}$
\\ \bottomrule
\end{longtable}
\end{center}
\end{class-list}

\medskip

\begin{class-list}\label{class:s=5-series}
Locally factorial Fano fourfoulds of Picard number two with a hypersurface Cox ring and an effective three-torus action: Specifying data for the series with $s = 5$.


\begin{center}
\small\setlength{\tabcolsep}{3pt}
\small\setlength{\arraycolsep}{2pt}
\begin{longtable}{cccl}
\toprule
\emph{ID} & $[w_1,\dots,w_7]$ & $\left[\tiny\begin{array}{r}\mu \\ -\KKK\end{array}\right]$ & \multicolumn{1}{c}{$g$} 
\\[6pt] \midrule \\[-10pt]

\emph{S39}
&
$\left[\tiny\begin{array}{rrrrrrr}
    1 & 1 & a & a & b & c & 0 \\
    0 & 0 & 1 & 1 & 1 & 1 & 1 
\end{array}\right]$
&
$
\tiny\begin{array}{c}
    (a,4) \\[2pt]
    (a\!+\!c\!+\!b\!+\!2,1)
\end{array}
$
&
$
\def\arraystretch{0.6}\begin{array}{l}
{\scriptstyle T_{1}^{(a-4b)}T_{5}^{4}+T_{2}^{(a-4c)}T_{6}^{4}+T_{4}T_{7}^{3},} \\[2pt]
{\scriptstyle b \,>\, c \,\ge\, 1,\, a \,>\, 4b,\, a \text{\emph{ odd}} }
\end{array}
$
\\[6pt] \midrule \\[-10pt]

\emph{S42}
&
$\left[\tiny\begin{array}{rrrrrrr}
    1 & 1 & a & a & b & c & 0 \\
    0 & 0 & 1 & 1 & 1 & 1 & 1 
\end{array}\right]$
&
$
\tiny\begin{array}{c}
    (4a,4) \\[2pt]
    (c\!+\!b\!-\!2a\!+\!2,1)
\end{array}
$
&
$
\def\arraystretch{0.6}\begin{array}{l}
{\scriptstyle T_{5}^{4}+T_{1}^{(4b-a-3c)}T_{3}T_{6}^{3}+T_{2}^{(4b-a)}T_{4}T_{7}^{3},} \\[2pt]
{\scriptstyle b \,>\, c \,\ge\, 1,\, b \,>\, 2c-1,\, 3b-c-1 \,\le\, a \,<\, 4b-3c }
\end{array}
$
\\[6pt] \midrule \\[-10pt]

\emph{S43}
&
$\left[\tiny\begin{array}{rrrrrrr}
    1 & 1 & 2a\!+\!2 & 2a\!+\!2 & a\!+\!1 & a & 0 \\
    0 & 0 & 1 & 1 & 1 & 1 & 1 
\end{array}\right]$
&
$
\tiny\begin{array}{c}
    (4a\!+\!4,4) \\[2pt]
    (2a\!+\!3,1)
\end{array}
$
&
$
\def\arraystretch{0.6}\begin{array}{l}
{\scriptstyle T_{1}^{3}T_{2}^{2}T_{6}^{4}+T_{3}T_{4}T_{7}^{2}+T_{5}^{4},} \\[2pt]
{\scriptstyle a \,\ge\, 1 }
\end{array}
$
\\ \bottomrule
\end{longtable}

\goodbreak

\begin{longtable}{cccl}
\toprule
\emph{ID} & $[w_1,\dots,w_7]$ & $\left[\tiny\begin{array}{r}\mu \\ -\KKK\end{array}\right]$ & \multicolumn{1}{c}{$g$} 
\\[6pt] \midrule \\[-10pt]

\emph{S44}
&
$\left[\tiny\begin{array}{rrrrrrr}
    1 & 1 & 4a & 4a & a & b & 0 \\
    0 & 0 & 1 & 1 & 1 & 1 & 1 
\end{array}\right]$
&
$
\tiny\begin{array}{c}
    (4a,4) \\[2pt]
    (5a\!+\!b\!+\!2,1)
\end{array}
$
&
$
\def\arraystretch{0.6}\begin{array}{l}
{\scriptstyle T_{5}^{4}+T_{1}^{(4a-4b-l)}T_{2}^{l}T_{6}^{4}+T_{4}T_{7}^{3},} \\[2pt]
{\scriptstyle a \,>\, b \,\ge\, 1,\, 0 \,<\, l \,<\, 2a+2b,\, l \text{\emph{ odd}} }
\end{array}
$
\\[6pt] \midrule \\[-10pt]

\emph{S45}
&
$\left[\tiny\begin{array}{rrrrrrr}
    1 & 1 & a & a & a\!+\!2 & 1 & 0 \\
    0 & 0 & 1 & 1 & 3 & 1 & 1 
\end{array}\right]$
&
$
\tiny\begin{array}{c}
    (2a\!+\!4,6) \\[2pt]
    (a\!+\!1,1)
\end{array}
$
&
$
\def\arraystretch{0.6}\begin{array}{l}
{\scriptstyle T_{5}^{2}+T_{3}T_{4}T_{6}^{4}+T_{1}^{(2a+4-l)}T_{2}^{l}T_{7}^{6},} \\[2pt]
{\scriptstyle a \,\ge\, 1,\, 0 \,<\, l \,\le\, a+2,\, l \text{\emph{ odd}} }
\end{array}
$
\\[6pt] \midrule \\[-10pt]

\emph{S49}
&
$\left[\tiny\begin{array}{rrrrrrr}
    1 & 1 & a & a & b & c & 0 \\
    0 & 0 & 1 & 1 & 3 & 1 & 1 
\end{array}\right]$
&
$
\tiny\begin{array}{c}
    (2b,6) \\[2pt]
    (2a\!-\!b\!+\!c\!+\!2,1)
\end{array}
$
&
$
\def\arraystretch{0.6}\begin{array}{l}
{\scriptstyle T_{5}^{2}+T_{1}^{(2b-5c-a)}T_{3}T_{6}^{5}+T_{2}^{(2b-a)}T_{4}T_{7}^{5},} \\[2pt]
{\scriptstyle c \,\ge\, 1,\, b \,\ge\, 4c,\, b/3 \,<\, a \,<\, 2b-5c }
\end{array}
$
\\[6pt] \midrule \\[-10pt]

\emph{S53}
&
$\left[\tiny\begin{array}{rrrrrrr}
    1 & 1 & a & a & a & b & 0 \\
    0 & 0 & 1 & 1 & 3 & 1 & 1 
\end{array}\right]$
&
$
\tiny\begin{array}{c}
    (2a,6) \\[2pt]
    (a\!+\!b\!+\!2,1)
\end{array}
$
&
$
\def\arraystretch{0.6}\begin{array}{l}
{\scriptstyle T_{5}^{2}+T_{1}^{(2a-6b-l)}T_{2}^{l}T_{6}^{6}+T_{3}T_{4}T_{7}^{4},} \\[2pt]
{\scriptstyle b \,\ge\, 1,\, a \,>\, 3b,\, 0 \,<\, l \,\le\, a-3b,\, l \text{\emph{ odd}} }
\end{array}
$
\\[6pt] \midrule \\[-10pt]

\emph{S54}
&
$\left[\tiny\begin{array}{rrrrrrr}
    1 & 1 & 2a & 2a & a & b & 0 \\
    0 & 0 & 1 & 1 & 3 & 1 & 1 
\end{array}\right]$
&
$
\tiny\begin{array}{c}
    (2a,6) \\[2pt]
    (b\!+\!3a\!+\!2,1)
\end{array}
$
&
$
\def\arraystretch{0.6}\begin{array}{l}
{\scriptstyle T_{5}^{2}+T_{1}^{(2a-l)}T_{2}^{l}T_{6}^{6}+T_{4}T_{7}^{5},} \\[2pt]
{\scriptstyle b \,\ge\, 1,\, a \,>\, 3b,\, 0 \,<\, l \,\le\, a,\, l \text{\emph{ odd}} }
\end{array}
$
\\[6pt] \midrule \\[-10pt]

\emph{S55}
&
$\left[\tiny\begin{array}{rrrrrrr}
    1 & 1 & a\!+\!1 & a & 1 & 1 & 0 \\
    0 & 0 & 1 & 1 & 1 & 1 & 1 
\end{array}\right]$
&
$
\tiny\begin{array}{c}
    (a\!+\!3,4) \\[2pt]
    (a\!+\!2,1)
\end{array}
$
&
$
\def\arraystretch{0.6}\begin{array}{l}
{\scriptstyle T_{1}^{(a-1)}T_{5}^{4}+T_{4}T_{6}^{3}+T_{2}^{2}T_{3}T_{7}^{3},} \\[2pt]
{\scriptstyle a \,\ge\, 2 }
\end{array}
$
\\[6pt] \midrule \\[-10pt]

\emph{S56}
&
$\left[\tiny\begin{array}{rrrrrrr}
    1 & 1 & a\!+\!2 & a & 1 & 1 & 0 \\
    0 & 0 & 1 & 1 & 1 & 1 & 1 
\end{array}\right]$
&
$
\tiny\begin{array}{c}
    (a\!+\!3,4) \\[2pt]
    (a\!+\!3,1)
\end{array}
$
&
$
\def\arraystretch{0.6}\begin{array}{l}
{\scriptstyle T_{1}^{(a-1)}T_{5}^{4}+T_{4}T_{6}^{3}+T_{2}T_{3}T_{7}^{3},} \\[2pt]
{\scriptstyle a \,\ge\, 2 }
\end{array}
$
\\[6pt] \midrule \\[-10pt]

\emph{S57}
&
$\left[\tiny\begin{array}{rrrrrrr}
    1 & 1 & a\!+\!3 & a & 1 & 1 & 0 \\
    0 & 0 & 1 & 1 & 1 & 1 & 1 
\end{array}\right]$
&
$
\tiny\begin{array}{c}
    (a\!+\!3,4) \\[2pt]
    (a\!+\!4,1)
\end{array}
$
&
$
\def\arraystretch{0.6}\begin{array}{l}
{\scriptstyle T_{1}^{(a-1-l)}T_{2}^{l}T_{5}^{4}+T_{4}T_{6}^{3}+T_{3}T_{7}^{3},} \\[2pt]
{\scriptstyle a \,\ge\, 2,\, 0 \,\le\, l \,\le\, (a-1)/2 }
\end{array}
$
\\[6pt] \midrule \\[-10pt]

\emph{S58}
&
$\left[\tiny\begin{array}{rrrrrrr}
    1 & 1 & a & b & 1 & 1 & 0 \\
    0 & 0 & 1 & 1 & 1 & 1 & 1 
\end{array}\right]$
&
$
\tiny\begin{array}{c}
    (b\!+\!3,4) \\[2pt]
    (a\!+\!1,1)
\end{array}
$
&
$
\def\arraystretch{0.6}\begin{array}{l}
{\scriptstyle T_{1}^{(b-1)}T_{5}^{4}+T_{4}T_{6}^{3}+T_{2}^{(b+3)}T_{7}^{4},} \\[2pt]
{\scriptstyle a \,>\, b \,\ge\, 2,\, b \text{\emph{ even}} }
\end{array}
$
\\[6pt] \midrule \\[-10pt]

\emph{S59}
&
$\left[\tiny\begin{array}{rrrrrrr}
    1 & 1 & 2a\!+\!1 & 2a & a & a & 0 \\
    0 & 0 & 1 & 1 & 1 & 1 & 1 
\end{array}\right]$
&
$
\tiny\begin{array}{c}
    (4a\!+\!1,4) \\[2pt]
    (2a\!+\!2,1)
\end{array}
$
&
$
\def\arraystretch{0.6}\begin{array}{l}
{\scriptstyle T_{1}T_{5}^{4}+T_{2}T_{6}^{4}+T_{3}T_{4}T_{7}^{2},} \\[2pt]
{\scriptstyle a \,\ge\, 1 }
\end{array}
$
\\[6pt] \midrule \\[-10pt]

\emph{S60}
&
$\left[\tiny\begin{array}{rrrrrrr}
    1 & 1 & a & b & c & c & 0 \\
    0 & 0 & 1 & 1 & 1 & 1 & 1 
\end{array}\right]$
&
$
\tiny\begin{array}{c}
    (a,4) \\[2pt]
    (2c\!+\!b\!+\!2,1)
\end{array}
$
&
$
\def\arraystretch{0.6}\begin{array}{l}
{\scriptstyle T_{1}^{(a-4c)}T_{5}^{4}+T_{2}^{(a-4c)}T_{6}^{4}+T_{3}T_{7}^{3},} \\[2pt]
{\scriptstyle c \,\ge\, 1,\, b \,>\, 2c-1,\, 4c \,<\, a \,\le\, 1+b+2c,\, a \text{\emph{ odd}} }
\end{array}
$
\\[6pt] \midrule \\[-10pt]

\emph{S61}
&
$\left[\tiny\begin{array}{rrrrrrr}
    1 & 1 & a & b & c & c & 0 \\
    0 & 0 & 1 & 1 & 1 & 1 & 1 
\end{array}\right]$
&
$
\tiny\begin{array}{c}
    (b,4) \\[2pt]
    (a\!+\!2c\!+\!2,1)
\end{array}
$
&
$
\def\arraystretch{0.6}\begin{array}{l}
{\scriptstyle T_{1}^{(b-4c)}T_{5}^{4}+T_{2}^{(b-4c)}T_{6}^{4}+T_{4}T_{7}^{3},} \\[2pt]
{\scriptstyle c \,\ge\, 1,\, b \,>\, 4c,\, a \,>\, b,\, b \text{\emph{ odd}} }
\end{array}
$
\\[6pt] \midrule \\[-10pt]

\emph{S64}
&
$\left[\tiny\begin{array}{rrrrrrr}
    1 & 1 & a & b & c & 0 & 0 \\
    0 & 0 & 1 & 1 & 1 & 1 & 1 
\end{array}\right]$
&
$
\tiny\begin{array}{c}
    (b,4) \\[2pt]
    (a\!+\!c\!+\!2,1)
\end{array}
$
&
$
\def\arraystretch{0.6}\begin{array}{l}
{\scriptstyle T_{1}^{(b-4c)}T_{5}^{4}+T_{2}^{b}T_{6}^{4}+T_{4}T_{7}^{3},} \\[2pt]
{\scriptstyle c \,\ge\, 1,\, b \,>\, 4c,\, b \,<\, a \,\le\, 1 + b + c,\, b \text{\emph{ odd}} }
\end{array}
$
\\[6pt] \midrule \\[-10pt]

\emph{S67}
&
$\left[\tiny\begin{array}{rrrrrrr}
    1 & 1 & a & b & c & 0 & 0 \\
    0 & 0 & 1 & 1 & 1 & 1 & 1 
\end{array}\right]$
&
$
\tiny\begin{array}{c}
    (4c,4) \\[2pt]
    (a\!+\!b\!-\!3c\!+\!2,1)
\end{array}
$
&
$
\def\arraystretch{0.6}\begin{array}{l}
{\scriptstyle T_{5}^{4}+T_{1}^{(4c-a)}T_{3}T_{6}^{3}+T_{2}^{(4c-b)}T_{4}T_{7}^{3},} \\[2pt]
{\scriptstyle c \,\ge\, 1,\, a \,>\, b \,>\, 4c }
\end{array}
$
\\[6pt] \midrule \\[-10pt]

\emph{S68}
&
$\left[\tiny\begin{array}{rrrrrrr}
    1 & 1 & a & 4b & b & 0 & 0 \\
    0 & 0 & 1 & 1 & 1 & 1 & 1 
\end{array}\right]$
&
$
\tiny\begin{array}{c}
    (4b,4) \\[2pt]
    (a\!+\!b\!+\!2,1)
\end{array}
$
&
$
\def\arraystretch{0.6}\begin{array}{l}
{\scriptstyle T_{5}^{4}+T_{1}^{(4b-l)}T_{2}^{l}T_{6}^{4}+T_{4}T_{7}^{3},} \\[2pt]
{\scriptstyle b \,\ge\, 1,\, 4b \,<\, a \,\le\, 5b+1,\, 0 \,<\, l \,<\, 2b,\, l \text{\emph{ odd}} }
\end{array}
$
\\[6pt] \midrule \\[-10pt]

\emph{S69}
&
$\left[\tiny\begin{array}{rrrrrrr}
    1 & 1 & 2a & a\!-\!1 & a & 0 & 0 \\
    0 & 0 & 1 & 1 & 3 & 1 & 1 
\end{array}\right]$
&
$
\tiny\begin{array}{c}
    (2a,6) \\[2pt]
    (1\!+\!2a,1)
\end{array}
$
&
$
\def\arraystretch{0.6}\begin{array}{l}
{\scriptstyle T_{1}T_{2}T_{4}^{2}T_{7}^{4}+T_{3}T_{6}^{5}+T_{5}^{2},} \\[2pt]
{\scriptstyle a \,\ge\, 2 }
\end{array}
$
\\[6pt] \midrule \\[-10pt]

\emph{S70}
&
$\left[\tiny\begin{array}{rrrrrrr}
    1 & 1 & 2b & a & b & 0 & 0 \\
    0 & 0 & 1 & 1 & 3 & 1 & 1 
\end{array}\right]$
&
$
\tiny\begin{array}{c}
    (2a,6) \\[2pt]
    (3b\!-\!a\!+\!2,1)
\end{array}
$
&
$
\def\arraystretch{0.6}\begin{array}{l}
{\scriptstyle T_{5}^{2}+T_{3}T_{6}^{5}+T_{1}^{(2b-a-l)}T_{2}^{l}T_{4}T_{7}^{5},} \\[2pt]
{\scriptstyle a,\,b \,\ge\, 1,\, b-1 \,\le\, a \,<\, 2b }
\end{array}
$
\\[6pt] \midrule \\[-10pt]

\emph{S71}
&
$\left[\tiny\begin{array}{rrrrrrr}
    1 & 1 & a & b & c & 0 & 0 \\
    0 & 0 & 1 & 1 & 3 & 1 & 1 
\end{array}\right]$
&
$
\tiny\begin{array}{c}
    (2c,6) \\[2pt]
    (a\!+\!b\!-\!c\!+\!2,1)
\end{array}
$
&
$
\def\arraystretch{0.6}\begin{array}{l}
{\scriptstyle T_{5}^{2}+T_{1}^{(2c-a)}T_{3}T_{6}^{5}+T_{2}^{(2c-b)}T_{4}T_{7}^{5},} \\[2pt]
{\scriptstyle c \,\ge\, 1,\, c-1 \,\le\, b \,<\, 2c,\, b \,<\, a \,<\, 2c }
\end{array}
$
\\[6pt] \midrule \\[-10pt]

\emph{S72}
&
$\left[\tiny\begin{array}{rrrrrrr}
    1 & 1 & a & 2 & 3 & 0 & 0 \\
    0 & 0 & 1 & 1 & 3 & 1 & 1 
\end{array}\right]$
&
$
\tiny\begin{array}{c}
    (6,6) \\[2pt]
    (a\!+\!1,1)
\end{array}
$
&
$
\def\arraystretch{0.6}\begin{array}{l}
{\scriptstyle T_{1}^{5}T_{2}T_{6}^{6}+T_{4}^{3}T_{7}^{3}+T_{5}^{2},} \\[2pt]
{\scriptstyle a \,\ge\, 3 }
\end{array}
$
\\[6pt] \midrule \\[-10pt]

\emph{S73}
&
$\left[\tiny\begin{array}{rrrrrrr}
    1 & 1 & a\!+\!1 & a\!-\!1 & a & 0 & 0 \\
    0 & 0 & 1 & 1 & 3 & 1 & 1 
\end{array}\right]$
&
$
\tiny\begin{array}{c}
    (2a,6) \\[2pt]
    (a\!+\!2,1)
\end{array}
$
&
$
\def\arraystretch{0.6}\begin{array}{l}
{\scriptstyle T_{1}^{(2a-l)}T_{2}^{l}T_{6}^{6}+T_{3}T_{4}T_{7}^{4}+T_{5}^{2},} \\[2pt]
{\scriptstyle a \,\ge\, 2,\, 0 \,<\, l \,\le\, a,\, l \text{\emph{ odd}} }
\end{array}
$
\\[6pt] \midrule \\[-10pt]

\emph{S74}
&
$\left[\tiny\begin{array}{rrrrrrr}
    1 & 1 & 2b & a & b & 0 & 0 \\
    0 & 0 & 1 & 1 & 3 & 1 & 1 
\end{array}\right]$
&
$
\tiny\begin{array}{c}
    (2b,6) \\[2pt]
    (a\!+\!b\!+\!2,1)
\end{array}
$
&
$
\def\arraystretch{0.6}\begin{array}{l}
{\scriptstyle T_{5}^{2}+T_{1}^{(2b-l)}T_{2}^{l}T_{6}^{6}+T_{3}T_{7}^{5},} \\[2pt]
{\scriptstyle a,\, b \,\ge\, 1,\, b-1 \,\le\, a \,<\, 2b,\, 0 \,<\, l \,\le\, b,\, l \text{\emph{ odd}} }
\end{array}
$
\\[6pt] \midrule \\[-10pt]

\emph{S75}
&
$\left[\tiny\begin{array}{rrrrrrr}
    1 & 1 & a & 2b & b & 0 & 0 \\
    0 & 0 & 1 & 1 & 3 & 1 & 1 
\end{array}\right]$
&
$
\tiny\begin{array}{c}
    (2b,6) \\[2pt]
    (a\!+\!b\!+\!2,1)
\end{array}
$
&
$
\def\arraystretch{0.6}\begin{array}{l}
{\scriptstyle T_{5}^{2}+T_{1}^{(2b-l)}T_{2}^{l}T_{6}^{6}+T_{4}T_{7}^{5},} \\[2pt]
{\scriptstyle b \,\ge\, 1,\, a \,>\, 2b,\, 0 \,<\, l \,\le\, b,\, l \text{\emph{ odd}} }
\end{array}
$
\\
\bottomrule
\end{longtable}
\end{center}


\begin{center}
\small\setlength{\tabcolsep}{3pt}
\small\setlength{\arraycolsep}{2pt}
\begin{longtable}{llll}
\toprule
\\[-8pt]

\multicolumn{4}{l}
{
\small\setlength{\tabcolsep}{6pt}
\begin{tabular}{cccc}
$
 Q \ = \ 
\left[\tiny\begin{array}{rrrrrrr}
    1 & 1 & a & a & b & c & 0 \\
    0 & 0 & 1 & 1 & 1 & 1 & 1 
\end{array}\right]
$
&
$\mu \ = \ (a\!+\!3c,4)$
&
$-\KKK \ = \ (a\!-\!2c\!+\!b\!+\!2,1)$
\end{tabular}
}
\\[6pt]

{\small\emph{ID}}
&
\multicolumn{1}{c}{{\small$g$}}
&
{\small\emph{ID}}
&
\multicolumn{1}{c}{{\small$g$}}
\\[3pt]

\emph{S37}
&

$
\def\arraystretch{0.75}\begin{array}{l}
{\scriptstyle T_{1}^{(a+3c-4b)}T_{5}^{4}+T_{3}T_{6}^{3}+T_{2}^{3c}T_{4}T_{7}^{3},} \\[2pt]
{\scriptstyle b \,>\, c \,\ge\, 1,\, b \,\ge\, 2c-1,\, a \,>\, 4b-3c }
\end{array}
$
&
\emph{S38}
&

$
\def\arraystretch{0.75}\begin{array}{l}
{\scriptstyle T_{1}^{(a+3c-4b)}T_{5}^{4}+T_{3}T_{6}^{3}+T_{2}^{(a+3c)}T_{7}^{4},} \\[2pt]
{\scriptstyle b \,>\, c \,\ge\, 1,\, b \,\ge\, 2c-1,\, a \,>\, 4b-3c,\, a \text{\emph{ or }} c \text{\emph{ odd}} }
\end{array}
$
\\[4pt] \midrule \\[-8pt]

\multicolumn{4}{l}
{
\small\setlength{\tabcolsep}{6pt}
\begin{tabular}{cccc}
$
 Q \ = \ 
\left[\tiny\begin{array}{rrrrrrr}
    1 & 1 & 4a\!-\!3b & 4a\!-\!3b & a & b & 0 \\
    0 & 0 & 1 & 1 & 1 & 1 & 1 
\end{array}\right]
$
&
$\mu \ = \ (4a,4)$
&
$-\KKK \ = \ (5a\!-\!5b\!+\!2,1)$
\end{tabular}
}
\\[6pt]

{\small\emph{ID}}
&
\multicolumn{1}{c}{{\small$g$}}
&
{\small\emph{ID}}
&
\multicolumn{1}{c}{{\small$g$}}
\\[3pt]

\emph{S40}
&

$
\def\arraystretch{0.75}\begin{array}{l}
{\scriptstyle T_{5}^{4}+T_{3}T_{6}^{3}+T_{1}^{(3b-l)}T_{2}^{l}T_{4}T_{7}^{3},} \\[2pt]
{\scriptstyle a \,>\, b \,\ge\, 1,\, a \,\ge\, 2b-1,\, 0 \,\le\, l \,\le\, 3b/2 }
\end{array}
$
&
\emph{S41}
&

$
\def\arraystretch{0.75}\begin{array}{l}
{\scriptstyle T_{5}^{4}+T_{3}T_{6}^{3}+T_{1}^{(4a-l)}T_{2}^{l}T_{7}^{4},} \\[2pt]
{\scriptstyle a \,>\, b \,\ge\, 1,\, a \,\ge\, 2b-1,\, 0 \,<\, l \,<\, 2a,\, l \text{\emph{ odd}} }
\end{array}
$
\\[4pt] \midrule \\[-8pt]

\multicolumn{4}{l}
{
\small\setlength{\tabcolsep}{6pt}
\begin{tabular}{cccc}
$
 Q \ = \ 
\left[\tiny\begin{array}{rrrrrrr}
    1 & 1 & 2a\!-\!5b & 2a\!-\!5b & a & b & 0 \\
    0 & 0 & 1 & 1 & 3 & 1 & 1 
\end{array}\right]
$
&
$\mu \ = \ (2a,6)$
&
$-\KKK \ = \ (3a\!-\!9b\!+\!2,1)$
\end{tabular}
}
\\[6pt]

{\small\emph{ID}}
&
\multicolumn{1}{c}{{\small$g$}}
&
{\small\emph{ID}}
&
\multicolumn{1}{c}{{\small$g$}}
\\[3pt]

\emph{S46}
&

$
\def\arraystretch{0.75}\begin{array}{l}
{\scriptstyle T_{5}^{2}+T_{3}T_{6}^{5}+T_{1}^{(10b-2a-l)}T_{2}^{l}T_{4}^{2}T_{7}^{4},} \\[2pt]
{\scriptstyle b \,\ge\, 1,\, 4b-1 \,\le\, a \,<\, 5b }
\end{array}
$
&
\emph{S47}
&

$
\def\arraystretch{0.75}\begin{array}{l}
{\scriptstyle T_{5}^{2}+T_{3}T_{6}^{5}+T_{1}^{(5b-l)}T_{2}^{l}T_{4}T_{7}^{5},} \\[2pt]
{\scriptstyle b \,\ge\, 1,\, a \,\ge\, 4b-1,\, 0 \,\le\, l \,\le\, 5b/2 }
\end{array}
$
\\[10pt]

\emph{S48}
&

$
\def\arraystretch{0.75}\begin{array}{l}
{\scriptstyle T_{5}^{2}+T_{3}T_{6}^{5}+T_{1}^{(2a-l)}T_{2}^{l}T_{7}^{6},} \\[2pt]
{\scriptstyle b \,\ge\, 1,\, a \,\ge\, 4b-1,\, 0 \,<\, l \,\le\, a,\, l \text{\emph{ odd}} }
\end{array}
$
&
&
\\[4pt] \midrule \\[-8pt]

\multicolumn{4}{l}
{
\small\setlength{\tabcolsep}{6pt}
\begin{tabular}{cccc}
$
 Q \ = \ 
\left[\tiny\begin{array}{rrrrrrr}
    1 & 1 & 2a\!+\!2 & 2a\!+\!2 & 3a\!+\!3 & a & 0 \\
    0 & 0 & 1 & 1 & 3 & 1 & 1 
\end{array}\right]
$
&
$\mu \ = \ (6a\!+\!6,6)$
&
$-\KKK \ = \ (2a\!+\!3,1)$
\end{tabular}
}
\\[6pt]

{\small\emph{ID}}
&
\multicolumn{1}{c}{{\small$g$}}
&
{\small\emph{ID}}
&
\multicolumn{1}{c}{{\small$g$}}
\\[3pt]

\emph{S50}
&

$
\def\arraystretch{0.75}\begin{array}{l}
{\scriptstyle T_{1}^{5}T_{2}T_{6}^{6}+T_{3}^{2}T_{4}T_{7}^{3}+T_{5}^{2},} \\[2pt]
{\scriptstyle a \,\ge\, 1 }
\end{array}
$
&
\emph{S51}
&

$
\def\arraystretch{0.75}\begin{array}{l}
{\scriptstyle T_{1}^{5}T_{2}T_{6}^{6}+T_{3}^{3}T_{7}^{3}+T_{5}^{2},} \\[2pt]
{\scriptstyle a \,\ge\, 1 }
\end{array}
$
\\[10pt]

\emph{S52}
&

$
\def\arraystretch{0.75}\begin{array}{l}
{\scriptstyle T_{1}^{3}T_{2}^{3}T_{6}^{6}+T_{3}^{2}T_{4}T_{7}^{3}+T_{5}^{2},} \\[2pt]
{\scriptstyle a \,\ge\, 1 }
\end{array}
$
&
&
\\[4pt] \midrule \\[-8pt]

\multicolumn{4}{l}
{
\small\setlength{\tabcolsep}{6pt}
\begin{tabular}{cccc}
$
 Q \ = \ 
\left[\tiny\begin{array}{rrrrrrr}
    1 & 1 & a & b & c & 0 & 0 \\
    0 & 0 & 1 & 1 & 1 & 1 & 1 
\end{array}\right]
$
&
$\mu \ = \ (a,4)$
&
$-\KKK \ = \ (c\!+\!b\!+\!2,1)$
\end{tabular}
}
\\[6pt]

{\small\emph{ID}}
&
\multicolumn{1}{c}{{\small$g$}}
&
{\small\emph{ID}}
&
\multicolumn{1}{c}{{\small$g$}}
\\[3pt]

\emph{S62}
&

$
\def\arraystretch{0.75}\begin{array}{l}
{\scriptstyle T_{1}^{(a-4c)}T_{5}^{4}+T_{3}T_{6}^{3}+T_{2}^{(a-b)}T_{4}T_{7}^{3},} \\[2pt]
{\scriptstyle c \,\ge\, 1,\, b \,>\, 3c-1,\, 4c \,<\, a \,\le\, 1 + b + c }
\end{array}
$
&
\emph{S63}
&

$
\def\arraystretch{0.75}\begin{array}{l}
{\scriptstyle T_{1}^{(a-4c)}T_{5}^{4}+T_{2}^{a}T_{6}^{4}+T_{3}T_{7}^{3},} \\[2pt]
{\scriptstyle c \,\ge\, 1,\, b \,>\, 3c-1,\, 4c \,<\, a \,\le\, 1 + b + c,\, a \text{\emph{ odd}} }
\end{array}
$
\\[4pt] \midrule \\[-8pt]

\multicolumn{4}{l}
{
\small\setlength{\tabcolsep}{6pt}
\begin{tabular}{cccc}
$
 Q \ = \ 
\left[\tiny\begin{array}{rrrrrrr}
    1 & 1 & 4b & a & b & 0 & 0 \\
    0 & 0 & 1 & 1 & 1 & 1 & 1 
\end{array}\right]
$
&
$\mu \ = \ (4b,4)$
&
$-\KKK \ = \ (a\!+\!b\!+\!2,1)$
\end{tabular}
}
\\[6pt]

{\small\emph{ID}}
&
\multicolumn{1}{c}{{\small$g$}}
&
{\small\emph{ID}}
&
\multicolumn{1}{c}{{\small$g$}}
\\[3pt]

\emph{S65}
&

$
\def\arraystretch{0.75}\begin{array}{l}
{\scriptstyle T_{5}^{4}+T_{3}T_{6}^{3}+T_{1}^{(4b-a-l)}T_{2}^{l}T_{4}T_{7}^{3},} \\[2pt]
{\scriptstyle b \,\ge\, 1,\, 3b-1 \,\le\, a \,<\, 4b,\, 0 \,\le\, l \,\le\, (4b-a)/2 }
\end{array}
$
&
\emph{S66}
&

$
\def\arraystretch{0.75}\begin{array}{l}
{\scriptstyle T_{5}^{4}+T_{1}^{(4b-l)}T_{2}^{l}T_{6}^{4}+T_{3}T_{7}^{3},} \\[2pt]
{\scriptstyle b \,\ge\, 1,\, 3b-1 \,\le\, a \,<\, 4b,\, 0 \,<\, l \,<\, 2b,\, l \text{\emph{ odd}} }
\end{array}
$
\\
\bottomrule
\end{longtable}
\end{center}
\end{class-list}

\medskip

\begin{class-list}\label{class:s=6-series}
Locally factorial Fano fourfoulds of Picard number two with a hypersurface Cox ring and an effective three-torus action: Specifying data for the series with $s = 6$.


\begin{center}
\small\setlength{\tabcolsep}{3pt}
\small\setlength{\arraycolsep}{2pt}
\begin{longtable}{cccl}
\toprule
\emph{ID} & $[w_1,\dots,w_7]$ & $\left[\tiny\begin{array}{r}\mu \\ -\KKK\end{array}\right]$ & \multicolumn{1}{c}{$g$} 
\\ \midrule

\emph{S78}
&
$\left[\tiny\begin{array}{rrrrrrr}
    1 & 1 & b+3d & b & c & d & 0 \\
    0 & 0 & 1 & 1 & 1 & 1 & 1 
\end{array}\right]$
&
$
\tiny\begin{array}{c}
    (b\!+\!3d,4) \\[2pt]
    (b\!+\!d\!+\!c\!+\!2,1)
\end{array}
$
&
$
\def\arraystretch{0.6}\begin{array}{l}
{\scriptstyle T_{1}^{(b-4c+3d-l)}T_{2}^{l}T_{5}^{4}+T_{4}T_{6}^{3}+T_{3}T_{7}^{3},} \\[2pt]
{\scriptstyle c \,>\, d \,\ge\, 1,\, c \,\ge\, 2d-1,\, b \,>\, 4c-3d,} \\[2pt]
{\scriptstyle 0 \,\le\, l \,\le\, (b-4c+3d)/2}
\end{array}
$
\\ \midrule

\emph{S83}
&
$\left[\tiny\begin{array}{rrrrrrr}
    1 & 1 & a & b & c & d & 0 \\
    0 & 0 & 1 & 1 & 1 & 1 & 1 
\end{array}\right]$
&
$
\tiny\begin{array}{c}
    (b,4) \\[2pt]
    (d\!+\!c\!+\!a\!+\!2,1)
\end{array}
$
&
$
\def\arraystretch{0.6}\begin{array}{l}
{\scriptstyle T_{1}^{(b-4c)}T_{5}^{4}+T_{2}^{(b-4d)}T_{6}^{4}+T_{4}T_{7}^{3},} \\[2pt]
{\scriptstyle c \,>\, d \,\ge\, 1,\, b \,>\, 4c,\, a \,>\, b,\, b \text{\emph{ odd}} }
\end{array}
$
\\ \midrule

\emph{S88}
&
$\left[\tiny\begin{array}{rrrrrrr}
    1 & 1 & 4c & 4c\!-\!3d & c & d & 0 \\
    0 & 0 & 1 & 1 & 1 & 1 & 1 
\end{array}\right]$
&
$
\tiny\begin{array}{c}
    (4c,4) \\[2pt]
    (5c\!-\!2d\!+\!2,1)
\end{array}
$
&
$
\def\arraystretch{0.6}\begin{array}{l}
{\scriptstyle T_{3}T_{7}^{3}+T_{4}T_{6}^{3}+T_{5}^{4},} \\[2pt]
{\scriptstyle c \,>\, d \,\ge\, 1,\, c \,\ge\, 2d-1 }
\end{array}
$
\\ \bottomrule
\end{longtable}

\goodbreak

\begin{longtable}{cccl}
\toprule
\emph{ID} & $[w_1,\dots,w_7]$ & $\left[\tiny\begin{array}{r}\mu \\ -\KKK\end{array}\right]$ & \multicolumn{1}{c}{$g$} 
\\ \midrule

\emph{S93}
&
$\left[\tiny\begin{array}{rrrrrrr}
    1 & 1 & a & 4c & c & d & 0 \\
    0 & 0 & 1 & 1 & 1 & 1 & 1 
\end{array}\right]$
&
$
\tiny\begin{array}{c}
    (4c,4) \\[2pt]
    (d\!+\!c\!+\!a\!+\!2,1)
\end{array}
$
&
$
\def\arraystretch{0.6}\begin{array}{l}
{\scriptstyle T_{5}^{4}+T_{1}^{(4c-4d-l)}T_{2}^{l}T_{6}^{4}+T_{4}T_{7}^{3},} \\[2pt]
{\scriptstyle c \,>\, d \,\ge\, 1,\, a \,>\, 4c,\, 0 \,<\, l \,<\, 2c-2d,\, l \text{\emph{ odd}} }
\end{array}
$
\\ \midrule

\emph{S102}
&
$\left[\tiny\begin{array}{rrrrrrr}
    1 & 1 & 2c & 2c\!-\!5d & c & d & 0 \\
    0 & 0 & 1 & 1 & 3 & 1 & 1 
\end{array}\right]$
&
$
\tiny\begin{array}{c}
    (2c,4) \\[2pt]
    (3c\!-\!4d\!+\!2,3)
\end{array}
$
&
$
\def\arraystretch{0.6}\begin{array}{l}
{\scriptstyle T_{3}T_{7}^{5}+T_{4}T_{6}^{5}+T_{5}^{2},} \\[2pt]
{\scriptstyle d \,\ge\, 1,\, c \,>\, 3,\, c \,\ge\, 4d-1 }
\end{array}
$
\\ \midrule

\emph{S105}
&
$\left[\tiny\begin{array}{rrrrrrr}
    1 & 1 & 2c-b & b & c & d & 0 \\
    0 & 0 & 1 & 1 & 3 & 1 & 1 
\end{array}\right]$
&
$
\tiny\begin{array}{c}
    (2c,4) \\[2pt]
    (c\!+\!d\!+\!2,3)
\end{array}
$
&
$
\def\arraystretch{0.6}\begin{array}{l}
{\scriptstyle T_{5}^{2}+T_{1}^{(2c-6d-l)}T_{2}^{l}T_{6}^{6}+T_{3}T_{4}T_{7}^{4},} \\[2pt]
{\scriptstyle d \,\ge\, 1,\, c \,>\, 3d,\, b \,>\, 1,\, c-d-1 \,\le\, b \,<\, c,} \\[2pt]
{\scriptstyle 0 \,<\, l \,\le\, c-3d,\, l \text{\emph{ odd}} }
\end{array}
$
\\ \midrule

\emph{S106}
&
$\left[\tiny\begin{array}{rrrrrrr}
    1 & 1 & a & 2c & c & d & 0 \\
    0 & 0 & 1 & 1 & 3 & 1 & 1 
\end{array}\right]$
&
$
\tiny\begin{array}{c}
    (2c,4) \\[2pt]
    (d\!+\!c\!+\!a\!+\!2,3)
\end{array}
$
&
$
\def\arraystretch{0.6}\begin{array}{l}
{\scriptstyle T_{5}^{2}+T_{1}^{(2c-6d-l)}T_{2}^{l}T_{6}^{6}+T_{4}T_{7}^{5},} \\[2pt]
{\scriptstyle d \,\ge\, 1,\, c \,>\, 3d,\, a \,>\, 2c,\, 0 \,<\, l \,\le\, c-3d,\, l \text{\emph{ odd}} }
\end{array}
$
\\
\bottomrule
\end{longtable}
\end{center}


\begin{center}
\small\setlength{\tabcolsep}{3pt}
\small\setlength{\arraycolsep}{2pt}
\begin{longtable}{llll}
\toprule
\\[-8pt]

\multicolumn{4}{l}
{
\small\setlength{\tabcolsep}{6pt}
\begin{tabular}{cccc}
$
 Q \ = \ 
\left[\tiny\begin{array}{rrrrrrr}
    1 & 1 & a & b & c & d & 0 \\
    0 & 0 & 1 & 1 & 1 & 1 & 1 
\end{array}\right]
$
&
$\mu \ = \ (a\!+\!3d,4)$
&
$-\KKK \ = \ (c\!+\!b\!-\!2d\!+\!2,1)$
\end{tabular}
}
\\[6pt]

{\small\emph{ID}}
&
\multicolumn{1}{c}{{\small$g$}}
&
{\small\emph{ID}}
&
\multicolumn{1}{c}{{\small$g$}}
\\[3pt]

\emph{S76}
&

$
\def\arraystretch{0.75}\begin{array}{l}
{\scriptstyle T_{1}^{(a-4c+3d)}T_{5}^{4}+T_{3}T_{6}^{3}+T_{2}^{(a-b+3d)}T_{4}T_{7}^{3},} \\[1pt]
{\scriptstyle b \,>\, c \,>\, d \,\ge\, 1,\, a \,>\, 4c-3d,} \\[1pt]
{\scriptstyle a \,>\, b-3d,\, a \,\le\, 1+b+c-2d }
\end{array}
$
&
\emph{S77}
&

$
\def\arraystretch{0.75}\begin{array}{l}
{\scriptstyle T_{1}^{(a-4c+3d)}T_{5}^{4}+T_{3}T_{6}^{3}+T_{2}^{(a+3d)}T_{7}^{4},} \\[1pt]
{\scriptstyle b \,>\, c \,>\, d \,\ge\, 1,\, 4c-3d \,<\, a \,\le\, 1+b+c-2d,} \\[1pt]
{\scriptstyle a+3d \text{\emph{ odd}} }
\end{array}
$
\\[4pt] \midrule \\[-8pt]

\multicolumn{4}{l}
{
\small\setlength{\tabcolsep}{6pt}
\begin{tabular}{cccc}
$
 Q \ = \ 
\left[\tiny\begin{array}{rrrrrrr}
    1 & 1 & a & b & c & d & 0 \\
    0 & 0 & 1 & 1 & 1 & 1 & 1 
\end{array}\right]
$
&
$\mu \ = \ (a,4)$
&
$-\KKK \ = \ (b\!+\!d\!+\!c\!+\!2,1)$
\end{tabular}
}
\\[6pt]

{\small\emph{ID}}
&
\multicolumn{1}{c}{{\small$g$}}
&
{\small\emph{ID}}
&
\multicolumn{1}{c}{{\small$g$}}
\\[3pt]

\emph{S79}
&

$
\def\arraystretch{0.75}\begin{array}{l}
{\scriptstyle T_{1}^{(a-4c)}T_{5}^{4}+T_{2}^{(a-b-3d)}T_{4}T_{6}^{3}+T_{3}T_{7}^{3},} \\[1pt]
{\scriptstyle b \,>\, c \,>\, d \,\ge\, 1,\, a \,>\, 4c,\, a \,>\, b+3d,} \\[1pt]
{\scriptstyle a \,\le\, 1+b+c+d }
\end{array}
$
&
\emph{S80}
&

$
\def\arraystretch{0.75}\begin{array}{l}
{\scriptstyle T_{1}^{(a-4c)}T_{5}^{4}+T_{2}^{(a-4d)}T_{6}^{4}+T_{3}T_{7}^{3},} \\[1pt]
{\scriptstyle c \,>\, d \,\ge\, 1,\, b \,\ge\, 3c-d,\, 4c \,<\, a \,\le\, 1+b+c+d,} \\[1pt]
{\scriptstyle a \text{\emph{ odd}} }
\end{array}
$
\\[4pt] \midrule \\[-8pt]

\multicolumn{4}{l}
{
\small\setlength{\tabcolsep}{6pt}
\begin{tabular}{cccc}
$
 Q \ = \ 
\left[\tiny\begin{array}{rrrrrrr}
    1 & 1 & a & b & c & d & 0 \\
    0 & 0 & 1 & 1 & 1 & 1 & 1 
\end{array}\right]
$
&
$\mu \ = \ (b\!+\!3d,4)$
&
$-\KKK \ = \ (c\!+\!a\!-\!2d\!+\!2,1)$
\end{tabular}
}
\\[6pt]

{\small\emph{ID}}
&
\multicolumn{1}{c}{{\small$g$}}
&
{\small\emph{ID}}
&
\multicolumn{1}{c}{{\small$g$}}
\\[3pt]

\emph{S81}
&
$
\def\arraystretch{0.75}\begin{array}{l}
{\scriptstyle T_{1}^{(b-4c+3d)}T_{5}^{4}+T_{4}T_{6}^{3}+T_{2}^{(b+3d-a)}T_{3}T_{7}^{3},} \\[1pt]
{\scriptstyle c \,>\, d \,\ge\, 1,\, b \,>\, 4c-3d,\, b \,<\, a \,<\, b+3d }
\end{array}
$
&
\emph{S82}
&
$
\def\arraystretch{0.75}\begin{array}{l}
{\scriptstyle T_{1}^{(b-4c+3d)}T_{5}^{4}+T_{4}T_{6}^{3}+T_{2}^{(b+3d)}T_{7}^{4},} \\[1pt]
{\scriptstyle c \,>\, d \,\ge\, 1,\, c \,\ge\, 2d-1,\, b \,>\, 4c-3d,} \\[1pt]
{\scriptstyle a \,>\, b,\, b+3d \text{\emph{ odd}} }
\end{array}
$
\\[4pt] \midrule \\[-8pt]

\multicolumn{4}{l}
{
\small\setlength{\tabcolsep}{6pt}
\begin{tabular}{cccc}
$
 Q \ = \ 
\left[\tiny\begin{array}{rrrrrrr}
    1 & 1 & 4c\!-\!3d & b & c & d & 0 \\
    0 & 0 & 1 & 1 & 1 & 1 & 1 
\end{array}\right]
$
&
$\mu \ = \ (4c,4)$
&
$-\KKK \ = \ (c\!+\!b\!-\!2d\!+\!2,1)$
\end{tabular}
}
\\[6pt]

{\small\emph{ID}}
&
\multicolumn{1}{c}{{\small$g$}}
&
{\small\emph{ID}}
&
\multicolumn{1}{c}{{\small$g$}}
\\[3pt]

\emph{S84}
&

$
\def\arraystretch{0.75}\begin{array}{l}
{\scriptstyle T_{5}^{4}+T_{3}T_{6}^{3}+T_{1}^{(4c-b-l)}T_{2}^{l}T_{4}T_{7}^{3},} \\[1pt]
{\scriptstyle c \,>\, d \,\ge\, 1,\, 3c-d-1 \,\le\, b \,<\, 4c-3d }
\end{array}
$
&
\emph{S85}
&

$
\def\arraystretch{0.75}\begin{array}{l}
{\scriptstyle T_{5}^{4}+T_{3}T_{6}^{3}+T_{1}^{(4c-l)}T_{2}^{l}T_{7}^{4},} \\[1pt]
{\scriptstyle d \,\ge\, 1,\, c \,>\, 2d-1,\, 3c-d-1 \,\le\, b \,<\, 4c-3d,} \\[1pt]
{\scriptstyle 0 \,<\, l \,<\, 2c,\, l \text{\emph{ odd}} }
\end{array}
$
\\[4pt] \midrule \\[-8pt]

\multicolumn{4}{l}
{
\small\setlength{\tabcolsep}{6pt}
\begin{tabular}{cccc}
$
 Q \ = \ 
\left[\tiny\begin{array}{rrrrrrr}
    1 & 1 & a & b & c & d & 0 \\
    0 & 0 & 1 & 1 & 1 & 1 & 1 
\end{array}\right]
$
&
$\mu \ = \ (4c,4)$
&
$-\KKK \ = \ (d\!+\!b\!+\!a\!-\!3c\!+\!2,1)$
\end{tabular}
}
\\[6pt]

{\small\emph{ID}}
&
\multicolumn{1}{c}{{\small$g$}}
&
{\small\emph{ID}}
&
\multicolumn{1}{c}{{\small$g$}}
\\[3pt]

\emph{S86}
&

$
\def\arraystretch{0.75}\begin{array}{l}
{\scriptstyle T_{5}^{4}+T_{1}^{(4c-a-3d)}T_{3}T_{6}^{3}+T_{2}^{(4c-b)}T_{4}T_{7}^{3},} \\[1pt]
{\scriptstyle c \,>\, d \,\ge\, 1,\, 3c-d-1 \,\le\, b \,<\, 4c,\, a \,>\, 4c-3d }
\end{array}
$
&
\emph{S87}
&

$
\def\arraystretch{0.75}\begin{array}{l}
{\scriptstyle T_{5}^{4}+T_{1}^{(4c-b-3d)}T_{4}T_{6}^{3}+T_{2}^{(4c-a)}T_{3}T_{7}^{3},} \\[1pt]
{\scriptstyle d \,\ge\, 1,\, c \,>\, 2d-1,\, 3c-d-1 \,\le\, b \,<\, 4c-3d,} \\[1pt]
{\scriptstyle b \,<\, a \,<\, 4c }
\end{array}
$
\\[4pt] \midrule \\[-8pt]

\multicolumn{4}{l}
{
\small\setlength{\tabcolsep}{6pt}
\begin{tabular}{cccc}
$
 Q \ = \ 
\left[\tiny\begin{array}{rrrrrrr}
    1 & 1 & 4c & b & c & d & 0 \\
    0 & 0 & 1 & 1 & 1 & 1 & 1 
\end{array}\right]
$
&
$\mu \ = \ (4c,4)$
&
$-\KKK \ = \ (b\!+\!d\!+\!c\!+\!2,1)$
\end{tabular}
}
\\[6pt]

{\small\emph{ID}}
&
\multicolumn{1}{c}{{\small$g$}}
&
{\small\emph{ID}}
&
\multicolumn{1}{c}{{\small$g$}}
\\[3pt]

\emph{S89}
&

$
\def\arraystretch{0.75}\begin{array}{l}
{\scriptstyle T_{5}^{4}+T_{1}^{(4c-b-3d-l)}T_{2}^{l}T_{4}T_{6}^{3}+T_{3}T_{7}^{3},} \\[1pt]
{\scriptstyle d \,\ge\, 1,\, c \,>\, 2d-1,\, 3c-d-1 \,\le\, b \,<\, 4c-3d }
\end{array}
$
&
\emph{S90}
&

$
\def\arraystretch{0.75}\begin{array}{l}
{\scriptstyle T_{5}^{4}+T_{1}^{(4c-4d-l)}T_{2}^{l}T_{6}^{4}+T_{3}T_{7}^{3},} \\[1pt]
{\scriptstyle c \,>\, d \,\ge\, 1,\, 3c-d-1 \,\le\, b \,<\, 4c,} \\[1pt]
{\scriptstyle 0 \,<\, l \,<\, 2c-2d,\, l \text{\emph{ odd}} }
\end{array}
$
\\ \bottomrule
\end{longtable}

\goodbreak

\begin{longtable}{llll}
\toprule
\\[-8pt]

\multicolumn{4}{l}
{
\small\setlength{\tabcolsep}{6pt}
\begin{tabular}{cccc}
$
 Q \ = \ 
\left[\tiny\begin{array}{rrrrrrr}
    1 & 1 & a & 4c\!-\!3d & c & d & 0 \\
    0 & 0 & 1 & 1 & 1 & 1 & 1 
\end{array}\right]
$
&
$\mu \ = \ (4c,4)$
&
$-\KKK \ = \ (c\!+\!a\!-\!2d\!+\!2,1)$
\end{tabular}
}
\\[6pt]

{\small\emph{ID}}
&
\multicolumn{1}{c}{{\small$g$}}
&
{\small\emph{ID}}
&
\multicolumn{1}{c}{{\small$g$}}
\\[3pt]

\emph{S91}
&

$
\def\arraystretch{0.75}\begin{array}{l}
{\scriptstyle T_{5}^{4}+T_{4}T_{6}^{3}+T_{1}^{(4c-a-l)}T_{2}^{l}T_{3}T_{7}^{3},} \\[1pt]
{\scriptstyle c \,>\, d \,\ge\, 1,\, c \,\ge\, 2d-1,\, 4c-3d \,<\, a \,<\, 4c }
\end{array}
$
&
\emph{S92}
&

$
\def\arraystretch{0.75}\begin{array}{l}
{\scriptstyle T_{5}^{4}+T_{4}T_{6}^{3}+T_{1}^{(4c-l)}T_{2}^{l}T_{7}^{4},} \\[1pt]
{\scriptstyle c,\, d \,\ge\, 1,\, c \,\ge\, 2d-1,\, a \,>\, 4c-3d,} \\[1pt]
{\scriptstyle 0 \,<\, l \,<\, 2c,\, l \text{\emph{ odd}} }
\end{array}
$
\\[4pt] \midrule \\[-8pt]

\multicolumn{4}{l}
{
\small\setlength{\tabcolsep}{6pt}
\begin{tabular}{cccc}
$
 Q \ = \ 
\left[\tiny\begin{array}{rrrrrrr}
    1 & 1 & a & 2c\!-\!5d & c & d & 0 \\
    0 & 0 & 1 & 1 & 3 & 1 & 1 
\end{array}\right]
$
&
$\mu \ = \ (2c,4)$
&
$-\KKK \ = \ (c\!-\!4d\!+\!a\!+\!2,3)$
\end{tabular}
}
\\[6pt]

{\small\emph{ID}}
&
\multicolumn{1}{c}{{\small$g$}}
&
{\small\emph{ID}}
&
\multicolumn{1}{c}{{\small$g$}}
\\[3pt]

\emph{S94}
&

$
\def\arraystretch{0.75}\begin{array}{l}
{\scriptstyle T_{5}^{2}+T_{4}T_{6}^{5}+T_{1}^{(2c-2a-l)}T_{2}^{l}T_{3}^{2}T_{7}^{4},} \\[1pt]
{\scriptstyle d \,\ge\, 1,\, c \,>\, 3d,\, 4d-1 \,\le\, c \,<\, 5d,} \\[1pt]
{\scriptstyle 2c-5d \,<\, a \,<\, c,\, 0 \,<\, l \,\le\, c-a,\, l \text{\emph{ odd}} }
\end{array}
$
&
\emph{S95}
&

$
\def\arraystretch{0.75}\begin{array}{l}
{\scriptstyle T_{5}^{2}+T_{4}T_{6}^{5}+T_{1}^{(2c-a-l)}T_{2}^{l}T_{3}T_{7}^{5},} \\[1pt]
{\scriptstyle d \,\ge\, 1,\, c \,>\, 3,\, c \,\ge\, 4d-1,\, 2c-5d \,<\, a \,<\, 2c,} \\[1pt]
{\scriptstyle 0 \,\le\, l \,\le\, (2c-a)/2 }
\end{array}
$
\\[12pt]

\emph{S96}
&

$
\def\arraystretch{0.75}\begin{array}{l}
{\scriptstyle T_{5}^{2}+T_{4}T_{6}^{5}+T_{1}^{(2c-l)}T_{2}^{l}T_{7}^{6},} \\[1pt]
{\scriptstyle d \,\ge\, 1,\, c \,>\, 3,\, c\,\ge\, 4d-1,\, a \,>\, 2c-5d,} \\[1pt]
{\scriptstyle 0 \,<\, l \,\le\, c,\, l \text{\emph{ odd}} }
\end{array}
$
&
&
\\[4pt] \midrule \\[-8pt]

\multicolumn{4}{l}
{
\small\setlength{\tabcolsep}{6pt}
\begin{tabular}{cccc}
$
 Q \ = \ 
\left[\tiny\begin{array}{rrrrrrr}
    1 & 1 & 2c\!-\!5d & b & c & d & 0 \\
    0 & 0 & 1 & 1 & 3 & 1 & 1 
\end{array}\right]
$
&
$\mu \ = \ (2c,4)$
&
$-\KKK \ = \ (c\!-\!4d\!+\!b\!+\!2,3)$
\end{tabular}
}
\\[6pt]

{\small\emph{ID}}
&
\multicolumn{1}{c}{{\small$g$}}
&
{\small\emph{ID}}
&
\multicolumn{1}{c}{{\small$g$}}
\\[3pt]

\emph{S97}
&

$
\def\arraystretch{0.75}\begin{array}{l}
{\scriptstyle T_{5}^{2}+T_{3}T_{6}^{5}+T_{1}^{(2c-2b-l)}T_{2}^{l}T_{4}^{2}T_{7}^{4},} \\[1pt]
{\scriptstyle d \,\ge\, 1,\, c \,>\, 4d-1,\, c-d-1 \,\le\, b \,<\, c ,} \\[1pt]
{\scriptstyle b \,<\, 2c-5d,\, 0 \,<\, l \,\le\, c-b,\, l \text{\emph{ odd}} }
\end{array}
$
&
\emph{S98}
&

$
\def\arraystretch{0.75}\begin{array}{l}
{\scriptstyle T_{5}^{2}+T_{3}T_{6}^{5}+T_{1}^{(2c-b-l)}T_{2}^{l}T_{4}T_{7}^{5},} \\[1pt]
{\scriptstyle d \,\ge\, 1,\, c \,>\, 3d,\, c-d-1 \,\le\, b \,<\, 2c-5d,} \\[1pt]
{\scriptstyle 0 \,\le\, l \,\le\, (2c-b)/2 }
\end{array}
$
\\[12pt]

\emph{S99}
&

$
\def\arraystretch{0.75}\begin{array}{l}
{\scriptstyle T_{5}^{2}+T_{3}T_{6}^{5}+T_{1}^{(2c-l)}T_{2}^{l}T_{7}^{6},} \\[1pt]
{\scriptstyle d \,\ge\, 1,\, c \,>\, 4d-1,\, b \,>\, 1,\, c-d-1 \,\le\, b \,<\, 2c-5d,} \\[1pt]
{\scriptstyle 0 \,<\, l \,\le\, c \text{\emph{ odd}} }
\end{array}
$
&
&
\\[4pt] \midrule \\[-8pt]

\multicolumn{4}{l}
{
\small\setlength{\tabcolsep}{6pt}
\begin{tabular}{cccc}
$
 Q \ = \ 
\left[\tiny\begin{array}{rrrrrrr}
    1 & 1 & a & b & c & d & 0 \\
    0 & 0 & 1 & 1 & 3 & 1 & 1 
\end{array}\right]
$
&
$\mu \ = \ (2c,4)$
&
$-\KKK \ = \ (d\!+\!b\!+\!a\!-\!c\!+\!2,3)$
\end{tabular}
}
\\[6pt]

{\small\emph{ID}}
&
\multicolumn{1}{c}{{\small$g$}}
&
{\small\emph{ID}}
&
\multicolumn{1}{c}{{\small$g$}}
\\[3pt]

\emph{S100}
&

$
\def\arraystretch{0.75}\begin{array}{l}
{\scriptstyle T_{5}^{2}+T_{1}^{(2c-a-5d)}T_{3}T_{6}^{5}+T_{2}^{(2c-b)}T_{4}T_{7}^{5},} \\[1pt]
{\scriptstyle d \,\ge\, 1,\, c \,>\, 3d,\, c-d-1 \,\le\, b \,<\, a \,<\, 2c-5d }
\end{array}
$
&
\emph{S101}
&

$
\def\arraystretch{0.75}\begin{array}{l}
{\scriptstyle T_{5}^{2}+T_{1}^{(2c-b-5d)}T_{4}T_{6}^{5}+T_{2}^{(2c-a)}T_{3}T_{7}^{5},} \\[1pt]
{\scriptstyle d \,\ge\, 1,\, c \,>\, 4d-1,\, b \,>\, 1,\, c-d-1 \,\le\, b \,<\, 2c-5d,} \\[1pt]
{\scriptstyle b \,<\, a \,<\, 2c }
\end{array}
$
\\[4pt] \midrule \\[-8pt]

\multicolumn{4}{l}
{
\small\setlength{\tabcolsep}{6pt}
\begin{tabular}{cccc}
$
 Q \ = \ 
\left[\tiny\begin{array}{rrrrrrr}
    1 & 1 & 2c & b & c & d & 0 \\
    0 & 0 & 1 & 1 & 3 & 1 & 1 
\end{array}\right]
$
&
$\mu \ = \ (2c,4)$
&
$-\KKK \ = \ (b\!+\!d\!+\!c\!+\!2,3)$
\end{tabular}
}
\\[6pt]

{\small\emph{ID}}
&
\multicolumn{1}{c}{{\small$g$}}
&
{\small\emph{ID}}
&
\multicolumn{1}{c}{{\small$g$}}
\\[3pt]

\emph{S103}
&

$
\def\arraystretch{0.75}\begin{array}{l}
{\scriptstyle T_{5}^{2}+T_{1}^{(2c-b-5d-l)}T_{2}^{l}T_{4}T_{6}^{5}+T_{3}T_{7}^{5},} \\[1pt]
{\scriptstyle d \,\ge\, 1,\, c \,>\, 4d-1,\, c-d-1 \,\le\, b \,<\, 2c-5d,} \\[1pt]
{\scriptstyle 0 \,\le\, l \,\le\, (2c-b-5d)/2 }
\end{array}
$
&
\emph{S104}
&

$
\def\arraystretch{0.75}\begin{array}{l}
{\scriptstyle T_{5}^{2}+T_{1}^{(2c-6d-l)}T_{2}^{l}T_{6}^{6}+T_{3}T_{7}^{5},} \\[1pt]
{\scriptstyle d \,\ge\, 1,\, c \,>\, 3d,\, b \,>\, 1,\, c-d-1 \,\le\, b \,<\, 2c,} \\[1pt]
{\scriptstyle 0 \,<\, l \,\le\, c-3d,\, l \text{\emph{ odd}} }
\end{array}
$
\\
\bottomrule
\end{longtable}
\end{center}
\end{class-list}

Finally, let us compare our results with existing classifications.

\begin{remark}
\label{rem:comp-smooth-cplx1}
The $447$ sporadic cases from Classification lists \ref{class:s=2} to \ref{class:s=6-series} encompass in particular the smooth Fano fourfolds with hypersurface Cox ring of Picard number two and torus action of complexity one. The following table translates their ID's in the present classification to the cases of \cite{FaHaNi}*{Thm.~1.2}.
\begin{center}
\newcommand{\mycolumnwidth}{.48\textwidth}
\begin{minipage}[t]{\mycolumnwidth}
\begin{center}
\begin{tabular}{l|r}
Theorem 1.2 in \cite{FaHaNi} & \multicolumn{1}{c}{ID} \\ \hline
1                  &  84 (\ref{class:s=3-sample}) \\
2                  &  20 (\ref{class:s=3-sample}) \\
4.A: $m=1$, $c=-1$ &  45 (\ref{class:s=3-sample}) \\
4.A: $m=1$, $c=0$  &   1 (\ref{class:s=2}) \\
4.B: $m=1$         &  44 (\ref{class:s=3-sample}) \\
4.C: $m=1$         &   6 (\ref{class:s=2})

\end{tabular}
\end{center}
\end{minipage}
\hspace{0cm}
\begin{minipage}[t]{\mycolumnwidth}
\begin{center}
\begin{tabular}{l|r}
Theorem 1.2 in \cite{FaHaNi} & \multicolumn{1}{c}{ID} \\ \hline
5: $m=1$, $a=0$    & 228 (\ref{class:s=3-nsample}) \\
7: $m=1$           & 253 (\ref{class:s=4-sample}) \\
10: $m=2$          & 244 (\ref{class:s=4-sample}) \\
11: $m=2$, $a_2=1$ & 225 (\ref{class:s=3-sample}) \\
11: $m=2$, $a_2=2$ & 226 (\ref{class:s=3-sample}) \\
12: $m=2$          &  15 (\ref{class:s=2}) \\
\end{tabular}
\end{center}
\end{minipage}
\end{center}
\end{remark}

\begin{remark}
\label{rem:comp-smooth-hs}
At least $268$ varieties from our classification admit a \emph{one-parameter smoothing} to a smooth Fano fourfold of Picard number two. Here, by a one-parameter smoothing of $X$ we mean a flat morphism of varieties $\varphi\colon \mathcal{X} \rightarrow \CC$ such that $\mathcal{X}_0 := \varphi^{-1}(0)$ is isomorphic to~$X$ and there is a non-empty open subset $U \subseteq \CC$ such that $\mathcal{X}_t := \varphi^{-1}(t)$ is smooth for all $t \in U$. The procedure to explicitly construct such a smoothing goes as follows: Let $X = X(Q,g)$ with $(Q,g)$ from the lists \ref{class:s=2} to \ref{class:s=6-series} and assume that up to a unimodular transformation the data $Q = [w_1,\dots,w_7]$ and $\mu = \deg(g)$ appears in \cite{HLM}*{Thm.~1.1}. Then there is a homogeneous spread polynomial $h$ of degree~$\deg(h) = \mu$ such that $X_h$ is a smooth Fano fourfold with general hypersurface Cox ring. We extend the action of $H = (\CC^*)^2$ on $\CC^7$ given by the grading map $Q$ to $\CC^8$ by letting $H$ act trivially on the last coordinate. We set
\begin{equation*}
    \bar{\mathcal{Z}} \ = \ \CC^8,
    \qquad
    \widehat{\mathcal{Z}} \ = \ \bar{\mathcal{Z}}^{ss}(\tau),
    \qquad
    \mathcal{Z} \ = \ \widehat{\mathcal{Z}} \git H,
\end{equation*}
where $\tau \in \Lambda(\CC[T_1,\dots,T_7,T])$ is the unique GIT-cone that contains the anticanonical class $-\KKK_X$ in its interior. Moreover we set
\begin{equation*}
    \bar{\mathcal{X}} \ := \ V((1-T)g+Th) \ \subseteq \ \CC^8,
    \qquad
    \widehat{\mathcal{X}} \ = \ \bar{\mathcal{X}} \cap \widehat{\mathcal{Z}},
    \qquad
    \mathcal{X} \ = \ \widehat{\mathcal{X}} \git H.
\end{equation*}
The projection $\pr\colon \widehat{\mathcal{X}} \rightarrow \CC$ to the last coordinate is $H$-invariant and thus factors through a morphism $\varphi\colon \mathcal{X} \rightarrow \CC$. We have $X \cong \varphi^{-1}(0)$ and $\varphi$ is a smoothing of $X$ with fiber over $t = 1$ isomorphic to $X_h$. In the following table, for each entry~$(Q,\mu)$ from the table in \cite{HLM}*{Thm.~1.1} we list the IDs of the varieties $X(Q,g)$ from the present classification that admit such an explicit smoothing to a smooth Fano fourfold of Picard number two with a general hypersurface Cox ring and data~$(Q,\mu)$.
\begin{center}
\newcommand{\mycolumnwidth}{.48\textwidth}
\begin{minipage}[t]{\mycolumnwidth}
\begin{center}
\begin{tabular}[t]{c|l}
\cite{HLM}*{Thm.~1.1} & IDs
\\ \hline
 1 & 1 \\
 2 & 2, 3 \\
 3 & 4, 5 \\
 4 & 6 \\
 5 & - \\
 6 & 7, 8 \\
 7 & 19, 20 \\
 8 & 21, 22 \\
 9 & 23 - 25 \\
10 &  - \\
11 & 26, 27 \\
12 & 28 - 30 \\
13 & 227, 228 \\
\multirow{2}{*}{14} & 230 - 232; \\
   & S2: $a=1$ \\
15 & 265 - 270 \\
16 & 31, 32 \\
17 & 33, 34 \\
18 & 35 - 39 \\
19 & 40 - 43 \\
20 & 44, 45 \\
21 & 242, 243 \\
22 & 46, 47 \\
23 & 48 - 50 \\
24 & 51, 52 \\
25 & 53 - 71 \\
26 & 72 \\
27 & 73 \\
28 & 74 - 76
\end{tabular}
\end{center}
\end{minipage}
\hspace{0cm}
\begin{minipage}[t]{\mycolumnwidth}
\begin{center}
\begin{tabular}[t]{c|l}
\cite{HLM}*{Thm.~1.1} & IDs
\\ \hline
29 & 77 \\
30 & 78 \\
31 & 79 \\
32 & 80 \\
33 & 415 - 418 \\
34 & 81 - 84 \\
35 & 85 - 91 \\
36 & 92 - 101 \\
37 & 102 - 107 \\
38 & 108 - 129 \\
39 & 130 - 148 \\
40 & 149 \\
41 & 150, 151 \\
42 & 152, 153 \\
43 & 154 - 158 \\
44 & 159, 160 \\
45 & 161 - 166 \\
46 & 167 - 180 \\
47 & 181 - 219 \\
48 & 244 \\
49 & 245 \\
50 & 246, 247 \\
51 & 248 - 251 \\
52 & 411 - 414 \\
53 & 252, 253 \\
54 & 254 \\
55 & 255 - 257 \\
56 & 258 - 261 \\
57 & 262 - 264
\end{tabular}
\end{center}
\end{minipage}
\end{center}

\goodbreak

\begin{center}
\newcommand{\mycolumnwidth}{.48\textwidth}
\begin{minipage}[t]{\mycolumnwidth}
\begin{center}
\begin{tabular}[t]{c|l}
\cite{HLM}*{Thm.~1.1} & IDs
\\ \hline
58 & 9, 10 \\
59 & 11 - 13 \\
60 & 220 - 222 \\
61 & 14 \\
62 & 223 \\
\end{tabular}
\end{center}
\end{minipage}
\hspace{0cm}
\begin{minipage}[t]{\mycolumnwidth}
\begin{center}
\begin{tabular}[t]{c|l}
\cite{HLM}*{Thm.~1.1} & IDs
\\ \hline
63 & 16 \\
64 & 224 \\
65 & 15 \\
66 & 225 \\
67 & 226
\end{tabular}
\end{center}
\end{minipage}
\end{center}
With the smoothing procedure from above one obtains a one-parameter smoothing of the variety no. $17$ in \cref{class:s=2} to $X = Y \times \PP^1$, where $Y \subseteq \PP^4$ is a smooth quartic. The specifying data of $X$ is missing from \cite{HLM}*{Thm.~1.1}.
\end{remark}


\begin{bibdiv}
\renewcommand{\eprint}[1]{\href{https://arxiv.org/abs/#1}{arXiv:#1}}

\begin{biblist}

\bib{ArBrHaWr}{article}{
   author={Arzhantsev, Ivan},
   author={Braun, Lukas},
   author={Hausen, J\"{u}rgen},
   author={Wrobel, Milena},
   title={Log terminal singularities, platonic tuples and iteration of Cox
   rings},
   journal={Eur. J. Math.},
   volume={4},
   date={2018},
   number={1},
   pages={242--312},
   issn={2199-675X},
}

\bib{ADHL}{book}{
   author={Arzhantsev, Ivan},
   author={Derenthal, Ulrich},
   author={Hausen, J\"urgen},
   author={Laface, Antonio},
   title={Cox rings},
   series={Cambridge Studies in Advanced Mathematics},
   volume={144},
   publisher={Cambridge University Press, Cambridge},
   date={2015},
   pages={viii+530},
   isbn={978-1-107-02462-5},
}

\bib{Ba99}{article}{
   author={Batyrev, Victor},
   title={On the classification of toric Fano 4-folds},
   journal={Journal of Mathematical Sciences},
   volume={94},
   pages={1021--1050},
   year={1999},
}

\bib{Ba24}{book}{
  author={B{\"a}uerle, Andreas},
  title={On Fano varieties of low Picard number with torus action},
  series={PhD thesis},
  note={In preparation},
}

\bib{BaHa22}{article}{
  title={On Gorenstein Fano Threefolds with an Action of a Two-Dimensional Torus},
  author={B{\"a}uerle, Andreas},
  author={Hausen, J{\"u}rgen},
  journal={Symmetry, Integrability and Geometry: Methods and Applications},
  volume={18},
  pages={88--129},
  year={2022}
}

\bib{BaMa24}{webpage}{
   author={B{\"a}uerle, Andreas},
   author={Mauz, Christian},
   title={On locally factorial Fano fourfolds of Picard number two -- a data file},
   url={https://github.com/abaeuerle/fano-4d-lf-rho-2},
}

\bib{BeHa04}{article}{
   author={Berchtold, Florian},
   author={Hausen, J\"{u}rgen},
   title={Bunches of cones in the divisor class group---a new combinatorial
   language for toric varieties},
   journal={Int. Math. Res. Not.},
   date={2004},
   number={6},
   pages={261--302},
   issn={1073-7928},
   review={\MR{2041065}},
   doi={10.1155/S1073792804130973},
}

\bib{BeHa07}{article}{
   author={Berchtold, Florian},
   author={Hausen, J\"{u}rgen},
   title={Cox rings and combinatorics},
   journal={Trans. Amer. Math. Soc.},
   volume={359},
   date={2007},
   number={3},
   pages={1205--1252},
}

\bib{Cox}{article}{
   author={Cox, David A.},
   title={The homogeneous coordinate ring of a toric variety},
   journal={J. Algebraic Geom.},
   volume={4},
   date={1995},
   number={1},
   pages={17--50},
}

\bib{CoLiSch11}{book}{
   author={Cox, David A.},
   author={Little, John B.},
   author={Schenck, Henry K.},
   title={Toric varieties},
   series={Graduate Studies in Mathematics},
   volume={124},
   publisher={American Mathematical Society, Providence, RI},
   date={2011},
   pages={xxiv+841},
 }


\bib{FaHa}{article}{
   author={Fahrner, Anne},
   author={Hausen, J\"{u}rgen},
   title={On intrinsic quadrics},
   journal={Canad. J. Math.},
   volume={72},
   year={2020},
   number={1},
   pages={145--181},
  }

\bib{FaHaNi}{article}{
   author={Fahrner, Anne},
   author={Hausen, J\"{u}rgen},
   author={Nicolussi, Michele},
   title={Smooth projective varieties with a torus action of complexity 1 and {P}icard number 2},
   journal={Ann. Sc. Norm. Super. Pisa Cl. Sci. (5)},
   volume={18},
   year={2018},
   number={2},
   pages={611--651},
  }

\bib{Ha08}{article}{
   author={Hausen, J\"{u}rgen},
   title={Cox rings and combinatorics II},
   journal={Mosc. Math. J.},
   volume={8},
   year={2008},
   number={4},
   pages={711--757},
}

\bib{HaHe}{article}{
   author={Hausen, J\"{u}rgen},
   author={Herppich, Elaine},
   title={Factorially graded rings of complexity one},
   conference={
      title={Torsors, \'{e}tale homotopy and applications to rational points},
   },
   book={
      series={London Math. Soc. Lecture Note Ser.},
      volume={405},
      publisher={Cambridge Univ. Press, Cambridge},
   },
   date={2013},
   pages={414--428},
   review={\MR{3077174}},
}

\bib{HaHiWr}{article}{
   author={Hausen, J\"{u}rgen},
   author={Hische, Christoff},
   author={Wrobel, Milena},
   title={On torus actions of higher complexity},
   journal={orum Math. Sigma},
   volume={7},
   year={2019},
   pages={e38},
}

\bib{HaKeWo}{article}{
   author={Hausen, J\"{u}rgen},
   author={Keicher, Simon},
   author={Wolf, R{\"u}diger},
   title={Computing automorphisms of Mori dream spaces},
   journal={Mathematics of Computation},
   volume={86},
   number={308},
   pages={2955--2974},
   year={2017}
}

\bib{HLM}{article}{
    title={On smooth Fano fourfolds of Picard number two}, 
    author={Hausen, Jürgen},
    author={Laface, Antonio},
    author={Mauz, Christian},
    journal={Revista Matematica Iberoamericana},
    volume={38},
    number={1},
    pages={53--93},
    year={2021},
}

\bib{HaSu}{article}{
   author={Hausen, J\"{u}rgen},
   author={S\"{u}\ss , Hendrik},
   title={The Cox ring of an algebraic variety with torus action},
   journal={Adv. Math.},
   volume={225},
   date={2010},
   number={2},
   pages={977--1012},
   issn={0001-8708},
   review={\MR{2671185}},
   doi={10.1016/j.aim.2010.03.010},
}



\bib{Kl88}{article}{
    title={A classification of toric varieties with few generators},
    author={Kleinschmidt, P.},
    journal={Aequationes Math.},
    volume={35},
    year={1988},
    number={2-3},
    pages={254--266},
}

\bib{KrNi}{article}{
    title={Classification of toric Fano 5-folds},
    author={Kreuzer, Maximilian},
    author={Nill, Benjamin},
    journal={Adv. Geom.},
    volume={9},
    year={2009},
    number={1},
    pages={85--97},
}


\bib{Ob}{article}{
    title={An algorithm for the classification of smooth Fano polytopes},
    author={{\O}bro, Mikkel},
    eprint={0704.0049},
    year={2007},
}

\bib{Pa}{webpage}{
   author={Paffenholz, Andreas},
   title={Smooth reflexive lattice polytopes},
   url={https://polymake.org/polytopes/paffenholz/www/fano.html},
   note={Accessed 11 Feb. 2024}
}

\bib{PaSha99}{book}{
    title={Algebraic geometry V: Fano varieties},
    author={Parshin, Aleksei N},
    author={Shafarevich, Igor Rostislavovich},
    volume={47},
    year={1999},
    publisher={Springer}
}

\bib{QuWr22}{article}{
   author={Qureshi, Muhammad Imran},
   author={Wrobel, Milena},
   title={Smooth {F}ano intrinsic {G}rassmannians of type {$(2,n)$} with {P}icard number two},
   journal={Int. Math. Res. Not. IMRN},
   year={2022},
   number={22},
   pages={17999--18034},
  }


\end{biblist}
\end{bibdiv}

\end{document}